\documentclass[a4paper,10pt]{amsart}

\usepackage{amssymb,xspace,amscd}

\usepackage{hyperref}

\title[On
homogeneous and symmetric $CR$ manifolds]{On
homogeneous and symmetric $CR$ manifolds}
\author[A.~Altomani]{Andrea Altomani}
\address{A.\ Altomani:
University of Luxembourg \\ 162a, avenue de 
la Fa\"{\i}encerie\\\mbox{L-2309} Luxembourg}
\email{andrea.altomani@uni.lu}

\author[C.~Medori]{Costantino Medori}
\address{C.\ Medori:
Dipartimento di Matematica\\ Universit\`a di Parma\\ Viale G.P.
Usberti, 53/A
\\ 43100 Parma (Italy)} \email{costantino.medori@unipr.it}

\author[M.~Nacinovich]{Mauro Nacinovich}
\address{M.\ Nacinovich:
Dipartimento di Matematica\\ II Universit\`a di Roma
``Tor Ver\-ga\-ta''\\ Via della Ricerca Scientifica\\ 00133 Roma
(Italy)}
\email{nacinovi@mat.uniroma2.it}
\date{\today}

\subjclass[2000]{Primary: 32V05
Secondary: 14M15, 17B20, 57T20}
\keywords{Homogeneous $CR$ manifold,
$CR$ algebra, $J$-property, $CR$-symmetry}

\numberwithin{equation}{section}

\theoremstyle{plain}

\newtheorem{thm}{Theorem}[section]

\newtheorem{lem}[thm]{Lemma}

\newtheorem{cor}[thm]{Corollary}

\newtheorem{prop}[thm]{Proposition}

\theoremstyle{definition}

\newtheorem{exam}[thm]{Example}

\newtheorem{dfn}[thm]{Definition}

\newtheorem{rmk}[thm]{Remark}

\newcommand{\re}{\mathrm{Re}}

\hyphenation{hom-og-en-eous}

\setcounter{tocdepth}{1}

\begin{document}

\maketitle

\begin{abstract}
We consider canonical fibrations and algebraic geometric structures
on homogeneous $CR$ manifolds, in connection with the notion of
$CR$ algebra. We give applications to the classifications of left
invariant $CR$ structures on semisimple Lie groups and of
$CR$-symmetric structures on complete flag varieties.
\end{abstract}



\section*{Introduction}
In this paper we discuss some topics about the $CR$ geometry of
homogeneous manifolds. Our main tool are
$CR$ algebras, introduced
in \cite{MN05} to parametrize 
homogeneous partial complex structures.
 If $M$ is a $\mathbf{G}_0$-homogeneous $CR$-manifold,
we associate to each point $p_0$ of $M$ a pair 
$(\mathfrak{g}_0,\mathfrak{q})$,
consisting of the Lie algebra $\mathfrak{g}_0$ of $\mathbf{G}_0$
and of a complex Lie subalgebra of its complexification
$\mathfrak{g}$. If $p=g\!\cdot\! p_0$, with
$g\in\mathbf{G}_0$, is another point of $M$, the $CR$ algebra of $M$
at $p$ is $(\mathfrak{g}_0,\mathrm{Ad}(g)(\mathfrak{q}))$, so that
$\mathfrak{q}$ is determined by $M$ modulo $\mathbf{G}_0$-equivalence.
Several questions about the $CR$ geometry of $M$ can be conveniently
reduced to Lie-algebraic questions about the pair
$(\mathfrak{g}_0,\mathfrak{q})$. This makes for a program
that
has already been started and 
carried on in several papers, see e.g. \cite{AMN06,AMN06b,LN05,LN08},
where the investigation focused on different
special classes of homogeneous $CR$ manifolds.
In \cite{KZ00}, W.Kaup and D. Zaitsev introduced the notion of $CR$-symmetry,
generalizing at the same time
the Riemannian and Hermitian cases, and
showing that
$CR$-symmetric manifolds are  $CR$-homogeneous.
\par
 In \cite{LN05, LN08} one of the Authors, in collaboration with
A.Lotta, classified and investigated some classes 
of $CR$-symmetric manifolds. A key point was to represent
 the partial complex structure of $M$ by an inner derivation
$J$ of $\mathfrak{g}_0$. The existence of such a $J$ was a crucial
step in the classification
of semisimple Levi-Tanaka algebras in \cite{MN98}, 
and in establishing the structure
of  standard $CR$ manifolds, which are 
homogeneous $CR$ manifolds with maximal $CR$-automorphisms groups 
(see \cite{MN00, MN05}). In this paper we will delve 
further into
the relationship between the existence of $J$, 
canonical $CR$ fibrations, 
$CR$-symmetry.
\par
In the first section we survey the basic notions of $CR$ and
homogeneous $CR$ manifolds, including the $J$-property,
$CR$-symmetry, and 
explaining their relationship.
\par
In \S{\ref{sec:e}}, we discuss the existence of Levi-Malcev and 
Jordan-Chevalley
fibrations, and the existence of suitable homogeneous $CR$ structures
on their total spaces, bases and fibers. These fibrations,
interwoven with canonical decompositions of the $CR$ algebras, were
largely employed in \cite{MN00, MN02},  and also
in \cite{AMN06, AMN06b} in the context of parabolic
$CR$ manifolds. Here they are considered in full generality.
\par
In \S{\ref{sec:i}} we study the inverse problem of constructing
a $\mathbf{G}_0$-homogeneous $CR$ manifold starting from
an abstractly given 
$CR$ algebra $(\mathfrak{g}_0,\mathfrak{q})$. 
This is not always possible,
and the question arises to describe
natural modifications of $(\mathfrak{g}_0,\mathfrak{q})$
leading to new $\mathbf{G}_0$-homogeneous $CR$ manifolds.
These are described in \S{\ref{sec:g}}, \S{\ref{sec:j}}, \S{\ref{sec:k}}.
\par
The construction of  
\S{\ref{sec:k}} was employed in \cite{AHR85, gihu07} and,
like the one of \S{\ref{sec:j}},  has a
distinct algebraic geometrical flavor. Besides,
algebraic groups played a central role in 
the study of parabolic $CR$ manifolds in
\cite{AMN06, AMN06b}.
Thus we consider $CR$ manifolds in an
algebraic geometric context
in \S{\ref{sec:c}}.
We show that algebraic $CR$ manifolds
canonically embed into
the set of regular points of  complex algebraic varieties.
An important distinction arises between algebraic and
weakly-algebraic $CR$ manifolds, the latter
admitting analytic,
but not algebraic, embeddings.\par
In the two final sections we
deal with special applications.
In \S{\ref{sec:li}} we extend to noncompact Lie groups some results
of \cite{Charb04}, classifying the regular left invariant maximal
$CR$ structures on semisimple real Lie groups.
In \S{\ref{sec:sf}},
we consider
symmetric $CR$ structures on full flags of complex Lie groups.
They had been considered in \cite{GS04} in a slightly different
context. In our treatment
we use the $CR$ algebras approach and we are especially
interested in the relationship between $CR$-symmetry and the
$J$-property. 
All the 
$CR$-symmetric manifolds of \cite{LN05, LN08}
also enjoyed the $J$-property. We are in the same situation 
when we consider the
complete flags of the classical groups.
On the complete flags of 
the exceptional groups we found
examples of $CR$-symmetric structures which 
do not enjoy even the weaker version of the $J$-property, 
and also examples of $CR$-structures enjoying the weak-$J$-property,
but not the $J$-property.\par\medskip

\section{\texorpdfstring{$CR$}{CR} manifolds, 
\texorpdfstring{$CR$}{CR} 
algebras,  \texorpdfstring{$J$}{J}-property,
\texorpdfstring{$CR$}{CR}-symmetry}
\label{sec:a}\nopagebreak
\subsection{ \texorpdfstring{$CR$}{CR} manifolds}\label{sec:1.1}\nopagebreak
Let $M$ be a smooth real manifold.
A \emph{$CR$ structure} on $M$ is
the datum of
an almost Lagrangian formally integrable
smooth complex subbundle $T^{0,1}M$ of the complexified tangent bundle
$T^{\mathbb{C}}M$. The subbundle $T^{0,1}M$ is required to satisfy:
\begin{gather} \label{eq:aA}
  T^{0,1}M\cap\overline{T^{0,1}M}=0,\\
 \label{eq:aa}
  [\Gamma(M,T^{0,1}M),\Gamma(M,T^{0,1}M)]\subset\Gamma(M,T^{0,1}M).
\end{gather}
The rank $n$ of $T^{0,1}M$ is the
$CR$-\emph{dimension}, and $k=\mathrm{dim}_{\mathbb{R}}{M}-2n$ the
$CR$-\emph{codimension} of $M$.
If $n=0$, we say that $M$ is totally real; if $k=0$,
$M$ is a complex
manifold in view of the Newlander-Nirenberg theorem.\par
When $M$ is a real submanifold of a complex manifold $\mathrm{X}$,
for every $p\in{M}$ we can consider the $\mathbb{C}$-vector space
$T^{0,1}_pM=T^{0,1}_p\mathrm{X}\cap{T}^{\mathbb{C}}_pM$
of the anti-holomorphic complex tangent 
vectors on $\mathrm{X}$ which are tangent to $M$
at $p$.
If the dimension of $T^{0,1}_pM$ is independent of $p\in{M}$,
then $T^{0,1}M=\bigcup_{p\in{M}}T^{0,1}_pM$ is an almost Lagrangian
formally integrable
complex subbundle of $T^{\mathbb{C}}M$, defining on $M$ the structure of a
\emph{$CR$ submanifold} of $\mathrm{X}$.
If the complex dimension of $\mathrm{X}$ is the sum of the
$CR$ dimension and the $CR$ codimension of $M$,
the embedding $M\hookrightarrow\mathrm{X}$ is called \emph{$CR$-generic}.
\par
A smooth map $f:M'\to{M}$ is
$CR$ if $M$ and $M'$ are $CR$ manifolds, and
$df(T^{0,1}M')\subset{T}^{0,1}M$.
The notions of $CR$ immersion, submersion,
diffeomorphism and automorphism are defined in an obvious way.
The set of all $CR$ automorphisms of a $CR$ manifold
$M$ is a group that we denote by~$\mathrm{Aut}_{CR}(M)$.\par
\begin{dfn}[Characteristic bundle and Levi forms]
Let $HM$ be the subbundle of $TM$ consisting of the real parts of
the elements of $T^{0,1}M$. Its annihilator bundle
$H^0M\subset{T}^*M$
is called the \emph{characteristic bundle} of
$M$. We have
\begin{equation}
  \label{eq:AA}
  H^0_pM=\{\xi\in{T}^*_pM\mid \xi(z)=0,\;\forall z\in{T}^{0,1}_pM\},\quad
\text{for all $p\in{M}$}.
\end{equation}
If $Z_1,Z_2$ are smooth sections of $T^{0,1}M$,
and $\Xi$ a smooth section
of $H^0M$, all defined on an open neighborhood of $p$ in $M$, with
$Z_i(p)=z_i$ and $\Xi(p)=\xi$, then we set
\begin{equation}
  \label{eq:AB}
  \mathcal{L}_{\xi}(z_1,z_2)=id\Xi({z}_1,\bar{z}_2)=-i\xi([Z_1,\bar{Z}_2]).
\end{equation}
In this way we define a Hermitian symmetric form
$\mathcal{L}_{\xi}$ on $T^{0,1}_pM$,
which is called the \emph{scalar Levi form} at $\xi\in{H}^0M$.\par
If $Z$ is a smooth section of $T^{0,1}M$ defined on a neighborhood of
$p\in{M}$, with $Z(p)=z$, we define
\begin{gather}
  \label{eq:AC}
  \mathfrak{L}_p(z)=\varpi_p(i[\bar{Z},Z]_p),\\
\notag\text{where}\;
\varpi_p:T_pM\to{T_pM}/H_pM\;\text{is the projection into the quotient}.
\qquad\qquad
\end{gather}
This map $\mathcal{L}_p:T^{0,1}_pM\to{T_pM}/H_pM$ is 
the \emph{vector valued Levi form} of $M$ at $p$.
\end{dfn}
\subsection{Homogeneous \texorpdfstring{$CR$}{CR} manifolds}
\label{sec:aa} Let $M$ be a smooth $CR$ manifold and $\mathbf{G}_0$
a Lie group.
\begin{dfn} \label{dfn:aa} We say that $M$ is a $\mathbf{G}_0$-homogeneous
$CR$ manifold if $\mathbf{G}_0$ acts transitively on $M$ by
$CR$ diffeomorphisms.
\end{dfn}
Let $M$ be a $\mathbf{G}_0$-homogeneous $CR$ manifold.
Fix $p_0\in{M}$, let
$\mathbf{I}_0=\{g\in\mathbf{G}_0\mid g\cdot{p}_0=p_0\}$
be the isotropy
subgroup, and
$\pi:{{\mathbf{G}_0}}\to{M}\simeq{\mathbf{G}_0}/\mathbf{I}_0$ the
associated
principal $\mathbf{I}_0$-bundle.
Denote by $\mathfrak{Z}({{\mathbf{G}_0}})$ the space of smooth
sections of the pullback $\pi^*T^{0,1}M$ of
$T^{0,1}M$ to ${{\mathbf{G}_0}}$:
\begin{equation}
  \label{eq:aB}
  \mathfrak{Z}(\mathbf{G}_0)=\{Z\in\mathfrak{X}^{\mathbb{C}}(\mathbf{G}_0)
\mid \pi_*Z_g\in{T}^{0,1}_{\pi(g)}M,\;\forall g\in\mathbf{G}_0\},
\end{equation}
where 
$\mathfrak{X}^{\mathbb{C}}(\mathbf{G}_0)$ is the space 
complex valued smooth
vector fields on $\mathbf{G}_0$.
By \eqref{eq:aa}, the complex system 
$\mathfrak{Z}({{\mathbf{G}_0}})$ is formally integrable,
i.e.
\begin{equation}\label{eq:aba}
[\mathfrak{Z}({{\mathbf{G}_0}}),\mathfrak{Z}({{\mathbf{G}_0}})]
\subset\mathfrak{Z}({{\mathbf{G}_0}}).
\end{equation}
Moreover,
$\mathfrak{Z}({{\mathbf{G}_0}})$ is invariant by left translations,
and therefore is generated, as a left
$\mathcal{C}^{\infty}(\mathbf{G}_0,\mathbb{C})$-module, by
its left invariant vector
fields. \par
Let $\mathfrak{g}_0$  be the Lie algebra of
$\mathbf{G}_0$ and $\mathfrak{g}$ its complexification.
By \eqref{eq:aba},
the left invariant elements of $\mathfrak{Z}(\mathbf{G}_0)$
define a complex Lie subalgebra $\mathfrak{q}$ of $\mathfrak{g}$,
given by
\begin{equation}\label{eq:ab}
\mathfrak{q}=\pi_*^{-1}(T^{0,1}_{p_0}M)\subset{\mathfrak{g}}
\simeq T^{\mathbb{C}}_e{{\mathbf{G}_0}}.
\end{equation}
We can summarize these observations by
\begin{prop} \label{prop:ab}
Let $\mathbf{G}_0$ be a Lie group, $\mathbf{I}_0$
a closed subgroup of $\mathbf{G}_0$, 
$\mathfrak{g}_0=\mathrm{Lie}(\mathbf{G}_0)$ 
and $\mathfrak{i}_0=\mathrm{Lie}(\mathbf{I}_0)$ their Lie algebras.
Then \eqref{eq:ab} establishes a one-to-one correspondence
between ${{\mathbf{G}_0}}$-homogeneous $CR$ structures on
$M={{\mathbf{G}_0}}/\mathbf{I}_0$
and complex Lie subalgebras
$\mathfrak{q}$ of $\mathfrak{g}$ with
$\mathfrak{q}\cap\mathfrak{g}_0=\mathfrak{i}_0$.\qed
\end{prop}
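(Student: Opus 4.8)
The plan is to realize \eqref{eq:ab} as a bijection by writing down an explicit inverse and checking the two composites are the identity; concretely the work splits into showing that the assignment $T^{0,1}M\mapsto\mathfrak{q}$ lands among the complex subalgebras with $\mathfrak{q}\cap\mathfrak{g}_0=\mathfrak{i}_0$, that it is injective, and that it is surjective. The first point is essentially contained in the discussion preceding the statement: $\mathfrak{q}$ is a complex subspace because $\pi_*^{\mathbb{C}}$ is $\mathbb{C}$-linear and $T^{0,1}_{p_0}M$ is complex, and it is a subalgebra because the left invariant elements of the formally integrable, left invariant system $\mathfrak{Z}(\mathbf{G}_0)$ are closed under bracket by \eqref{eq:aba}. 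The only genuinely new check is $\mathfrak{q}\cap\mathfrak{g}_0=\mathfrak{i}_0$: the inclusion $\mathfrak{i}_0\subseteq\mathfrak{q}$ holds since $\pi_*\mathfrak{i}_0=0$, and conversely any $X\in\mathfrak{q}\cap\mathfrak{g}_0$ has $\pi_*X\in T^{0,1}_{p_0}M$ real, hence $\pi_*X\in T^{0,1}_{p_0}M\cap\overline{T^{0,1}_{p_0}M}=0$ by \eqref{eq:aA}, so $X\in\ker\pi_*=\mathfrak{i}_0$.

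For injectivity I would use that $\pi$ is a submersion, so $\pi_*^{\mathbb{C}}:\mathfrak{g}\to T^{\mathbb{C}}_{p_0}M$ is onto with kernel $\mathfrak{i}:=\mathfrak{i}_0^{\mathbb{C}}$; applying $\pi_*$ to $\mathfrak{q}=\pi_*^{-1}(T^{0,1}_{p_0}M)$ recovers the fibre $T^{0,1}_{p_0}M=\pi_*(\mathfrak{q})$, and the $\mathbf{G}_0$-invariance $T^{0,1}_{g\cdot p_0}M=g_*(T^{0,1}_{p_0}M)$, with $g_*$ the differential of the $CR$-diffeomorphism $p\mapsto g\cdot p$, then recovers the bundle at every point. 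Hence two homogeneous structures with the same $\mathfrak{q}$ coincide.

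The substance is surjectivity. Given $\mathfrak{q}$ with $\mathfrak{q}\cap\mathfrak{g}_0=\mathfrak{i}_0$, note that $\mathfrak{i}_0\subseteq\mathfrak{q}$ and $\mathbb{C}$-linearity give $\mathfrak{i}\subseteq\mathfrak{q}$, so I can set $T^{0,1}_{p_0}M:=\pi_*(\mathfrak{q})=\mathfrak{q}/\mathfrak{i}\subseteq\mathfrak{g}/\mathfrak{i}\cong T^{\mathbb{C}}_{p_0}M$ and transport it by $\mathbf{G}_0$. The transversality \eqref{eq:aA} then comes for free: $\mathfrak{q}\cap\bar{\mathfrak{q}}$ is conjugation stable, hence equals the complexification of its real points $\mathfrak{q}\cap\bar{\mathfrak{q}}\cap\mathfrak{g}_0=\mathfrak{q}\cap\mathfrak{g}_0=\mathfrak{i}_0$, so $\mathfrak{q}\cap\bar{\mathfrak{q}}=\mathfrak{i}$ and, since $\ker\pi_*=\mathfrak{i}$ lies in both, $\pi_*(\mathfrak{q})\cap\overline{\pi_*(\mathfrak{q})}=\pi_*(\mathfrak{q}\cap\bar{\mathfrak{q}})=0$; and the formal integrability \eqref{eq:aa} is equivalent, through the left invariant generating system of $\mathfrak{Z}(\mathbf{G}_0)$, to $[\mathfrak{q},\mathfrak{q}]\subseteq\mathfrak{q}$, which holds by hypothesis.

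The one delicate point, and the step I expect to be the main difficulty, is the well-definedness of this transported bundle: for $T^{0,1}_{p_0}M=\pi_*(\mathfrak{q})$ to descend to a subbundle it must be stable under the isotropy representation of $\mathbf{I}_0$, equivalently $\mathrm{Ad}(h)\mathfrak{q}=\mathfrak{q}$ for all $h\in\mathbf{I}_0$. Since $\mathfrak{i}_0\subseteq\mathfrak{q}$ and $\mathfrak{q}$ is a subalgebra we have $[\mathfrak{i}_0,\mathfrak{q}]\subseteq\mathfrak{q}$, so $\mathfrak{q}$ is automatically invariant under the identity component $\mathbf{I}_0^{\circ}=\exp\mathfrak{i}_0$; only the component group $\mathbf{I}_0/\mathbf{I}_0^{\circ}$ remains to be watched, and I expect it to be disposed of either by taking $\mathbf{G}_0$ (hence $\mathbf{I}_0$) connected or by building $\mathrm{Ad}(\mathbf{I}_0)$-stability into the standing notion of a homogeneous $CR$ structure. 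Granting this, $\pi_*(\mathfrak{q})$ descends to a smooth $\mathbf{G}_0$-invariant subbundle whose $CR$ algebra is $\pi_*^{-1}(\pi_*(\mathfrak{q}))=\mathfrak{q}+\mathfrak{i}=\mathfrak{q}$, which closes the correspondence.
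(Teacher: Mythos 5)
Your forward construction, the identity $\mathfrak{q}\cap\mathfrak{g}_0=\mathfrak{i}_0$, the injectivity argument, and the verification of \eqref{eq:aA} and \eqref{eq:aa} for the reconstructed bundle are all correct; note that the paper offers no proof at all (the proposition is stated as a summary of the preceding discussion, which only produces the map from $CR$ structures to subalgebras), so your surjectivity analysis is genuinely additional content. But the point you flag as ``the one delicate point'' is a real gap, not a formality, and it cannot be closed under the stated hypotheses: well-definedness of the transported bundle needs $\mathrm{Ad}(h)(\mathfrak{q})=\mathfrak{q}$ for \emph{every} $h\in\mathbf{I}_0$, and when $\mathbf{I}_0$ is disconnected this can fail, in which case the proposition itself is false as literally stated. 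Concretely, let $\mathbf{G}_0=\mathbb{R}^2\rtimes\{1,\sigma\}$ with $\sigma$ acting by $(x,y)\mapsto(x,-y)$, let $\mathbf{I}_0=\{1,\sigma\}$, so that $M\simeq\mathbb{R}^2$ and $\mathfrak{i}_0=0$, and let $\mathfrak{q}=\mathbb{C}\,(e_1+ie_2)\subset\mathfrak{g}$, where $e_1,e_2$ is the standard basis of $\mathfrak{g}_0=\mathbb{R}^2$. Then $\mathfrak{q}$ is a complex Lie subalgebra with $\mathfrak{q}\cap\mathfrak{g}_0=0=\mathfrak{i}_0$, but $\mathrm{Ad}(\sigma)(\mathfrak{q})=\bar{\mathfrak{q}}\neq\mathfrak{q}$; since any $\mathbf{G}_0$-homogeneous $CR$ structure on $M$ would force the $CR$ diffeomorphism $\sigma$, which fixes $p_0$, to satisfy $d\sigma(T^{0,1}_{p_0}M)=T^{0,1}_{p_0}M$, i.e. $\mathrm{Ad}(\sigma)(\mathfrak{q})=\mathfrak{q}$, no homogeneous $CR$ structure corresponds to this $\mathfrak{q}$, and \eqref{eq:ab} is not onto the set named in the statement.

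Of your two proposed repairs, only the second works, and the first contains a false step: connectedness of $\mathbf{G}_0$ does not imply connectedness of $\mathbf{I}_0$ (consider $\mathbf{O}(2)\subset\mathbf{SO}(3)$, with quotient the real projective plane), so ``taking $\mathbf{G}_0$ (hence $\mathbf{I}_0$) connected'' is not an available move. The correct amendments are either to assume $\mathbf{I}_0$ connected --- then $\mathbf{I}_0$ is generated by $\exp(\mathfrak{i}_0)$, and $\mathrm{Ad}(\mathbf{I}_0)$-invariance of $\mathfrak{q}$ follows from $[\mathfrak{i}_0,\mathfrak{q}]\subset\mathfrak{q}$ exactly as you argue --- or to state the bijection onto the set of $\mathrm{Ad}(\mathbf{I}_0)$-invariant complex Lie subalgebras $\mathfrak{q}$ with $\mathfrak{q}\cap\mathfrak{g}_0=\mathfrak{i}_0$, invariance being automatic in the forward direction by the observation above. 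With either amendment your argument is complete; the residual imprecision lies in the paper's statement, which your attempt has the merit of exposing.
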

This lead us to introduce the notion of a
\emph{$CR$ algebra} in \cite{MN05}. 
\begin{dfn} \label{dfn:ad}
A \emph{$CR$ algebra} is a pair $(\mathfrak{g}_0,\mathfrak{q})$, consisting
of a real Lie algebra $\mathfrak{g}_0$ and of a complex Lie subalgebra
$\mathfrak{q}$ of its complexification $\mathfrak{g}$, such that
the quotient $\mathfrak{g}_0/(\mathfrak{q}\cap\mathfrak{g}_0)$ is  finite
dimensional.
The real Lie subalgebra $\mathfrak{i}_0=\mathfrak{q}\cap\mathfrak{g}_0$
is called the \emph{isotropy subalgebra} of $(\mathfrak{g}_0,\mathfrak{q})$.
\par
If $M$ is a ${{\mathbf{G}_0}}$-homogeneous $CR$ manifold and $\mathfrak{q}$
is defined by
\eqref{eq:ab},
we say that the $CR$ algebra
$(\mathfrak{g}_0,\mathfrak{q})$ is \emph{associated} with $M$.
\end{dfn}
\begin{rmk}\label{rmk:ac}
The $CR$-dimension and $CR$-codimension of $M$ can be computed in terms
of its associated
$CR$ algebra $(\mathfrak{g}_0,\mathfrak{q})$. We have indeed
\begin{align}
  CR\text{-}\mathrm{dim}\,{M}&=\mathrm{dim}_{\mathbb{C}}\mathfrak{q}-
\mathrm{dim}_{\mathbb{C}}(\mathfrak{q}\cap\bar{\mathfrak{q}}),\\
CR\text{-}\mathrm{codim}\,{M}&=\mathrm{dim}_{\mathbb{C}}\mathfrak{g}-
\mathrm{dim}_{\mathbb{C}}(\mathfrak{q}+\bar{\mathfrak{q}}).
\end{align}
\end{rmk}
The $CR$ algebra
$(\mathfrak{g}_0,\mathfrak{q})$, is \emph{totally real} when
$CR\text{-}\mathrm{dim}\,M=0$,
\emph{totally complex} when
$CR\text{-}\mathrm{codim}\,M=0$.
\par
The scalar and vector valued Levi forms of a $\mathbf{G}_0$-homogeneous
$CR$ manifolds can be computed 
in terms of the Lie product of $\mathfrak{g}$,
by using
$\mathbf{G}_0$-left-invariant
vector fields. Indeed, for $p\in{M}$, $\xi\in{H}^0_pM$,
we have
\begin{gather}\label{eq:ACB}
  \mathcal{L}_{\xi}(z_1,z_2)=-i\pi^*(\xi)([Z_1^*,\bar{Z}_2^*])\;\text{if}\;
Z_1,Z_2\in\mathfrak{q},\;\text{and}\; \pi_*(Z_i^*)_p=z_i,\\\label{eq:AD}
\mathcal{L}_{p}(z)=\varpi_{p}(\pi_*(i[\bar{Z}^*,Z^*]))\quad\text{if}\;
Z\in\mathfrak{q},\;\text{and}\; \pi_*(Z^*)_p=z,\\
\notag
\qquad\qquad\qquad\qquad\qquad\qquad
\text{for}\; z,z_1,z_2\in{T}^{0,1}_pM,\; \xi\in{H}^0_pM ; \\
\notag
\text{here}\;
Z^*,Z_1^*,Z_2^*\;\text{are the left invariant vector fields of}
\; Z,Z_1,Z_2\in\mathfrak{q}.
\end{gather}
The natural isomorphism between
$T_{p_0}M/H_{p_0}M$ and
the quotient
$\mathfrak{e}=
\mathfrak{g}_0/(\{\mathfrak{q}+\bar{\mathfrak{q}}\}
\cap\mathfrak{g}_0)$ makes $\mathcal{L}_{p_0}(z)$ correspond
to the projection of $i[\bar{Z},Z]$ into $\mathfrak{e}$.
\begin{dfn}\label{dfn:ac} Consider
a $CR$ algebra $(\mathfrak{g}_0,\mathfrak{q})$.
Let $\mathfrak{Lie}_{\mathbb{C}}(\mathfrak{g})$ be the set of
complex Lie subalgebras of $\mathfrak{g}$. We recall that
$(\mathfrak{g}_0,\mathfrak{q})$
is called:
\begin{align*}
  \text{\emph{fundamental}}&\quad
\text{if}\quad \mathfrak{q}'\in\mathfrak{Lie}_{\mathbb{C}}(\mathfrak{g}),
\; \mathfrak{q}+\bar{\mathfrak{q}}\subset\mathfrak{q}'
\Longrightarrow
\mathfrak{q}'=\mathfrak{g}\\
 \text{\emph{weakly nondegenerate}}&\quad
\text{if}\quad \mathfrak{q}'\in\mathfrak{Lie}_{\mathbb{C}}(\mathfrak{g}),
\; \mathfrak{q}\subset
\mathfrak{q}'\subset\mathfrak{q}+\bar{\mathfrak{q}}
\Longrightarrow
\mathfrak{q}'=\mathfrak{q}\\
\text{
\emph{Levi-nondegenerate}}&\quad\text{if}
\quad \{Z\in\mathfrak{q}\,|\,\mathrm{ad}(Z)(\bar{\mathfrak{q}}) \subset
\mathfrak{q}+\bar{\mathfrak{q}}\}=\mathfrak{q}\cap\bar{\mathfrak{q}},\\
\text{\emph{effective}}&\quad
\text{if no nontrivial ideal of $\mathfrak{g}_0$ is contained
in $\mathfrak{i}_0$.}
\end{align*}
\end{dfn}
If $M$ is a ${{\mathbf{G}_0}}$-homogeneous $CR$ manifold with associated
$CR$ algebra $(\mathfrak{g}_0,\mathfrak{q})$, the  above properties
are related to the $CR$ geometry of $M$
(see e.g. \cite{AMN06}) by:
\begin{enumerate}
\item $(\mathfrak{g}_0,\mathfrak{q})$ is fundamental if and only
if $M$ is of finite type in the sense of Bloom and Graham (see \cite{BG77}).
\item
$(\mathfrak{g}_0,\mathfrak{q})$ is Levi-nondegenerate
if and only if the vector valued Levi form
of $M$ is nondegenerate.
Levi-nondegeneracy implies weak nondegeneracy.
\item $(\mathfrak{g}_0,\mathfrak{q})$ is fundamental and weakly nondegenerate
if and only if the group of germs of $CR$ diffeomorphisms at $p_0\in{M}$
stabilizing $p_0$
is a finite dimensional Lie group, i.e.
if and only if $M$ is holomorphically
nondegenerate
(see e.g.~\cite{BER99}, \cite{fels06}).
\item
A fundamental $(\mathfrak{g}_0,\mathfrak{q})$ is weakly degenerate
if and only if there exists a local
$\mathbf{G}_0$-equivariant $CR$ fibration $M\to{M}'$, with
nontrivial complex fibers.
\item Effectiveness 
means 
that the normal subgroups of $\mathbf{G}_0$ contained in the
isotropy $\mathbf{I}_0$ are discrete.
\end{enumerate}
Let $\mathfrak{g}_0,\mathfrak{g}'_0$ be real Lie algebras and
$\mathfrak{q},\mathfrak{q}'$ complex Lie subalgebras of their complexifications
$\mathfrak{g},\mathfrak{g}'$.
A Lie algebra homomorphism $\phi_0:\mathfrak{g}_0\to\mathfrak{g}'_0$
is a $CR$ algebras morphism from $(\mathfrak{g}_0,\mathfrak{q})$
to $(\mathfrak{g}'_0,\mathfrak{q}')$ if the complexification
$\phi$ of $\phi_0$ transforms $\mathfrak{q}$ into a subalgebra of
$\mathfrak{q}'$. The pair $(\phi_0,\phi)$ is
\begin{displaymath}
\begin{aligned}
 &\text{\emph{a $CR$ algebras immersion} if} \quad
\phi^{-1}(\mathfrak{q}'\cap\bar{\mathfrak{q}}')=\mathfrak{q}
\cap\bar{\mathfrak{q}},\;\;
\phi^{-1}(\mathfrak{q}')=\mathfrak{q},\\[4pt]
&\begin{aligned}
\text{\emph{a $CR$ algebras submersion} if} \quad
\phi(\mathfrak{g})\!+\!\mathfrak{q}'\cap\bar{\mathfrak{q}}'=\mathfrak{g}',
\;\;\phi(\mathfrak{q})\!
+\! \mathfrak{q}'\cap\bar{\mathfrak{q}}'=\mathfrak{q}',
\end{aligned}
\\[4pt]
&\text{\emph{a $CR$ algebras local isomorphism} if it is at the same time}\\
&
\qquad\qquad\qquad\qquad\qquad
\text{a $CR$ algebras immersion and submersion.}
\end{aligned}
\end{displaymath}\par
The $CR$ algebra $(\mathfrak{g}''_0,\mathfrak{q}'')$ with
$\mathfrak{g}''_0=
\phi_0^{-1}(\mathfrak{q}'\cap\mathfrak{g}'_0)$ and
$\mathfrak{q}''=
\mathfrak{q}\cap\phi^{-1}(\mathfrak{q}'\cap\bar{\mathfrak{q}}')$
is the \emph{fiber} of $(\phi_0,\phi):(\mathfrak{g}_0,\mathfrak{q})
\to  (\mathfrak{g}_0',\mathfrak{q}')$.\par
When $\mathfrak{g}_0=\mathfrak{g}'_0$, $\mathfrak{q}\subset\mathfrak{q}'$,
and $\phi_0$ is the identity, the corresponding
morphism $(\mathfrak{g}_0,\mathfrak{q})\to
(\mathfrak{g}_0,\mathfrak{q}')$ is
said to be  \emph{$\mathfrak{g}_0$-equivariant}
 (see \cite{MN05}).\par
If $M$ and $M'$ are homogeneous $CR$ manifolds with associated
$CR$ algebras $(\mathfrak{g}_0,\mathfrak{q})$,
$(\mathfrak{g}_0',\mathfrak{q}')$,
local $CR$ maps that are local $CR$ immersions, submersions or diffeomorphisms,
correspond to algebraic $CR$ morphisms of their $CR$ algebras that are
$CR$ algebras immersions, submersions, or local isomorphisms,
respectively, and vice versa.\par
For later reference, it is convenient to restate \cite[Lemma 5.1]{MN05}
in the following form.
\begin{prop}\label{prop:ad}
Let $\mathbf{I}_0\subset\mathbf{I}_0'$ be closed subgroups of a Lie group
$\mathbf{G}_0$. Let $\mathfrak{i}_0,\mathfrak{i}_0',\mathfrak{g}_0$ be the
Lie algebras of $\mathbf{I}_0,\mathbf{I}_0',\mathbf{G}_0$,
 and $\mathfrak{i},\mathfrak{i}',\mathfrak{g}$ their complexifications,
respectively. Let $(\mathfrak{g}_0,\mathfrak{q})$ be a $CR$ algebra,
defining a $\mathbf{G}_0$-invariant $CR$ structure on
$M=\mathbf{G}_0/\mathbf{I}_0$.
Then a necessary and sufficient condition for the existence of a
$\mathbf{G}_0$-invariant $CR$ structure on $M'=\mathbf{G}_0/\mathbf{I}_0'$
making the $\mathbf{G}_0$-equivariant map $\pi:M\to{M}'$ a
$CR$ submersion is that:
\begin{equation}
  \label{eq:a1}
  \mathfrak{q}'=\mathfrak{q}+\mathfrak{i}'\;\;\text{is a Lie algebra},
\quad\text{and}
\quad
\mathfrak{q}'\cap\mathfrak{g}_0=\mathfrak{i}'_0.
\end{equation}
When \eqref{eq:a1} holds, it defines the
$CR$ algebra $(\mathfrak{g}_0,\mathfrak{q}')$ at $p_0'=[\mathbf{I}'_0]$
which defines the unique $\mathbf{G}_0$-homogeneous
$CR$ structure on $M'$ for which $M\xrightarrow{\pi}{M}'$
is a $CR$ submersion.
\end{prop}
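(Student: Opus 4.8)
The plan is to work entirely at the Lie-algebra level and let Proposition \ref{prop:ab} do the translation back to geometry. Recall that by Proposition \ref{prop:ab}, a $\mathbf{G}_0$-invariant $CR$ structure on $M'=\mathbf{G}_0/\mathbf{I}'_0$ is \emph{exactly} a complex Lie subalgebra $\mathfrak{q}'\subset\mathfrak{g}$ satisfying $\mathfrak{q}'\cap\mathfrak{g}_0=\mathfrak{i}'_0$; so the whole statement is about characterizing which such $\mathfrak{q}'$ make the projection a $CR$ submersion, and showing the condition \eqref{eq:a1} is the right one. First I would unwind what ``$\pi:M\to M'$ is a $CR$ submersion'' means in terms of the $CR$ algebras $(\mathfrak{g}_0,\mathfrak{q})$ and $(\mathfrak{g}_0,\mathfrak{q}')$. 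Since $\pi$ is induced by the identity on $\mathbf{G}_0$ (a $\mathfrak{g}_0$-equivariant map), the associated $CR$-algebra morphism has $\phi_0=\mathrm{id}$, $\phi=\mathrm{id}$, and the submersion condition from the definition in the excerpt reads $\mathfrak{q}+\mathfrak{q}'\cap\bar{\mathfrak{q}}'=\mathfrak{q}'$ together with $\mathfrak{g}+\mathfrak{q}'\cap\bar{\mathfrak{q}}'=\mathfrak{g}$ (the latter being automatic). Because $\pi_*$ sends $T^{0,1}_{p_0}M$ onto $T^{0,1}_{p'_0}M'$ with the fiber direction inside the kernel, at the level of the containment $\mathfrak{q}\subset\mathfrak{q}'$ one sees that $\mathfrak{q}'$ must contain both $\mathfrak{q}$ and the complexified fiber isotropy $\mathfrak{i}'$; this is what forces $\mathfrak{q}+\mathfrak{i}'\subset\mathfrak{q}'$.

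For necessity, I would argue that if a $\mathbf{G}_0$-invariant $CR$ structure on $M'$ making $\pi$ a $CR$ submersion exists, its subalgebra $\mathfrak{q}'$ must satisfy $\mathfrak{q}'\cap\mathfrak{g}_0=\mathfrak{i}'_0$ by Proposition \ref{prop:ab}, which is the second half of \eqref{eq:a1}. For the first half, a $CR$ submersion requires $d\pi$ to map $T^{0,1}M$ \emph{onto} $T^{0,1}M'$; combined with $\mathfrak{q}\subset\mathfrak{q}'$ and $\mathfrak{i}'\subset\mathfrak{q}'$ (the fiber is a complex, hence $CR$, submanifold forcing $\mathfrak{i}'\subset\mathfrak{q}'$ — more precisely, invariance of $T^{0,1}M'$ under $\mathbf{I}'_0$ forces $\mathfrak{i}'$ into the normalizer, and surjectivity of $\pi_*$ on the $(0,1)$-part forces $\mathfrak{q}'=\mathfrak{q}+\mathfrak{i}'$), this gives $\mathfrak{q}+\mathfrak{i}'\subseteq\mathfrak{q}'$, while the reverse inclusion follows because every element of $\mathfrak{q}'$ projects to something in $T^{0,1}M'=\pi_*\mathfrak{q}$ modulo the fiber $\mathfrak{i}'$. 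Hence $\mathfrak{q}'=\mathfrak{q}+\mathfrak{i}'$ and in particular this sum is a (complex) Lie subalgebra.

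For sufficiency, assume \eqref{eq:a1}. Then $\mathfrak{q}'=\mathfrak{q}+\mathfrak{i}'$ is a complex Lie subalgebra of $\mathfrak{g}$ with $\mathfrak{q}'\cap\mathfrak{g}_0=\mathfrak{i}'_0$, so by Proposition \ref{prop:ab} it defines a unique $\mathbf{G}_0$-invariant $CR$ structure on $M'$. I would then verify that $\pi$ is a $CR$ submersion by checking $\pi_*(T^{0,1}_{p_0}M)=T^{0,1}_{p'_0}M'$, which at the algebra level is the statement that the image of $\mathfrak{q}$ under $\mathfrak{g}\to\mathfrak{g}/\mathfrak{i}'$ equals the image of $\mathfrak{q}'$ — immediate from $\mathfrak{q}'=\mathfrak{q}+\mathfrak{i}'$ since $\mathfrak{i}'$ maps to zero. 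Uniqueness follows because any $CR$ structure on $M'$ for which $\pi$ is a $CR$ submersion must have, by the necessity direction, the same associated subalgebra $\mathfrak{q}+\mathfrak{i}'$, and Proposition \ref{prop:ab} makes the correspondence one-to-one.

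The main obstacle I anticipate is the \emph{necessity} direction, specifically pinning down that the subalgebra $\mathfrak{q}'$ of an arbitrary $CR$-submersion structure on $M'$ is forced to equal $\mathfrak{q}+\mathfrak{i}'$ rather than merely to contain it. The containment $\mathfrak{q}+\mathfrak{i}'\subseteq\mathfrak{q}'$ is the comparatively easy part; the reverse inclusion is where one must use surjectivity of $d\pi$ on the $(0,1)$-tangent spaces together with the fact that the fibers of $\pi$ are the $\mathbf{I}'_0$-orbits, and this is precisely the content extracted from \cite[Lemma 5.1]{MN05} that the proposition is restating. Everything else is a direct application of Proposition \ref{prop:ab} and the definitions of $CR$ submersion.
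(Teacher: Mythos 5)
Your proof is correct; note, though, that the paper itself contains no proof of this proposition---it is explicitly presented as a restatement of \cite[Lemma 5.1]{MN05}---so there is no internal argument to compare against, and your proposal supplies a proof where the paper only cites one. Your route (Proposition \ref{prop:ab} plus surjectivity of $d\pi$ on the $(0,1)$-tangent spaces at the base point, which suffices by equivariance) is sound: $\mathfrak{q}\subset\mathfrak{q}'$ because $\pi$ is a $CR$ map, $\mathfrak{i}'\subset\mathfrak{q}'$, and the surjectivity $(\pi')_*(\mathfrak{q})=(\pi')_*(\mathfrak{q}')$ together with $\ker(\pi')_*=\mathfrak{i}'\subset\mathfrak{q}'$ forces $\mathfrak{q}'\subset\mathfrak{q}+\mathfrak{i}'$; sufficiency and uniqueness then follow exactly as you say. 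Two remarks. First, your parenthetical justification of $\mathfrak{i}'\subset\mathfrak{q}'$ (``the fiber is a complex, hence $CR$, submanifold'') is off: the fibers of $\pi$ need not be complex submanifolds, and no such fact is needed, since $\mathfrak{i}'\subset\mathfrak{q}'$ is immediate from $\mathfrak{i}'_0=\mathfrak{q}'\cap\mathfrak{g}_0\subset\mathfrak{q}'$ and the fact that $\mathfrak{q}'$ is a complex subspace. Second, a streamlining of the whole argument: $\mathfrak{q}'\cap\bar{\mathfrak{q}}'$ is conjugation-stable, hence equals the complexification of $\mathfrak{q}'\cap\bar{\mathfrak{q}}'\cap\mathfrak{g}_0=\mathfrak{q}'\cap\mathfrak{g}_0=\mathfrak{i}'_0$, i.e.\ $\mathfrak{q}'\cap\bar{\mathfrak{q}}'=\mathfrak{i}'$; the $CR$ algebras submersion condition $\mathfrak{q}+\mathfrak{q}'\cap\bar{\mathfrak{q}}'=\mathfrak{q}'$ that you quote at the outset then reads literally $\mathfrak{q}'=\mathfrak{q}+\mathfrak{i}'$, which yields both directions of the proposition and uniqueness at once. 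That shortcut, however, leans on the paper's stated (but not proved there) equivalence between geometric $CR$ submersions and $CR$ algebras submersions, whereas your geometric derivation has the merit of being self-contained.
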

\subsection{The \texorpdfstring{$J$}{J}-property}
Let $M$ be a $CR$ manifold. Its \emph{partial complex structure}
is the vector bundle isomorphism $J:HM\to{HM}$
that
associates to $X_p\in{H}_pM$ the vector $JX_p\in{H}_pM$ for which
$X_p+iJX_p\in{T}_p^{0,1}M$.\par
Let $M$ be
$\mathbf{G}_0$-$CR$-homogeneous,
with $CR$ algebra $(\mathfrak{g}_0,\mathfrak{q})$
at $p_0\in{M}$, and set $\mathfrak{V}_0=\{\re{Z}\mid Z\in\mathfrak{q}\}$.
The partial complex structure of $M$ yields a complex structure
on $\mathfrak{V}_0/\mathfrak{i}_0$, via its canonical identification
with $H_{p_0}M$.
This is
the \emph{partial complex structure of $(\mathfrak{g}_0,\mathfrak{q})$.}
\par
\begin{dfn}\label{def:fc}
We say that $(\mathfrak{g}_0,\mathfrak{q})$ has the $J$-property
if $J\in\mathrm{Der}(\mathfrak{g}_0)$ can be chosen in such a way that
\begin{equation}
  \label{eq:Fib}
  J(\mathfrak{i}_0)\subset\mathfrak{i}_0,\quad
X+iJ(X)\in\mathfrak{q},\quad\forall X\in\mathfrak{V}_0.
\end{equation}
\par
We say that  a $CR$ algebra
$(\mathfrak{g}_0,\mathfrak{q})$ has the
\emph{weak-$J$-property} if there is $J\in\mathrm{Der}(\mathfrak{g}_0)$
such that, for $\Upsilon\!=\!\mathrm{Ad}(\exp(\pi{J}/2))$, 
\begin{equation}
  \label{eq:Fi} \Upsilon(\mathfrak{i}_0)=\mathfrak{i}_0,\quad
  X+i\,\Upsilon (X)\in\mathfrak{q},\quad\forall X\in\mathfrak{V}_0.
\end{equation}
\par
If $J\in\mathrm{Der}(\mathfrak{g}_0)$ satisfies \eqref{eq:Fib}, then 
$\Upsilon\!=\!\mathrm{Ad}(\exp(\pi{J}/2))$ satisfies \eqref{eq:Fi}.
Hence the first condition is stronger than the second.
\end{dfn}
\begin{rmk}
Conditions \eqref{eq:Fib} and \eqref{eq:Fi} can also be expressed
in terms of the complexifications of $J,\;
\Upsilon$. Namely,
denoting by the same letter also their complexifications, they are
equivalent to
\begin{equation*}\begin{array}{lllllllllllll}
    \text{\eqref{eq:Fib}$'$}&&&
{J}(\mathfrak{q})\subset\mathfrak{q},\quad Z-i{J}(Z)\in\mathfrak{q}
\cap\bar{\mathfrak{q}},\;\forall{Z}\in\mathfrak{q},&&&&&&&&&\\
  \text{\eqref{eq:Fi}$'$} &&&
\Upsilon(\mathfrak{q})=\mathfrak{q},\quad Z-i\Upsilon{(Z)}\in\mathfrak{q}
\cap\bar{\mathfrak{q}},\;\forall{Z}\in\mathfrak{q}.&&&&&&&&&
\end{array}
\end{equation*}
\end{rmk}
For a map  $A\in\mathfrak{gl}(\mathfrak{g}_0)$ , denote by $A_s$ and
$A_n$ its semisimple and the nilpotent parts, respectively.
If $A\in\mathrm{Der}(\mathfrak{g}_0)$, then also
$A_s$ and $A_n$ are derivations of $\mathfrak{g}_0$ 
(see e.g \cite[\S{4.2}, Lemma b]{hum72}).
\begin{prop}
Let $(\mathfrak{g}_0,\mathfrak{q})$ be a $CR$ algebra,
and $J\in\mathrm{Der}(\mathfrak{g}_0)$. If
$J$ satisfies \eqref{eq:Fib}, then also $J_s$ satisfies \eqref{eq:Fib}.
If 
$\Upsilon=\mathrm{Ad}(\exp(\pi{J}/2))$
satisfies \eqref{eq:Fi},
then also $\Upsilon_s=\mathrm{Ad}(\exp(\pi{J_s}/2))$ satisfies \eqref{eq:Fi}.
\end{prop}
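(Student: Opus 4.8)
The plan is to pass to the complexification $\mathfrak{g}$, to work with the equivalent forms \eqref{eq:Fib}$'$ and \eqref{eq:Fi}$'$ of the two properties, and to exploit the single structural fact that a semisimple part is a polynomial in the operator. First I would isolate two elementary statements on $\mathfrak{g}$, valid for any $A\in\mathfrak{gl}(\mathfrak{g})$: (i) since $A_s$ is a polynomial in $A$, it preserves every $A$-invariant subspace; and (ii) the Jordan decomposition descends to invariant quotients, i.e. if $\mathfrak{i}\subset\mathfrak{q}$ are $A$-invariant and $\bar{A}$ denotes the operator induced on $\mathfrak{q}/\mathfrak{i}$, then $\overline{A_s}=(\bar{A})_s$. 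Fact (ii) holds because the operator induced by a semisimple (resp. nilpotent) operator on an invariant quotient is again semisimple (resp. nilpotent) — the induced minimal polynomial divides the original — so that $\bar{A}=\overline{A_s}+\overline{A_n}$ is, by uniqueness, the Jordan decomposition of $\bar{A}$. I record the same two statements for the multiplicative decomposition of an invertible $A$, with ``nilpotent'' replaced by ``unipotent''.

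For the first assertion I would use \eqref{eq:Fib}$'$, which reads $J(\mathfrak{q})\subset\mathfrak{q}$ and $Z-iJ(Z)\in\mathfrak{i}$ for all $Z\in\mathfrak{q}$, where $\mathfrak{i}=\mathfrak{q}\cap\bar{\mathfrak{q}}$ is the complexification of $\mathfrak{i}_0$. Since $J$ is a real derivation it commutes with conjugation, so $J(\bar{\mathfrak{q}})\subset\bar{\mathfrak{q}}$ and hence $J(\mathfrak{i})\subset\mathfrak{i}$. By fact (i) the same inclusions hold for $J_s$; being a derivation of $\mathfrak{g}_0$ (hence real), $J_s$ maps $\mathfrak{g}_0$ into $\mathfrak{g}_0$, so $J_s(\mathfrak{i}_0)\subset J_s(\mathfrak{i})\cap\mathfrak{g}_0\subset\mathfrak{i}\cap\mathfrak{g}_0=\mathfrak{i}_0$. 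The remaining clause of \eqref{eq:Fib}$'$ says precisely that $J$ acts on the finite-dimensional quotient $\mathfrak{q}/\mathfrak{i}$ as the scalar $-i\,\mathrm{Id}$, which is semisimple; hence by fact (ii) $\overline{J_s}=(\bar{J})_s=\bar{J}=-i\,\mathrm{Id}$, i.e. $Z-iJ_s(Z)\in\mathfrak{i}$ for every $Z\in\mathfrak{q}$. Thus $J_s$ satisfies \eqref{eq:Fib}$'$, equivalently \eqref{eq:Fib}.

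For the assertion on $\Upsilon$ I would first identify the relevant operators on $\mathfrak{g}$: writing also $J$ for the complexified derivation, one has $\Upsilon=\exp(\pi J/2)=\exp(\pi J_s/2)\exp(\pi J_n/2)$, and since $\exp(\pi J_s/2)$ is semisimple, $\exp(\pi J_n/2)$ is unipotent, and the two commute, this is the multiplicative Jordan decomposition of the invertible operator $\Upsilon$, with semisimple part $\Upsilon_s=\exp(\pi J_s/2)$. In particular $\Upsilon_s$ is a polynomial in $\Upsilon$, so facts (i)--(ii) apply with $\Upsilon$ in place of $J$. From \eqref{eq:Fi}$'$ we have $\Upsilon(\mathfrak{q})=\mathfrak{q}$; then $\Upsilon_s(\mathfrak{q})\subset\mathfrak{q}$, and, $\Upsilon_s$ being invertible (with inverse $\exp(-\pi J_s/2)$ equally preserving $\mathfrak{q}$), an equality, and likewise $\Upsilon_s(\bar{\mathfrak{q}})=\bar{\mathfrak{q}}$, $\Upsilon_s(\mathfrak{i})=\mathfrak{i}$, $\Upsilon_s(\mathfrak{i}_0)=\mathfrak{i}_0$. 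The relation $Z-i\Upsilon(Z)\in\mathfrak{i}$ says that $\bar{\Upsilon}=-i\,\mathrm{Id}$ on $\mathfrak{q}/\mathfrak{i}$; being semisimple it equals its own semisimple part, so by fact (ii) $\overline{\Upsilon_s}=(\bar{\Upsilon})_s=-i\,\mathrm{Id}$, that is $Z-i\Upsilon_s(Z)\in\mathfrak{i}$ for all $Z\in\mathfrak{q}$. Hence $\Upsilon_s$ satisfies \eqref{eq:Fi}$'$, equivalently \eqref{eq:Fi}.

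The only point that is not formal bookkeeping is fact (ii), the compatibility of the Jordan decomposition with the invariant quotient $\mathfrak{q}/\mathfrak{i}$: it is exactly this that forces the scalar action $-i\,\mathrm{Id}$ of the original operator to be reproduced by its semisimple part, thereby propagating the defining relations $Z-i(\,\cdot\,)(Z)\in\mathfrak{i}$ from $J,\Upsilon$ to $J_s,\Upsilon_s$. I would take care to phrase (ii) for a quotient (not merely for a subspace) and to justify the identification $\Upsilon_s=\exp(\pi J_s/2)$ through the multiplicative decomposition; once these are in place, the invariance of $\mathfrak{q},\bar{\mathfrak{q}},\mathfrak{i}$ and the reality of $J_s$ are routine.
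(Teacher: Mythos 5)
Your proof is correct and follows essentially the same route as the paper's: identify $\Upsilon_s=\mathrm{Ad}(\exp(\pi J_s/2))$ as the multiplicative semisimple part of $\Upsilon$, and use that semisimple parts are polynomials in the operator to propagate conditions \eqref{eq:Fib}$'$ and \eqref{eq:Fi}$'$ from $J,\Upsilon$ to $J_s,\Upsilon_s$. The only difference is completeness: you make explicit the quotient-compatibility of the Jordan decomposition (your fact (ii)), which the paper's one-line argument leaves implicit and which is genuinely needed to see that the induced action $-i\,\mathrm{Id}$ on $\mathfrak{q}/(\mathfrak{q}\cap\bar{\mathfrak{q}})$ is inherited by the semisimple parts.
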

\begin{proof} Indeed, $\mathrm{Ad}(\pi{J}_s/2)$
is the semisimple part of
$\mathrm{Ad}(\pi{J}/2)$. Since $J_s$ and $\Upsilon_s$ are
polynomials of $J$, $\Upsilon$, respectively,
\eqref{eq:Fib}$'$ for $J$ implies \eqref{eq:Fib}$'$ for $J_s$,
and likewise \eqref{eq:Fi}$'$ for $\Upsilon$ implies
\eqref{eq:Fi}$'$ for $\Upsilon_s$.
\end{proof}
As a consequence, we can always assume in Definition \ref{def:fc}
that $J$ be a semisimple derivation of $\mathfrak{g}_0$.
\subsection{Symmetric \texorpdfstring{$CR$}{CR} manifolds}
\label{sec:sy}
Let $M$ be a $CR$ manifold, with partial complex structure $J$.
A Riemannian metric $g$ on $M$ is
$CR$-\emph{compatible} if $g(JX_p,JX_p)=g(X_p,X_p)$ for all $p\in{M}$ and
$X_p\in{H}_pM$. Let $\varTheta(M)$ be the Lie algebra of
real vector fields generated by $\Gamma(M,HM)$ and
$\Theta_pM=\{X_p\mid{X}\in\varTheta(M)\}$. Note that
$\Theta_pM=T_pM$ when $M$ is of finite type in the sense of Bloom and 
Graham. Denote by $\Theta^\perp_pM$ the orthogonal of $\Theta_pM$
in $T_pM$ for the Riemannian metric $g$.
\begin{dfn}[see \cite{KZ00}]
Let $M$ be a $CR$ manifold,
with a $CR$-compatible Riemannian structure.
We say that $M$ is $CR$-symmetric if, for each $p\in{M}$, there is an
isometry $\sigma_p:M\to{M}$ that fixes $p$,
is a $CR$ map,  and whose differential restricts to $-\!\mathrm{Id}$
on $H_pM\oplus\Theta_p^{\perp}{M}$.
\end{dfn}
In \cite[Proposition 3.6]{KZ00} the $CR$-isometries
of a symmetric $CR$-manifold $M$ 
are proved to
form a transitive group
$\mathbf{G}_0$
of transformations of $M$.\par
Given a $CR$ algebra $(\mathfrak{g}_0,\mathfrak{q})$, let
$\mathfrak{q}^{\natural}$ be the Lie subalgebra of $\mathfrak{g}$
generated by $\mathfrak{q}\! + \!\bar{\mathfrak{q}}$. 

We recall that a subalgebra $\mathfrak k_0$ of $\mathfrak g_0$ is compact if the
Killing form of $\mathfrak g_0$ is negative definite on $\mathfrak k_0$. 
We say that a subalgebra $\mathfrak i_0$ of $\mathfrak g_0$ is almost compact if 
there exists a decomposition $\mathfrak i_0=\mathfrak k_0\oplus\mathfrak t_0$ 
with $\mathfrak k_0$ compact in $\mathfrak g_0$ and $\mathfrak t_0$ contained in
the kernel of the Killing form of $\mathfrak g_0$.
\begin{dfn} We say that
$(\mathfrak{g}_0,\mathfrak{q})$ is $CR$-symmetric if
$\mathfrak{i}_0=\mathfrak{q}\cap\mathfrak{g}_0$
is almost compact in $\mathfrak{g}_0$,
and there exists an involution
$\lambda$ of $\mathfrak{g}$ with
\begin{equation}
  \label{eq:sy0}\begin{gathered}
  \lambda(\mathfrak{g}_0)=\mathfrak{g}_0,\quad
\ker(\mathrm{Id}\! -\! \lambda)
\subset\mathfrak{q}^{\natural},\quad
\lambda(\mathfrak{q})=\mathfrak{q},\\
Z+\lambda(Z)\in\mathfrak{q}
\cap\bar{\mathfrak{q}},\;\forall Z\in\mathfrak{q}.
\end{gathered}
\end{equation}
\end{dfn}
Conditions \eqref{eq:sy0} imply that
\begin{equation}
  \label{eq:sy1}
  [Z_1,Z_2]\in\mathfrak{q}\cap\bar{\mathfrak{q}},
\quad\forall Z_1,Z_2\in\mathfrak{q}.
\end{equation}
The involution $\lambda$ is equivalent to the datum of a
$\mathbb{Z}_2$-gradation
\begin{gather}
  \label{eq:sy2}
  \mathfrak{g}=\mathfrak{g}_{(0)}\oplus\mathfrak{g}_{(1)},\quad
[\mathfrak{g}_{(i)},\mathfrak{g}_{(j)}]\subset\mathfrak{g}_{(i+j)},
\end{gather}
where $(i)$ denotes the congruence 
class of $i\in\mathbb{Z}$ in $\mathbb{Z}_2$,
\emph{compatible} with $(\mathfrak{g}_0,\mathfrak{q})$.
Compatibility  means that:
\begin{equation}
 \label{eq:sy3}\begin{cases}
\mathfrak{g}_{(0)}\subset\mathfrak{q}^{\natural},\quad
\mathfrak{q}\cap\mathfrak{g}_{(0)}\subset\mathfrak{q}\cap\bar{\mathfrak{q}},
\\
\mathfrak{q}=(\mathfrak{q}\cap\mathfrak{g}_{(0)})\oplus
(\mathfrak{q}\cap\mathfrak{g}_{(1)}),\\
\mathfrak{g}_0=(\mathfrak{g}_0\cap\mathfrak{g}_{(0)})\oplus
(\mathfrak{g}_0\cap\mathfrak{g}_{(1)}).
\end{cases}
\end{equation}
The involution $\lambda$ and the $\mathbb{Z}_2$-gradation \eqref{eq:sy2}
are related by
\begin{equation}
    \label{eq:sy6}
  \mathfrak{g}_{(0)}=\{Z\in\mathfrak{g}\mid \lambda(Z)=Z\},\quad
\mathfrak{g}_{(1)}=\{Z\in\mathfrak{g}\mid \lambda(Z)=-Z\},
\end{equation}
and \eqref{eq:sy0}, \eqref{eq:sy3} are equivalent to define
the $CR$-symmetry of $(\mathfrak{g}_0,\mathfrak{q})$.
\begin{prop}
 Let $(\mathfrak{g}_0,\mathfrak{q})$ be a fundamental
$CR$ algebra with
$\mathfrak{i}_0$ almost compact,
 and having the weak-$J$-property.
If 
$J(\mathfrak{i}_0)=0$,
then
$(\mathfrak{g}_0,\mathfrak{q})$ is $CR$-symmetric.
\end{prop}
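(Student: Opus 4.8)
The plan is to build the involution $\lambda$ required by the definition of $CR$-symmetry directly out of the automorphism $\Upsilon=\mathrm{Ad}(\exp(\pi J/2))$. By the Proposition preceding this statement I may assume $J$ is semisimple, so that $\Upsilon$ is a semisimple automorphism of $\mathfrak{g}$; being the complexification of an automorphism of $\mathfrak{g}_0$, it commutes with conjugation and hence preserves $\mathfrak{g}_0$, as well as $\mathfrak{q}$, $\bar{\mathfrak{q}}$ and $\mathfrak{q}\cap\bar{\mathfrak{q}}=\mathfrak{i}$ (the complexification of $\mathfrak{i}_0$, since $\mathfrak{i}_0=\mathfrak{q}\cap\mathfrak{g}_0$ is conjugation-invariant). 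The hypothesis $J(\mathfrak{i}_0)=0$ gives $\Upsilon|_{\mathfrak{i}}=\mathrm{Id}$, while the complexified form of the weak-$J$-property \eqref{eq:Fi}, namely $\Upsilon(\mathfrak{q})=\mathfrak{q}$ together with $Z-i\Upsilon(Z)\in\mathfrak{q}\cap\bar{\mathfrak{q}}$ for all $Z\in\mathfrak{q}$, says that $\Upsilon$ acts as the scalar $-i$ on the quotient $\mathfrak{q}/\mathfrak{i}$.

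First I would extract the eigenspace picture of $\Upsilon$ on $\mathfrak{q}+\bar{\mathfrak{q}}$. Because $\Upsilon$ is semisimple, restricts to $\mathrm{Id}$ on $\mathfrak{i}$, and induces $-i\cdot\mathrm{Id}$ on $\mathfrak{q}/\mathfrak{i}$, a short linear-algebra argument shows that the only eigenvalues of $\Upsilon$ on $\mathfrak{q}$ are $1$ and $-i$, with $1$-eigenspace equal to $\mathfrak{i}$; thus $\mathfrak{q}=\mathfrak{i}\oplus\mathfrak{q}_{-i}$, where $\mathfrak{q}_{-i}$ is the $(-i)$-eigenspace. Applying conjugation gives $\bar{\mathfrak{q}}=\mathfrak{i}\oplus\overline{\mathfrak{q}_{-i}}$ with $\Upsilon$ acting as $+i$ on $\overline{\mathfrak{q}_{-i}}$, whence $\mathfrak{q}+\bar{\mathfrak{q}}=\mathfrak{i}\oplus\mathfrak{q}_{-i}\oplus\overline{\mathfrak{q}_{-i}}$ carries only the $\Upsilon$-eigenvalues $1,-i,i$, all fourth roots of unity.

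The key step is to upgrade this to all of $\mathfrak{g}$, and this is where fundamentality is indispensable. Since $(\mathfrak{g}_0,\mathfrak{q})$ is fundamental, $\mathfrak{q}^{\natural}=\mathfrak{g}$, i.e. $\mathfrak{g}$ is generated as a Lie algebra by $\mathfrak{q}+\bar{\mathfrak{q}}$. As $\Upsilon$ is an automorphism, an iterated bracket of $\Upsilon$-eigenvectors is again an eigenvector whose eigenvalue is the product of the factors; since the eigenvalues on the generating space lie in the group $\mu_4$ of fourth roots of unity, every $\Upsilon$-eigenvalue on $\mathfrak{g}$ lies in $\mu_4$. Consequently $\Upsilon^4=\mathrm{Id}$, and I may set $\lambda:=\Upsilon^2$. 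This is the main obstacle: without fundamentality $\mathfrak{g}$ could carry eigenvalues outside $\mu_4$ and $\Upsilon^2$ would fail to be involutive.

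Finally I would verify that $\lambda=\Upsilon^2$ satisfies \eqref{eq:sy0}. It is an involution ($\lambda^2=\Upsilon^4=\mathrm{Id}$), it commutes with conjugation so $\lambda(\mathfrak{g}_0)=\mathfrak{g}_0$, and it preserves $\mathfrak{q}$. On $\mathfrak{q}=\mathfrak{i}\oplus\mathfrak{q}_{-i}$ it acts as $1^2=1$ on $\mathfrak{i}$ and as $(-i)^2=-1$ on $\mathfrak{q}_{-i}$, so that $\mathfrak{q}\cap\mathfrak{g}_{(0)}=\mathfrak{i}\subset\mathfrak{q}\cap\bar{\mathfrak{q}}$ and $\mathfrak{q}\cap\mathfrak{g}_{(1)}=\mathfrak{q}_{-i}$, giving the splitting $\mathfrak{q}=(\mathfrak{q}\cap\mathfrak{g}_{(0)})\oplus(\mathfrak{q}\cap\mathfrak{g}_{(1)})$ and $Z+\lambda(Z)=2Z_{\mathfrak{i}}\in\mathfrak{q}\cap\bar{\mathfrak{q}}$ for $Z=Z_{\mathfrak{i}}+Z_{-i}\in\mathfrak{q}$. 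The inclusion $\ker(\mathrm{Id}-\lambda)=\mathfrak{g}_{(0)}\subset\mathfrak{q}^{\natural}$ is automatic because $\mathfrak{q}^{\natural}=\mathfrak{g}$, and the decomposition $\mathfrak{g}_0=(\mathfrak{g}_0\cap\mathfrak{g}_{(0)})\oplus(\mathfrak{g}_0\cap\mathfrak{g}_{(1)})$ follows from $\lambda(\mathfrak{g}_0)=\mathfrak{g}_0$ and $\lambda^2=\mathrm{Id}$. Together with the assumed almost compactness of $\mathfrak{i}_0$, this exhibits $(\mathfrak{g}_0,\mathfrak{q})$ as $CR$-symmetric. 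The one point to keep in view throughout is that $\mathfrak{q}$ is a priori only $\Upsilon$-invariant and \emph{not} $J$-invariant (this is precisely what separates the weak-$J$-property from the $J$-property), which is why the whole argument is carried out with the finite-order automorphism $\Upsilon$ rather than with the derivation $J$.
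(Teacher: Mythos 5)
Your proof is correct and follows essentially the same route as the paper: your $\lambda=\Upsilon^2$ is exactly the paper's $\lambda=\mathrm{Ad}(\exp(\pi{J}))$, which the paper asserts in a single sentence to be an involution of $\mathfrak{g}$ satisfying \eqref{eq:sy0}. Your eigenvalue analysis (eigenvalues $1,\pm i$ on $\mathfrak{q}+\bar{\mathfrak{q}}$, propagated to all of $\mathfrak{g}$ by fundamentality to get $\Upsilon^4=\mathrm{Id}$) simply supplies the verification that the paper leaves implicit.
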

\begin{proof}
Indeed, by the assumptions,
the automorphism
$\lambda=\mathrm{Ad}(\exp(\pi{J}))$ is an involution of
$\mathfrak{g}$ that satisfies \eqref{eq:sy0}.
\end{proof}
\begin{prop}
Let $M$ be a $CR$-manifold. Assume that $M$ is
$CR$-symmetric for 
a $CR$-compatible Riemannian structure. Let $\mathbf{G}_0$
be the transitive 
group of $CR$-isometries of $M$, and $(\mathfrak{g}_0,\mathfrak{q})$
the corresponding
$CR$ algebra of $M$ at $p_0\in{M}$. Then $(\mathfrak{g}_0,\mathfrak{q})$
is $CR$-symmetric.\par
Vice versa, if $M$ is a $\mathbf{G}_0$-homogeneous $CR$ manifold,
having at a point $p_0\in{M}$ a $CR$ algebra $(\mathfrak{g}_0,\mathfrak{q})$
which is $CR$-symmetric, and the analytic subgroup tangent to $\mathfrak i_0$ is
compact, then there is a compatible Riemannian metric
on $M$ for which $M$ is $CR$-symmetric. \qed
\end{prop}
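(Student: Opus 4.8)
The plan is to prove the two implications separately, in each case passing between the $CR$-symmetry $\sigma_{p_0}$ of $M$ and an involution $\lambda$ of $\mathfrak{g}$ through the adjoint action. For the forward direction, by \cite[Proposition 3.6]{KZ00} I take $\mathbf{G}_0$ to be the group of $CR$-isometries, so that $\sigma=\sigma_{p_0}$ lies in $\mathbf{G}_0$ and fixes $p_0$, i.e. $\sigma\in\mathbf{I}_0$. First I would note that $\mathbf{I}_0$ is compact: the isotropy representation $\mathbf{I}_0\to\mathrm{GL}(T_{p_0}M)$ is faithful (a Riemannian isometry fixing a point with trivial differential is the identity on the connected $M$), lands in the orthogonal group, and the isometric action is proper; hence $\mathfrak{i}_0$ is compactly embedded, a fortiori almost compact. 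I then set $\lambda=\mathrm{Ad}(\sigma)$, with the same letter for its complexification on $\mathfrak{g}$. The key preliminary point is that $\lambda$ is an involution. Since $\sigma$ is $CR$ it preserves the canonical filtration $H_{p_0}M=H^{(1)}\subset{H}^{(2)}\subset\cdots$ of $\Theta_{p_0}M$ and, being an isometry, also $\Theta^{\perp}_{p_0}M$; by hypothesis $d\sigma_{p_0}=-\mathrm{Id}$ on $H^{(1)}$ and on $\Theta^{\perp}_{p_0}M$, whence, because $d\sigma_{p_0}[X,Y]=[d\sigma_{p_0}X,d\sigma_{p_0}Y]$, it acts by $(-1)^k$ on $H^{(k)}/H^{(k-1)}$. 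Thus $(d\sigma_{p_0})^2$ is orthogonal and unipotent, hence $=\mathrm{Id}$; by isometry rigidity $\sigma^2=\mathrm{Id}$ on $M$, so $\lambda^2=\mathrm{Ad}(\sigma^2)=\mathrm{Id}$.

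It then remains to verify the conditions \eqref{eq:sy0}. That $\lambda(\mathfrak{g}_0)=\mathfrak{g}_0$ is clear. As $\sigma$ is a $CR$ diffeomorphism, $d\sigma_{p_0}$ preserves $T^{0,1}_{p_0}M\cong\mathfrak{q}/\mathfrak{i}$, so $\lambda(\mathfrak{q})+\mathfrak{i}=\mathfrak{q}$; since $\sigma\in\mathbf{I}_0$ gives $\lambda(\mathfrak{i})=\mathfrak{i}\subset\lambda(\mathfrak{q})$, this yields $\lambda(\mathfrak{q})=\mathfrak{q}$. Because $d\sigma_{p_0}=-\mathrm{Id}$ on $H_{p_0}M$, hence on $T^{0,1}_{p_0}M$, every $Z\in\mathfrak{q}$ satisfies $\lambda(Z)\equiv{-Z}$ modulo $\mathfrak{i}=\mathfrak{q}\cap\bar{\mathfrak{q}}$, which is exactly $Z+\lambda(Z)\in\mathfrak{q}\cap\bar{\mathfrak{q}}$. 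For the inclusion $\ker(\mathrm{Id}-\lambda)\subset\mathfrak{q}^{\natural}$ I would use that the image of $\mathfrak{q}^{\natural}$ under $\mathfrak{g}\to\mathfrak{g}/\mathfrak{i}=T^{\mathbb{C}}_{p_0}M$ is exactly the complexified bracket-generated distribution $\Theta^{\mathbb{C}}_{p_0}M$, iterated brackets of $\mathfrak{q}+\bar{\mathfrak{q}}$ corresponding, via left-invariant vector fields, to iterated brackets of $H^{\mathbb{C}}M$. Since the $+1$-eigenspace of $d\sigma_{p_0}$ meets $\Theta^{\perp}_{p_0}M$ in $0$, it lies in $\Theta_{p_0}M$, so the image of $\ker(\mathrm{Id}-\lambda)$ lies in $\Theta^{\mathbb{C}}_{p_0}M$; as $\mathfrak{i}\subset\mathfrak{q}^{\natural}$ this forces $\ker(\mathrm{Id}-\lambda)\subset\mathfrak{q}^{\natural}$.

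For the converse, let $\lambda$ be the involution afforded by $CR$-symmetry and $\theta=\lambda|_{\mathfrak{g}_0}$; from $\lambda(\mathfrak{q})=\mathfrak{q}$ and $\lambda(\mathfrak{g}_0)=\mathfrak{g}_0$ we get $\theta(\mathfrak{i}_0)=\mathfrak{i}_0$. After passing if necessary to a covering group, $\theta$ integrates to an involutive automorphism $\Theta$ of $\mathbf{G}_0$ with $\Theta(\mathbf{I}_0)=\mathbf{I}_0$, and I define $\sigma_{p_0}([g])=[\Theta(g)]$: a well-defined involutive $CR$ diffeomorphism fixing $p_0$, $CR$ because $\lambda(\mathfrak{q})=\mathfrak{q}$, with $d\sigma_{p_0}=\theta$ on $\mathfrak{g}_0/\mathfrak{i}_0$. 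The metric is produced by averaging: since $\mathbf{I}_0$ is compact and normalized by $\Theta$, the group generated by $\mathbf{I}_0$ and $\sigma_{p_0}$ is compact, and averaging a $J$-Hermitian inner product on $H_{p_0}M$ and extending it to an invariant inner product on $\mathfrak{g}_0/\mathfrak{i}_0$ yields a $\mathbf{G}_0$-invariant, $CR$-compatible, $\theta$-invariant metric, so $\sigma_{p_0}$ is an isometry. The conditions then read off the differential: $Z+\lambda(Z)\in\mathfrak{q}\cap\bar{\mathfrak{q}}$ gives $\theta=-\mathrm{Id}$ on $H_{p_0}M$, while $\ker(\mathrm{Id}-\lambda)\subset\mathfrak{q}^{\natural}$, with the identification of the image of $\mathfrak{q}^{\natural}$ with $\Theta^{\mathbb{C}}_{p_0}M$, places the $+1$-eigenspace of $\theta$ inside $\Theta_{p_0}M$, so $\theta=-\mathrm{Id}$ on the $\theta$-invariant subspace $\Theta^{\perp}_{p_0}M$. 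Transporting by $\mathbf{G}_0$, the maps $\sigma_p=g\sigma_{p_0}g^{-1}$ for $p=g\cdot{p_0}$ give symmetries at every point.

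The main obstacle, shared by both directions, is the bridge between the purely algebraic object $\mathfrak{q}^{\natural}$ and the analytic distribution $\Theta{M}$: showing that the image of $\mathfrak{q}^{\natural}$ in $\mathfrak{g}/\mathfrak{i}$ equals $\Theta^{\mathbb{C}}_{p_0}M$ is what turns the eigenspace condition on $\ker(\mathrm{Id}-\lambda)$ into the geometric requirement that the symmetry act by $-\mathrm{Id}$ on $\Theta^{\perp}M$. A secondary technical point is the simultaneous arrangement, by averaging over the compact group generated by $\mathbf{I}_0$ and the involution, of a single metric that is at once $\mathbf{G}_0$-invariant, $CR$-compatible, and invariant under $\sigma_{p_0}$.
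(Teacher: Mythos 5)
First, a point of reference: the paper states this Proposition with a terminal end-of-proof mark and gives no argument at all, so there is no "paper proof" to compare against; your attempt has to stand on its own. Your forward direction does stand. Compactness of $\mathbf{I}_0$ (closed subgroup of the isotropy of the isometry group) and hence almost-compactness of $\mathfrak{i}_0$, involutivity of $\sigma_{p_0}$ via the orthogonal-plus-unipotent argument on $d\sigma_{p_0}$, the choice $\lambda=\mathrm{Ad}(\sigma_{p_0})$, and the verification of \eqref{eq:sy0} are all correct and in the spirit of \cite{KZ00}. Note that for this half you only need the easy inclusion $\Theta^{\mathbb{C}}_{p_0}M\subset\pi_*(\mathfrak{q}^{\natural})$ of your bridging lemma (iterated brackets of sections of $H^{\mathbb{C}}M$ lift into the $\mathcal{C}^{\infty}$-module generated by left-invariant fields from $\mathfrak{q}+\bar{\mathfrak{q}}$); the converse uses the harder inclusion $\pi_*(\mathfrak{q}^{\natural}\cap\mathfrak{g}_0)\subset\Theta_{p_0}M$, which is also true (it underlies the paper's stated equivalence of ``fundamental'' with finite type) but deserves the projectable-lift argument rather than a one-line assertion.

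The genuine gap is in the converse, at the sentence ``After passing if necessary to a covering group, $\theta$ integrates to an involutive automorphism $\Theta$ of $\mathbf{G}_0$ with $\Theta(\mathbf{I}_0)=\mathbf{I}_0$.'' Passing to the universal cover $\tilde{\mathbf{G}}_0$ does produce an integrated involution $\tilde{\Theta}$, but it simultaneously replaces $\mathbf{I}_0$ by its full preimage, which contains $\Gamma=\ker(\tilde{\mathbf{G}}_0\to\mathbf{G}_0)$ and is even more disconnected. From $\theta(\mathfrak{i}_0)=\mathfrak{i}_0$ you get $\tilde{\Theta}$-invariance only of the identity component; nothing forces $\tilde{\Theta}(\Gamma)$ to lie in the preimage of $\mathbf{I}_0$, nor the non-identity components of a disconnected isotropy to be preserved, and without that the map $[g]\mapsto[\tilde\Theta(g)]$ is simply not defined on $M$. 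This is exactly the classical locally-symmetric versus globally-symmetric obstruction: flat tori are symmetric because $-\mathrm{Id}$ preserves every lattice, while generic lens spaces are homogeneous and locally symmetric but not Riemannian symmetric, for precisely this descent failure. (That the same lens spaces \emph{are} $CR$-symmetric is instructive: the $CR$ symmetry of $S^3$ at $(1,0)$ is the element $\mathrm{diag}(1,-1)$ of $\mathbf{U}(2)$, i.e.\ the involution is realized \emph{inside} a group acting on the quotient — which suggests the missing step is to show $\lambda$ can be chosen, or corrected, so as to normalize the full isotropy.) A second, related slip: your averaging invokes ``$\mathbf{I}_0$ is compact,'' but the hypothesis grants compactness only of the analytic subgroup $\mathbf{I}_0^0$; if $\mathbf{I}_0$ has infinitely many components, the average over $\mathbf{I}_0$ — which you need for the inner product on $\mathfrak{g}_0/\mathfrak{i}_0$ to descend to $M=\mathbf{G}_0/\mathbf{I}_0$ — is unavailable without a further argument that the isotropy representation has compact closure.
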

\section{Levi-Malcev and Jordan-Chevalley fibrations}
\label{sec:e}
\subsection{$\mathbf{A}_0$-fibrations}
Let $\mathbf{G}_0$ be a Lie group, $\mathfrak{g}_0$ its Lie algebra,
$\mathfrak{a}_0$ an ideal of $\mathfrak{g}_0$, and
$\mathbf{A}_0$ the corresponding
analytic normal subgroup of~$\mathbf{G}_0$.
\begin{dfn}
Let $M=\mathbf{G}_0/\mathbf{I}_0$ be a homogeneous space of $\mathbf{G}_0$.
If the subgroup $\mathbf{A}_0\mathbf{I}_0$ is closed in $\mathbf{G}_0$,
we call the $\mathbf{G}_0$-equivariant fibration
  \begin{equation}\label{eq:eh}\begin{CD}
 M=\mathbf{G}_0/\mathbf{I}_0
@>{\pi}>>M'=\mathbf{G}_0/(\mathbf{A}_0\mathbf{I}_0)
\end{CD}
\end{equation}
\emph{the
$\mathbf{A}_0$-fibration} of $M$.
\end{dfn}
Assuming that $M$ admits an $\mathbf{A}_0$-fibration \eqref{eq:eh},
we will discuss the existence of compatible $\mathbf{G}_0$-homogeneous
$CR$ structures on $M=\mathbf{G}_0/\mathbf{I}_0$ and
$M'=\mathbf{G}_0/(\mathbf{A}_0\mathbf{I}_0)$. Denote by
$\mathfrak{a}$ the complexification of $\mathfrak{a}_0$.
\begin{prop} \label{thm:Fb}
Let $(\mathfrak{g}_0,\mathfrak{q})$,
with $\mathfrak{q}\cap\mathfrak{g}_0=\mathfrak{i}_0$, be a
$CR$ algebra defining a $\mathbf{G}_0$-homogeneous $CR$ structure
on $M$,
and assume that the subgroup $\mathbf{A}_0\mathbf{I}_0$ is closed.
\par
A necessary and sufficient condition for the existence of
a $\mathbf{G}_0$-homoge\-neous $CR$ structure on
$M'=\mathbf{G}/(\mathbf{A}_0\mathbf{I}_0)$,
making the $\mathbf{A}_0$-fibration \eqref{eq:eh}
a $CR$ map is that:
\begin{equation}
  \label{eq:Ff}
  \mathfrak{q}\cap\bar{\mathfrak{q}}+\mathfrak{a}=
\left(\mathfrak{q}+\mathfrak{a}\right)\cap
\left(\bar{\mathfrak{q}}+\mathfrak{a}\right).
\end{equation}
\par
Assume that
\eqref{eq:Ff}
is satisfied and define the $CR$ structure on
$M'$ by $(\mathfrak{g}_0,\mathfrak{q}')$, with
$\mathfrak{q}'=\mathfrak{q}+\mathfrak{a}$. Then:
\begin{enumerate}
\item
\eqref{eq:eh} is a
$\mathbf{G}_0$-equivariant $CR$ fibration.
\item Its
typical fiber $F$
is the $\mathbf{A}_0$-homogeneous manifold
$\mathbf{A}_0/(\mathbf{A}_0\cap\mathbf{I}_0)$, having
an $\mathbf{A}_0$-homogeneous $CR$ structure defined by
the $CR$ algebra $(\mathfrak{a}_0,\mathfrak{q}\cap\mathfrak{a})$.
\end{enumerate}
\end{prop}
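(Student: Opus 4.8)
The plan is to obtain the whole statement from Proposition~\ref{prop:ad}, applied to the inclusion of closed subgroups $\mathbf{I}_0\subset\mathbf{I}_0'=\mathbf{A}_0\mathbf{I}_0$, by translating its abstract criterion \eqref{eq:a1} into the symmetric condition \eqref{eq:Ff}. Writing $\mathfrak{i}'=\mathfrak{a}+\mathfrak{i}$ for the complexified Lie algebra of $\mathbf{A}_0\mathbf{I}_0$, the subalgebra appearing in \eqref{eq:a1} is $\mathfrak{q}'=\mathfrak{q}+\mathfrak{i}'$, and the two tasks are to check that $\mathfrak{q}'$ has the expected form and that \eqref{eq:a1} is equivalent to \eqref{eq:Ff}.

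First I would record the elementary facts I rely on. Since $\mathfrak{i}_0$ is real and contained in $\mathfrak{q}$, its complexification satisfies $\mathfrak{i}\subset\mathfrak{q}\cap\bar{\mathfrak{q}}$; moreover $\mathfrak{q}\cap\bar{\mathfrak{q}}$ is stable under conjugation, hence is the complexification of its real points $\mathfrak{q}\cap\mathfrak{g}_0=\mathfrak{i}_0$, so in fact $\mathfrak{q}\cap\bar{\mathfrak{q}}=\mathfrak{i}$. Consequently $\mathfrak{q}'=\mathfrak{q}+\mathfrak{i}'=\mathfrak{q}+\mathfrak{a}$, because $\mathfrak{i}\subset\mathfrak{q}$; and since $\mathfrak{a}$ is an ideal of $\mathfrak{g}$ while $\mathfrak{q}$ is a subalgebra, $\mathfrak{q}+\mathfrak{a}$ is automatically a complex Lie subalgebra. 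Thus the only substantive half of \eqref{eq:a1} is the equality $\mathfrak{q}'\cap\mathfrak{g}_0=\mathfrak{i}_0'$.

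The heart of the argument is to identify this equality with \eqref{eq:Ff}. Here I would use repeatedly that a complex subspace $V\subset\mathfrak{g}$ with $\bar V=V$ is the complexification of $V\cap\mathfrak{g}_0$, and that for any complex subspace $W$ the intersection $W\cap\bar W$ is conjugation-stable with real points $W\cap\mathfrak{g}_0$. Taking $W=\mathfrak{q}'=\mathfrak{q}+\mathfrak{a}$ and using $\bar{\mathfrak{a}}=\mathfrak{a}$, the right-hand side of \eqref{eq:Ff} is $(\mathfrak{q}+\mathfrak{a})\cap(\bar{\mathfrak{q}}+\mathfrak{a})=\mathfrak{q}'\cap\overline{\mathfrak{q}'}=(\mathfrak{q}'\cap\mathfrak{g}_0)\otimes\mathbb{C}$, while the left-hand side is $\mathfrak{q}\cap\bar{\mathfrak{q}}+\mathfrak{a}=\mathfrak{i}+\mathfrak{a}=\mathfrak{i}'=\mathfrak{i}_0'\otimes\mathbb{C}$. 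Since the inclusion of left into right always holds and both sides are conjugation-stable, \eqref{eq:Ff} is equivalent to the coincidence of their real points, i.e. to $\mathfrak{i}_0'=\mathfrak{q}'\cap\mathfrak{g}_0$. This proves the necessary-and-sufficient part, and with it assertion~(1): the map \eqref{eq:eh} is $\mathbf{G}_0$-equivariant by construction and a $CR$ submersion by Proposition~\ref{prop:ad}, for the structure defined by $\mathfrak{q}'=\mathfrak{q}+\mathfrak{a}$.

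For assertion~(2) I would identify the fibre over $p_0'=[\mathbf{A}_0\mathbf{I}_0]$ with the $\mathbf{A}_0$-homogeneous manifold $F=\mathbf{A}_0/(\mathbf{A}_0\cap\mathbf{I}_0)$, whose isotropy subalgebra is $\mathfrak{a}_0\cap\mathfrak{i}_0$. As $\mathfrak{q}\cap\mathfrak{a}$ is a complex subalgebra of $\mathfrak{a}$ with $(\mathfrak{q}\cap\mathfrak{a})\cap\mathfrak{a}_0=\mathfrak{q}\cap\mathfrak{a}_0=\mathfrak{i}_0\cap\mathfrak{a}_0$, the pair $(\mathfrak{a}_0,\mathfrak{q}\cap\mathfrak{a})$ is a $CR$ algebra with the correct isotropy and, by Proposition~\ref{prop:ab}, defines an $\mathbf{A}_0$-homogeneous $CR$ structure on $F$. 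It remains to match this with the structure $F$ inherits as a fibre of $\pi$, namely $T^{0,1}F=T^{0,1}M\cap T^{\mathbb{C}}F$. Computing at $p_0$ under the identifications $T^{\mathbb{C}}_{p_0}M\cong\mathfrak{g}/\mathfrak{i}$, $T^{0,1}_{p_0}M\cong\mathfrak{q}/\mathfrak{i}$ and $T^{\mathbb{C}}_{p_0}F\cong(\mathfrak{a}+\mathfrak{i})/\mathfrak{i}$, the intersection becomes $(\mathfrak{q}\cap(\mathfrak{a}+\mathfrak{i}))/\mathfrak{i}$, which by the modular law (valid since $\mathfrak{i}\subset\mathfrak{q}$) equals $((\mathfrak{q}\cap\mathfrak{a})+\mathfrak{i})/\mathfrak{i}$; this is precisely the image of $\mathfrak{q}\cap\mathfrak{a}$, so the two structures agree at $p_0$ and hence, by $\mathbf{A}_0$-homogeneity, everywhere on $F$. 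The step I expect to require the most care is the one in the third paragraph: the passage between \eqref{eq:Ff} and the isotropy condition rests entirely on checking that the relevant sums and intersections are conjugation-stable and on computing their real points correctly, and the same circle of identities is what legitimizes the modular-law computation on the fibre.
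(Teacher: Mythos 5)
Your proposal takes essentially the same route as the paper: both reduce the statement to Proposition \ref{prop:ab} and Proposition \ref{prop:ad} applied to the inclusion $\mathbf{I}_0\subset\mathbf{A}_0\mathbf{I}_0$, with the candidate $\mathfrak{q}'=\mathfrak{q}+\mathfrak{a}$. Your conjugation-stability bookkeeping ($\mathfrak{q}\cap\bar{\mathfrak{q}}=\mathfrak{i}$, both sides of \eqref{eq:Ff} being conjugation-stable with real points $\mathfrak{i}_0+\mathfrak{a}_0$, respectively $(\mathfrak{q}+\mathfrak{a})\cap\mathfrak{g}_0$) makes explicit what the paper leaves implicit, and your modular-law identification of the fiber structure is a correct, self-contained substitute for the paper's citation of \cite[\S{5}]{MN05} for items (1) and (2).

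There is, however, one loose end in the necessity direction. The statement asserts that \eqref{eq:Ff} is necessary for the existence of a structure making \eqref{eq:eh} a $CR$ \emph{map}, whereas Proposition \ref{prop:ad}, on which your whole argument rests, characterizes the existence of a structure making $\pi$ a $CR$ \emph{submersion}. Your equivalence \eqref{eq:Ff} $\Leftrightarrow$ \eqref{eq:a1} therefore yields ``\eqref{eq:Ff} holds if and only if there exists a structure making $\pi$ a $CR$ submersion,'' and the implication you still owe --- that the existence of a structure making $\pi$ merely a $CR$ map already forces \eqref{eq:Ff} --- does not follow formally from this, since a priori such a structure need not be the submersion one. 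The missing step is short, and it is exactly what the paper's proof records: if $(\mathfrak{g}_0,\mathfrak{q}'')$ is \emph{any} $CR$ algebra defining a $\mathbf{G}_0$-homogeneous structure on $M'$ with $\pi$ a $CR$ map, then $\mathfrak{q}\subset\mathfrak{q}''$ and, since by Proposition \ref{prop:ab} the isotropy of $\mathfrak{q}''$ is $\mathfrak{i}_0+\mathfrak{a}_0$, also $\mathfrak{a}\subset\mathfrak{q}''$; hence
\begin{equation*}
\mathfrak{i}_0+\mathfrak{a}_0\subset(\mathfrak{q}+\mathfrak{a})\cap\mathfrak{g}_0\subset\mathfrak{q}''\cap\mathfrak{g}_0=\mathfrak{i}_0+\mathfrak{a}_0,
\end{equation*}
so $(\mathfrak{q}+\mathfrak{a})\cap\mathfrak{g}_0=\mathfrak{i}_0+\mathfrak{a}_0$, which by your own conjugation-stability argument is \eqref{eq:Ff}. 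With this one observation added, your proof is complete and matches the paper's.
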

\begin{proof} By Proposition \ref{prop:ab},
the $\mathbf{G}_0$-homogeneous $CR$ structures on $M'$ are in one-to-one
correspondence with the $CR$ algebras
$(\mathfrak{g}_0,\mathfrak{q}')$,
with isotropy
$\mathfrak{i}_0'=\mathfrak{q}'\cap\mathfrak{g}_0$
equal to $(\mathfrak{i}_0+\mathfrak{a}_0)$. The map
$\pi:M\to{M}'$ is $CR$
if $\mathfrak{q}\subset\mathfrak{q}'$. Thus
$(\mathfrak{i}_0+\mathfrak{a}_0)
\subset(\mathfrak{q}\cap\bar{\mathfrak{q}}+\mathfrak{a})
\subset
\mathfrak{q}'$, and
$  \mathfrak{q}+\mathfrak{a}\subset\mathfrak{q}'$.
By Proposition \ref{prop:ad},
the map $\pi:M\to{M}'$ is a $CR$ submersion if and only if
the last inclusion is an equality.
Finally $(1)$ and $(2)$ follow by \cite[\S{5}]{MN05}.
\end{proof}
\begin{prop}
\label{prop:Fd}
We keep the notation above.
Assume that
$(\mathfrak{g}_0,\mathfrak{q})$ has the weak-$J$-property
and that $\mathfrak{a}_0$ is $\Upsilon$-invariant. Then:
\begin{enumerate}
\item Condition \eqref{eq:Ff} is satisfied.
\item
The basis $(\mathfrak{g}_0,\mathfrak{q}+
\mathfrak{a})$
and the fiber $(\mathfrak{a}_0,\mathfrak{a}\cap\mathfrak{q})$
of the $\mathbf{A}_0$-fibration
enjoy the weak-$J$-property.
\end{enumerate}
If we assume that $(\mathfrak{g}_0,\mathfrak{q})$ has
the $J$-property,
then both the basis and the fiber of the
$\mathbf{A}_0$-fibration enjoy the $J$-property.
\end{prop}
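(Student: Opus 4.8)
The plan is to fix once and for all a derivation $J\in\mathrm{Der}(\mathfrak{g}_0)$ realizing the weak-$J$-property of $(\mathfrak{g}_0,\mathfrak{q})$ and, invoking the reduction to the semisimple part established just before Definition~\ref{def:fc}, to assume $J$ semisimple. Then $\Upsilon=\mathrm{Ad}(\exp(\pi J/2))$ is a semisimple automorphism of $\mathfrak{g}$ commuting with conjugation, with $\Upsilon(\mathfrak{q})=\mathfrak{q}$ (hence $\Upsilon(\bar{\mathfrak{q}})=\bar{\mathfrak{q}}$) and, by hypothesis, $\Upsilon(\mathfrak{a})=\mathfrak{a}$. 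Throughout I work with the complexified form of the weak-$J$-property recorded in the Remark after Definition~\ref{def:fc}: namely $\Upsilon(\mathfrak{q})=\mathfrak{q}$ together with $Z-i\Upsilon(Z)\in\mathfrak{q}\cap\bar{\mathfrak{q}}$ for all $Z\in\mathfrak{q}$, and, conjugating, $\bar{W}+i\Upsilon(\bar{W})\in\mathfrak{q}\cap\bar{\mathfrak{q}}$ for all $W\in\mathfrak{q}$.

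For (1), only the inclusion $(\mathfrak{q}+\mathfrak{a})\cap(\bar{\mathfrak{q}}+\mathfrak{a})\subseteq(\mathfrak{q}\cap\bar{\mathfrak{q}})+\mathfrak{a}$ requires proof, the reverse being immediate. I would take $v$ in the left-hand side and write $v=Z+a=\bar{W}+b$ with $Z,W\in\mathfrak{q}$ and $a,b\in\mathfrak{a}$. The two membership relations above give $Z-i\Upsilon(Z)\in\mathfrak{q}\cap\bar{\mathfrak{q}}$ and $\bar{W}+i\Upsilon(\bar{W})\in\mathfrak{q}\cap\bar{\mathfrak{q}}$, and one checks the identity $2v=(Z-i\Upsilon(Z))+(\bar{W}+i\Upsilon(\bar{W}))+i\Upsilon(Z-\bar{W})+(a+b)$. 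Since $Z-\bar{W}=b-a\in\mathfrak{a}$ and $\Upsilon(\mathfrak{a})=\mathfrak{a}$, the last two groups of terms lie in $\mathfrak{a}$, whence $2v\in(\mathfrak{q}\cap\bar{\mathfrak{q}})+\mathfrak{a}$; this is exactly \eqref{eq:Ff}. Proposition~\ref{thm:Fb} then applies and \eqref{eq:eh} is a $CR$ fibration with the stated fiber.

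For the base in (2) I keep the same $J$, hence the same $\Upsilon$, and verify the complexified weak-$J$-property for $\mathfrak{q}'=\mathfrak{q}+\mathfrak{a}$. First $\Upsilon(\mathfrak{q}')=\Upsilon(\mathfrak{q})+\Upsilon(\mathfrak{a})=\mathfrak{q}+\mathfrak{a}=\mathfrak{q}'$. Next, writing $Z'=Z+a$ with $Z\in\mathfrak{q}$, $a\in\mathfrak{a}$, one has $Z'-i\Upsilon(Z')=(Z-i\Upsilon(Z))+(a-i\Upsilon(a))$, where the first summand lies in $\mathfrak{q}\cap\bar{\mathfrak{q}}\subseteq\mathfrak{q}'\cap\bar{\mathfrak{q}}'$ and the second in $\mathfrak{a}\subseteq\mathfrak{q}'\cap\bar{\mathfrak{q}}'$ (using $\bar{\mathfrak{a}}=\mathfrak{a}$). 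Thus $(\mathfrak{g}_0,\mathfrak{q}')$ has the weak-$J$-property; if one starts from the stronger condition \eqref{eq:Fib}, the identical bookkeeping with $J$ in place of $\Upsilon$, together with $J(\mathfrak{a})\subseteq\mathfrak{a}$, yields the $J$-property of the base.

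The fiber is where the real point lies. Its $CR$ algebra is $(\mathfrak{a}_0,\mathfrak{q}\cap\mathfrak{a})$, with $\overline{\mathfrak{q}\cap\mathfrak{a}}=\bar{\mathfrak{q}}\cap\mathfrak{a}$ and isotropy $\mathfrak{q}\cap\bar{\mathfrak{q}}\cap\mathfrak{a}$. The restriction of $\Upsilon$ to $\mathfrak{a}$ makes sense, since $\mathfrak{a}$ is $\Upsilon$-invariant, and it satisfies the set-theoretic requirements: $\Upsilon(\mathfrak{q}\cap\mathfrak{a})=\Upsilon(\mathfrak{q})\cap\Upsilon(\mathfrak{a})=\mathfrak{q}\cap\mathfrak{a}$, while for $Z\in\mathfrak{q}\cap\mathfrak{a}$ one has $Z-i\Upsilon(Z)\in\mathfrak{q}\cap\bar{\mathfrak{q}}$ and simultaneously $Z-i\Upsilon(Z)\in\mathfrak{a}$, so it falls in the fiber's isotropy. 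What must still be supplied is a derivation $J'\in\mathrm{Der}(\mathfrak{a}_0)$ with $\exp(\pi J'/2)=\Upsilon|_{\mathfrak{a}_0}$, so that $\Upsilon|_{\mathfrak{a}_0}$ is genuinely the automorphism of a weak-$J$ datum on $\mathfrak{a}_0$. I expect this to be the main obstacle: it amounts to realizing $\Upsilon|_{\mathfrak{a}_0}$ as the time-$\pi/2$ flow of a derivation, which is guaranteed once the $\Upsilon$-invariant ideal $\mathfrak{a}_0$ is shown to be $J$-invariant, for then $J'=J|_{\mathfrak{a}_0}$ works. Having reduced to $J$ semisimple, I would attack this by comparing the $J$-eigenspace decomposition of $\mathfrak{g}$ with the $\Upsilon$-eigenspace decomposition of $\mathfrak{a}$: on the $CR$-relevant weights the exponential $\mu\mapsto e^{\pi\mu/2}$ separates $0$ and $\pm i$, so $\mathfrak{a}$ splits compatibly with $J$ there and $J'=J|_{\mathfrak{a}_0}$ is a semisimple derivation of $\mathfrak{a}_0$ with $\exp(\pi J'/2)=\Upsilon|_{\mathfrak{a}_0}$; the delicate point to control is whether colliding weights on the isotropy part can spoil the splitting. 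With such a $J'$ in hand the two conditions verified above show that $(\mathfrak{a}_0,\mathfrak{q}\cap\mathfrak{a})$ has the weak-$J$-property, and restricting $J$ itself (when $J$ satisfies \eqref{eq:Fib}) gives the $J$-property of the fiber.
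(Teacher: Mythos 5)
Your treatment of (1) and of the base in (2) is correct and agrees in substance with the paper's own proof. The paper likewise proves only the nontrivial inclusion of \eqref{eq:Ff}, by applying the weak-$J$ relation on both the $\mathfrak{q}$- and the $\bar{\mathfrak{q}}$-side and using $\Upsilon(\mathfrak{a})=\mathfrak{a}$ (it does this with real decompositions $A=(X+iY)+(U-iY)$, concluding $X+\Upsilon(Y)\in\mathfrak{i}_0$; your identity for $2v$ performs exactly the same cancellation in complexified form), and for the base it records precisely what you verify: $\mathfrak{q}+\mathfrak{a}$ and $(\mathfrak{q}\cap\bar{\mathfrak{q}})+\mathfrak{a}$ are $\Upsilon$-invariant, so $\Upsilon$ induces the complex structure on the quotient.

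The genuine gap is the fiber, and with it the final $J$-property assertion. You correctly isolate what is missing under the stated hypothesis --- a derivation $J'\in\mathrm{Der}(\mathfrak{a}_0)$ whose associated automorphism realizes the restricted conditions \eqref{eq:Fi} --- but the route you propose cannot close it: $\Upsilon$-invariance of $\mathfrak{a}_0$ does \emph{not} imply $J$-invariance, even for semisimple $J$, because two $J$-eigenvalues congruent modulo $4i\mathbb{Z}$ produce the same $\Upsilon$-eigenvalue $e^{\pi\mu/2}$. For instance, if $J$ has eigenvalues $0,\pm 4i$ on the complexified isotropy, then $\Upsilon$ is the identity there, every subspace is $\Upsilon$-invariant, and hardly any is $J$-invariant; this is exactly the ``colliding weights on the isotropy part'' you feared, and it cannot be excluded, since \eqref{eq:Fi} constrains $\Upsilon$ only transversally to $\mathfrak{q}\cap\bar{\mathfrak{q}}$ and says nothing about the spectrum of $J$ on the isotropy. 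Note also that in your $J$-property argument for the base you invoke $J(\mathfrak{a})\subseteq\mathfrak{a}$, which is not among the hypotheses either. For comparison, the paper's proof simply takes $\Upsilon''=\Upsilon|_{\mathfrak{a}_0}$, for which \eqref{eq:Fi} restricts verbatim (the set-theoretic verification you already carried out), declares this ``trivial'', and never addresses whether $\Upsilon''$ is of the form $\mathrm{Ad}(\exp(\pi J'/2))$ with $J'\in\mathrm{Der}(\mathfrak{a}_0)$; so the paper does not settle the point you raise. The statement becomes unproblematic --- and your restriction argument complete, including the $J$-property part --- under the stronger assumption that $\mathfrak{a}_0$ is $J$-invariant, which is how the proposition is actually applied: in Theorems \ref{thm:Fd} and \ref{thm:Fbb} the ideal $\mathfrak{a}_0$ is the radical, respectively the nilradical, hence a characteristic ideal, invariant under every derivation, and then $J'=J|_{\mathfrak{a}_0}$ works. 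So instead of trying to deduce $J$-invariance from $\Upsilon$-invariance (which is impossible in general), either assume it outright, or state explicitly that for the fiber your argument establishes \eqref{eq:Fi} for the restricted automorphism, the existence of a generating derivation of $\mathfrak{a}_0$ being a separate issue.
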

\begin{proof} Let $J\in\mathrm{Der}(\mathfrak{g}_0)$
be such that
$\Upsilon\!=\!\mathrm{Ad}(\exp(\pi{J}/2))$ satisfies \eqref{eq:Fi}.
\par
(1) We only need to prove the inclusion
$(\mathfrak{q}+\mathfrak{a})\cap\mathfrak{g}_0\subset
\mathfrak{i}_0+\mathfrak{a}_0$. An element $A$ of
$(\mathfrak{q}+\mathfrak{a})\cap\mathfrak{g}_0$ is a sum
$A=(X+iY)+(U-iY)$, with $X,Y,U\in\mathfrak{g}_0$,
$X+iY\in\mathfrak{q}$ and $U,Y\in\mathfrak{a}_0$.
Since both
$Y-iX$ and $Y+i\Upsilon(Y)$ belong to $\mathfrak{q}$,
we obtain that $X+\Upsilon(Y)\in\mathfrak{q}\cap\mathfrak{g}_0=
\mathfrak{i}_0$. Moreover, $\Upsilon(Y)\in\mathfrak{a}_0$,
because $\mathfrak{a}_0$ is $\Upsilon$-invariant.
Hence $A=(X+\Upsilon(Y))+(U-\Upsilon(Y))\in\mathfrak{i}_0+
\mathfrak{a}_0$.
\par
(2) The subalgebras $\mathfrak{q}+\mathfrak{a}$ and
$(\mathfrak{q}\cap\bar{\mathfrak{q}})+\mathfrak{a}$
are $\Upsilon$-invariant. Thus $\Upsilon$ yields multiplication
by $i$ on the quotient
$(\mathfrak{q}+\mathfrak{a})/
((\mathfrak{q}\cap\bar{\mathfrak{q}})+\mathfrak{a})$.
This proves the statement for the base.
The statement for the fiber
is trivial.\par
The last statement can be obtained by repeating with minor changes
the arguments used above for the proof of (2).
\end{proof}
We will apply the results above to the cases where $\mathfrak{a}_0$
is either the radical or the nilpotent radical of $\mathfrak{g}_0$.

\par

\subsection{The Levi-Malcev fibration}
Let $\mathfrak{g}_0$ be a real Lie algebra and
$\mathfrak{r}_0$ its radical.
The Levi-Malcev decomposition of
$\mathfrak{g}_0$ has the form
\begin{equation}\label{eq:ea}
  \mathfrak{g}_0=\mathfrak{r}_0\oplus\mathfrak{s}_0,
  \end{equation}
where $\mathfrak{s}_0$ is a
semisimple Levi factor of $\mathfrak{g}_0$,
i.e. a Lie subalgebra of $\mathfrak{g}_0$ isomorphic to the
quotient $\mathfrak{g}_0/\mathfrak{r}_0$.\par
Let $\mathbf{G}_0$ be a Lie group with Lie algebra $\mathfrak{g}_0$.
Its radical 
$\mathbf{R}_0$ 
is its maximal connected solvable subgroup, and equals its
analytic Lie subgroup with Lie algebra $\mathfrak{r}_0$.
\begin{dfn}
Let $M=\mathbf{G}_0/\mathbf{I}_0$ be a homogeneous space of $\mathbf{G}_0$.
If $\mathbf{R}_0\mathbf{I}_0$ is a closed subgroup of $\mathbf{G}_0$,
we call the $\mathbf{G}_0$-equivariant fibration
  \begin{equation}\label{eq:eha}\begin{CD}
 M=\mathbf{G}_0/\mathbf{I}_0
@>{\pi}>>M'=\mathbf{G}_0/\mathbf{R}_0\mathbf{I}_0
\end{CD}
\end{equation}
\emph{the Levi-Malcev fibration} of $M$.
\end{dfn}
\begin{exam}
Not all homogeneous spaces admit a Levi-Malcev fibration.
Take, for instance, $\mathbf{G}_0=\mathbf{SU}(3)\times\mathbb{R}^+$ and
$\mathbf{I}_0=\{(\exp(tX),e^t)\mid t\in\mathbb{R}\}$ for
$X=i\,\mathrm{diag}(\alpha,\beta,\gamma)$, with $\alpha,\beta,\gamma\in
\mathbb{R}$,
$\alpha+\beta+\gamma=0$, and $\alpha,\beta$ linearly independent over
$\mathbb{Q}$. Let $\mathbf{R}_0$ be the radical of $\mathbf{G}_0$.
Then $\mathbf{I}_0$ is closed,
but $\mathbf{R}_0\mathbf{I}_0=\{(\exp(tX),e^s)\mid t,s\in\mathbb{R}\}$
is not closed in $\mathbf{G}_0$.
\end{exam}
\begin{exam} \label{exam:bf}
Let $\mathfrak{s}_0$ be a semisimple real Lie algebra and
$V_0$ a nontrivial real irreducible $\mathfrak{s}_0$-module.
Let $\mathfrak{g}_0=\mathfrak{s}_0\oplus{V}_0$ be
the Abelian extension of $\mathfrak{s}_0$ by $V_0$.
Its Lie algebra structure is
defined by
\begin{equation*}
\begin{cases}
[X_1+v_1,X_2+v_2]=[X_1,X_2]+X_1\cdot{v}_2-X_2\cdot{v}_1\quad\\
\qquad\text{for}\;X_1,X_2\in\mathfrak{s}_0,\;v_1,v_2\in{V}_0.
\end{cases}
\end{equation*}
Radical and nilradical of $\mathfrak{g}_0$
are both
equal to $V_0\simeq{0}\oplus{V}_0$, and
$\mathfrak{s}_0\simeq
\mathfrak{s}_0\oplus{0}\subset\mathfrak{g}_0$ is
a Levi subalgebra
and a reductive component
of $\mathfrak{g}_0$. Then $\mathfrak{g}_0=\mathfrak{s}_0\oplus{V}_0$
is
a Jordan-Chevalley and a Levi decomposition of $\mathfrak{g}_0$,
 at the same time.
\par
Fix a connected semisimple Lie group
$\mathbf{S}_0$  
with Lie algebra
$\mathfrak{s}_0$, to which
the representation of $\mathfrak{s}_0$ on
$V_0$ lifts.
The product
\begin{equation*}
  (g_1,v_1)\cdot(g_2,v_2)=(g_1g_2,v_1+g_1(v_2)),\;\text{for}\;
g_1,g_2\in\mathbf{S}_0,\;v_1,v_2\in{V}_0,
\end{equation*}
defines on $\mathbf{G}_0=\mathbf{S}_0\times{V}_0$ the structure of a Lie group
with Lie algebra~$\mathfrak{g}_0$
and radical $\mathbf{R}_0=\{e_{\mathbf{S}_0}\}\times{V_0}$.\par
Let $\mathfrak{s}$ and $V$  be the complexifications of $\mathfrak{s}_0$
and $V_0$, respectively,
so that
$\mathfrak{g}=\mathfrak{s}\oplus{V}$ is the complexification of
$\mathfrak{g}_0$.
\par
Fix a closed subgroup $\mathbf{A}_0$ of $\mathbf{S}_0$, 
with Lie algebra
$\mathfrak{a}_0$.
The stabilizer $\mathbf{I}_0$ of any vector $v_0\in{V}_0$
in $\mathbf{A}_0$ is a closed
subgroup of $\mathbf{S}_0$ and hence of $\mathbf{G}_0$.
Let $M=\mathbf{G}_0/\mathbf{I}_0\simeq(\mathbf{S}_0/\mathbf{I}_0)\times
V_0$ be endowed with the
$\mathbf{G}_0$-homogeneous $CR$ structure
defined by $(\mathfrak{g}_0,\mathfrak{q})$, for
\begin{equation*} 
  \mathfrak{q}=\mathbb{C}\cdot\{X+iX\cdot{v}_0\,|\, X\in\mathfrak{a}_0\}
\subset\mathfrak{g}.
\end{equation*}
With $\mathfrak{a}$ equal to the
complexification of $\mathfrak{a}_0$, we have:
\begin{equation*}
  \label{eq:Fc}\begin{cases}
\mathfrak{i}_0=
\mathfrak{q}\cap\mathfrak{g}_0=\mathfrak{q}\cap\mathfrak{s}_0=
\{X\in\mathfrak{a}_0\,|\,
{X}\cdot{v}_0=0\},\\
\mathfrak{q}+{V}=\mathfrak{a}\oplus
{V},\\
\mathfrak{q}+\bar{\mathfrak{q}}=\mathfrak{a}\oplus
\mathbb{C}\cdot\left(\mathfrak{a}\cdot{v}_0\right).
\end{cases}
\end{equation*}
Hence the $CR$ algebra $(\mathfrak{g}_0,\mathfrak{q}+V)$, corresponding
to the basis of the $\mathbf{G}_0$-equivariant map
$M\to{N}=\mathbf{G}_0/(\mathbf{A}_0\times{V_0})$, is
totally real and locally $CR$ isomorphic to
$(\mathfrak{s}_0,\mathfrak{a})$.
The fiber $F$ is an $({\mathbf{A}_0}\ltimes{V}_0)$-homogeneous $CR$ manifold,
with $CR$ algebra $(\mathfrak{a}_0\oplus{V}_0,\mathfrak{q})$.
Thus, if $\mathfrak{a}_0\neq\mathfrak{i}_0$, there is no
$\mathbf{G}_0$-homogeneous $CR$ structure on
$M'=\mathbf{G}_0/\mathbf{R}_0\mathbf{I}_0$ such that
the fibration $M\to{M'}$ is an equivariant $CR$ submersion.
\par
The Levi subalgebras of $\mathfrak{g}_0$ are parametrized by
the elements of $V_0$:
\begin{equation*}
  \mathfrak{s}_{0}^{(v)}=
\{X+X\cdot{v}\mid X\in\mathfrak{s}_0\},\quad\text{for}\quad
v\in{V}_0.
\end{equation*}
We have
\begin{align*}
 \mathfrak{s}_0^{(v)}\cap\mathfrak{q}=&\{X\in\mathfrak{s}_0\mid
X(v_0)=0,\;X(v)=0\}\subset
\mathfrak{i}_0,
\\
\mathfrak{s}^{(v)}\cap\mathfrak{q}=&\{\; Z\in\mathfrak{s}\;\mid\,
Z(v_0)=0,\;Z(v)=\, 0\}=\big(\mathfrak{s}_0^{(v)}\cap\mathfrak{q}\big)^{
\mathbb{C}},
\end{align*}
so that, for every choice of $v\in{V}_0$, the $CR$ algebra
$(\mathfrak{s}_0^{(v)},\mathfrak{q}\cap\mathfrak{s}^{(v)})$
is totally real.
Thus, if $\mathfrak{a}_0\neq\mathfrak{i}_0$, there
is no Levi factor $\mathfrak{s}_0^{(v)}$ in $\mathfrak{g}_0$
such that $(\mathfrak{s}_0^{(v)}, (\mathfrak{q}\cap\mathfrak{s}^{(v)}))
\simeq(\mathfrak{g}_0,\mathfrak{q}
+V)$.
\end{exam}
In \cite{MN02} the homogeneous $CR$ manifolds $M$
with $(\mathfrak{g}_0,\mathfrak{q})$ of Levi-Tanaka type
were shown to 
admit a Levi-Malcev fibration; \eqref{eq:eha} is
in this case a $CR$ submersion  with  basis
and fiber having both $CR$ structures of Levi-Tanaka type.
Example \ref{exam:bf} shows that,
even when it does exist,
we cannot expect
to find, on the basis $M'$ of the Levi-Malcev fibration \eqref{eq:eha}
of a $\mathbf{G}_0$-homogeneous $CR$ manifold $M$, a
$\mathbf{G}_0$-homogeneous $CR$ structure that makes
\eqref{eq:eha} a $CR$ map.
We have, by Proposition~\ref{thm:Fb}:
\begin{cor} \label{cor:fg}
Assume that
the $\mathbf{G}_0$-homogeneous $CR$ structure
of $M=\mathbf{G}_0/\mathbf{I}_0$ is described by  the
$CR$ algebra $(\mathfrak{g}_0,\mathfrak{q})$ and that
$M$ admits the Levi-Malcev fibration \eqref{eq:eha}. Then a
necessary and sufficient condition for the existence of
a $\mathbf{G}_0$-homogeneous $CR$ structure on $M'$, making
\eqref{eq:eha} a $CR$ map, is that
\begin{equation}\label{eq:Ff0}
  \mathfrak{q}\cap\bar{\mathfrak{q}}+\mathfrak{r}=
(\mathfrak{q}+\mathfrak{r})\cap(\bar{\mathfrak{q}}+\mathfrak{r}).
\end{equation}
\end{cor}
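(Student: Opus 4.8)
The plan is to recognize Corollary \ref{cor:fg} as nothing more than the specialization of Proposition \ref{thm:Fb} to the case in which the ideal $\mathfrak{a}_0$ is taken to be the radical $\mathfrak{r}_0$ of $\mathfrak{g}_0$. First I would check that this choice is admissible for the $\mathbf{A}_0$-fibration machinery: the radical $\mathfrak{r}_0$ is by definition an ideal of $\mathfrak{g}_0$, and the corresponding analytic normal subgroup of $\mathbf{G}_0$ is precisely the radical $\mathbf{R}_0$. Moreover, the standing hypothesis that $M$ admits the Levi-Malcev fibration \eqref{eq:eha} is, by the very definition of that fibration, the requirement that $\mathbf{R}_0\mathbf{I}_0$ be closed in $\mathbf{G}_0$; this is exactly the closedness hypothesis $\mathbf{A}_0\mathbf{I}_0$ closed demanded by the general $\mathbf{A}_0$-fibration. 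Hence Proposition \ref{thm:Fb} applies verbatim with $\mathbf{A}_0=\mathbf{R}_0$ and $\mathbf{I}_0$ as given.

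With this identification in place, the complexification $\mathfrak{a}$ of $\mathfrak{a}_0$ becomes the complexification $\mathfrak{r}$ of the radical. Substituting $\mathfrak{a}=\mathfrak{r}$ into the necessary and sufficient condition \eqref{eq:Ff} of Proposition \ref{thm:Fb} turns it literally into condition \eqref{eq:Ff0}. Therefore the existence of a $\mathbf{G}_0$-homogeneous $CR$ structure on $M'=\mathbf{G}_0/\mathbf{R}_0\mathbf{I}_0$ rendering \eqref{eq:eha} a $CR$ map is equivalent to \eqref{eq:Ff0}, which is the assertion of the corollary.

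I do not expect any genuine obstacle here: the entire analytic and Lie-algebraic content is already carried by Proposition \ref{thm:Fb}, and the corollary is a pure renaming of $\mathfrak{a}$ to $\mathfrak{r}$. The only two points that deserve an explicit line of verification are the bookkeeping facts noted above, namely that $\mathfrak{r}_0$ qualifies as an admissible $\mathfrak{a}_0$ and that the Levi-Malcev closedness hypothesis coincides with the $\mathbf{A}_0$-fibration closedness hypothesis; both are immediate from the definitions of the radical and of the Levi-Malcev fibration \eqref{eq:eha}. Consequently the proof reduces to invoking Proposition \ref{thm:Fb} with $\mathfrak{a}_0=\mathfrak{r}_0$.
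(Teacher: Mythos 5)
Your proof is correct and is exactly the paper's own argument: the corollary is stated in the paper as an immediate consequence of Proposition \ref{thm:Fb} (``We have, by Proposition \ref{thm:Fb}:''), i.e.\ the specialization $\mathfrak{a}_0=\mathfrak{r}_0$, $\mathbf{A}_0=\mathbf{R}_0$, under which condition \eqref{eq:Ff} becomes \eqref{eq:Ff0}. The two bookkeeping checks you single out --- that $\mathfrak{r}_0$ is an ideal whose analytic normal subgroup is $\mathbf{R}_0$, and that the existence of the Levi-Malcev fibration is precisely the closedness hypothesis on $\mathbf{A}_0\mathbf{I}_0$ required by Proposition \ref{thm:Fb} --- are indeed all that is needed.
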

Moreover we obtain:
\begin{thm}
\label{thm:Fd}
Suppose that \eqref{eq:Ff0} is valid, and
consider on the basis $M'$ of the Levi-Malcev fibration \eqref{eq:eha}
the $\mathbf{G}_0$-homogeneous
$CR$ structure
defined by $(\mathfrak{g}_0,\mathfrak{q}')$, with
$\mathfrak{q}'=\mathfrak{q}+\mathfrak{r}$. Then:
\begin{enumerate}
\item
$M'$ is
an $\mathbf{S}_0$-homogeneous $CR$ manifold
$M'\simeq\mathbf{S}_0/(\mathbf{S}_0\cap\mathbf{R}_0\mathbf{I}_0)$, with
$CR$ algebra $(\mathfrak{s}_0,\mathfrak{s}\cap\mathfrak{q}')$.
\item 
The fiber of \eqref{eq:eha} is the solvmanifold
$F\simeq\mathbf{R}_0/(\mathbf{R}_0\cap\mathbf{I}_0)$, with
$\mathbf{R}_0$-homogeneous
$CR$ structure defined by
\mbox{$(\mathfrak{r}_0,\mathfrak{r}\cap\mathfrak{q})$.}
\end{enumerate}
If $(\mathfrak{g}_0,\mathfrak{q})$ has the weak-$J$-property
(resp. the $J$-property) then:
\begin{enumerate}
\item[(3)] Condition \eqref{eq:Ff0} is satisfied.
\item[(4)] Both the basis $M'$ and the fiber $F$ of the
Levi-Malcev fibration enjoy the
weak-$J$-property (resp. the $J$-property).
\end{enumerate}
\end{thm}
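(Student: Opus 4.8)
The plan is to read the whole statement as the specialization to $\mathfrak{a}_0=\mathfrak{r}_0$ of the general $\mathbf{A}_0$-fibration results already established in Proposition~\ref{thm:Fb} and Proposition~\ref{prop:Fd}, supplemented by one reduction of the structure group along the Levi splitting. Since $\mathfrak{r}_0$ is an ideal of $\mathfrak{g}_0$ and $\mathbf{R}_0$ its analytic subgroup, the Levi-Malcev fibration \eqref{eq:eha} is exactly the $\mathbf{A}_0$-fibration with $\mathbf{A}_0=\mathbf{R}_0$, and \eqref{eq:Ff0} is \eqref{eq:Ff} in this case. Granting \eqref{eq:Ff0}, part~(2) is then immediate from Proposition~\ref{thm:Fb}(2): the typical fiber is $\mathbf{R}_0/(\mathbf{R}_0\cap\mathbf{I}_0)$ with $CR$ algebra $(\mathfrak{r}_0,\mathfrak{r}\cap\mathfrak{q})$, and it is a solvmanifold because $\mathbf{R}_0$ is solvable. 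For (3) and (4) I would invoke Proposition~\ref{prop:Fd} after checking its one hypothesis, namely that $\mathfrak{r}_0$ is $\Upsilon$-invariant: this holds because $\Upsilon=\mathrm{Ad}(\exp(\pi{J}/2))$ is an automorphism of $\mathfrak{g}_0$ and the radical is a characteristic ideal, hence preserved by every automorphism. This yields both the validity of \eqref{eq:Ff0} and the weak-$J$- (resp.\ $J$-) property of base and fiber simultaneously.

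The genuine content is (1), the reduction of the base from a $\mathbf{G}_0$- to an $\mathbf{S}_0$-homogeneous $CR$ manifold. At the group level I would use $\mathbf{G}_0=\mathbf{R}_0\mathbf{S}_0$ with $\mathbf{R}_0$ normal: writing $g=rs$ and using $s\mathbf{R}_0=\mathbf{R}_0s$ shows $g\,\mathbf{R}_0\mathbf{I}_0=s\,\mathbf{R}_0\mathbf{I}_0$, so the orbit map $\mathbf{S}_0\to{M'}$ is onto with stabilizer $\mathbf{S}_0\cap\mathbf{R}_0\mathbf{I}_0$, giving $M'\simeq\mathbf{S}_0/(\mathbf{S}_0\cap\mathbf{R}_0\mathbf{I}_0)$. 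At the $CR$-algebra level I would show that the inclusion $\phi_0:\mathfrak{s}_0\hookrightarrow\mathfrak{g}_0$ is a $CR$ algebras local isomorphism from $(\mathfrak{s}_0,\mathfrak{s}\cap\mathfrak{q}')$ onto $(\mathfrak{g}_0,\mathfrak{q}')$. The structural facts I would lean on are the Levi splitting $\mathfrak{g}=\mathfrak{s}\oplus\mathfrak{r}$ and the inclusion $\mathfrak{r}\subset\mathfrak{q}'$ (immediate from $\mathfrak{q}'=\mathfrak{q}+\mathfrak{r}$), which also give $\mathfrak{r}=\bar{\mathfrak{r}}\subset\mathfrak{q}'\cap\bar{\mathfrak{q}}'$ and the decomposition $\mathfrak{q}'=(\mathfrak{s}\cap\mathfrak{q}')\oplus\mathfrak{r}$. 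The immersion identities $\phi^{-1}(\mathfrak{q}')=\mathfrak{s}\cap\mathfrak{q}'$ and $\phi^{-1}(\mathfrak{q}'\cap\bar{\mathfrak{q}}')=(\mathfrak{s}\cap\mathfrak{q}')\cap\overline{(\mathfrak{s}\cap\mathfrak{q}')}$ hold because $\mathfrak{s}=\bar{\mathfrak{s}}$, while the submersion identities $\mathfrak{s}+\mathfrak{q}'\cap\bar{\mathfrak{q}}'=\mathfrak{g}$ and $(\mathfrak{s}\cap\mathfrak{q}')+\mathfrak{q}'\cap\bar{\mathfrak{q}}'=\mathfrak{q}'$ follow from $\mathfrak{r}\subset\mathfrak{q}'\cap\bar{\mathfrak{q}}'$ together with the two displayed decompositions.

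I expect the only delicate point to be the global matching of the two homogeneous-space structures on $M'$: that the continuous equivariant bijection $\mathbf{S}_0/(\mathbf{S}_0\cap\mathbf{R}_0\mathbf{I}_0)\to\mathbf{G}_0/\mathbf{R}_0\mathbf{I}_0$ is a $CR$ diffeomorphism, and not merely a local $CR$ isomorphism at the base point. This rests on $\mathbf{R}_0\mathbf{I}_0$ being closed, which is the standing hypothesis guaranteeing the existence of \eqref{eq:eha}, and on the standard observation that $\pi_*$ annihilates the complexified isotropy $\mathfrak{r}+\mathfrak{i}$, so that $\pi_*(\mathfrak{q}')=\pi_*(\mathfrak{s}\cap\mathfrak{q}')$ identifies the two copies of $T^{0,1}_{p_0'}M'$. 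The remaining verifications are the routine bookkeeping sketched above, and I would relegate the fiber and $J$-property assertions to the cited Propositions.
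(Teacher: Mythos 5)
Your proposal is correct and takes essentially the same route as the paper: the paper's own proof consists precisely of citing Propositions~\ref{thm:Fb} and~\ref{prop:Fd} with $\mathfrak{a}_0=\mathfrak{r}_0$, after observing that $\mathfrak{r}_0$ is a characteristic ideal and hence $J$-invariant, which is exactly your $\Upsilon$-invariance check. Your detailed verification of part (1) — the identification $M'\simeq\mathbf{S}_0/(\mathbf{S}_0\cap\mathbf{R}_0\mathbf{I}_0)$ via $\mathbf{G}_0=\mathbf{R}_0\mathbf{S}_0$ and the $CR$ algebras local isomorphism built from $\mathfrak{g}=\mathfrak{s}\oplus\mathfrak{r}$ and $\mathfrak{r}\subset\mathfrak{q}'\cap\bar{\mathfrak{q}}'$ — correctly fills in a step the paper leaves implicit.
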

\begin{proof}
The result follows from Propositions \ref{thm:Fb} and \ref{prop:Fd},
after noticing that $\mathfrak{r}_0$ is a characteristic ideal,
hence $J$-invariant.
\end{proof}

\subsection{The  Jordan-Chevalley fibration} \label{sec:F}
An algebraic group $\mathbf{G}$ over a field $\Bbbk$ always contains
a maximal normal solvable subgroup $\mathbf{R}^*$. The connected component
$\mathbf{R}$ of the identity in $\mathbf{R}^*$ is the \emph{radical}
of $\mathbf{G}$. The set $\mathbf{N}$ of unipotent elements of $\mathbf{R}$
is a connected normal subgroup of $\mathbf{G}$, called the
\emph{unipotent radical of} $\mathbf{G}$.
The algebraic group $\mathbf{G}$ is \emph{reductive}
when its unipotent radical is trivial.\par
If the field $\Bbbk$ is \textit{perfect}\footnote{This means that
all algebraic
extensions of $\Bbbk$ are separable. This
is equivalent to the fact that
either $\Bbbk$ has characteristic $0$, or, having positive characteristic
$p$, every element of $\Bbbk$ admits a $p$-th root in $\Bbbk$.},
any algebraic group $\mathbf{G}$ over $\Bbbk$ admits a
\emph{Jordan-Chevalley decomposition}
(see e.g.\cite{BT65, Chev55, Most56}), i.e. there is
a maximal reductive subgroup $\mathbf{L}$ of $\mathbf{G}$
such that
\begin{equation}
  \label{eq:bbcd}
  \mathbf{G}=\mathbf{N}\rtimes\mathbf{L}.
\end{equation}
\par
For the proof of the following Lemma, see e.g.
\mbox{\cite[Ch.VII, Lemma\,{1.4}]{Hoch81}.}
\begin{lem} \label{LMbb}
Let $\mathfrak{g}\subset\mathfrak{gl}(n,\Bbbk)$
be a linear Lie algebra,
$\mathfrak{n}$ an ideal and
$\mathfrak{a}$
a subalgebra of $\mathfrak{g}$.
If all the elements of $\mathfrak{n}\cup\mathfrak{a}$
are nilpotent,
then all the elements of
$\mathfrak{a}+\mathfrak{n}$ are nilpotent.\qed
\end{lem}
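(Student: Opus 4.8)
The plan is to reduce to the case $\mathfrak g=\mathfrak a+\mathfrak n$ and then argue by induction on the dimension of the representation space $V=\Bbbk^{n}$. First I would note that $\mathfrak a+\mathfrak n$ is a subalgebra (since $[\mathfrak a,\mathfrak a]\subseteq\mathfrak a$, $[\mathfrak n,\mathfrak n]\subseteq\mathfrak n$, and $[\mathfrak a,\mathfrak n]\subseteq\mathfrak n$ because $\mathfrak n$ is an ideal), that $\mathfrak n$ is still an ideal of $\mathfrak a+\mathfrak n$, and that both hypotheses and conclusion involve only $\mathfrak a+\mathfrak n$; hence there is no loss in assuming $\mathfrak g=\mathfrak a+\mathfrak n$. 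The base case $\dim V=0$ is vacuous.

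The heart of the argument is the common kernel $W=\{v\in V\mid xv=0\ \text{for all }x\in\mathfrak n\}$ of the ideal. Since $\mathfrak n$ is a Lie algebra of nilpotent endomorphisms, Engel's theorem guarantees $W\neq 0$ whenever $V\neq 0$. The key point --- and the place where the ideal hypothesis is indispensable --- is that $W$ is $\mathfrak a$-invariant: for $a\in\mathfrak a$, $x\in\mathfrak n$ and $v\in W$ one has $x(av)=a(xv)-[a,x]v=0$, because $xv=0$ and $[a,x]\in\mathfrak n$ also annihilates $v$; thus $av\in W$. Consequently $W$ is stable under all of $\mathfrak g=\mathfrak a+\mathfrak n$, and I can pass simultaneously to the restriction on $W$ and to the quotient action on $V/W$.

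On $W$ the ideal acts trivially, so every $y=a+x\in\mathfrak a+\mathfrak n$ restricts to $y|_{W}=a|_{W}$, which is nilpotent because $a$ is. On $V/W$ the induced images $\bar{\mathfrak a},\bar{\mathfrak n}\subseteq\mathfrak{gl}(V/W)$ again satisfy the hypotheses of the lemma: the quotient map $\mathfrak g\to\mathfrak{gl}(V/W)$ is a homomorphism of linear Lie algebras (as $W$ is $\mathfrak g$-stable), it carries the ideal $\mathfrak n$ to an ideal $\bar{\mathfrak n}$ of $\bar{\mathfrak g}=\bar{\mathfrak a}+\bar{\mathfrak n}$ and the subalgebra $\mathfrak a$ to a subalgebra $\bar{\mathfrak a}$, and images of nilpotent operators remain nilpotent. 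Since $W\neq 0$ forces $\dim(V/W)<\dim V$, the induction hypothesis applies and every $\bar y$ is nilpotent on $V/W$.

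It remains to splice the two pieces together. Choosing a basis of $V$ adapted to $W\subseteq V$, each $y\in\mathfrak a+\mathfrak n$ is represented by a block upper-triangular matrix whose diagonal blocks are the nilpotent operators $y|_{W}$ and $\bar y$; hence its characteristic polynomial is $x^{\dim W}\cdot x^{\dim(V/W)}=x^{n}$, and $y$ is nilpotent by Cayley--Hamilton. I expect the invariance of $W$ under $\mathfrak a$ to be the only genuinely substantive step: it is exactly here that one uses $[\mathfrak a,\mathfrak n]\subseteq\mathfrak n$, and without it the conclusion fails, since an arbitrary sum of two spaces of nilpotent matrices need not consist of nilpotent matrices.
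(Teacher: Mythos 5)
Your proof is correct, but there is essentially nothing in the paper to compare it against: the paper states this lemma without proof, marking it as settled and referring the reader to \cite[Ch.~VII, Lemma~1.4]{Hoch81}. What you have written is the standard self-contained argument, and every step checks out. The reduction to $\mathfrak{g}=\mathfrak{a}+\mathfrak{n}$ is legitimate ($\mathfrak{n}$ remains an ideal there); Engel's theorem, which holds over any field, gives $W\neq 0$ whenever $V\neq 0$; the computation $x(av)=a(xv)-[a,x]v$ with $[a,x]\in\mathfrak{n}$ is exactly where the ideal hypothesis enters and makes $W$ stable under $\mathfrak{a}$ (and trivially under $\mathfrak{n}$, hence under all of $\mathfrak{g}$); the induced algebras $\bar{\mathfrak{a}},\bar{\mathfrak{n}}\subset\mathfrak{gl}(V/W)$ inherit the hypotheses, since the image of an ideal under the surjective homomorphism $\mathfrak{g}\to\bar{\mathfrak{g}}$ is an ideal of $\bar{\mathfrak{g}}$ and images of nilpotent operators stay nilpotent, so the induction hypothesis applies on the smaller space; and the block-triangular Cayley--Hamilton conclusion is valid over an arbitrary field, because a nilpotent operator on an $m$-dimensional space has characteristic polynomial $x^m$ (alternatively, $y^{k}(V)\subseteq W$ and $y^{k+m}=0$ gives the same conclusion without Cayley--Hamilton). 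Your closing remark about indispensability of the ideal condition is also on target: $\mathfrak{a}=\Bbbk e$ and $\mathfrak{n}=\Bbbk f$ inside $\mathfrak{sl}(2,\Bbbk)$ consist of nilpotent elements, yet $e+f$ is not nilpotent. In short, you have supplied precisely the Engel-type induction that the paper outsources to Hochschild; the citation buys the authors brevity, while your version makes the lemma self-contained.
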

\begin{prop}\label{prop:cb}
Let $\mathbf{G}$ be an algebraic group over a perfect
field $\Bbbk$ and $\mathbf{N}$ its unipotent radical.
If $\mathbf{I}$ is an
algebraic subgroup of $\mathbf{G}$, then
also $\mathbf{N}\,\mathbf{I}$ is
an algebraic subgroup of $\mathbf{G}$.
\end{prop}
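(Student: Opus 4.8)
The plan is to deduce the statement from general properties of quotients of algebraic groups, rather than from any structure theory specific to the unipotent radical. First I would observe that $\mathbf{N}\mathbf{I}$ is already an abstract subgroup of $\mathbf{G}$: since $\mathbf{N}$ is the unipotent radical, it is a closed connected \emph{normal} subgroup, so $\mathbf{N}\mathbf{I}=\mathbf{I}\mathbf{N}$ and the product of a normal subgroup with a subgroup is always a subgroup. Thus the only thing to prove is that $\mathbf{N}\mathbf{I}$ is Zariski-closed.

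Because $\mathbf{N}$ is closed and normal in $\mathbf{G}$, the quotient $\mathbf{G}/\mathbf{N}$ exists as an algebraic group and the canonical projection $\pi\colon\mathbf{G}\to\mathbf{G}/\mathbf{N}$ is a homomorphism of algebraic groups, in particular a continuous map for the Zariski topologies. The restriction $\pi|_{\mathbf{I}}\colon\mathbf{I}\to\mathbf{G}/\mathbf{N}$ is then a homomorphism of algebraic groups, so its image $\pi(\mathbf{I})$ is a closed, hence algebraic, subgroup of $\mathbf{G}/\mathbf{N}$, by the classical theorem that homomorphisms of algebraic groups have closed images. Finally I would check the set-theoretic identity $\mathbf{N}\mathbf{I}=\pi^{-1}(\pi(\mathbf{I}))$: an element $g$ lies in the right-hand side exactly when $\pi(g)=\pi(x)$ for some $x\in\mathbf{I}$, i.e. $gx^{-1}\in\mathbf{N}$, i.e. $g\in\mathbf{N}\mathbf{I}$. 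As $\pi$ is continuous and $\pi(\mathbf{I})$ is closed, $\mathbf{N}\mathbf{I}=\pi^{-1}(\pi(\mathbf{I}))$ is closed, and therefore an algebraic subgroup of $\mathbf{G}$.

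I expect the genuine content to be concentrated in two classical facts, valid over a perfect field: that the quotient $\mathbf{G}/\mathbf{N}$ is again an algebraic group with $\pi$ a morphism, and that the image of a homomorphism of algebraic groups is closed; the rest is formal. If one prefers to avoid forming the quotient, the same conclusion follows by a constructibility argument: the multiplication map $\mathbf{N}\times\mathbf{I}\to\mathbf{G}$, $(n,x)\mapsto nx$, is a morphism of varieties, so by Chevalley's theorem its image $\mathbf{N}\mathbf{I}$ is constructible, and a constructible subgroup of an algebraic group is automatically closed. The main obstacle is thus purely in invoking the correct structural theorem rather than in any computation, and the hypothesis that $\Bbbk$ be perfect is precisely what guarantees the good behaviour of these constructions in positive characteristic.
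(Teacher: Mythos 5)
Your proof is correct, and it takes a genuinely different route from the paper's. You use only that $\mathbf{N}$ is a closed \emph{normal} subgroup: then $\mathbf{N}\,\mathbf{I}$ is an abstract subgroup, and it is Zariski-closed either because it equals $\pi^{-1}(\pi(\mathbf{I}))$ for the quotient morphism $\pi\colon\mathbf{G}\to\mathbf{G}/\mathbf{N}$ (with $\pi(\mathbf{I})$ closed), or, avoiding quotients altogether, because it is a constructible subgroup (Chevalley) and a constructible subgroup of an algebraic group is closed. The paper argues instead through Lie algebras and the Jordan--Chevalley decomposition: by Lemma \ref{LMbb} the sum $\mathfrak{n}'=\mathfrak{u}+\mathfrak{n}$ of $\mathfrak{n}$ with the Lie algebra of the unipotent radical $\mathbf{U}$ of $\mathbf{I}$ is nilpotent; the corresponding analytic group $\mathbf{N}'$ is shown to be algebraic by trapping it inside the unipotent radical of the normalizer $\mathbf{G}''$ of $\mathfrak{n}'$; and a decomposition $\mathbf{I}=\mathbf{L}'\ltimes\mathbf{U}$ then yields $\mathbf{N}\,\mathbf{I}=\mathbf{L}'\ltimes\mathbf{N}'$, a semidirect product of algebraic subgroups. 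Your argument is shorter and strictly more general: unipotency of $\mathbf{N}$ plays no role, so you actually prove that the product of any closed normal subgroup with a closed subgroup is a closed subgroup. What the paper's heavier route buys is structure: it exhibits the Jordan--Chevalley decomposition of $\mathbf{N}\,\mathbf{I}$ itself, with unipotent radical $\mathbf{N}'$ and maximal reductive part $\mathbf{L}'$, and this is the information exploited later, e.g. in the description of the basis of the Jordan--Chevalley fibration in Theorem \ref{thm:Fbb}. One small correction to your closing remark: perfectness of $\Bbbk$ is not what makes quotients or closed images work (those facts hold over arbitrary fields); in this paper it is needed for the existence of the decomposition \eqref{eq:bbcd} used in the paper's own proof, and to guarantee that the unipotent radical is well behaved over $\Bbbk$ --- your argument never actually needs it.
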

\begin{proof}
Let $\mathfrak{n}$ and $\mathfrak{i}$
be the Lie algebras of $\mathbf{N}$ and $\mathbf{I}$,
respectively.
Let $\mathbf{U}$ be the unipotent radical of $\mathbf{I}$
and $\mathfrak{u}$ its Lie algebra. By Lemma \ref{LMbb}, the sum
$\mathfrak{n}'=\mathfrak{u}+\mathfrak{n}$ is
a nilpotent subalgebra of $\mathfrak{g}$. The set
\begin{equation}
  \label{eq:bbcf}
  \mathbf{G}''=\{g\in\mathbf{G}\mid \mathrm{Ad}(g)(\mathfrak{n}')
=\mathfrak{n}'\}
\end{equation}
is an algebraic subgroup of $\mathbf{G}$ containing $\mathbf{I}$.
Let $\mathfrak{g}''$ be its Lie algebra
and $\mathfrak{n}''$, which is contained in $\mathfrak{g}''$, that
of the unipotent radical $\mathbf{N}''$ of $\mathbf{G}''$.
The analytic subgroup $\mathbf{N}'$ corresponding to $\mathfrak{n}'$
is a normal subgroup of $\mathbf{G}''$ consisting of
unipotent elements. Hence $\mathbf{N}'\subset\mathbf{N}''$,
and therefore
$\mathbf{N}'$ is Zariski-closed and algebraic in both $\mathbf{G}''$
and $\mathbf{G}$.\par
If $\mathbf{I}=\mathbf{L}'\ltimes\mathbf{U}$ is
a Jordan-Chevalley decomposition of $\mathbf{I}$, then we obtain
a decomposition $\mathbf{N}\,\mathbf{I}=\mathbf{L}'\ltimes\mathbf{N}'$.
Hence $\mathbf{N}\,\mathbf{I}$ is algebraic, being a
semidirect product of algebraic subgroups of $\mathbf{G}$.
\end{proof}
\begin{dfn}
  Let $\mathbf{G}_0$ be a real algebraic group, with unipotent radical
$\mathbf{N}_0$. If $\mathbf{I}_0$ is an algebraic subgroup of
$\mathbf{G}_0$, then by Proposition \ref{prop:cb} 
also $\mathbf{N}_0\mathbf{I}_0$ is algebraic. The
$\mathbf{G}_0$-equivariant fibration
\begin{equation}
  \label{eq:b1}\begin{CD}
  M=\mathbf{G}_0/{\mathbf{I}_0}@>>>M'=\mathbf{G}_0/{\mathbf{N}_0\mathbf{I}_0}
\end{CD}\end{equation}
is called the \emph{Jordan-Chevalley} fibration of $M$.
\end{dfn}

In this setting, Propositions~\ref{thm:Fb} and \ref{prop:Fd}
yield the following result.
\begin{thm} \label{thm:Fbb}
Let $\mathbf{G}_0$ be a real linear algebraic group
and
$\mathbf{N}_0$ its unipotent radical.
We denote by $\mathfrak{g}_0$, $\mathfrak{n}_0$ the Lie algebras of
$\mathbf{G}_0$, $\mathbf{N}_0$, and by $\mathfrak{g}$,
$\mathfrak{n}$ their complexifications, respectively.\par
Let $M=\mathbf{G}_0/\mathbf{I}_0$,
for an algebraic subgroup $\mathbf{I}_0$, have a $\mathbf{G}_0$-invariant
$CR$ structure defined by the $CR$ algebra $(\mathfrak{g}_0,\mathfrak{q})$.
\par
A necessary and sufficient condition in order that
there exists a $\mathbf{G}_0$-homogeneous $CR$ structure on
the basis
$M'=\mathbf{G}/\mathbf{N}_0\mathbf{I}_0$
of the Jordan-Chevalley fibration, making
\eqref{eq:b1}
a $CR$ map is that:
\begin{equation}
  \label{eq:Ffx}
  \mathfrak{q}\cap\bar{\mathfrak{q}}+\mathfrak{n}=
\left(\mathfrak{q}+\mathfrak{n}\right)\cap
\left(\bar{\mathfrak{q}}+\mathfrak{n}\right).
\end{equation}
\par
Assume that
\eqref{eq:Ffx}
is satisfied and consider on $M'$ the $CR$ structure
defined by $(\mathfrak{g}_0,\mathfrak{q}')$ with
$\mathfrak{q}'=\mathfrak{q}+\mathfrak{n}$. Then:
\begin{enumerate}
\item
\eqref{eq:b1}
is a $CR$ fibration.
\item Its
typical fiber $F$
is the nilmanifold $\mathbf{N}_0/(\mathbf{N}_0\cap\mathbf{I}_0)$,
with an $\mathbf{N}_0$-homogeneous $CR$ structure defined by
the $CR$ algebra $(\mathfrak{n}_0,\mathfrak{q}\cap\mathfrak{n})$.
\item
The basis $M'=\mathbf{G}_0/\mathbf{N}_0\mathbf{I}_0$ is an
$\mathbf{L}_0$-homogeneous $CR$ manifold, associated
with the $CR$ algebra
$(\mathfrak{l}_0,\mathfrak{l}
\cap(\mathfrak{q}+\mathfrak{n}))$, where $\mathbf{L}_0$ is a maximal reductive
subgroup of $\mathbf{G}_0$, $\mathfrak{l}_0$ its Lie algebra, and
$\mathfrak{l}$
its complexification.\end{enumerate}
If $(\mathfrak{g}_0,\mathfrak{q})$ has the weak-$J$-property
(resp. the $J$-property) then:
\begin{enumerate}
\item[(4)] Condition \eqref{eq:Ffx} is satisfied.
\item[(5)]
Both the basis $M'$ and the fiber $F$ of \eqref{eq:b1}
enjoy the
weak-$J$-property (resp. the $J$-property).
\end{enumerate}
\end{thm}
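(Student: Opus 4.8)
The plan is to recognize \eqref{eq:b1} as the $\mathbf{A}_0$-fibration of \S{2.1} for the special choice $\mathfrak{a}_0=\mathfrak{n}_0$, and then to extract every assertion from Propositions~\ref{thm:Fb} and \ref{prop:Fd}, supplemented by the Jordan-Chevalley decomposition \eqref{eq:bbcd} for the statement about the base. First I would invoke Proposition~\ref{prop:cb} to guarantee that $\mathbf{N}_0\mathbf{I}_0$ is an algebraic, hence closed, subgroup of $\mathbf{G}_0$; this is exactly the hypothesis needed to form the fibration \eqref{eq:b1} and to place ourselves in the hypotheses of Proposition~\ref{thm:Fb}. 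With $\mathfrak{a}=\mathfrak{n}$, condition \eqref{eq:Ff} reads verbatim as \eqref{eq:Ffx}, so both the necessary and sufficient criterion and assertions (1)--(2) are immediate consequences of Proposition~\ref{thm:Fb}: once \eqref{eq:Ffx} holds and $\mathfrak{q}'=\mathfrak{q}+\mathfrak{n}$, the fibration is $CR$ and the typical fiber is the $\mathbf{N}_0$-homogeneous nilmanifold with $CR$ algebra $(\mathfrak{n}_0,\mathfrak{q}\cap\mathfrak{n})$.

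For assertion (3) I would use the Jordan-Chevalley decomposition $\mathbf{G}_0=\mathbf{N}_0\rtimes\mathbf{L}_0$, with $\mathbf{L}_0$ a maximal reductive subgroup. Since $\mathbf{N}_0\subset\mathbf{N}_0\mathbf{I}_0$, the projection $\mathbf{G}_0\to\mathbf{G}_0/\mathbf{N}_0\simeq\mathbf{L}_0$ identifies $M'=\mathbf{G}_0/\mathbf{N}_0\mathbf{I}_0$ with $\mathbf{L}_0/(\mathbf{L}_0\cap\mathbf{N}_0\mathbf{I}_0)$, so the $\mathbf{L}_0$-action on $M'$ is transitive. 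At the Lie-algebra level the complexified decomposition $\mathfrak{g}=\mathfrak{n}\rtimes\mathfrak{l}$, together with the inclusion $\mathfrak{n}\subset\mathfrak{q}'$, forces the splitting $\mathfrak{q}'=\mathfrak{n}\oplus(\mathfrak{l}\cap\mathfrak{q}')$; this is what exhibits the inclusion $(\mathfrak{l}_0,\mathfrak{l}\cap\mathfrak{q}')\hookrightarrow(\mathfrak{g}_0,\mathfrak{q}')$ as a $CR$ algebras \emph{local isomorphism}, yielding the claimed $\mathbf{L}_0$-homogeneous structure on the base. This step runs entirely parallel to part (1) of Theorem~\ref{thm:Fd} for the Levi-Malcev fibration, with the semidirect product $\mathbf{N}_0\rtimes\mathbf{L}_0$ in the role there played by the Levi decomposition.

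Finally, for the $J$-property statements (4) and (5), the decisive observation --- mirroring the remark that $\mathfrak{r}_0$ is characteristic in the proof of Theorem~\ref{thm:Fd} --- is that $\mathfrak{n}_0$ coincides with the nilpotent radical $[\mathfrak{g}_0,\mathfrak{g}_0]\cap\mathfrak{r}_0$, an intersection of two characteristic ideals and therefore itself a characteristic ideal of $\mathfrak{g}_0$. Consequently $\mathfrak{n}_0$ is invariant under every derivation $J\in\mathrm{Der}(\mathfrak{g}_0)$ and hence under $\Upsilon=\mathrm{Ad}(\exp(\pi J/2))$. This is precisely the $\Upsilon$-invariance hypothesis of Proposition~\ref{prop:Fd}, so applying that proposition with $\mathfrak{a}_0=\mathfrak{n}_0$ delivers (4) and (5) at once, in both the weak-$J$ and the $J$ versions; the fiber case is automatic, as already observed there.

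I expect the main obstacle to lie in assertion (3): one must check that restricting to $\mathbf{L}_0$ reproduces \emph{the same} $\mathbf{G}_0$-homogeneous $CR$ structure and not merely some transitive $CR$ action, i.e. that the splitting $\mathfrak{q}'=\mathfrak{n}\oplus(\mathfrak{l}\cap\mathfrak{q}')$ makes $(\mathfrak{l}_0,\mathfrak{l}\cap\mathfrak{q}')\to(\mathfrak{g}_0,\mathfrak{q}')$ simultaneously a $CR$ algebras immersion and submersion, so that $CR$-dimension and $CR$-codimension are preserved. The verification that $\mathfrak{n}_0$ is characteristic is the other point requiring care, but it is standard and depends only on the intrinsic description of the unipotent radical.
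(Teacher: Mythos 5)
Your reduction to Propositions~\ref{thm:Fb} and \ref{prop:Fd} with $\mathfrak{a}_0=\mathfrak{n}_0$ is exactly the paper's route: its entire proof consists of citing Proposition~\ref{thm:Fb} for the criterion, \cite[\S{5}]{MN05} for (1)--(3), and the assertion that $\mathfrak{n}_0$ is a characteristic ideal for (4)--(5). Your use of Proposition~\ref{prop:cb} to get closedness of $\mathbf{N}_0\mathbf{I}_0$, and your explicit verification of (3) via $\mathbf{G}_0=\mathbf{N}_0\rtimes\mathbf{L}_0$ and the splitting $\mathfrak{q}'=\mathfrak{n}\oplus(\mathfrak{l}\cap\mathfrak{q}')$, are sound expansions of what the paper leaves implicit. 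The problem is in (4)--(5). Your key claim, that $\mathfrak{n}_0$ equals the nilpotent radical $[\mathfrak{g}_0,\mathfrak{g}_0]\cap\mathfrak{r}_0$, is false: one always has $[\mathfrak{g}_0,\mathfrak{g}_0]\cap\mathfrak{r}_0\subset\mathfrak{n}_0$, because $\mathfrak{g}_0/\mathfrak{n}_0$ is reductive, but the inclusion is strict whenever $\mathfrak{n}_0\not\subset[\mathfrak{g}_0,\mathfrak{g}_0]$; already for the additive group $\mathbf{G}_0=(\mathbb{R},+)$, which is its own unipotent radical, one has $\mathfrak{n}_0=\mathfrak{g}_0$ while $[\mathfrak{g}_0,\mathfrak{g}_0]\cap\mathfrak{r}_0=0$.

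More seriously, no argument ``intrinsic'' to the Lie algebra can deliver what you need, because $\mathfrak{n}_0$ is not determined by $\mathfrak{g}_0$ at all: the additive group $\mathbb{R}$ and the multiplicative group $\mathbb{R}^*$ have isomorphic one-dimensional abelian Lie algebras but different unipotent radicals. Concretely, take $\mathbf{G}_0=\mathbb{R}\times\mathbb{R}^*$ (additive times multiplicative) and $\mathbf{I}_0=\{e\}$, so that $\mathfrak{g}_0=\mathbb{R}^2$ is abelian, $\mathfrak{n}_0=\mathbb{R}\times 0$, and every endomorphism of $\mathfrak{g}_0$ is a derivation; let $\mathfrak{q}=\mathbb{C}\cdot(1,i)\subset\mathbb{C}^2=\mathfrak{g}$. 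Then $(\mathfrak{g}_0,\mathfrak{q})$ has the $J$-property with $J=\left(\begin{smallmatrix}0&-1\\1&0\end{smallmatrix}\right)$, and this is the only admissible $J$; yet $J(\mathfrak{n}_0)\not\subset\mathfrak{n}_0$ and \eqref{eq:Ffx} fails, since $\mathfrak{q}\cap\bar{\mathfrak{q}}+\mathfrak{n}=\mathfrak{n}$ while $(\mathfrak{q}+\mathfrak{n})\cap(\bar{\mathfrak{q}}+\mathfrak{n})=\mathfrak{g}$. So the invariance of $\mathfrak{n}_0$ under $J$ (resp. $\Upsilon$) cannot be obtained for free, and you should be aware that the paper is equally thin here: its bare assertion that $\mathfrak{n}_0$ is characteristic is contradicted by this example. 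The analogy with Theorem~\ref{thm:Fd} breaks at precisely this point, because the radical $\mathfrak{r}_0$ genuinely is a characteristic ideal while the Lie algebra of the unipotent radical in general is not. A correct treatment of (4)--(5) requires either an added hypothesis (for instance that $\mathfrak{n}_0$ be $J$- resp.\ $\Upsilon$-invariant, or that $J$ can be chosen among algebraic derivations of $\mathfrak{g}_0$) or a genuinely different argument --- not the identification you propose, and not the paper's unproved claim either.
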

\begin{proof}
The first part of the statement is a consequence of Proposition \ref{thm:Fb}.
Finally, $(1)$, $(2)$, $(3)$ follow by \cite[\S{5}]{MN05}, and
$(4)$, $(5)$ because $\mathfrak{n}_0$ is a characteristic ideal.
\end{proof}
\section{Attaching homogeneous
\texorpdfstring{$CR$}{CR} manifolds
to \texorpdfstring{$CR$}{CR} algebras}
\label{sec:i}
\subsection{\texorpdfstring{$CR$}{CR} manifolds associated to a 
\texorpdfstring{$CR$}{CR} algebra}
Let us consider 
the question of the existence of
homogeneous $CR$ manifolds
associated with a given $CR$ algebra
$(\mathfrak{g}_0,\mathfrak{q})$.
\begin{dfn}
A $CR$ algebra
$(\mathfrak{g}_0,\mathfrak{q})$
is \emph{factual} if there exists a real Lie group
$\mathbf{G}_0$,
with Lie algebra $\mathfrak{g}_0$, and a closed subgroup $\mathbf{I}_0$
of $\mathbf{G}_0$ with Lie algebra $\mathfrak{i}_0=\mathfrak{q}
\cap\mathfrak{g}_0$.
\end{dfn}
 We have:
\begin{thm} \label{thm:ia}
Let $(\mathfrak{g}_0,\mathfrak{q})$ be a $CR$ algebra,
$\tilde{\mathbf{G}}_0$ a connected and simply connected
real Lie group with Lie algebra $\mathfrak{g}_0$, and
$\tilde{\mathbf{I}}_0$ its analytic subgroup with Lie algebra
$\mathfrak{i}_0=\mathfrak{q}\cap\mathfrak{g}_0$. Then
$(\mathfrak{g}_0,\mathfrak{q})$
is factual if, and only if,
$\tilde{\mathbf{I}}_0$ is closed in $\tilde{\mathbf{G}}_0$.
\end{thm}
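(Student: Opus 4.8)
The statement characterizes when a $CR$ algebra $(\mathfrak{g}_0,\mathfrak{q})$ is factual, i.e.\ realizable by an actual Lie group pair $(\mathbf{G}_0,\mathbf{I}_0)$ with $\mathbf{I}_0$ closed. The natural strategy is to exhibit the implication in the easy direction first, then prove the substantive direction by a universal/covering argument. First I would dispose of the ``if'' direction: if $\tilde{\mathbf{I}}_0$ happens to be closed in the simply connected group $\tilde{\mathbf{G}}_0$, then the pair $(\tilde{\mathbf{G}}_0,\tilde{\mathbf{I}}_0)$ directly witnesses that $(\mathfrak{g}_0,\mathfrak{q})$ is factual, since $\tilde{\mathbf{G}}_0$ has Lie algebra $\mathfrak{g}_0$ and $\tilde{\mathbf{I}}_0$ is a closed subgroup with Lie algebra $\mathfrak{i}_0=\mathfrak{q}\cap\mathfrak{g}_0$. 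So this direction is immediate from the definition.

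\emph{The main work is the ``only if'' direction.} Suppose $(\mathfrak{g}_0,\mathfrak{q})$ is factual, so there exist some connected Lie group $\mathbf{G}_0$ with Lie algebra $\mathfrak{g}_0$ and a \emph{closed} subgroup $\mathbf{I}_0$ with Lie algebra $\mathfrak{i}_0$. I want to transport this closedness up to the universal cover. Let $p:\tilde{\mathbf{G}}_0\to\mathbf{G}_0$ be the covering homomorphism (we may assume $\mathbf{G}_0$ connected, replacing it by its identity component, since $\mathbf{I}_0\cap\mathbf{G}_0^{\circ}$ is still closed with the right Lie algebra). The analytic subgroup $\tilde{\mathbf{I}}_0\subset\tilde{\mathbf{G}}_0$ tangent to $\mathfrak{i}_0$ maps under $p$ onto the analytic subgroup $\mathbf{I}_0^{\circ}$ of $\mathbf{G}_0$ tangent to $\mathfrak{i}_0$, which is the identity component of the given closed $\mathbf{I}_0$. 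The plan is to show $\tilde{\mathbf{I}}_0=p^{-1}(\mathbf{I}_0^{\circ})^{\circ}$ is closed by writing $\tilde{\mathbf{I}}_0$ as (a connected component of) the preimage of a closed set.

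\emph{Key steps, in order.} (i) Reduce to $\mathbf{G}_0$ connected and identify $\mathbf{I}_0^{\circ}=p(\tilde{\mathbf{I}}_0)$ as the identity component of the closed subgroup $\mathbf{I}_0$; note $\mathbf{I}_0^{\circ}$ is itself closed in $\mathbf{G}_0$. (ii) Since $p$ is continuous, $p^{-1}(\mathbf{I}_0^{\circ})$ is a closed subgroup of $\tilde{\mathbf{G}}_0$, and its Lie algebra is exactly $\mathfrak{i}_0$ because $p$ is a local diffeomorphism. (iii) Conclude that $\tilde{\mathbf{I}}_0$, being the identity component of the closed subgroup $p^{-1}(\mathbf{I}_0^{\circ})$, is itself closed in $\tilde{\mathbf{G}}_0$ (the identity component of any Lie subgroup that is closed is closed, as it is open in the closed subgroup, hence the union of cosets that is its complement is open). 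This yields closedness of $\tilde{\mathbf{I}}_0$ in $\tilde{\mathbf{G}}_0$, as required.

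\emph{The anticipated obstacle.} The delicate point is step (iii)---controlling the relationship between the analytic subgroup $\tilde{\mathbf{I}}_0$ (intrinsically defined as the \emph{connected} subgroup tangent to $\mathfrak{i}_0$) and the full preimage $p^{-1}(\mathbf{I}_0^{\circ})$, whose connected components are indexed by the kernel of $p$ (a discrete central subgroup) intersected appropriately. One must verify that $\tilde{\mathbf{I}}_0$ is genuinely a \emph{union of connected components} of the closed set $p^{-1}(\mathbf{I}_0^{\circ})$, rather than a dense analytic subgroup winding inside it; this is where the subtlety analogous to the irrational-winding phenomenon of the preceding examples could in principle intrude. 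The safeguard is that $\tilde{\mathbf{I}}_0$, as the image under the exponential-generated analytic subgroup construction, is open in $p^{-1}(\mathbf{I}_0^{\circ})$ because the two share the same Lie algebra $\mathfrak{i}_0$, and an open subgroup of a topological group is automatically closed; combining openness in a closed ambient set with this standard fact gives closedness in $\tilde{\mathbf{G}}_0$. I would make sure the argument does not secretly assume $\mathbf{I}_0$ connected, and that it survives when $\ker p$ is infinite.
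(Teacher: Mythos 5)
Your proof is correct and takes essentially the same route as the paper: the ``if'' direction is immediate from the definition of factuality, and for ``only if'' the paper likewise passes to the covering homomorphism $\tilde{\mathbf{G}}_0\to\mathbf{G}_0$ and identifies $\tilde{\mathbf{I}}_0$ as the identity component of the (closed) preimage of $\mathbf{I}_0$, hence closed. Your resolution of the ``winding'' worry---that $\tilde{\mathbf{I}}_0$ and the preimage share the Lie algebra $\mathfrak{i}_0$, so $\tilde{\mathbf{I}}_0$ is open in it and therefore is its identity component---is exactly the justification left implicit in the paper's one-line assertion.
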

\begin{proof}
The statement is a special case of a general fact, only involving
homogeneous spaces. If $\mathbf{G}_0$ is any connected
Lie group with Lie algebra $\mathfrak{g}_0$, containing a closed subgroup
$\mathbf{I}_0$ with Lie algebra $\mathfrak{i}_0$, then the universal
covering
$\tilde{M}$ of the homogeneous manifold $M=\mathbf{G}_0/\mathbf{I}_0$
is $\tilde{\mathbf{G}}_0$-homogeneous,
and of the form $\tilde{\mathbf{G}}_0/\tilde{\mathbf{I}}_0$,
for the analytic Lie subgroup $\tilde{\mathbf{I}}_0$ of
$\tilde{\mathbf{G}}_0$ corresponding to the Lie subalgebra
$\mathfrak{i}_0$. The subgroup $\tilde{\mathbf{I}}_0$ is
the connected component
of the identity of the inverse image of $\mathbf{I}_0$ for the covering
map $\tilde{\mathbf{G}}_0\to\mathbf{G}_0$, hence closed when
$\mathbf{I}_0$ is closed.
\end{proof}
\begin{exam} Let $\mathbf{G}_0$ be a connected compact Lie group,
with a simple Lie algebra $\mathfrak{g}_0$,
of rank $\ell\geq{2}$. Fix a Cartan subalgebra
$\mathfrak{h}_0$ of  $\mathfrak{g}_0$ and let
$\mathcal{R}$ be the corresponding
root system for  the complexification
$\mathfrak{g}$ of $\mathfrak{g}_0$.
Fix a lexicographic order ``$\prec$'' of $\mathcal{R}$ and
let $\alpha_1,\hdots,\alpha_{\ell}$ be
the basis of positive simple roots, and
$H_{\alpha_1},\hdots,H_{\alpha_\ell}$ the corresponding
elements of $i\mathfrak{h}_0$. Fix $c_1,\hdots,c_\ell\in\mathbb{R}$,
linearly independent over the field $\mathbb{Q}$ of rational numbers.
Set
$H=c_1H_{\alpha_1}+\cdots+c_\ell{H}_{\alpha_{\ell}}$, and
take $\mathfrak{q}=\mathbb{C}\cdot{H}+
\sum_{\alpha\prec{0}}\mathfrak{g}^{\alpha}$,
where $\mathfrak{g}^{\alpha}\subset\mathfrak{g}$ is the root space
of $\alpha\in\mathcal{R}$. Then $(\mathfrak{g}_0,\mathfrak{q})$ is
fundamental and Levi-nondegenerate, but the analytic subgroup
$\mathbf{I}_0=\{\exp{(itH)}|t\in\mathbb{R}\}$
of $\mathbf{G}_0$,
corresponding to $\mathfrak{i}_0=\mathfrak{q}\cap\mathfrak{g}_0
=i\mathbb{R}\cdot{H}$, is not
closed. Its closure coincides with the Cartan subgroup
$\mathbf{H}_0$ of $\mathbf{G}_0$ with Lie algebra $\mathfrak{h}_0$.
We note
that its $\mathbf{G}_0$-closure (see \S\ref{sec:g}) is
$\mathfrak{q}^{\mathbf{G}_0}=\mathfrak{q}+\mathbb{C}\mathfrak{h}_0$.
It is a Borel subalgebra of
$\mathfrak{g}$, and
$(\mathfrak{g}_0,\mathfrak{q}^{\mathbf{G}_0})$ is the totally complex
$CR$ algebra of a complex flag manifold.
\end{exam}
\subsection{The \texorpdfstring{$\mathbf{G}_0$}{G0}-closure of a
\texorpdfstring{$CR$}{CR} algebra}
\label{sec:g}  
We may canonically associate to every $CR$ algebra a
\textit{factual} $CR$ algebra.
\begin{prop}\label{prop:ga}
Let $\mathbf{G}_0$ be a Lie group with Lie algebra $\mathfrak{g}_0$,
and $(\mathfrak{g}_0,\mathfrak{q})$ a $CR$ algebra.
Let $\mathbf{I}_0^0\subset\mathbf{G}_0$ be the analytic subgroup
of $\mathfrak{i}_0=\mathfrak{q}\cap\mathfrak{g}_0$.
Denote by
$\bar{\mathbf{I}}_0^0$ the closure
of $\mathbf{I}_0^0$ in $\mathbf{G}_0$, by
$\mathfrak{i}^{\mathbf{G}_0}\subset\mathfrak{g}_0$
the Lie algebra of $\bar{\mathbf{I}}_0^0$ and by
$\mathfrak{i}^{\mathbf{G}_0}$ its complexification.
Then:
\begin{enumerate}
\item $\mathfrak{q}^{\mathbf{G}_0}=\mathfrak{q}+\mathfrak{i}^{\mathbf{G}_0}$
is a complex Lie subalgebra
of the complexification $\mathfrak{g}$ of $\mathfrak{g}_0$,
which contains $\mathfrak{q}$ as an ideal. The quotient
$\mathfrak{q}^{\mathbf{G}_0}/\mathfrak{q}$ is Abelian.
\item If $\mathfrak{i}'_0$ is any real linear subspace of
$\mathfrak{g}_0$ with
$\mathfrak{i}_0\subset\mathfrak{i}'_0\subset\mathfrak{i}_0^{\mathbf{G}_0}$,
and $\mathfrak{i}'$ its complexification,
then $\mathfrak{q}'=\mathfrak{q}+
\mathfrak{i}'$ is a complex Lie subalgebra of $\mathfrak{g}$
and
the $\mathfrak{g}_0$-equivariant map $(\mathfrak{g}_0,\mathfrak{q})
\to(\mathfrak{g}_0,\mathfrak{q}')$ is an algebraic
$CR$ submersion, with Levi-flat fibers.
\item If $(\mathfrak{g}_0,\mathfrak{q})$ is fundamental,
and $\mathfrak{q}'$ is as in $(2)$, then also
$(\mathfrak{g}_0,\mathfrak{q}')$ is fundamental.
\end{enumerate}
\end{prop}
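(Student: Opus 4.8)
The plan is to dispatch the three assertions in order, isolating at the outset the single non-formal ingredient; after that, everything is bracket-chasing inside $\mathfrak{g}$. That ingredient is the classical structure of the closure of an analytic subgroup (Malcev's theorem): $\mathfrak{i}_0$ is an ideal of $\mathfrak{i}_0^{\mathbf{G}_0}$ and the quotient $\mathfrak{i}_0^{\mathbf{G}_0}/\mathfrak{i}_0$ is abelian. For (1) I would then proceed as follows. Since $\mathfrak{i}_0\subset\mathfrak{q}$ and $\mathfrak{q}$ is a complex subalgebra, $\mathrm{Ad}(g)$ preserves $\mathfrak{q}$ for every $g\in\mathbf{I}_0^0$; as the stabilizer $\{g\in\mathbf{G}_0\mid\mathrm{Ad}(g)\mathfrak{q}=\mathfrak{q}\}$ is closed, it contains $\bar{\mathbf{I}}_0^0$, and differentiating and complexifying gives $[\mathfrak{i}^{\mathbf{G}_0},\mathfrak{q}]\subset\mathfrak{q}$. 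Combined with $[\mathfrak{q},\mathfrak{q}]\subset\mathfrak{q}$ and the fact that $\mathfrak{i}^{\mathbf{G}_0}$ is a subalgebra, this shows that $\mathfrak{q}^{\mathbf{G}_0}=\mathfrak{q}+\mathfrak{i}^{\mathbf{G}_0}$ is a subalgebra in which $\mathfrak{q}$ is an ideal. Finally $\mathfrak{q}^{\mathbf{G}_0}/\mathfrak{q}\cong\mathfrak{i}^{\mathbf{G}_0}/(\mathfrak{i}^{\mathbf{G}_0}\cap\mathfrak{q})$ is a quotient of the abelian $\mathfrak{i}^{\mathbf{G}_0}/\mathfrak{i}$, hence abelian.

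For the first half of (2) I would verify that $\mathfrak{q}'=\mathfrak{q}+\mathfrak{i}'$ is a subalgebra from the three inclusions $[\mathfrak{q},\mathfrak{q}]\subset\mathfrak{q}$, $[\mathfrak{i}',\mathfrak{q}]\subset[\mathfrak{i}^{\mathbf{G}_0},\mathfrak{q}]\subset\mathfrak{q}$, and $[\mathfrak{i}',\mathfrak{i}']\subset[\mathfrak{i}^{\mathbf{G}_0},\mathfrak{i}^{\mathbf{G}_0}]\subset\mathfrak{i}\subset\mathfrak{q}$. The last inclusion is the crucial one: it is precisely the abelianness of $\mathfrak{i}^{\mathbf{G}_0}/\mathfrak{i}$ from (1) that makes $\mathfrak{q}'$ closed under bracket even though $\mathfrak{i}'_0$ is only assumed to be a linear subspace, not a subalgebra. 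The morphism $(\mathfrak{g}_0,\mathfrak{q})\to(\mathfrak{g}_0,\mathfrak{q}')$ is $\mathfrak{g}_0$-equivariant because $\mathfrak{q}\subset\mathfrak{q}'$; to see it is a $CR$ algebras submersion I observe that $\mathfrak{i}'=\overline{\mathfrak{i}'}$ is conjugation invariant and contained in $\mathfrak{q}'$, so $\mathfrak{i}'\subset\mathfrak{q}'\cap\bar{\mathfrak{q}}'$, whence $\mathfrak{q}+(\mathfrak{q}'\cap\bar{\mathfrak{q}}')=\mathfrak{q}'$ and, trivially, $\mathfrak{g}+(\mathfrak{q}'\cap\bar{\mathfrak{q}}')=\mathfrak{g}$; these are the two defining conditions of a submersion.

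The substantive part of (2), and where I expect the main work, is the Levi-flatness of the fiber. The fiber $CR$ algebra is $(\mathfrak{q}'\cap\mathfrak{g}_0,\mathfrak{q}'')$ with $\mathfrak{q}''=\mathfrak{q}\cap(\mathfrak{q}'\cap\bar{\mathfrak{q}}')=\mathfrak{q}\cap\bar{\mathfrak{q}}'$ and $\bar{\mathfrak{q}}''=\bar{\mathfrak{q}}\cap\mathfrak{q}'$, both subalgebras; since the vector-valued Levi form vanishes exactly when $[\mathfrak{q}'',\bar{\mathfrak{q}}'']\subset\mathfrak{q}''+\bar{\mathfrak{q}}''$, it suffices to prove that inclusion. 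Here is the argument I would give. Given $X\in\mathfrak{q}''\subset\bar{\mathfrak{q}}'=\bar{\mathfrak{q}}+\mathfrak{i}'$, write $X=\bar{Y}+W$ with $\bar{Y}\in\bar{\mathfrak{q}}$, $W\in\mathfrak{i}'$; then $\bar{Y}=X-W\in\mathfrak{q}'$ (as $X\in\mathfrak{q}\subset\mathfrak{q}'$ and $W\in\mathfrak{i}'\subset\mathfrak{q}'$), so in fact $\bar{Y}\in\bar{\mathfrak{q}}\cap\mathfrak{q}'=\bar{\mathfrak{q}}''$. For any $\bar{X}'\in\bar{\mathfrak{q}}''$ we get $[\bar{Y},\bar{X}']\in\bar{\mathfrak{q}}''$, while $[W,\bar{X}']\in\bar{\mathfrak{q}}$ (because $\mathfrak{i}^{\mathbf{G}_0}$ normalizes $\bar{\mathfrak{q}}$, obtained by conjugating the normalization in (1)) and $[W,\bar{X}']\in\mathfrak{q}'$ (because $\mathfrak{q}'$ is a subalgebra), so $[W,\bar{X}']\in\bar{\mathfrak{q}}\cap\mathfrak{q}'=\bar{\mathfrak{q}}''$. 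Hence $[X,\bar{X}']\in\bar{\mathfrak{q}}''\subset\mathfrak{q}''+\bar{\mathfrak{q}}''$, so the fiber is Levi-flat; the computation in fact exhibits the Levi form as identically zero.

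Assertion (3) then follows with no new input: from $\mathfrak{q}'+\bar{\mathfrak{q}}'=(\mathfrak{q}+\bar{\mathfrak{q}})+\mathfrak{i}'\supset\mathfrak{q}+\bar{\mathfrak{q}}$, any complex subalgebra $\mathfrak{q}''$ of $\mathfrak{g}$ with $\mathfrak{q}'+\bar{\mathfrak{q}}'\subset\mathfrak{q}''$ also satisfies $\mathfrak{q}+\bar{\mathfrak{q}}\subset\mathfrak{q}''$, so fundamentality of $(\mathfrak{g}_0,\mathfrak{q})$ forces $\mathfrak{q}''=\mathfrak{g}$, i.e. $(\mathfrak{g}_0,\mathfrak{q}')$ is fundamental. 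The only genuinely non-formal step in the whole proof is the abelian-quotient fact invoked in (1); I would be careful to cite the closure theorem in the generality needed (an arbitrary analytic subgroup, not merely a one-parameter group), since the subalgebra property of $\mathfrak{q}'$ for a non-subalgebra $\mathfrak{i}'_0$, and with it the entire construction, rests on the abelianness of $\mathfrak{i}^{\mathbf{G}_0}/\mathfrak{i}$.
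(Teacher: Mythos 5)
Your proof is correct and follows essentially the same route as the paper: continuity of $\mathrm{Ad}$ gives the normalization $[\mathfrak{i}^{\mathbf{G}_0},\mathfrak{q}]\subset\mathfrak{q}$, and the Malcev closure property $[\mathfrak{i}_0^{\mathbf{G}_0},\mathfrak{i}_0^{\mathbf{G}_0}]=[\mathfrak{i}_0,\mathfrak{i}_0]\subset\mathfrak{i}_0$ is the single non-formal ingredient that makes $\mathfrak{q}'$ a subalgebra even when $\mathfrak{i}'_0$ is only a linear subspace. The one difference is that you explicitly verify the $CR$-submersion conditions and the Levi-flatness of the fibers (via $[\mathfrak{q}'',\bar{\mathfrak{q}}'']\subset\bar{\mathfrak{q}}''$), which the paper asserts without detail, and your derivation of the abelianness of $\mathfrak{q}^{\mathbf{G}_0}/\mathfrak{q}$ as a quotient of $\mathfrak{i}^{\mathbf{G}_0}/\mathfrak{i}$ avoids the paper's slightly overstated equality $[\mathfrak{q}^{\mathbf{G}_0},\mathfrak{q}^{\mathbf{G}_0}]=\mathfrak{q}$.
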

\begin{proof}
Since $\mathrm{Ad}(g)(\mathfrak{q})=\mathfrak{q}$ for all
$g\in\mathbf{I}_0^0$, we also have
$\mathrm{Ad}(g)(\mathfrak{q})=\mathfrak{q}$ for all
$g\in\bar{\mathbf{I}}_0^0$. This implies that
$\mathrm{ad}(X)(\mathfrak{q})\subset\mathfrak{q}$ for all
$X\in\mathfrak{i}_0^{\mathbf{G}_0}$, hence
$\mathfrak{q}^{\mathbf{G}_0}=\mathfrak{q}+
\mathfrak{i}^{\mathbf{G}_0}$ is a complex Lie subalgebra.
Clearly $\mathfrak{q}$ is an ideal in $\mathfrak{q}^{\mathbf{G}_0}$
and $\mathfrak{q}^{\mathbf{G}_0}/\mathfrak{q}$ is Abelian. Indeed
the equality
$[\mathfrak{i}^{\mathbf{G}_0}_0,\mathfrak{i}^{\mathbf{G}}_0]=
[\mathfrak{i}_0,\mathfrak{i}_0]\subset\mathfrak{i}_0$
(see e.g. \cite[Chap.2, \S{5.2}]{AGOV93}) implies that
$[\mathfrak{q}^{\mathbf{G}_0},\mathfrak{q}^{\mathbf{G}_0}]=
\mathfrak{q}$.
Hence,
if $\mathfrak{i}_0\subset\mathfrak{i}'_0\subset\mathfrak{i}_0^{\mathbf{G}_0}$,
we obtain $[\mathfrak{i}'_0,\mathfrak{i}'_0]\subset\mathfrak{i}_0$.
Therefore
$\mathfrak{q}'$ defined in $(2)$ is a
complex Lie subalgebra of $\mathfrak{g}^{\mathbb{C}}$, and
the $\mathfrak{g}$-equivariant map
$(\mathfrak{g}_0,\mathfrak{q})
\to(\mathfrak{g}_0,\mathfrak{q}')$ is an algebraic
$CR$ submersion, with Levi-flat fibers. Finally, $(3)$ is
obvious from the inclusion  $\mathfrak{q}\subset\mathfrak{q}'$.
\end{proof}
Keeping the notation of Proposition \ref{prop:ga}, we give the following:
\begin{dfn}
Let $(\mathfrak{g}_0,\mathfrak{q})$ be a $CR$ algebra and
$\mathbf{G}_0$ a Lie group with Lie algebra $\mathfrak{g}_0$.
The $CR$ algebra $(\mathfrak{g}_0,\mathfrak{q}^{\mathbf{G}_0})$
is called the \emph{$\mathbf{G}_0$-closure} of
$(\mathfrak{g}_0,\mathfrak{q})$ (cf.
\cite[pp.53-54]{AGOV93}, where $\mathfrak{i}_0^{\mathbf{G}_0}$ is called
\emph{the Malcev-closure of}~$\mathfrak{i}_0$).
\end{dfn}
\begin{prop} Let $(\mathfrak{g}_0,\mathfrak{q})$ be a
weakly nondegenerate
$CR$ algebra, $\mathfrak{i}'_0$
any real linear subspace of $\mathfrak{g}_0$
with $\mathfrak{i}_0\subset\mathfrak{i}'_0\subset\mathfrak{i}_0^{\mathbf{G}_0}$,
and denote by $\mathfrak{i}'$ its complexification. Set
$\mathfrak{q}'=\mathfrak{q}+\mathfrak{i}'$. Then
 the $\mathfrak{g}_0$-equivariant algebraic-$CR$ submersion
$(\mathfrak{g}_0,\mathfrak{q})
\to(\mathfrak{g}_0,\mathfrak{q}')$ has totally real fibers.
\end{prop}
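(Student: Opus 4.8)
The plan is to identify the fiber of the submersion explicitly and to reduce the claim to a single inclusion, which is then forced by weak nondegeneracy. Since the morphism $(\mathfrak{g}_0,\mathfrak{q})\to(\mathfrak{g}_0,\mathfrak{q}')$ is $\mathfrak{g}_0$-equivariant, its complexification is the identity and $\mathfrak{q}\subset\mathfrak{q}'$; hence, by the definition of the fiber of a $CR$ algebras morphism and the description in \cite[\S5]{MN05}, the fiber is the $CR$ algebra $(\mathfrak{i}'_0,\mathfrak{q}'')$ with $\mathfrak{q}''=\mathfrak{q}\cap\phi^{-1}(\mathfrak{q}'\cap\bar{\mathfrak{q}}')=\mathfrak{q}\cap\bar{\mathfrak{q}}'$ (using $\mathfrak{q}\subset\mathfrak{q}'$). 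By Remark~\ref{rmk:ac} the fiber is totally real exactly when $\mathfrak{q}''\subset\bar{\mathfrak{q}}''$, i.e. when $\mathfrak{q}\cap\bar{\mathfrak{q}}'\subset\bar{\mathfrak{q}}\cap\mathfrak{q}'$. As the inclusion $\mathfrak{q}\cap\bar{\mathfrak{q}}'\subset\mathfrak{q}\subset\mathfrak{q}'$ is automatic, the whole statement reduces to
\begin{equation*}
\mathfrak{q}''=\mathfrak{q}\cap\bar{\mathfrak{q}}'\subset\bar{\mathfrak{q}},\qquad\text{equivalently}\qquad\mathfrak{q}\cap\bar{\mathfrak{q}}'=\mathfrak{q}\cap\bar{\mathfrak{q}}.
\end{equation*}

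To produce this inclusion I would introduce the subspace $\mathfrak{s}=\bar{\mathfrak{q}}+(\mathfrak{q}\cap\bar{\mathfrak{q}}')$ and prove that it is a complex Lie subalgebra sandwiched between $\bar{\mathfrak{q}}$ and $\mathfrak{q}+\bar{\mathfrak{q}}$. The sandwiching is immediate: $\bar{\mathfrak{q}}\subset\mathfrak{s}$, while $\mathfrak{q}\cap\bar{\mathfrak{q}}'\subset\mathfrak{q}$ gives $\mathfrak{s}\subset\mathfrak{q}+\bar{\mathfrak{q}}$. Now weak nondegeneracy of $(\mathfrak{g}_0,\mathfrak{q})$ is equivalent, by applying complex conjugation (an antilinear automorphism of $\mathfrak{g}$ carrying $\mathfrak{q}$ to $\bar{\mathfrak{q}}$ and permuting the complex Lie subalgebras between them), to weak nondegeneracy of $(\mathfrak{g}_0,\bar{\mathfrak{q}})$; the latter forces every complex Lie subalgebra $\mathfrak{s}$ with $\bar{\mathfrak{q}}\subset\mathfrak{s}\subset\bar{\mathfrak{q}}+\mathfrak{q}$ to coincide with $\bar{\mathfrak{q}}$. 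This yields $\mathfrak{q}\cap\bar{\mathfrak{q}}'\subset\bar{\mathfrak{q}}$ at once, completing the reduction.

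The main obstacle is therefore to check that $\mathfrak{s}$ is closed under brackets, and this is precisely where the hypothesis $\mathfrak{i}'_0\subset\mathfrak{i}_0^{\mathbf{G}_0}$ is used. The brackets $[\bar{\mathfrak{q}},\bar{\mathfrak{q}}]\subset\bar{\mathfrak{q}}$ and $[\mathfrak{q}\cap\bar{\mathfrak{q}}',\mathfrak{q}\cap\bar{\mathfrak{q}}']\subset\mathfrak{q}\cap\bar{\mathfrak{q}}'$ are clear, the latter because $\mathfrak{q}\cap\bar{\mathfrak{q}}'$ is an intersection of subalgebras. For the mixed term I would invoke the fact established in the proof of Proposition~\ref{prop:ga}, namely $\mathrm{ad}(X)(\mathfrak{q})\subset\mathfrak{q}$ for every $X\in\mathfrak{i}^{\mathbf{G}_0}$; since $\mathfrak{i}'\subset\mathfrak{i}^{\mathbf{G}_0}$ is conjugation invariant, conjugating shows that $\mathrm{ad}(\mathfrak{i}')$ preserves both $\mathfrak{q}$ and $\bar{\mathfrak{q}}$. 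Any $C\in\mathfrak{q}\cap\bar{\mathfrak{q}}'$ lies in $\bar{\mathfrak{q}}'=\bar{\mathfrak{q}}+\mathfrak{i}'$, so it splits as $C=C_1+C_2$ with $C_1\in\bar{\mathfrak{q}}$ and $C_2\in\mathfrak{i}'$; for $W\in\bar{\mathfrak{q}}$ one then gets $[W,C_1]\in\bar{\mathfrak{q}}$ and $[W,C_2]\in\bar{\mathfrak{q}}$, whence $[\bar{\mathfrak{q}},\mathfrak{q}\cap\bar{\mathfrak{q}}']\subset\bar{\mathfrak{q}}\subset\mathfrak{s}$. This closes $\mathfrak{s}$ under brackets. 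A minor point to record along the way is that $\mathfrak{q}''\subset\mathfrak{i}'$, so conjugation inside the fiber agrees with conjugation in $\mathfrak{g}$; this follows from the fiber description in \cite[\S5]{MN05} together with the identity $\mathfrak{q}'\cap\bar{\mathfrak{q}}'=\mathfrak{i}'$, which holds since $\mathfrak{q}'\cap\bar{\mathfrak{q}}'$ is conjugation invariant with real points $\mathfrak{i}'_0$.
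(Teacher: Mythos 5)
Your proof is correct, and at bottom it runs on the same mechanism as the paper's: weak nondegeneracy is applied to an intermediate complex Lie subalgebra sandwiched between $\mathfrak{q}$ (or its conjugate) and $\mathfrak{q}+\bar{\mathfrak{q}}$, whose closure under brackets comes from the fact, established in the proof of Proposition~\ref{prop:ga}, that $\mathfrak{i}^{\mathbf{G}_0}$, hence $\mathfrak{i}'$, normalizes both $\mathfrak{q}$ and $\bar{\mathfrak{q}}$. The execution, however, is dual to the paper's. The paper sets $\mathfrak{a}_0=\mathfrak{i}'_0\cap(\mathfrak{q}+\bar{\mathfrak{q}})$ and collapses $\mathfrak{q}+\mathfrak{a}$ onto $\mathfrak{q}$; this gives $\mathfrak{a}_0\subset\mathfrak{i}_0$ in one stroke, and the fiber is then read off as $(\mathfrak{i}'_0,\mathfrak{a})$ with $\mathfrak{a}=\bar{\mathfrak{a}}$, so total reality is immediate. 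You instead compute the fiber subalgebra $\mathfrak{q}''=\mathfrak{q}\cap\bar{\mathfrak{q}}'$ first, reduce total reality to the single inclusion $\mathfrak{q}''\subset\bar{\mathfrak{q}}$, and collapse $\bar{\mathfrak{q}}+\mathfrak{q}''$ onto $\bar{\mathfrak{q}}$, after the (correct) remark that weak nondegeneracy of $(\mathfrak{g}_0,\mathfrak{q})$ is inherited by $(\mathfrak{g}_0,\bar{\mathfrak{q}})$ under conjugation. Your route costs the mixed-bracket check $[\bar{\mathfrak{q}},\mathfrak{q}\cap\bar{\mathfrak{q}}']\subset\bar{\mathfrak{q}}$, which you handle correctly by splitting $C=C_1+C_2$ along $\bar{\mathfrak{q}}'=\bar{\mathfrak{q}}+\mathfrak{i}'$, but it buys a transparent statement of exactly what must be proved about the fiber; the paper's choice of $\mathfrak{a}_0$ avoids that check, at the price of identifying the fiber only at the very end. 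One small caveat: your closing identity $\mathfrak{q}'\cap\bar{\mathfrak{q}}'=\mathfrak{i}'$ tacitly assumes $\mathfrak{q}'\cap\mathfrak{g}_0=\mathfrak{i}'_0$, which is true here but not completely trivial; for the purpose you use it for (that conjugation inside the fiber is the restriction of conjugation of $\mathfrak{g}$) the weaker, automatic identity $\mathfrak{q}'\cap\bar{\mathfrak{q}}'=\mathbb{C}\otimes_{\mathbb{R}}(\mathfrak{q}'\cap\mathfrak{g}_0)$ already suffices, so nothing in your main argument is affected.
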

\begin{proof}
Indeed, the intersection
$\mathfrak{a}_0=\mathfrak{i}'_0\cap(\mathfrak{q}+\bar{\mathfrak{q}})$ is a
real Lie subalgebra that normalizes $\mathfrak{q}$, hence
$\mathfrak{q}''=\mathfrak{q}+\mathfrak{a}$, where $\mathfrak{a}$
is the complexification of $\mathfrak{a}_0$,
is a complex Lie subalgebra of $\mathfrak{g}$. Since
$\mathfrak{q}\subset\mathfrak{q}''\subset\mathfrak{q}+\bar{\mathfrak{q}}$,
the assumption that
$(\mathfrak{g},\mathfrak{q})$ is weakly nondegenerate implies that
$\mathfrak{q}''=\mathfrak{q}$. Then $\mathfrak{a}_0\subset
\mathfrak{q}$ and $\mathfrak{a}_0\subset\mathfrak{i}_0$,
and
the fiber of the $\mathfrak{g}_0$-equivariant $CR$ map
$(\mathfrak{g}_0,\mathfrak{q})\to(\mathfrak{g}_0,\mathfrak{q}')$ is
$(\mathfrak{i}'_0,\mathfrak{a}\cap\mathfrak{q})=
(\mathfrak{i}_0',\mathfrak{a})$, thus totally real.
\end{proof}
\section{Real analytic 
and algebraic
\texorpdfstring{$CR$}{CR} manifolds}
\label{sec:c}
\subsection{\texorpdfstring{$CR$}{CR} submanifolds of analytic spaces}
A $\mathbf{G}_0$-homogeneous $CR$ manifold is real analytic,
hence it can be realized as a generic
$CR$ submanifold of a complex manifold (see e.g. \cite{AF79, Ro73}).
Let us consider, in general, the
embedding of a real analytic $CR$ manifold into a complex space.
\begin{dfn}\label{def:ca}
Let $M$ be a real analytic $CR$ manifold, $N$ a complex space, and
$\phi:M\hookrightarrow{N}$ a real analytic map.
The structure
sheaf $\mathcal{O}_N$ of germs of holomorphic functions on $N$
pulls back to a subsheaf $\phi^*(\mathcal{O}_N)$ of
the sheaf
$\mathcal{A}_M$
of germs of  complex valued real analytic functions on $M$.
Let $\mathcal{O}_M$ be the sheaf of germs of real analytic $CR$ functions
on $M$.
We say that $\phi$ is
\begin{enumerate}
\item a $CR$ map if $\phi^*(\mathcal{O}_N)$
is contained in $\mathcal{O}_M$,
\item a $CR$ immersion if $\phi^*(\mathcal{O}_N)=\mathcal{O}_M$,
\item a generic $CR$ immersion if moreover
the composition
\begin{displaymath}\begin{CD}
\phi^{-1}(\mathcal{O}_N)@>>>\phi^*(\mathcal{O}_N)@>>>
\mathcal{O}_M
\end{CD}
\end{displaymath}
defines
an isomorphism
of the inverse image sheaf
$\phi^{-1}(\mathcal{O}_N)$ onto~$\mathcal{O}_M$.
\end{enumerate}\par
A $CR$ immersion $\phi$ that is also a topological embedding will
be called a $CR$ embedding.\par
If $N$ is a smooth complex manifold, these notions coincide
with the classic definitions in \S\ref{sec:1.1}.\par
\end{dfn}

\par\smallskip
\begin{lem}\label{lem:cb}
  Let $M$ be a real analytic $CR$ manifold, generically embedded
into a complex space $N$. Then $M$
is contained in the set $N^{\mathrm{reg}}$ of non singular points of $N$.
\end{lem}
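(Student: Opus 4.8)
The plan is to reduce the assertion to a local statement about the stalks of the two structure sheaves, and then to read off the regularity of $N$ at an image point from the regularity of the local ring of germs of $CR$ functions on $M$.

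Fix $p\in{M}$ and set $x=\phi(p)\in{N}$. Since $\phi$ is a generic $CR$ immersion, the composition in Definition \ref{def:ca}(3) induces at $p$ an isomorphism of local $\mathbb{C}$-algebras
\[
\mathcal{O}_{N,x}=\bigl(\phi^{-1}\mathcal{O}_N\bigr)_p\;\xrightarrow{\ \sim\ }\;\mathcal{O}_{M,p},
\]
because the stalk of the inverse image sheaf $\phi^{-1}\mathcal{O}_N$ at $p$ is $\mathcal{O}_{N,\phi(p)}$. It therefore suffices to prove that $\mathcal{O}_{M,p}$ is isomorphic, as a $\mathbb{C}$-algebra, to the convergent power series ring $\mathbb{C}\{z_1,\dots,z_m\}$ for a suitable $m$: the germ of a complex space whose local ring is isomorphic to such a power series ring is a smooth germ, so this forces $x\in{N}^{\mathrm{reg}}$.

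Next I would identify $\mathcal{O}_{M,p}$. Being real analytic, $M$ can be realized near $p$ as a generic $CR$ submanifold of $\mathbb{C}^m$, with $m$ the sum of its $CR$-dimension and $CR$-codimension (see \cite{AF79, Ro73}). For a generic real analytic $CR$ submanifold every germ of real analytic $CR$ function at $p$ extends to a germ of holomorphic function on a neighborhood of $p$ in $\mathbb{C}^m$, and this extension is unique, since a holomorphic germ vanishing along such an $M$ must vanish identically. Hence restriction to $M$ yields an isomorphism
\[
\mathcal{O}_{\mathbb{C}^m,p}\;\xrightarrow{\ \sim\ }\;\mathcal{O}_{M,p},
\]
so that $\mathcal{O}_{M,p}\simeq\mathbb{C}\{z_1,\dots,z_m\}$ is a regular local ring of Krull dimension $m$.

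Combining the two isomorphisms, $\mathcal{O}_{N,x}\simeq\mathbb{C}\{z_1,\dots,z_m\}$; in particular this local ring is reduced and regular, so $x$ is a nonsingular point of $N$. As $p$ was arbitrary and $\phi$ is a topological embedding, $M$ is contained in $N^{\mathrm{reg}}$. I expect the middle step to be the main obstacle, namely the identification of the stalk $\mathcal{O}_{M,p}$ of germs of real analytic $CR$ functions with the regular ring of holomorphic germs in $m$ variables; this is exactly where the real analyticity of $M$ and the genericity of the embedding enter, through the local generic embedding theorem and the unique holomorphic extendability of real analytic $CR$ functions.
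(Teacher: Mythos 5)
Your proof is correct and takes essentially the same route as the paper's: both transfer the stalk isomorphism $\mathcal{O}_{N,x}\simeq\mathcal{O}_{M,p}$ provided by genericity of the embedding, and then conclude that $x$ is a non-singular point because $\mathcal{O}_{M,p}$ is a regular local ring. The only difference is one of detail: the paper simply asserts that real analyticity of $M$ makes $\mathcal{O}_{M,p}$ regular, while you justify this via the local generic embedding of $M$ into $\mathbb{C}^m$ and the unique holomorphic extension of real analytic $CR$ functions, identifying $\mathcal{O}_{M,p}$ with $\mathbb{C}\{z_1,\dots,z_m\}$ --- an elaboration of the same argument, not a different method.
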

\begin{proof}
Indeed, the fact that $M$ is real analytic implies that
for each $p\in{M}$ the local ring $\mathcal{O}_{M,p}$ is regular.
Hence $\mathcal{O}_{N,p}$, being isomorphic to a regular local ring,
is also regular.
\end{proof}
\subsection{Algebraic and weakly-algebraic \texorpdfstring{$CR$}{CR} manifolds}
We consider now $CR$ structures on real algebraic manifolds.
\begin{dfn}
  An \emph{affine $CR$ submanifold} of $\mathbb{C}^n$
is a smooth 
real algebraic subvariety $M$ of
$\mathbb{C}^n$
that is also a $CR$ submanifold. 
\par
An \emph{affine $CR$ manifold} is a smooth real algebraic variety,
endowed with a $CR$ structure, and $CR$-isomorphic, by a
smooth birational
correspondence, with an affine $CR$ submanifold of some $\mathbb{C}^n$.
\par
An \emph{algebraic $CR$ manifold} is a smooth real algebraic variety,
endowed with a $CR$ structure, 
in which each point has a Zariski open
neighborhood that is an affine $CR$ manifold.
\par
An \emph{algebraic $CR$ submanifold} $M$
of an algebraic complex variety $N$ is a smooth real algebraic
subvariety of $N$, embedded as a $CR$ submanifold in the set
$N^{\mathrm{reg}}$ of its regular points.
\par
Likewise, we can define \emph{semialgebraic} $CR$ manifolds and
submanifolds.\par
A \emph{weakly-algebraic $CR$ manifold} $M$ is
a smooth real  algebraic variety endowed with
an  algebraic formally integrable partial complex structure. This
is given by a formally integrable
smooth complex valued  algebraic distribution
$T^{0,1}M\subset T^{\mathbb{C}}M$,
with  $T^{0,1}M\cap\overline{T^{0,1}M}=
{0}_M$.
\end{dfn}
We observe that an irreducible
real algebraic subvariety $M'$ of an irreducible complex
algebraic variety $N$ contains a maximal Zariski open subset $M$
that is a real algebraic $CR$ submanifold
of a Zariski open subset of $N$. \par
An algebraic (respectively, semialgebraic)
$CR$ submanifold of a complex algebraic variety
naturally is an algebraic (respectively, semialgebraic)
$CR$ manifold. The vice versa may
be false, as we shall see as a consequence of Theorem \ref{thm:je} below.
     \begin{rmk}
Since neither the complex nor the real Frobenius theorems are valid
in the algebraic category, weakly-algebraic $CR$ manifolds may not
be algebraic $CR$ manifolds. For instance, consider the complex
structure on $\mathbb{R}^2_{x,y}$ defined by
$J_{x,y}=  \left(
    \begin{smallmatrix}
      x&1+x^2\\
-1&-x
    \end{smallmatrix}\right)$.
This structure is weakly-algebraic, but not algebraic, because
any rational function in $\mathbb{C}(x,y)$,
holomorphic for this structure near a
point of $\mathbb{R}^2$, is constant.
     \end{rmk}
\begin{prop}\label{prop:jc}
Let $M$ be an algebraic $CR$ manifold. Then $M$ has a real
algebraic embedding into a complex variety $N$, that is also
a generic $CR$-embedding into the set
$N^{\mathrm{reg}}$
of its regular points.
\end{prop}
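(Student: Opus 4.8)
The plan is to construct the embedding \emph{locally} using the defining affine charts and then glue them into a global variety $N$. By definition, $M$ is covered by Zariski open sets $M_\alpha$, each of which is an affine $CR$ manifold, hence $CR$-isomorphic by a smooth birational correspondence to an affine $CR$ submanifold $M_\alpha' \subset \mathbb{C}^{n_\alpha}$. First I would record that each such $M_\alpha'$, being a real algebraic $CR$ submanifold of $\mathbb{C}^{n_\alpha}$, sits inside a natural complex host: namely, I take $N_\alpha$ to be the (irreducible component of the) complex Zariski closure of $M_\alpha'$ in $\mathbb{C}^{n_\alpha}$, i.e.\ the smallest complex algebraic subvariety of $\mathbb{C}^{n_\alpha}$ containing $M_\alpha'$. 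This is the algebraic analogue of complexifying a real analytic manifold, and I would choose the embedding dimension $n_\alpha$ large enough (equal to the sum of the $CR$ dimension and $CR$ codimension) so that the inclusion $M_\alpha' \hookrightarrow N_\alpha$ is $CR$-generic in the sense of \S\ref{sec:1.1}.

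The heart of the argument is to verify that $M_\alpha' \hookrightarrow N_\alpha^{\mathrm{reg}}$ is a \emph{generic} $CR$-embedding in the sheaf-theoretic sense of Definition \ref{def:ca}, and that $M_\alpha'$ lands in the regular locus. For the latter I would invoke Lemma \ref{lem:cb}: once I know the embedding is generic, that lemma forces $M_\alpha' \subset N_\alpha^{\mathrm{reg}}$, since the local rings $\mathcal{O}_{M_\alpha',p}$ are regular (being those of a smooth real algebraic variety), whence the isomorphic rings $\mathcal{O}_{N_\alpha,p}$ are regular too. To establish genericity I would argue that, because $N_\alpha$ is the complex Zariski closure, the restriction map on structure sheaves $\phi^{-1}(\mathcal{O}_{N_\alpha}) \to \mathcal{O}_{M_\alpha'}$ is injective (no nonzero holomorphic function on $N_\alpha$ vanishes identically on the Zariski-dense $M_\alpha'$), while surjectivity onto the sheaf of $CR$ functions follows from genericity of the real-dimensional count together with the standard fact that a real analytic $CR$ function on a generic submanifold extends to a holomorphic function on a neighborhood in the ambient complex space.

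The main obstacle will be the \textbf{gluing step}: I must patch the local complex hosts $N_\alpha$ into a single separated complex algebraic variety $N$ so that the local embeddings agree on overlaps. On each overlap $M_\alpha \cap M_\beta$, the two ambient embeddings differ by the birational $CR$-isomorphisms provided by the affine structures; I would show these induce birational maps $N_\alpha \dashrightarrow N_\beta$ that are biregular in a Zariski neighborhood of the image of the overlap (using again that the $N_\alpha$ are the minimal complex closures, so the transition data is determined by $M$ up to birational equivalence). The cocycle condition on triple overlaps is automatic from the corresponding condition for the $M_\alpha$ themselves. Here I expect the technical care to lie in ensuring the glued $N$ is \emph{separated} and in checking that the regular loci match up, so that the global inclusion $M \hookrightarrow N$ lands in $N^{\mathrm{reg}}$.

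Once $N$ is assembled, I would conclude by assembling the local genericity and regularity statements: the embedding $M \hookrightarrow N$ is real algebraic by construction, it is a $CR$-embedding since it is so on each chart, and it is generic into $N^{\mathrm{reg}}$ by the local verifications above together with the compatibility of the regular loci under the gluing. A final remark would record that the construction is canonical up to birational equivalence, since $N_\alpha$ was taken to be the minimal complex variety containing the chart — this canonicity is what later permits the distinction, announced before Theorem \ref{thm:je}, between algebraic and weakly-algebraic $CR$ manifolds.
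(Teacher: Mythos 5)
Your construction agrees with the paper's: in both cases the complex host for an affine chart $M'_\alpha\subset\mathbb{C}^{\ell}$ is the Zariski closure $N_\alpha=V(\mathcal{I})$, where $\mathcal{I}$ is the ideal of complex polynomials vanishing on $M'_\alpha$, the points of $M$ land in the regular locus by Lemma \ref{lem:cb}, and the global statement is obtained by patching a finite atlas of affine charts by birational equivalence. But there is a genuine gap at the central step. You write that you would ``choose the embedding dimension $n_\alpha$ \dots equal to the sum of the $CR$ dimension and $CR$ codimension, so that the inclusion $M'_\alpha\hookrightarrow N_\alpha$ is $CR$-generic''. This is not something you can choose: the affine chart comes with a fixed embedding into some $\mathbb{C}^{\ell}$, with $\ell$ in general strictly larger than $n+k$, and the dimension of its Zariski closure is then determined by $M'_\alpha$ itself. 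That $\dim_{\mathbb{C}}N_\alpha=n+k$ --- equivalently, that $M'_\alpha\hookrightarrow N_\alpha$ is generic in the sense of Definition \ref{def:ca} --- is precisely the assertion to be proven, and it is the only place where the $CR$-integrability of $M$ enters. Your later justification of surjectivity of $\phi^{-1}(\mathcal{O}_{N_\alpha})\to\mathcal{O}_{M'_\alpha}$ ``from genericity of the real-dimensional count'' therefore runs in a circle: it invokes the very dimension statement that was never established.

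This dimension count is where almost all of the paper's proof is spent. Fixing $z^0\in M$ and coordinates so that $\mathrm{Re}\,z_1,\hdots,\mathrm{Re}\,z_{n+k},\mathrm{Im}\,z_1,\hdots,\mathrm{Im}\,z_n$ are local coordinates on $M$, one takes, for each coordinate polynomial $w$, a relation \eqref{eq:jd} of minimal degree $d$ with coefficients $b_j\in\mathbb{C}[z_1,\hdots,z_{n+k},\bar{z}_1,\hdots,\bar{z}_n]$ and with $b_0$ of minimal total degree, applies the tangential operator $\bar\partial_M$ (which annihilates $w$ because $M$ is $CR$), and uses both minimalities to force $\bar\partial b_j=0$, i.e.\ all the $b_j$ to be holomorphic. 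Hence the restriction of every holomorphic polynomial to $M$ is algebraic over $\mathbb{C}(z_1,\hdots,z_{n+k})$, and the theorem of the primitive element then exhibits $N=V(\mathcal{I})$ as a complex variety of pure dimension $n+k$, which is exactly the genericity you need; only then does Lemma \ref{lem:cb} apply to place $M$ in $N^{\mathrm{reg}}$. Without an argument of this kind, or some substitute proving the dimension bound, the proposal does not prove the proposition; the gluing and separatedness issues on which you concentrate are, by comparison, the routine part, and the paper disposes of them in a single sentence.
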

\begin{proof} We first consider the case where $M$ is an affine
$CR$ manifold, of
$CR$ dimension $n$, and
$CR$ codimension $k$, of a Euclidean complex space $\mathbb{C}^{\ell}$.
Denote by
$\mathcal{I}$ the ideal of polynomials $P\in\mathbb{C}[z_1,
\hdots,z_\ell]$ vanishing on $M$. We claim that $N=V(\mathcal{I})$
has the properties requested in the statement. To this aim, let us fix
a point $z^{0}\in{M}$.  We can assume that the restrictions to
$M$ of
$dz_1,\hdots,dz_{n+k}$ are linearly independent in a neighborhood
of $z^0$ in $M$, and that $\mathrm{Re}\,{z}_1,\hdots,\mathrm{Re}\,{z}_{n+k}$
and $\mathrm{Im}\,{z}_1,\hdots,\mathrm{Im}\,{z}_{n}$ define a system
of coordinates in a neighborhood of $z_0$ for the real analytic structure of
$M$. The restriction to $M$ of the polynomials in
$P\in\mathbb{C}[z_1,\hdots,z_\ell,\bar{z}_1,\hdots,\bar{z}_\ell]$
form a ring which is an algebraic extension of the ring of the restrictions
to $M$ of polynomials in $\mathbb{C}[z_1,\hdots,z_{n+k},
\bar{z}_1,\hdots,\bar{z}_{n}]$. Let
$w\in\mathbb{C}[z_1,\hdots,z_\ell]$. Then there is a smallest
integer $d\geq{1}$ such that $w$ satisfies a monic equation:
\begin{displaymath}
  w^d+a_1(z',\bar{z}')w^{d-1}+\cdots+a_d(z',\bar{z}')=0 \quad
\text{on}\; M,
\end{displaymath}
where $a_j(z',\bar{z}')$
are rational functions of
$z_1,\hdots,z_{n+k},
\bar{z}_1,\hdots,\bar{z}_{n}$,
for $j=1,\hdots,d$. Eliminating denominators, we obtain an equation:
\begin{equation}\label{eq:jd}
 b_0(z',\bar{z}') w^d+b_1(z',\bar{z}')w^{d-1}+\cdots+b_d(z',\bar{z}')=0 \quad
\text{on}\; M,
\end{equation}
with $b_j$
polynomials in $\mathbb{C}[z_1,\hdots,z_{n+k},\bar{z}_1,\hdots,\bar{z}_{n}]$.
Moreover, we can assume that
$b_0\in\mathbb{C}[z_1,\hdots,z_{n+k},\bar{z}_1,\hdots,\bar{z}_{n}]$
has minimal total degree among the $b_0$'s of the non zero polynomial
vectors $(b_0,\hdots,b_d)$ that satisfy \eqref{eq:jd}.
For $b\in\mathbb{C}[z_1,\hdots,z_{n+k},\bar{z}_1,\hdots,\bar{z}_{n}]$
the anti-holomorphic differential $\bar{\partial}b$ is a linear combination
of the differentials $d\bar{z}_1,\hdots,d\bar{z}_n$ and me may
identify $\bar\partial_Mb$ to the restriction of $\bar\partial{b}$ to $M$.
The pull-backs to $M$  of the
 differentials $d\bar{z}_1$, $\hdots$,
 $d\bar{z}_n$
are linearly independent on a neighborhood of $z_0$.
Taking $\bar\partial_M$ of both sides of\eqref{eq:jd}, we obtain:
\begin{displaymath}
  w^d\bar\partial{b}_0+w^{d-1}\bar\partial{b}_1+\cdots+
w\bar\partial{b}_{d-1}+\bar\partial{b}_d=0 \quad\text{on}\; M.
\end{displaymath}
By our choice of $b_0$, this implies that $\bar\partial{b}_0=0$
on $M$, hence
that:
\begin{displaymath}
w^{d-1}\bar\partial{b}_1+\cdots+\bar\partial{b_d}=0\quad\text{on}\; M.
\end{displaymath}
This is a system of polynomial equations with polynomial coefficients
on $M$, thus, by our choice of $d$, we obtain that all
$\bar\partial{b}_j$'s are zero on $M$, consequently
zero because they only depend on $z_1,\hdots,z_{n+k},
\bar{z}_1,\hdots,\bar{z}_{n+k}$.
Hence the $b_j's$ are holomorphic,
and $a_j=a_j(z')\in\mathbb{C}(z_1,\hdots,z_{n+k})$.\par
Let $\mathbb{A}$ be the ring of the restrictions to $M$
of the elements of $\mathbb{C}[z_1,\hdots,z_{\ell}]$,
and $\mathbb{B}$ the integral closure in $\mathbb{A}$ of the ring
of the restrictions to $M$ of the elements of
$\mathbb{C}[z_1,\hdots,z_{n+k}]$. We proved that
$\mathbb{A}$ is contained in the integral closure of the field of
fractions of $\mathbb{B}$. By the theorem of the primitive element,
we can find an element $w\in\mathbb{C}[z_1,\hdots,z_{\ell}]$,
a polynomial $P\in\mathbb{C}[z_1,\hdots,z_{n+k},w]$, monic with respect
to $w$, such that, if $\Delta(z')$ is the discriminant of $P$
with respect to $w$:
\begin{align}
&P(z',w)=w^d+a_1(z')w^{d-1}+\cdots+a_d(z')\in \mathcal{I}\\
& \begin{gathered}
  \forall j=n+k+1,\hdots,\ell\quad\text{there exists}\;
p_j\in\mathbb{C}[z_1,\hdots,z_{n+k},w]\;\\
   \text{such that}\;
\Delta(z')z_j-p_j(z',w)\in \mathcal{I}.
\end{gathered}
\end{align}
This shows that $N=V(\mathcal{I})$ is
a complex algebraic subvariety of $\mathbb{C}^\ell$,
of pure dimension $n+k$.
The statement follows, because the points of $M$
are contained in $N^{\mathrm{reg}}$ by
Lemma \ref{lem:cb}. \par
The proof in the general case is obtained by patching together
a finite atlas of affine charts of $M$ by birational
equivalence.
\end{proof}
\subsection{Homogeneous algebraic
\texorpdfstring{$CR$}{CR} manifolds}
\label{sec:ha}
Let $\mathfrak{g}_0$ be a finite dimensional real Lie algebra and
$\mathfrak{g}$ its complexification.
We recall that $\mathfrak{g}_0$ (and $\mathfrak{g}$)
are \emph{algebraic Lie algebras}, if
any of the three equivalent conditions
below is satisfied (see \cite{Chev43} for the definition of \emph{replica},
\cite{Chev47}, \cite{Goto48} for the equivalence of the conditions):
\begin{enumerate}
\item there exists a real linear algebraic group $\mathbf{G}_0$ with
Lie algebra~$\mathfrak{g}_0$;
\item there exists an algebraic subgroup of $\mathrm{Aut}(\mathfrak{g}_0)$
with Lie algebra~$\mathrm{ad}(\mathfrak{g}_0)$;
\item for every $X$ in $\mathfrak{g}_0$, the subalgebra
$\mathrm{ad}(\mathfrak{g}_0)$ of $\mathfrak{gl}_{\mathbb{R}}(\mathfrak{g}_0)$
contains all replicas of $\mathrm{ad}(X)$.
\end{enumerate}
Moreover, $\mathfrak{g}_0$ is a real algebraic Lie algebra if and only
if its complexification $\mathfrak{g}$
is a complex algebraic Lie algebra, and
the characterization of complex algebraic Lie algebras is given by
conditions that are completely analogous to the ones listed above.
\par
When $\mathfrak{g}_0$ is an algebraic Lie subalgebra of some
$\mathfrak{gl}(n,\mathbb{R})$, the semisimple and nilpotent components
$X_s$ and $X_n$ of an element $X$ of $\mathfrak{g}_0$ are replicas of
$X$. Thus, in particular, an algebraic Lie algebra $\mathfrak{g}_0$ is
$\mathrm{ad}$-splittable: this means that, for every $X\in\mathfrak{g}_0$,
also $X_s,X_n\in\mathfrak{g}_0$. Moreover,
$\mathrm{ad}(X_s)=[\mathrm{ad}(X)]_s$
and $\mathrm{ad}(X_n)=[\mathrm{ad}(X)]_n$ are the semisimple
and the nilpotent components of $\mathrm{ad}(X)$, respectively.
\par
\begin{lem}\label{lem:jd}
Let $\mathbf{G}_0$ be a real linear algebraic group.
If $M$ is a $\mathbf{G}_0$-homogeneous real algebraic manifold
and a smooth $\mathbf{G}_0$-homogeneous $CR$-manifold, then $M$ is a
 weakly-algebraic $CR$ manifold.
\end{lem}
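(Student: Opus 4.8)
The plan is to show that the $\mathbf{G}_0$-invariant subbundle $T^{0,1}M$ is cut out by algebraic equations, by realizing it as a single $\mathbf{G}_0$-orbit inside an algebraic Grassmann bundle. Write $M=\mathbf{G}_0/\mathbf{I}_0$, where $\mathbf{I}_0$ is the (algebraic) isotropy subgroup at a base point $p_0$, and let $\pi:\mathbf{G}_0\to M$ be the quotient, which is a morphism of real algebraic varieties. Since $M$ is a real algebraic manifold, its complexified tangent bundle $T^{\mathbb{C}}M$ is an algebraic complex vector bundle, and the Grassmann bundle $E=\mathrm{Gr}_n(T^{\mathbb{C}}M)$ of complex $n$-planes, with $n=CR\text{-}\dim M$, is again a real algebraic variety fibered algebraically over $M$. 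The $CR$ structure is precisely a smooth section $s:M\to E$, $p\mapsto T^{0,1}_pM$, and proving that $M$ is weakly-algebraic amounts to proving that $s$ is a regular (algebraic) section: the remaining requirements, namely formal integrability \eqref{eq:aa} and $T^{0,1}M\cap\overline{T^{0,1}M}=0_M$, are guaranteed by the hypothesis that $M$ is a $CR$ manifold.

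First I would promote the $\mathbf{G}_0$-action to the Grassmann bundle. Because the action $\mathbf{G}_0\times M\to M$ is a morphism of real algebraic varieties, its fiberwise differential is algebraic, so $\mathbf{G}_0$ acts algebraically on $TM$, hence on $T^{\mathbb{C}}M$ and on $E$. Define $\Phi:\mathbf{G}_0\to E$ by $\Phi(g)=dg\bigl(T^{0,1}_{p_0}M\bigr)$, a point of $E$ lying over $g\cdot p_0$; this is an algebraic morphism. Each $g\in\mathbf{G}_0$ acts by a $CR$ diffeomorphism, so every $h\in\mathbf{I}_0$ satisfies $dh\bigl(T^{0,1}_{p_0}M\bigr)=T^{0,1}_{p_0}M$; consequently $\Phi(gh)=\Phi(g)$, so $\Phi$ is constant on the fibers of $\pi$. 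Equivalently $\Phi=s\circ\pi$, which also re-proves that $s$ is well defined and $\mathbf{G}_0$-equivariant.

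It then remains to descend algebraicity from $\Phi$ to $s$. The cleanest route is to observe that the image of $\Phi$ is exactly the graph of $s$, and that it is the $\mathbf{G}_0$-orbit of the point $\bigl(p_0,T^{0,1}_{p_0}M\bigr)$ for the induced algebraic action on $E$; since orbits of algebraic group actions are locally closed algebraic subvarieties, the graph of $s$ is algebraic, and hence $s$ is a regular section. Alternatively, since $\pi:\mathbf{G}_0\to M$ is faithfully flat, a set map $s$ with $s\circ\pi=\Phi$ regular is itself regular by descent. In either case $T^{0,1}M=s^*\mathcal{T}$, the pullback of the tautological subbundle $\mathcal{T}$ over $E$, is an algebraic subbundle of $T^{\mathbb{C}}M$, so the partial complex structure is algebraic and $M$ is a weakly-algebraic $CR$ manifold.

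The main obstacle is this descent step, i.e. passing from the algebraicity of $\Phi$ on $\mathbf{G}_0$ to that of $s$ on the quotient $M$. Over $\mathbb{R}$ one must be slightly careful, since principal $\mathbf{I}_0$-bundles need not admit Zariski-local algebraic sections; the argument is therefore most safely phrased through the local closedness of algebraic orbits (or through faithfully flat descent) rather than by exhibiting local sections of $\pi$ and composing with $\Phi$. Setting up the algebraic structure on $E$ and checking that the differential of the action is algebraic is routine, but should be stated with some care.
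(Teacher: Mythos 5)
Your construction is essentially the paper's, repackaged. The paper identifies $T^{\mathbb{C}}\mathbf{G}_0$ with $\mathbf{G}_0\times\mathfrak{g}$ (so that the left action becomes $h\cdot(g,Z)=(hg,\mathrm{Ad}(h)(Z))$), observes that $\mathfrak{T}=\{(g,Z)\in\mathbf{G}_0\times\mathfrak{g}\mid \mathrm{Ad}(g^{-1})(Z)\in\mathfrak{q}\}$ is an algebraic subset --- the fiber of $\mathfrak{T}$ over $g$ is the full preimage under $d\pi$ of your plane $\Phi(g)$, so it encodes the same data as your map to the Grassmann bundle --- and then asserts that $T^{0,1}M$, being the image of $\mathfrak{T}$ under the differential of the orbit map $g\mapsto g\cdot p_0$, is algebraic. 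So the Grassmannian packaging is a cosmetic variation; in both arguments all of the content sits in the final descent from $\mathbf{G}_0$ to $M=\mathbf{G}_0/\mathbf{I}_0$, which you correctly single out as the main obstacle.

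The problem is that the principle you invoke to carry out that descent is false in the relevant category. ``Orbits of algebraic group actions are locally closed algebraic subvarieties'' is Chevalley's theorem over an \emph{algebraically closed} field; over $\mathbb{R}$, which is the setting of the lemma, it fails: for the algebraic action of $\mathbb{R}^{\times}$ on $\mathbb{R}$ given by $t\cdot x=t^{2}x$, the orbit of $1$ is $(0,\infty)$, which is semialgebraic but not Zariski locally closed. For real algebraic groups one only knows (Tarski--Seidenberg) that orbits, and more generally images of algebraic sets under regular maps, are semialgebraic and open in their Euclidean closures, and semialgebraicity of the graph of $s$ is not enough to conclude that $T^{0,1}M$ is an algebraic distribution. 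Your fallback, faithfully flat descent, is also not available off the shelf: fppf descent is a scheme-theoretic statement, and the set-theoretic quotient $M=\mathbf{G}_0/\mathbf{I}_0$ need not coincide with the set of real points of the scheme-theoretic quotient (Galois cohomology can make the latter strictly larger), so $\pi$ is not, as it stands, a covering in a site where descent applies; one would at least have to pass to Zariski closures of the relevant stabilizers and verify density statements that can fail for disconnected groups. In fairness, the paper's own proof makes exactly the same jump --- it offers no justification for ``hence algebraic'' beyond the (generally false over $\mathbb{R}$) implication that images of algebraic sets under regular maps are algebraic --- so your write-up is no less complete than the printed one; but the specific fact you cite does not close the gap, and as written your descent step remains unproved.
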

\begin{proof} Fix $p_0\in{M}$ and let
$(\mathfrak{g}_0,\mathfrak{q})$ be the $CR$ algebra of $M$ at $p_0$.
The complexification $T^{\mathbb{C}}\mathbf{G}_0$ of the tangent space
of $\mathbf{G}_0$ is algebraic and can be identified with the
Cartesian product
$\mathbf{G}_0\times\mathfrak{g}$, the left action of
$\mathbf{G}_0$ on $\mathbf{G}_0\times\mathfrak{g}$ being defined
by:\begin{displaymath}
h\cdot (g,Z)=(hg,\mathrm{Ad}_{\mathfrak{g}}(h)(Z))=
(h\circ{g},h\circ{Z}\circ{h}^{-1}).
\end{displaymath}
The set
\begin{displaymath}
  \mathfrak{T}=\{(g,Z)\in\mathbf{G}_0\times\mathfrak{g}\mid
g^{-1}\circ{Z}\circ{g}\in\mathfrak{q}\}
\end{displaymath}
is algebraic. The set $T^{0,1}M$ is the image of
$\mathfrak{T}$ by the differential of the map
$\mathbf{G}_0\ni{g}\to{g\cdot{p}_0}\in{M}$, hence algebraic.
This proves that
the $\mathbf{G}_0$-homogeneous $CR$ structure of $M$ defined by
$(\mathfrak{g}_0,\mathfrak{q})$
is weakly algebraic.
\end{proof}
From Lemma \ref{lem:jd} we obtain:
\begin{thm}\label{thm:je}
Let $\mathfrak{g}_0$ be an algebraic Lie algebra.
A necessary and sufficient condition for the existence
of a homogeneous weakly-algebraic $CR$ manifold $M$ with $CR$ algebra
$(\mathfrak{g}_0,\mathfrak{q})$
is that
\begin{align}
  \label{eq:fa}
  &\forall X\in\mathfrak{i}_0=\mathfrak{q}\cap\mathfrak{g}_0\quad
\text{all replicas of}\;\mathrm{ad}(X)\;\text{belong to}\;\mathrm{ad}(
\mathfrak{i}_0).
\end{align}
\end{thm}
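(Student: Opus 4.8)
The plan is to translate the statement into a purely group-theoretic existence problem and settle it with the algebraicity criteria recalled before the statement. By Lemma~\ref{lem:jd}, if one can find a real linear algebraic group $\mathbf{G}_0$ with Lie algebra $\mathfrak{g}_0$ and an algebraic subgroup $\mathbf{I}_0$ with Lie algebra $\mathfrak{i}_0=\mathfrak{q}\cap\mathfrak{g}_0$, then $M=\mathbf{G}_0/\mathbf{I}_0$ is a homogeneous real algebraic manifold and the $\mathbf{G}_0$-invariant $CR$ structure attached to $(\mathfrak{g}_0,\mathfrak{q})$ is automatically weakly-algebraic; conversely, in any homogeneous weakly-algebraic realization the isotropy is a Zariski-closed, hence algebraic, subgroup. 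So the whole content is that $(\mathfrak{g}_0,\mathfrak{q})$ has a homogeneous weakly-algebraic realization if and only if $\mathfrak{i}_0$ is the Lie algebra of an algebraic subgroup of \emph{some} linear algebraic group with Lie algebra $\mathfrak{g}_0$, and I must match this with \eqref{eq:fa}.

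For necessity I would argue directly. Let $\mathbf{G}_0\subset\mathrm{GL}(V)$ be the acting group and $\mathbf{I}_0$ the algebraic isotropy. The adjoint representation $\mathrm{Ad}\colon\mathbf{G}_0\to\mathrm{GL}(\mathfrak{g}_0)$ is a morphism of algebraic groups, so $\mathrm{Ad}(\mathbf{I}_0)$ is an algebraic subgroup of $\mathrm{GL}(\mathfrak{g}_0)$ with Lie algebra $\mathrm{ad}(\mathfrak{i}_0)$. Hence $\mathrm{ad}(\mathfrak{i}_0)$ is an algebraic Lie subalgebra of $\mathfrak{gl}(\mathfrak{g}_0)$, and by the replica characterization of algebraicity (condition (3) above) it contains all replicas of each of its elements, which is exactly \eqref{eq:fa}.

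For sufficiency the task is instead to \emph{build} an adapted algebraic structure. Assuming \eqref{eq:fa}, $\mathrm{ad}(\mathfrak{i}_0)$ is algebraic, but pulling back along $\mathrm{ad}$ controls $\mathfrak{i}_0$ only modulo the centre $\mathfrak{z}_0=\ker(\mathrm{ad})$: in any faithful algebraic realization the algebraic hull $\widehat{\mathfrak{i}}_0$ of $\mathfrak{i}_0$ satisfies $\mathfrak{i}_0\subseteq\widehat{\mathfrak{i}}_0\subseteq\mathfrak{i}_0+\mathfrak{z}_0$, since any replica $\rho(Y)$ of $\rho(X)$ projects under $\mathrm{ad}$ to a replica of $\mathrm{ad}(X)$, which lies in $\mathrm{ad}(\mathfrak{i}_0)$ by \eqref{eq:fa}, forcing $Y\in\mathfrak{i}_0+\mathfrak{z}_0$. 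I would therefore realize $\mathfrak{g}_0$ faithfully by $\rho=\mathrm{ad}\oplus\mu$ with $\mu$ faithful on $\mathfrak{z}_0$, and choose the rational (algebraic) structure carried by $\mu$ on the toral directions of the centre so that the semisimple and nilpotent replicas of each $\rho(X)$, $X\in\mathfrak{i}_0$, have central component again in $\mathfrak{i}_0$; nilpotent elements being their own replicas up to scalars, only the toral part of $\mathfrak{z}_0$ is at stake. With $\mathfrak{i}_0$ thus made an algebraic subalgebra, I take $\mathbf{I}_0$ to be the associated connected algebraic subgroup and conclude by Lemma~\ref{lem:jd}.

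I expect the delicate point to be exactly this last choice, namely the central toral directions on which $\mathrm{ad}$ is blind. The freedom that makes it solvable is the liberty to select \emph{which} algebraic group $\mathbf{G}_0$ realizes $\mathfrak{g}_0$: a one-dimensional central line may be realized as a torus $\mathbb{R}^{\times}$ or as a vector group $\mathbb{G}_a$, and a given mixed (partly central) toral line of $\mathfrak{i}_0$ becomes algebraic only for a correctly matched rational structure. The hard part will be proving that \eqref{eq:fa} guarantees these choices can be made \emph{simultaneously and consistently} for all of $\mathfrak{i}_0$ — equivalently, that the rational structure forced on the adjoint-visible torus by \eqref{eq:fa} lifts to a rational structure on the full Cartan subalgebra of $\mathbf{G}_0$ rendering the toral part of $\mathfrak{i}_0$ rational. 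Everything else (faithfulness of $\rho$, algebraicity of $\rho(\mathfrak{g}_0)$, and the passage from an algebraic subalgebra to the corresponding algebraic subgroup) is routine in characteristic zero.
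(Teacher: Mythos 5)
Your preparatory observations are essentially sound: the necessity argument via $\mathrm{Ad}(\mathbf{I}_0)$, the fact that a replica $\rho(Y)$ of $\rho(X)$ projects to a replica of $\mathrm{ad}(X)$ when $\rho=\mathrm{ad}\oplus\mu$, and the resulting bound $\mathfrak{i}_0\subseteq\widehat{\mathfrak{i}}_0\subseteq\mathfrak{i}_0+\mathfrak{z}_0$ are all correct (for necessity you should still note that the paper's definition of a weakly-algebraic $CR$ manifold constrains only $M$ and $T^{0,1}M$, so ``the isotropy is Zariski closed'' reflects a choice of reading of ``homogeneous'' --- the one under which Lemma~\ref{lem:jd} is applied --- rather than something that follows from the definition). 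The genuine gap is that the sufficiency direction, which is the only direction the paper argues and is the substantive content of the theorem, is not proved: your text ends by saying that ``the hard part will be proving that \eqref{eq:fa} guarantees these choices can be made simultaneously and consistently.'' That deferred step \emph{is} the theorem; everything before it is a (correct) reduction of the problem to it.

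Moreover, this step is not a routine matter of ``rational structures on a Cartan subalgebra,'' and your own enumeration of options shows the plan is underdetermined. Take $\mathfrak{g}_0=\mathbb{R}\oplus\mathfrak{sl}(2,\mathbb{R})$, let $W$ span a compact Cartan subalgebra of $\mathfrak{sl}(2,\mathbb{R})$, fix $\lambda\notin\mathbb{Q}$, and set $\mathfrak{i}_0=\mathbb{R}\,(1,\lambda W)$ (with $\mathfrak{q}$ spanned over $\mathbb{C}$ by $(1,\lambda W)$ and a root vector of $\mathbb{C}W$). Condition \eqref{eq:fa} holds: $\exp(t\,\mathrm{ad}(W))$ is a circle, hence Zariski closed, so the replicas of $\lambda\,\mathrm{ad}(W)$ are exactly $\mathbb{R}\,\mathrm{ad}(W)=\mathrm{ad}(\mathfrak{i}_0)$. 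But here neither of the two central realizations you list ($\mathbb{R}^{\times}$ or $\mathbb{G}_a$) can work: neither $\mathbb{G}_m$ nor $\mathbb{G}_a$ admits a nontrivial algebraic homomorphism to, or isogeny with, $SO(2)$, so in any such realization the one-parameter group $\exp(t(1,\lambda W))$ has two-dimensional Zariski closure and $\mathfrak{i}_0$ is never algebraic; the only realization that works is the one you omit, an anisotropic torus $SO(2)$ whose period is matched to $\lambda$. Thus the algebraic structure on the centre is forced, element by element, by the compact/split/nilpotent type and the ``speed'' of the adjoint part, and what must actually be proved is that \eqref{eq:fa} makes all these constraints simultaneously satisfiable. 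This is precisely the point to which the paper devotes its proof: it reduces modulo the central ideal $\mathfrak{i}_0\cap\mathfrak{z}_0$, splits $\mathfrak{i}_0$ into an $\mathrm{ad}_{\mathfrak{g}_0}$-reductive subalgebra plus an ideal of $\mathrm{ad}_{\mathfrak{g}_0}$-nilpotent elements, and feeds this into Hochschild's embedding of $\mathfrak{g}_0$ into $\mathfrak{l}^{*}\rtimes\mathfrak{n}_0$ (maximal reductive part acting on the nilradical, realized by nilpotent matrices) so that Jordan decompositions and replicas can be controlled. None of that machinery, or a substitute for it, appears in your proposal; as it stands it is a correct reformulation together with an acknowledged open problem, not a proof.
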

\begin{proof} Let $\mathfrak{z}_0$ be the center of  $\mathfrak{g}_0$.
By taking the quotient by the central ideal
$\mathfrak{i}_0\cap\mathfrak{z}_0$,
we reduce to the case where $\mathfrak{z}_0\cap\mathfrak{i}_0={0}$.
Then we decompose $\mathfrak{i}_0$ into the direct sum of
an $\mathrm{ad}_{\mathfrak{g}_0}$-reductive subalgebra $\mathfrak{l}_0$
and an ideal $\mathfrak{n}_0$ consisting of
$\mathrm{ad}_{\mathfrak{g}_0}$-nilpotent elements.
We can consider a maximal reductive subalgebra $\mathfrak{l}^*_0$ of
$\mathrm{ad}_{\mathfrak{g}_0}(\mathfrak{g}_0)$ containing
$\mathrm{ad}_{\mathfrak{g}_0}(\mathfrak{l}_0)$, and construct,
as in \cite[XVIII.1]{Hoch81}, an
embedding of $\mathfrak{g}_0$ as an algebraic subalgebra:
\begin{equation}
  \label{eq:fb}
  \phi:\mathfrak{g}\to \mathfrak{l}^*\rtimes \mathfrak{n}_0
\subset\mathfrak{gl}(n,\mathbb{R}),
\end{equation}
where $\mathfrak{n}_0$ is the maximum nilpotent ideal of $\mathfrak{g}_0$,
for which the corresponding connected and simply connected Lie group
has the structure of an algebraic group, consisting of unipotent matrices.
Since, by \cite{Hoch66}, $\phi(X)$ is a nilpotent matrix in
$\mathfrak{gl}(n,\mathbb{R})$ for all
$\mathrm{ad}_{\mathfrak{g}_0}$-nilpotent $X$ in $\mathfrak{g}_0$,
we obtain that the semisimple parts
in $\mathfrak{gl}(n,\mathbb{R})$ of the elements of
$\phi(\mathfrak{l}_0)$ belong to $\phi(\mathfrak{l}_0)$, and
$\phi(\mathfrak{l}_0)$ is an algebraic subalgebra of
$\phi(\mathfrak{g}_0)$ by \cite{Chev47}. Finally, $\phi(\mathfrak{n}_0)$
is algebraic because it is a subalgebra of $\mathfrak{gl}(n,\mathbb{R})$
consisting of nilpotent matrices. Hence
$\phi(\mathfrak{g}_0)=\phi(\mathfrak{l}_0)\rtimes\phi(\mathfrak{n}_0)$
is an algebraic subalgebra of $\mathfrak{gl}(n,\mathbb{R})$, and
the statement follows from Lemma \ref{lem:jd}.
\end{proof}
\begin{prop}\label{prop:jf}
  Let $\mathfrak{g}_0$ be an algebraic real Lie algebra and
$\mathfrak{q}$  an ideal of its complexification
$\mathfrak{g}$,
defining a
complex structure on $\mathfrak{g}_0$. This means that:
\begin{equation}
  \mathfrak{g}=\mathfrak{q}\oplus\overline{\mathfrak{q}}\quad
\text{(direct sum of ideals)}.
\end{equation}
Then we can find a complex algebraic group $\mathbf{G}_0$ with
associated $CR$ algebra $(\mathfrak{g}_0,\mathfrak{q})$.
\end{prop}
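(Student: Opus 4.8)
The plan is to reduce the statement to producing a complex algebraic group out of the single complex factor $\overline{\mathfrak{q}}$, the real content being that this factor is again an algebraic Lie algebra. First I would unpack the hypothesis $\mathfrak{g}=\mathfrak{q}\oplus\overline{\mathfrak{q}}$ (direct sum of ideals). Since $\mathfrak{q}\cap\overline{\mathfrak{q}}=0$, the isotropy $\mathfrak{i}_0=\mathfrak{q}\cap\mathfrak{g}_0$ is contained in $\mathfrak{q}\cap\overline{\mathfrak{q}}=0$, so $\mathfrak{i}_0=0$. The projection $p\colon\mathfrak{g}_0\to\overline{\mathfrak{q}}$ with kernel $\mathfrak{g}_0\cap\mathfrak{q}$ (restriction of the $\mathbb{C}$-linear projection of $\mathfrak{g}$ onto $\overline{\mathfrak{q}}$) is a homomorphism of real Lie algebras, because the cross brackets $[\mathfrak{q},\overline{\mathfrak{q}}]$ vanish; it is injective since $\ker p=\mathfrak{g}_0\cap\mathfrak{q}=\mathfrak{i}_0=0$, and surjective by the dimension count $\dim_{\mathbb{R}}\mathfrak{g}_0=\dim_{\mathbb{C}}\mathfrak{g}=2\dim_{\mathbb{C}}\mathfrak{q}=\dim_{\mathbb{R}}(\overline{\mathfrak{q}})$. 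Thus $p$ identifies $\mathfrak{g}_0$ with the underlying real Lie algebra of the complex Lie algebra $\overline{\mathfrak{q}}$, and the transported complex structure is exactly the partial complex structure of $(\mathfrak{g}_0,\mathfrak{q})$, with $T^{0,1}=\mathfrak{q}$. Consequently any connected complex Lie group $\mathbf{G}_0$ with complex Lie algebra $\overline{\mathfrak{q}}$ has trivial (closed) isotropy, underlying real Lie algebra $\mathfrak{g}_0$, and a left invariant complex structure recovering $(\mathfrak{g}_0,\mathfrak{q})$; so it suffices to choose $\mathbf{G}_0$ complex algebraic.

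The main obstacle, and the genuinely new point, is to prove that $\overline{\mathfrak{q}}$ is a complex algebraic Lie algebra, knowing only that $\mathfrak{g}$ is. Here I would invoke the replica characterization of algebraicity recalled above: it is enough to show that $\mathrm{ad}_{\overline{\mathfrak{q}}}(\overline{\mathfrak{q}})$ contains every replica of every $\mathrm{ad}_{\overline{\mathfrak{q}}}(Z)$, $Z\in\overline{\mathfrak{q}}$. Because $\mathfrak{g}=\mathfrak{q}\oplus\overline{\mathfrak{q}}$ is a decomposition into ideals with $[\mathfrak{q},\overline{\mathfrak{q}}]=0$, for $Z\in\overline{\mathfrak{q}}$ the operator $\mathrm{ad}_{\mathfrak{g}}(Z)$ is block diagonal, zero on $\mathfrak{q}$ and equal to $\mathrm{ad}_{\overline{\mathfrak{q}}}(Z)$ on $\overline{\mathfrak{q}}$. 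The delicate lemma I would establish is that the replicas of such a block operator $0\oplus A$ are exactly the operators $0\oplus B$ with $B$ a replica of $A$: the eigenvalues on the $\mathfrak{q}$-block are all zero, so the algebraic hull of the semisimple part, together with the fixed nilpotent part, stays supported on the $\overline{\mathfrak{q}}$-block. Granting this, if $B$ is a replica of $\mathrm{ad}_{\overline{\mathfrak{q}}}(Z)$ then $0\oplus B$ is a replica of $\mathrm{ad}_{\mathfrak{g}}(Z)$, hence lies in $\mathrm{ad}_{\mathfrak{g}}(\mathfrak{g})$ by the algebraicity of $\mathfrak{g}$; writing $0\oplus B=\mathrm{ad}_{\mathfrak{g}}(W)$ and comparing blocks forces $B=\mathrm{ad}_{\overline{\mathfrak{q}}}(W'')$ for the $\overline{\mathfrak{q}}$-component $W''$ of $W$, so $B\in\mathrm{ad}_{\overline{\mathfrak{q}}}(\overline{\mathfrak{q}})$, as required.

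Finally I would assemble the pieces: take $\mathbf{G}_0$ to be the identity component of a complex linear algebraic group whose Lie algebra is the complex algebraic Lie algebra $\overline{\mathfrak{q}}$ produced above. By the first paragraph, $\mathbf{G}_0$ regarded as a real Lie group has Lie algebra $\mathfrak{g}_0$, trivial closed isotropy, and a left invariant complex structure whose associated $CR$ algebra is precisely $(\mathfrak{g}_0,\mathfrak{q})$ (the holomorphic tangent space at $e$ being $\overline{\mathfrak{q}}$, so that $T^{0,1}_e=\mathfrak{q}$); since $\mathbf{G}_0$ is complex algebraic, this completes the construction. I expect the only real work to be the replica lemma of the middle paragraph, which is exactly the assertion that a direct ideal factor of an algebraic Lie algebra is again algebraic; everything else is bookkeeping with the decomposition and the dimension count.
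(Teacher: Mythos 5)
Your proposal is correct, but it takes a genuinely different route from the paper's at the key step. Both arguments turn on the observation that $[\mathfrak{q},\bar{\mathfrak{q}}]=0$ (automatic for a direct sum of ideals), and both must establish algebraicity of the relevant Lie algebra; the difference is where and how. The paper proves that $\mathfrak{q}$ itself is an algebraic subalgebra of $\mathfrak{g}$ by a one-line centralizer identity: since $[\mathfrak{q},\bar{\mathfrak{q}}]=0$, one has $\mathfrak{z}_{\mathfrak{g}}(\bar{\mathfrak{q}})=\mathfrak{q}+\mathfrak{z}_{\mathfrak{g}}(\mathfrak{g})$, centralizers are algebraic, and $\mathrm{ad}_{\mathfrak{g}}(\mathfrak{q})=\mathrm{ad}_{\mathfrak{g}}(\mathfrak{z}_{\mathfrak{g}}(\bar{\mathfrak{q}}))$; it then invokes the argument of Theorem \ref{thm:je} to obtain a complex algebraic group $\mathbf{G}$ with a normal algebraic subgroup $\mathbf{Q}$ whose Lie algebra is $\mathfrak{q}$, and takes the algebraic quotient $\mathbf{G}_0=\mathbf{G}/\mathbf{Q}$. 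You instead prove that the direct factor $\bar{\mathfrak{q}}$ is algebraic as an abstract complex Lie algebra, via the replica criterion (condition (3) of \S\ref{sec:ha}) and your block lemma on replicas of operators of the form $0\oplus A$. That lemma is true --- it follows, for instance, from Chevalley's description of the algebraic hull of a single operator as the replicas of its semisimple part plus the line through its nilpotent part, since every eigenvalue on the $\mathfrak{q}$-block is $0$ and a $\mathbb{Q}$-linear function of the eigenvalues sends $0$ to $0$ --- but you only sketch it, and it is the one place where your writeup would need to be expanded into an actual proof. What each approach buys: the paper's centralizer trick is shorter and avoids the replica calculus entirely, at the price of quoting the group-level construction of Theorem \ref{thm:je} and leaving implicit the verification that $\mathbf{G}/\mathbf{Q}$ has CR algebra $(\mathfrak{g}_0,\mathfrak{q})$; your version is more self-contained at the Lie-algebra level, proves the cleaner general fact that a direct ideal factor of an algebraic Lie algebra is algebraic, and usefully makes explicit the identification $p:\mathfrak{g}_0\xrightarrow{\;\sim\;}\bar{\mathfrak{q}}$ under which $T^{0,1}$ corresponds to $\mathfrak{q}$ --- exactly the bookkeeping the paper's last sentence glosses over. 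The two constructions are consistent, since the Lie algebra of $\mathbf{G}/\mathbf{Q}$ is $\mathfrak{g}/\mathfrak{q}\cong\bar{\mathfrak{q}}$, so your group and the paper's are locally isomorphic.
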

\begin{proof}
  We prove that $\mathfrak{q}$ is an algebraic Lie subalgebra of
$\mathfrak{g}$. To this aim we observe that
$[\mathfrak{q},\bar{\mathfrak{q}}]={0}$. Hence the centralizer
of $\bar{\mathfrak{q}}$ in $\mathfrak{g}^{\mathbb{C}}$ is:
\begin{displaymath}
  \mathfrak{z}_{\mathfrak{g}}(\bar{\mathfrak{q}})=\mathfrak{q}+
\mathfrak{z}_{\mathfrak{g}}(\mathfrak{g}).
\end{displaymath}
This is an algebraic Lie subalgebra of $\mathfrak{g}$
and therefore $\mathrm{ad}_{\mathfrak{g}}(\mathfrak{q})=
\mathrm{ad}_{\mathfrak{g}}(\mathfrak{z}_{\mathfrak{g}}(\bar{\mathfrak{q}}))$
is
algebraic in $\mathfrak{gl}_{\mathbb{C}}(\mathfrak{g})$.
By the same argument of Theorem \ref{thm:je},
we obtain that there exists a complex algebraic group
$\mathbf{G}$
and a complex algebraic normal subgroup $\mathbf{Q}$ with
Lie algebra $\mathfrak{q}$. Then
$\mathbf{G}_0=\mathbf{G}/\mathbf{Q}$
is a complex algebraic group satisfying the conditions of the
statement.
\end{proof}
\begin{exam}
  Let $\mathfrak{g}_0=\mathfrak{sl}(2m,\mathbb{R})$, with $m\geq{2}$.
Consider a Borel subalgebra $\mathfrak{b}$ of
$\mathfrak{g}\simeq\mathfrak{sl}(2m,\mathbb{C})$ such that
$\mathfrak{h}=\mathfrak{b}\cap\bar{\mathfrak{b}}$
is a Cartan subalgebra of $\mathfrak{g}$.
Then $\mathfrak{h}$ is the complexification of
a maximally compact Cartan subalgebra
$\mathfrak{h}_0$ of $\mathfrak{g}_0$. Let $\mathfrak{n}=[\mathfrak{b},
\mathfrak{b}]$ be the
nilpotent ideal of $\mathfrak{b}$. Fix an element
$H\in\mathfrak{h}\setminus\mathfrak{h}_0$,
such that $\exp(\mathbb{R}\,H)$ is not closed in
$\mathbf{SL}(2m,\mathbb{C})$. We choose $\mathfrak{q}=\mathfrak{n}
\oplus\mathbb{C}\,H$. Since $\mathfrak{q}\cap\bar{\mathfrak{q}}={0}$,
the complex Lie subalgebra $\mathfrak{q}$ defines a left homogeneous
$CR$ structure on  $\mathbf{SL}(2m,\mathbb{R})$.
However, in this case there is no semialgebraic
$\mathbf{G}_0$-equivariant $CR$ embedding of
$\mathbf{G}_0\simeq\mathbf{SL}(2m,\mathbb{R})$
into an
$\mathbf{SL}(2m,\mathbb{C})$-homogeneous
complex
manifold.
\end{exam}
\begin{exam} \label{ex:jh}
Let $\mathfrak{g}=\mathfrak{sl}(2m-1,\mathbb{C})$, with
$m\geq{2}$. Define the
conjugation:
\begin{displaymath}
  A=(a_{i,j})_{1\leq{i,j}\leq{2m-1}}\to \bar{A}=(\bar{a}_{2m-i,2m-j})_{
1\leq{i,j}\leq{m-1}}.
\end{displaymath}
Consider the real form
$\mathfrak{g}_0=\{A=\bar{A}\}\simeq\mathfrak{sl}(2m-1,\mathbb{R})$
of $\mathfrak{g}$, and let
$\mathbf{G}_0\simeq\mathbf{SL}(2m-1,\mathbb{R})$ be the analytic
Lie subgroup of $\mathbf{SL}(n,\mathbb{C})$ with
Lie algebra $\mathfrak{g}_0$.
Define:
\begin{displaymath}
  \mathfrak{q}=\{A=(a_{i,j})\in\mathfrak{sl}(2m-1,\mathbb{C})\mid
a_{i,j}=0 \;\text{for}\; i>j\;\text{and for}\; i=j>m\}.
\end{displaymath}
Since $\mathfrak{q}\cap\bar{\mathfrak{q}}={0}$ and
$\mathfrak{q}+\bar{\mathfrak{q}}=\mathfrak{sl}(2m-1,\mathbb{C})$,
the datum of the $CR$ algebra $(\mathfrak{g},\mathfrak{q})$
yields on $\mathbf{G}_0$ a
complex structure, which is only left
$\mathbf{G}_0$-invariant.
Note that, being  $\mathbf{SL}(2m-1,\mathbb{R})$ a simple real Lie
group corresponding to a connected Satake diagram, it can carry
the structure of
neither a complex Lie group, nor a complex
algebraic group. However, it is a quasi-projective smooth complex
variety, open in $\mathbf{SL}(2m-1,\mathbb{C})/\mathbf{Q}$, where
$\mathbf{Q}$ is the algebraic subgroup of $\mathbf{SL}(2m-1,\mathbb{C})$
corresponding to the solvable Lie subalgebra $\mathfrak{q}$.
\end{exam}
\begin{exam}
Let us take $\mathfrak{g}_0$ and $\mathbf{G}_0$
as in Example \ref{ex:jh}, with $m=2$, and define:
\begin{displaymath}
  \mathfrak{q}=\left.\left\{A=
    \begin{pmatrix}
a_{1,1}&a_{1,2}&a_{1,3}\\
0&a_{2,2}&a_{2,3}\\
0&0&a_{3,3}
    \end{pmatrix}\in\mathfrak{sl}(3,\mathbb{C})\,
\right| \; a_{1,1}+\lambda\, a_{3,3}=0\right\},
\end{displaymath}
for an irrational complex number $\lambda$, with $|\lambda|\neq{1}$.
We have $\mathfrak{q}\cap\bar{\mathfrak{q}}={0}$ and
$\mathfrak{q}+\bar{\mathfrak{q}} =\mathfrak{sl}(3,\mathbb{C})$.
Since the subalgebra $\mathfrak{q}$ is not algebraic, the complex
structure on $\mathbf{G}_0$ defined by $(\mathfrak{g}_0,\mathfrak{q})$
is weakly algebraic, but not algebraic.
\end{exam}
\begin{thm} \label{prop:jj}
Let $\mathbf{G}_0$ be a real linear algebraic group.
Let $M$ be a real algebraic manifold and a smooth $CR$ manifold,
on which $\mathbf{G}_0$ acts as a transitive group of algebraic
and $CR$ transformations. Let $(\mathfrak{g}_0,\mathfrak{q})$
be the $CR$ algebra of $M$ at a point $p_0$.\par
If $\mathbf{G}_0$ acts on $M$ as
a group of algebraic $CR$ automorphisms, then:
\begin{enumerate}
\item 
$\mathfrak{q}$ is an algebraic subalgebra of $\mathfrak{g}$.
\item There are a
$\mathbf{G}$-homogeneous complex algebraic manifold
$\hat{M}$ and a $\mathbf{G}_0$-equivariant $CR$
generic algebraic embedding
$M\hookrightarrow\hat{M}$.
\end{enumerate}
Vice versa, if $\mathfrak{q}$ is algebraic, then $M$ is a
$\mathbf{G}_0$-homogeneous algebraic $CR$ manifold.
\end{thm}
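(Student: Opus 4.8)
The plan is to isolate a single algebraic fact—that $\mathfrak{q}$ is an algebraic subalgebra of $\mathfrak{g}$—and then feed it into one construction that yields assertion (2) and the converse simultaneously. First I would set up that construction, granting momentarily that $\mathfrak{q}$ is algebraic. Let $\mathbf{G}$ be the complexification of $\mathbf{G}_0$, a complex linear algebraic group with Lie algebra $\mathfrak{g}$. Let $\mathbf{Q}\subset\mathbf{G}$ be the connected algebraic subgroup with $\mathrm{Lie}(\mathbf{Q})=\mathfrak{q}$, so that $\hat{M}=\mathbf{G}/\mathbf{Q}$ is a $\mathbf{G}$-homogeneous complex algebraic manifold, smooth and hence equal to its regular locus. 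Since $\mathfrak{i}_0=\mathfrak{q}\cap\mathfrak{g}_0\subset\mathfrak{q}$, the orbit map $\iota\colon M=\mathbf{G}_0/\mathbf{I}_0\to\hat{M}$, $g\mathbf{I}_0\mapsto g\cdot[\mathbf{Q}]$, is well defined, $\mathbf{G}_0$-equivariant, and a real-algebraic immersion onto the locally closed orbit $\mathbf{G}_0\cdot[\mathbf{Q}]$, whose isotropy has Lie algebra $\mathfrak{g}_0\cap\mathfrak{q}=\mathfrak{i}_0$.

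Next I would verify that $\iota$ is a \emph{generic} $CR$ embedding inducing the $CR$ algebra $(\mathfrak{g}_0,\mathfrak{q})$. The $CR$ structure that $\hat{M}$ induces on the real orbit is the one whose $T^{0,1}$ is cut out by $\mathfrak{q}$, so the associated $CR$ algebra is exactly $(\mathfrak{g}_0,\mathfrak{q})$; genericity is then automatic from the identity $\dim_{\mathbb{C}}(\mathfrak{q}\cap\bar{\mathfrak{q}})+\dim_{\mathbb{C}}(\mathfrak{q}+\bar{\mathfrak{q}})=2\dim_{\mathbb{C}}\mathfrak{q}$, which gives $CR\text{-}\mathrm{dim}\,M+CR\text{-}\mathrm{codim}\,M=\dim_{\mathbb{C}}\mathfrak{g}-\dim_{\mathbb{C}}\mathfrak{q}=\dim_{\mathbb{C}}\hat{M}$. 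This proves (2). For the converse, the orbit $\mathbf{G}_0\cdot[\mathbf{Q}]$ is a semialgebraic, smooth real-algebraic $CR$ submanifold lying in $\hat{M}=\hat{M}^{\mathrm{reg}}$; by the remark that an algebraic $CR$ submanifold of a complex algebraic variety is itself an algebraic $CR$ manifold, $M$ is a $\mathbf{G}_0$-homogeneous algebraic $CR$ manifold.

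The heart of the argument is assertion (1). First, the isotropy $\mathbf{I}_0$, being the fiber over $p_0$ of the algebraic orbit map $\mathbf{G}_0\to M$, is Zariski-closed, hence an algebraic subgroup; therefore $\mathfrak{i}_0$ is an algebraic Lie algebra and its complexification $\mathfrak{i}=\mathfrak{i}_0^{\mathbb{C}}$ is algebraic in $\mathfrak{g}$. I would then record the general identity $\mathfrak{q}\cap\bar{\mathfrak{q}}=\mathfrak{i}$: the left-hand side is conjugation-invariant, so it is the complexification of its real part $\mathfrak{q}\cap\bar{\mathfrak{q}}\cap\mathfrak{g}_0=\mathfrak{i}_0$ (equivalently, it is the kernel of $\pi_*$ restricted to $\mathfrak{q}$, by the $CR$ axiom $T^{0,1}M\cap\overline{T^{0,1}M}=0$), so $\mathfrak{q}\cap\bar{\mathfrak{q}}$ is already algebraic. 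To upgrade this to algebraicity of $\mathfrak{q}$ itself I would use that $\mathbf{G}_0$ acts by \emph{algebraic} $CR$ automorphisms: taking a $\mathbf{G}_0$-equivariant generic algebraic $CR$ embedding of $M$ into a complex algebraic variety $N$ in the style of Proposition~\ref{prop:jc}, the action extends to an algebraic action of $\mathbf{G}_0$, and hence by complexification of $\mathbf{G}$, on $N$ by biregular automorphisms. The orbit $\mathbf{G}\cdot p_0$ is then a locally closed, smooth, $\mathbf{G}$-homogeneous complex algebraic subvariety with algebraic isotropy $\mathbf{Q}$; and because the embedding is generic $CR$, one has $T^{0,1}_{p_0}(\mathbf{G}\cdot p_0)\cap T^{\mathbb{C}}_{p_0}M=T^{0,1}_{p_0}M$, which forces $\mathrm{Lie}(\mathbf{Q})=\mathfrak{q}$ rather than some strictly larger subalgebra. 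Hence $\mathfrak{q}$ is algebraic.

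The main obstacle is precisely this last step: producing the algebraic complexification of the ambient together with the extended $\mathbf{G}$-action, and then checking that the complex orbit has isotropy Lie algebra \emph{exactly} $\mathfrak{q}$. This is where algebraicity—rather than mere weak-algebraicity—of the $CR$ automorphisms is indispensable. For a merely weakly-algebraic structure the analytic closure of the subgroup tangent to $\mathfrak{q}$ can acquire extra toral directions (as in the example with an irrational parameter following Proposition~\ref{prop:jf}), so that the natural complex orbit has isotropy strictly larger than $\mathfrak{q}$ and $\mathfrak{q}$ fails to be algebraic; the content of the hypothesis is exactly that this enlargement cannot occur.
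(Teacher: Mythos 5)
You follow the same route as the paper: embed $M$ generically into a complex algebraic variety via Proposition \ref{prop:jc}, extend the $\mathbf{G}_0$-action to an algebraic action of the complexification $\mathbf{G}$, and read off $\mathfrak{q}$ as the Lie algebra of the algebraic isotropy of the resulting complex homogeneous space; assertion (2) and the converse then come from the orbit map into $\mathbf{G}/\mathbf{Q}$, again as in the paper. However, your argument for the key assertion (1) has a genuine gap, which you yourself call ``the main obstacle'' and then leave unresolved. Proposition \ref{prop:jc} produces the embedding $M\hookrightarrow N$ by a purely ideal-theoretic elimination argument; it gives no $\mathbf{G}_0$-equivariance, no action of $\mathbf{G}_0$ on $N$, and certainly no algebraic $\mathbf{G}$-action on $N$. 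All three of these are precisely what must be proved: once they are granted, (1) follows in a few lines (your genericity argument, which is correct), and conversely (1) immediately yields such an extension via $\mathbf{G}/\mathbf{Q}$. So as written, the heart of your proof is an assertion essentially equivalent to the theorem itself.

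What is missing is the mechanism the paper uses to produce the extension. In the affine case: genericity identifies the ring $\mathbb{C}[N]$ of regular holomorphic functions on $N$ with the ring $\mathbb{C}[M]$ of regular $CR$ functions on $M$; pulling back along $\pi:\mathbf{G}_0\to M$ realizes this as a subring $\pi^*(\mathbb{C}[M])$ of $\mathbb{C}[\mathbf{G}_0]$, and the identification $\mathbb{C}[\mathbf{G}_0]=\mathbb{C}[\mathbf{G}]$ of the regular functions of a real algebraic group with those of its complexification makes it a subring of $\mathbb{C}[\mathbf{G}]$. That subring is invariant under translation by $\mathbf{G}_0$ (because $\mathbf{G}_0$ acts by algebraic $CR$ automorphisms, hence preserves regular $CR$ functions), and therefore, by Zariski density of $\mathbf{G}_0$ in $\mathbf{G}$, it is $\mathbf{G}$-invariant. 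This $\mathbf{G}$-invariant subring itself defines the $\mathbf{G}$-homogeneous complex algebraic variety $\hat{M}$, with algebraic isotropy $\mathbf{Q}$ whose Lie algebra is $\mathfrak{q}$, and with $\mathbb{C}[M]=\mathbb{C}[\hat{M}]$ equivariantly, whence the generic algebraic embedding $M\hookrightarrow\hat{M}$; the non-affine case is handled by replacing rings with sheaves of regular $CR$/holomorphic functions. Note also that the paper obtains (1) and (2) simultaneously from this construction, whereas your derivation of (2) from (1) through the orbit map $g\mathbf{I}_0\mapsto g\cdot[\mathbf{Q}]$ silently requires $\mathbf{I}_0\subset\mathbf{Q}$ and $\mathbf{G}_0\cap\mathbf{Q}=\mathbf{I}_0$ at the group, not merely the Lie algebra, level.
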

\begin{proof}
First assume that $M$ is affine.
By Proposition \ref{prop:jc},
there is a generic $CR$-embedding $M\hookrightarrow{N}$ of $M$ into
the set of regular points of an affine complex variety $N\hookrightarrow
\mathbb{C}^{\ell}$, in such a way that the ring $\mathbb{C}[M]$
of regular $CR$ functions on $M$ coincides with the ring $\mathbb{C}[N]$
of regular holomorphic functions on $N$.
\par
Analogously, $\mathbb{C}[\mathbf{G}_0]=\mathbb{C}[\mathbf{G}]$.
Let $\mathbf{I}_0\subset\mathbf{G}_0$ be the isotropy subgroup at
$p_0\in{M}$ and
$\pi:\mathbf{G}_0\to\mathbf{G}_0/\mathbf{I}_0=M$ the natural
projection. The subring $\pi^*(\mathbb{C}[M])$ of
$\mathbb{C}[\mathbf{G}_0]=\mathbb{C}[\mathbf{G}]$
is $\mathbf{G}_0$-invariant, hence also $\mathbf{G}$-invariant.
Thus it defines a $\mathbf{G}$-homogeneous complex
algebraic variety $\hat{M}$, and the isotropy subgroup 
is an algebraic subgroup
$\mathbf{Q}$ with Lie algebra $\mathfrak{q}$. Indeed,
in a $\mathbf{G}_0$-equivariant way we have
$\mathbb{C}[M]=\mathbb{C}[\hat{M}]$, and we also obtain a generic
algebraic $CR$ embedding $M\hookrightarrow\hat{M}$.\par
Let us now turn to the general case.
Let $M\hookrightarrow{N}^{\mathrm{reg}}$ be the embedding
of Proposition \ref{prop:jc}, and let
$\mathcal{R}_M$ and $\mathcal{R}_N$ be
the sheaves of
germs of regular
$CR$ functions on $M$ and of regular holomorphic functions
on $N$, respectively. Then $\mathcal{R}_M$ and $\mathcal{R}_N$
are isomorphic, as, for every {{open}}
$ U\subset{N}$, the restriction map
$\mathcal{R}_N(U)\to\mathcal{R}_M(U\cap{M})$ is an isomorphism.
Then we apply the considerations of the affine case
to the subsheaf $\pi^{-1}(\mathcal{R}_M)$ of
$\mathcal{R}_{\mathbf{G}_0}\simeq\mathcal{R}_{\mathbf{G}}$.\par
When $\mathfrak{q}$ is algebraic, we consider the algebraic subgroup
$\mathbf{Q}$ of $\mathbf{G}$ with Lie algebra $\mathfrak{q}$
and the natural $\mathbf{G}_0$-equivariant embedding
$M\hookrightarrow{\mathbf{G}}/\mathbf{Q}$.
\end{proof}
\subsection{Algebraic closure of a
\texorpdfstring{$CR$}{CR} algebra}\label{sec:j}
The considerations of \S \ref{sec:g} can be adapted to the
case of a real linear algebraic group $\mathbf{G}_0$.
Indeed, if $\mathbf{H}_0$ is any subgroup of $\mathbf{G}_0$, we can define
its \emph{algebraic} closure $\mathbf{H}^{\mathrm{alg}}_0$ as the
smallest algebraic subgroup of $\mathbf{G}_0$ containing $\mathbf{H}_0$.
It coincides with the closure of $\mathbf{H}_0$ in the Zariski topology
of $\mathbf{G}_0$. When $\mathbf{H}_0$ is the analytic subgroup of
$\mathbf{G}_0$ corresponding to a Lie subalgebra $\mathfrak{h}_0$ of
its Lie algebra $\mathfrak{g}_0$, we denote by
$\mathfrak{h}^{\mathrm{alg}}_0$ the Lie algebra of
$\mathbf{H}^{\mathrm{alg}}_0$. Also in this case we have
(see \cite[Theorem 6.2]{OV93})
\begin{equation}
  \label{eq:eea}
  [\mathfrak{h}_0,\mathfrak{h}_0]=[\mathfrak{h}^{\mathrm{alg}}_0,
\mathfrak{h}^{\mathrm{alg}}_0].
\end{equation}
As in \S \ref{sec:g}, we have:
\begin{prop} Let $\mathbf{G}_0$ be a real linear algebraic
group, with Lie algebra $\mathfrak{g}_0$.
  Let $(\mathfrak{g}_0,\mathfrak{q})$ be a $CR$ algebra,
and denote by $\mathfrak{i}_0^{\mathrm{alg}}$ the algebraic closure
of the isotropy subalgebra
$\mathfrak{i}_0=\mathfrak{q}\cap
\mathfrak{g}_0$. Set $\mathfrak{q}^{\mathrm{alg}}=\mathfrak{q}+
\mathbb{C}\otimes_{\mathbb{R}}\mathfrak{i}^{\mathrm{alg}}_0$. Then
 $\mathfrak{q}^{\mathrm{alg}}$ is a complex Lie subalgebra of
$\mathfrak{g}$, contained in the normalizer of $\mathfrak{q}$
in $\mathfrak{g}$. The quotient
$\mathfrak{q}^{\mathrm{alg}}/\mathfrak{q}$ is Abelian.\par
Fix any real linear subspace $\mathfrak{i}_0'$ of $\mathfrak{g}_0$
with $\mathfrak{i}_0\subset\mathfrak{i}'_0\subset
\mathfrak{i}_0^{\mathrm{alg}}$ and define
$\mathfrak{q}'=\mathfrak{q}+\mathbb{C}\otimes_{\mathbb{R}}\mathfrak{i}'_0$.
Then:
\begin{enumerate}
\item $\mathfrak{q}'$ is a complex Lie subalgebra of $\mathfrak{g}$
and
the $\mathfrak{g}_0$-equivariant map $(\mathfrak{g}_0,\mathfrak{q})
\to(\mathfrak{g}_0,\mathfrak{q}')$ is an algebraic
$CR$ submersion, with Levi-flat fibers.
\item If $(\mathfrak{g}_0,\mathfrak{q})$ is fundamental,
then also
$(\mathfrak{g}_0,\mathfrak{q}')$ is fundamental.
\item If $(\mathfrak{g}_0,\mathfrak{q})$ is weakly nondegenerate, then
the fiber of the
$\mathfrak{g}_0$-equivariant map $(\mathfrak{g}_0,\mathfrak{q})
\to(\mathfrak{g}_0,\mathfrak{q}')$ is totally real.
\qed
\end{enumerate}
\end{prop}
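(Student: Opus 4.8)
The plan is to mirror the proof of Proposition \ref{prop:ga}, replacing the topological closure by the Zariski (algebraic) closure and the Malcev-closure identity by \eqref{eq:eea}. First I would show that $\mathfrak{i}_0^{\mathrm{alg}}$ normalizes $\mathfrak{q}$. Since $\mathfrak{i}_0=\mathfrak{q}\cap\mathfrak{g}_0\subset\mathfrak{q}$, we have $\mathrm{ad}(X)(\mathfrak{q})\subset\mathfrak{q}$ for $X\in\mathfrak{i}_0$, so the analytic subgroup $\mathbf{I}_0^0$ lies in $\mathbf{N}_0=\{g\in\mathbf{G}_0\mid\mathrm{Ad}(g)(\mathfrak{q})=\mathfrak{q}\}$. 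The set $\mathbf{N}_0$ is Zariski-closed, because $g\mapsto\mathrm{Ad}(g)$ is a morphism of algebraic groups and fixing the subspace $\mathfrak{q}$ is a closed condition; hence $\mathbf{N}_0$ contains the algebraic closure $\mathbf{I}_0^{\mathrm{alg}}$ of $\mathbf{I}_0^0$. Differentiating, $\mathrm{ad}(X)(\mathfrak{q})\subset\mathfrak{q}$ for every $X\in\mathfrak{i}_0^{\mathrm{alg}}$, so $\mathfrak{i}^{\mathrm{alg}}=\mathbb{C}\otimes_{\mathbb{R}}\mathfrak{i}_0^{\mathrm{alg}}$ lies in the normalizer $N_{\mathfrak{g}}(\mathfrak{q})$, and $\mathfrak{q}^{\mathrm{alg}}=\mathfrak{q}+\mathfrak{i}^{\mathrm{alg}}$ is a complex Lie subalgebra contained in $N_{\mathfrak{g}}(\mathfrak{q})$.

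The step I expect to carry the argument is the commutator identity \eqref{eq:eea}, which gives $[\mathfrak{i}_0^{\mathrm{alg}},\mathfrak{i}_0^{\mathrm{alg}}]=[\mathfrak{i}_0,\mathfrak{i}_0]\subset\mathfrak{i}_0\subset\mathfrak{q}$. Combined with the normalizing property this yields $[\mathfrak{q}^{\mathrm{alg}},\mathfrak{q}^{\mathrm{alg}}]\subset\mathfrak{q}$, so $\mathfrak{q}$ is an ideal in $\mathfrak{q}^{\mathrm{alg}}$ and the quotient $\mathfrak{q}^{\mathrm{alg}}/\mathfrak{q}$ is Abelian. For any intermediate $\mathfrak{i}_0'$ the same identity gives $[\mathfrak{i}_0',\mathfrak{i}_0']\subset[\mathfrak{i}_0^{\mathrm{alg}},\mathfrak{i}_0^{\mathrm{alg}}]=[\mathfrak{i}_0,\mathfrak{i}_0]\subset\mathfrak{i}_0$, whence $\mathfrak{i}_0'$ is a subalgebra and $\mathfrak{q}'=\mathfrak{q}+\mathfrak{i}'$ is a complex Lie subalgebra, its bracket landing in $\mathfrak{q}$ through the three pieces $[\mathfrak{q},\mathfrak{q}]$, $[\mathfrak{q},\mathfrak{i}']$, $[\mathfrak{i}',\mathfrak{i}']$.

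For item (1) I would verify that $(\mathfrak{g}_0,\mathfrak{q})\to(\mathfrak{g}_0,\mathfrak{q}')$ is a $CR$ algebras submersion: since $\mathfrak{i}'=\overline{\mathfrak{i}'}$ one has $\mathfrak{i}'\subset\mathfrak{q}'\cap\bar{\mathfrak{q}}'$, so $\mathfrak{q}+\mathfrak{q}'\cap\bar{\mathfrak{q}}'=\mathfrak{q}'$, which is exactly the submersion condition (the first being trivial). Its fiber is $(\mathfrak{i}_0',\mathfrak{q}\cap\bar{\mathfrak{q}}')$, and I expect Levi-flatness to follow from the Abelianness of $\mathfrak{q}^{\mathrm{alg}}/\mathfrak{q}$ exactly as in Proposition \ref{prop:ga}; the map is algebraic because all the subalgebras involved are built from the algebraic closure. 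Item (2) is immediate from $\mathfrak{q}\subset\mathfrak{q}'$: any $\mathfrak{q}''$ with $\mathfrak{q}'+\bar{\mathfrak{q}}'\subset\mathfrak{q}''$ also contains $\mathfrak{q}+\bar{\mathfrak{q}}$, so fundamentality of $(\mathfrak{g}_0,\mathfrak{q})$ forces $\mathfrak{q}''=\mathfrak{g}$.

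For item (3), following the weakly nondegenerate $\mathbf{G}_0$-closure argument, I would set $\mathfrak{a}_0=\mathfrak{i}_0'\cap(\mathfrak{q}+\bar{\mathfrak{q}})$. Since $\mathfrak{i}_0'\subset\mathfrak{i}_0^{\mathrm{alg}}$ normalizes $\mathfrak{q}$, the complexification $\mathfrak{a}=\mathbb{C}\otimes_{\mathbb{R}}\mathfrak{a}_0$ produces a complex Lie subalgebra $\mathfrak{q}+\mathfrak{a}$ with $\mathfrak{q}\subset\mathfrak{q}+\mathfrak{a}\subset\mathfrak{q}+\bar{\mathfrak{q}}$; weak nondegeneracy forces $\mathfrak{q}+\mathfrak{a}=\mathfrak{q}$, hence $\mathfrak{a}_0\subset\mathfrak{i}_0$ and the fiber reduces to the totally real $(\mathfrak{i}_0',\mathfrak{a})$. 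The only genuine obstacle is the first step, turning the analytic normalizing property into an algebraic one via Zariski-closedness of $\mathbf{N}_0$, since everything afterward is formal bracket bookkeeping identical to the $\mathbf{G}_0$-closure case.
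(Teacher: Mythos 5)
Your proposal is correct and coincides with the paper's intended argument: the paper states this proposition without a separate proof (only the remark ``As in \S\ref{sec:g}, we have:''), precisely because the proof is the adaptation you carry out, in which Zariski-closedness of the stabilizer $\{g\in\mathbf{G}_0\mid \mathrm{Ad}(g)(\mathfrak{q})=\mathfrak{q}\}$ gives that $\mathfrak{i}_0^{\mathrm{alg}}$ normalizes $\mathfrak{q}$, and the identity \eqref{eq:eea} plays the role of the Malcev-closure commutator identity used in Proposition \ref{prop:ga}. The remaining bracket computations for the Abelian quotient, the submersion condition, fundamentality, and the weakly nondegenerate case are exactly those of \S\ref{sec:g}, as you indicate.
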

\begin{dfn}
  The $CR$ algebra $(\mathfrak{g}_0,\mathfrak{q}^{\mathrm{alg}})$
is called the \emph{algebraic closure} of
the $CR$ algebra $(\mathfrak{g}_0,\mathfrak{q})$.
\end{dfn}
\subsection{The \texorpdfstring{$\mathbf{G}_0$}{G0}- and the
\texorpdfstring{$\mathfrak{g}_0$}{g0}-anticanonical
fibrations} \label{sec:k}
In this section we describe the anticanonical fibration of
\cite{AHR85} and \cite{gihu07} in terms of $CR$ algebras.
\par
Let $\mathbf{G}_0$ be a Lie group, $\mathfrak{g}_0$ its Lie algebra.
Given a $CR$ algebra $(\mathfrak{g}_0,\mathfrak{q})$,~~set:
\begin{align}
\mathfrak{a}_0&=\mathbf{N}_{\mathfrak{g}_0}(\mathfrak{q})=
\{X\in\mathfrak{g}_0\mid [X,\mathfrak{q}]\subset\mathfrak{q}\}
\label{eq:da}\\
\mathfrak{q}'&=\mathfrak{q}+\mathfrak{a},
\quad\text{with}\quad
\mathfrak{a}=\mathbb{C}\otimes_{\mathbb{R}}\mathfrak{a}_0,
\label{eq:db}\\
\mathbf{A}_0&=\mathbf{N}_{\mathbf{G}_0}(\mathfrak{q})=
\{g\in\mathbf{G}_0\mid\mathrm{Ad}(g)(\mathfrak{q})=\mathfrak{q}\}.\label{eq:dc}
\end{align}
\begin{prop} Keep the notation introduced above.
Then:
\begin{enumerate}
\item $\mathfrak{q}'$
is the complex Lie subalgebra of $\mathfrak{g}$
characterized by the properties:
\begin{equation}\label{eq:dd}
  \begin{cases}\mathfrak{q}\subset\mathfrak{q}',\quad
    \mathfrak{q}'\cap\mathfrak{g}_0=\mathfrak{a}_0,\\
(\mathfrak{g}_0,\mathfrak{q})\to(\mathfrak{g}_0,\mathfrak{q}')\;
\text{is a $\mathfrak{g}_0$-equivariant $CR$-submersion.}
  \end{cases}
\end{equation}
\item $\mathfrak{q}'$ is the smallest complex Lie subalgebra
of $\mathfrak{g}$ which satisfy
\mbox{$\mathfrak{q}+\mathfrak{a}_0\subset
\mathfrak{q}'\subset{\mathbf{N}}_{\mathfrak{g}}(\mathfrak{q})$}.
\item The fiber
$(\mathfrak{a}_0,\mathfrak{a}\cap\mathfrak{q})$
of the $\mathfrak{g}_0$-equivariant $CR$ fibration
$(\mathfrak{g},\mathfrak{q})\to
(\mathfrak{g},\mathfrak{q}')$ is Levi-flat. Indeed
$\mathfrak{a}\cap\mathfrak{q}=\mathfrak{q}\cap
\bar{\mathfrak{q}}'$, and
$[\mathfrak{q}\cap\bar{\mathfrak{q}}',\bar{\mathfrak{q}}\cap\mathfrak{q}']
\subset\mathfrak{q}\cap\bar{\mathfrak{q}}$.
\item[] {Moreover:}
\item If $(\mathfrak{g}_0,\mathfrak{q}')$ is totally real, then
$(\mathfrak{g}_0,\mathfrak{q})$ is Levi-flat, and
$[\mathfrak{q},\bar{\mathfrak{q}}]\subset\mathfrak{q}\cap\bar{\mathfrak{q}}$.
\item Conditions $(i)$, $(ii)$ and $(iii)$ below are equivalent and
imply $(iv)$:
\begin{displaymath}
  \underset{(i)}{\underbrace{\mathfrak{a}_0=\mathfrak{g}_0}}
\Longleftrightarrow\underset{(ii)}{\underbrace{
\mathfrak{q}'=\mathfrak{g}}}\Longleftrightarrow
\underset{(iii)}{\underbrace{\mathfrak{q}\;\text{is
an ideal of $\mathfrak{g}$}}}\Longrightarrow
\underset{(iv)}{\underbrace{\mathfrak{a}_0\;
\text{is an ideal of $\mathfrak{g}_0$}}}.
\end{displaymath}
\item $\mathbf{A}_0$ is a closed subgroup of $\mathbf{G}_0$ and
hence $(\mathfrak{g}_0,\mathfrak{q}')$ is factual.
\item If $\mathfrak{g}_0$ is an algebraic Lie algebra, then also
$\mathfrak{a}_0$ and $\mathfrak{a}$ are algebraic.
If $\mathbf{G}_0$ is a real linear algebraic group, then
$M'=\mathbf{G}_0/\mathbf{A}_0$ is a weakly algebraic $CR$ manifold.
If $\mathfrak{q}$
is algebraic, then $\mathfrak{q}'$ is algebraic too, and $M'$ is an
algebraic $CR$ manifold.
\end{enumerate}
\end{prop}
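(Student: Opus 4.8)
The plan is to derive the entire proposition from one foundational observation: $\mathfrak{q}'=\mathfrak{q}+\mathfrak{a}$ is contained in the complex normalizer $\mathbf{N}_{\mathfrak{g}}(\mathfrak{q})$. First I would note that $\mathfrak{a}_0$ is a real subalgebra and that, by $\mathbb{C}$-bilinearity, $\mathfrak{a}=\mathbb{C}\otimes_{\mathbb{R}}\mathfrak{a}_0$ still satisfies $[\mathfrak{a},\mathfrak{q}]\subset\mathfrak{q}$ and $[\mathfrak{a},\mathfrak{a}]\subset\mathfrak{a}$; combined with $[\mathfrak{q},\mathfrak{q}]\subset\mathfrak{q}$ this gives $[\mathfrak{q}',\mathfrak{q}']\subset\mathfrak{q}'$, so $\mathfrak{q}'$ is a complex Lie subalgebra, and since both summands normalize $\mathfrak{q}$ we get $\mathfrak{q}'\subset\mathbf{N}_{\mathfrak{g}}(\mathfrak{q})$. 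Part $(2)$ is then immediate: any complex subalgebra $\mathfrak{b}$ with $\mathfrak{q}+\mathfrak{a}_0\subset\mathfrak{b}$ contains $\mathbb{C}(\mathfrak{q}+\mathfrak{a}_0)=\mathfrak{q}'$, while $\mathfrak{q}'$ itself lies between $\mathfrak{q}+\mathfrak{a}_0$ and $\mathbf{N}_{\mathfrak{g}}(\mathfrak{q})$. For $(1)$, intersecting $\mathfrak{q}'\subset\mathbf{N}_{\mathfrak{g}}(\mathfrak{q})$ with $\mathfrak{g}_0$ yields $\mathfrak{q}'\cap\mathfrak{g}_0=\mathbf{N}_{\mathfrak{g}_0}(\mathfrak{q})=\mathfrak{a}_0$, and from $\mathfrak{a}=\bar{\mathfrak{a}}\subset\mathfrak{q}'\cap\bar{\mathfrak{q}}'$ one reads off $\mathfrak{q}+\mathfrak{q}'\cap\bar{\mathfrak{q}}'=\mathfrak{q}'$, i.e. the $CR$-submersion condition. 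Uniqueness is forced: for any $\mathfrak{q}''$ satisfying the three properties, $\mathfrak{q}''\cap\bar{\mathfrak{q}}''$ is conjugation-stable, hence equals $\mathbb{C}(\mathfrak{q}''\cap\mathfrak{g}_0)=\mathfrak{a}$, so the submersion relation gives $\mathfrak{q}''=\mathfrak{q}+\mathfrak{a}=\mathfrak{q}'$ (alternatively one quotes Proposition~\ref{prop:ad}).

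For $(3)$ I would first read the fiber off the general formula: its isotropy is $\mathfrak{q}'\cap\mathfrak{g}_0=\mathfrak{a}_0$ and its subalgebra is $\mathfrak{q}''=\mathfrak{q}\cap(\mathfrak{q}'\cap\bar{\mathfrak{q}}')=\mathfrak{q}\cap\bar{\mathfrak{q}}'$. The main obstacle is the identification $\mathfrak{q}\cap\bar{\mathfrak{q}}'=\mathfrak{a}\cap\mathfrak{q}$: the inclusion $\supset$ is clear from $\mathfrak{a}\subset\bar{\mathfrak{q}}'$, but for $\subset$ one must close the gap between the complex normalizer and the complexification $\mathfrak{a}=\mathbb{C}\otimes_{\mathbb{R}}\mathfrak{a}_0$ of the real normalizer, which may be strictly smaller, so membership in $\mathbf{N}_{\mathfrak{g}}(\mathfrak{q})$ alone does not suffice. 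The decisive point is that if $Z\in\mathfrak{q}\cap\bar{\mathfrak{q}}'$ then $\bar{Z}\in\mathfrak{q}'\subset\mathbf{N}_{\mathfrak{g}}(\mathfrak{q})$, so \emph{both} $Z$ and $\bar{Z}$ normalize $\mathfrak{q}$; then the real vectors $\tfrac{1}{2}(Z+\bar{Z})$ and $\tfrac{1}{2i}(Z-\bar{Z})$ lie in $\mathbf{N}_{\mathfrak{g}}(\mathfrak{q})\cap\mathfrak{g}_0=\mathfrak{a}_0$, forcing $Z\in\mathbb{C}\mathfrak{a}_0=\mathfrak{a}$. Levi-flatness follows from $[\mathfrak{q}\cap\bar{\mathfrak{q}}',\bar{\mathfrak{q}}\cap\mathfrak{q}']\subset\mathfrak{q}\cap\bar{\mathfrak{q}}$: taking conjugates of $\mathfrak{q}'\subset\mathbf{N}_{\mathfrak{g}}(\mathfrak{q})$ shows $\bar{\mathfrak{q}}'\subset\mathbf{N}_{\mathfrak{g}}(\bar{\mathfrak{q}})$, so for $Z\in\mathfrak{q}\cap\bar{\mathfrak{q}}'$ and $W\in\bar{\mathfrak{q}}\cap\mathfrak{q}'$ one has $[Z,W]\in\mathfrak{q}$ (as $W$ normalizes $\mathfrak{q}$) and $[Z,W]\in\bar{\mathfrak{q}}$ (as $Z$ normalizes $\bar{\mathfrak{q}}$); since $\mathfrak{q}\cap\bar{\mathfrak{q}}$ sits inside the complexified isotropy of the fiber, the fiber Levi form vanishes.

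Part $(4)$ is a corollary of $(3)$: if $(\mathfrak{g}_0,\mathfrak{q}')$ is totally real then $\mathfrak{q}'=\bar{\mathfrak{q}}'$, so $\mathfrak{q}\cap\bar{\mathfrak{q}}'=\mathfrak{q}$, and $(3)$ gives $\mathfrak{q}\subset\mathfrak{a}$; since $\mathfrak{a}_0$ is real it normalizes both $\mathfrak{q}$ and $\bar{\mathfrak{q}}$, whence $[\mathfrak{q},\bar{\mathfrak{q}}]\subset\mathfrak{q}\cap\bar{\mathfrak{q}}$, the claimed Levi-flatness of $(\mathfrak{g}_0,\mathfrak{q})$. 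For $(5)$ the equivalences are bookkeeping on $\mathfrak{q}'\cap\mathfrak{g}_0=\mathfrak{a}_0$: one has $\mathfrak{a}_0=\mathfrak{g}_0\Leftrightarrow\mathfrak{a}=\mathfrak{g}\Leftrightarrow\mathfrak{q}'=\mathfrak{g}$, and $\mathfrak{a}_0=\mathfrak{g}_0\Leftrightarrow[\mathfrak{g}_0,\mathfrak{q}]\subset\mathfrak{q}\Leftrightarrow[\mathfrak{g},\mathfrak{q}]\subset\mathfrak{q}$ by complexification, i.e. $\mathfrak{q}$ an ideal; then $(iv)$ holds trivially because $(i)$ makes $\mathfrak{a}_0=\mathfrak{g}_0$, which is an ideal of itself.

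Finally, for $(6)$ I would observe that $\mathbf{A}_0$ is the stabilizer of the point $[\mathfrak{q}]$ in the Grassmannian of subspaces of $\mathfrak{g}$ under the continuous $\mathrm{Ad}$-action, hence closed, with Lie algebra $\{X\in\mathfrak{g}_0\mid\mathrm{ad}(X)\mathfrak{q}\subset\mathfrak{q}\}=\mathfrak{a}_0=\mathfrak{q}'\cap\mathfrak{g}_0$; thus the pair $(\mathbf{G}_0,\mathbf{A}_0)$ witnesses that $(\mathfrak{g}_0,\mathfrak{q}')$ is factual. For $(7)$, when $\mathfrak{g}_0$ is algebraic the same stabilizer is Zariski-closed, so $\mathfrak{a}_0$ and its complexification $\mathfrak{a}$ are algebraic; if moreover $\mathbf{G}_0$ is real linear algebraic, then $M'=\mathbf{G}_0/\mathbf{A}_0$ is a real algebraic homogeneous manifold carrying the $\mathbf{G}_0$-homogeneous $CR$ structure of $(\mathfrak{g}_0,\mathfrak{q}')$, so Lemma~\ref{lem:jd} makes it weakly algebraic; and if $\mathfrak{q}$ is algebraic, then $\mathfrak{q}'=\mathfrak{q}+\mathfrak{a}$ is the Lie algebra of the algebraic subgroup generated by the algebraic (mutually normalizing) subgroups with Lie algebras $\mathfrak{q}$ and $\mathfrak{a}$, hence algebraic, and the converse direction of Theorem~\ref{prop:jj} then yields that $M'$ is an algebraic $CR$ manifold. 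The one genuinely delicate step throughout is the fiber identification in $(3)$, where the passage from the complex normalizer to $\mathbb{C}\otimes_{\mathbb{R}}\mathfrak{a}_0$ via real and imaginary parts is essential.
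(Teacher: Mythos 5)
Your proposal is correct, and its skeleton is the same as the paper's: everything is derived from the observation that $\mathfrak{a}_0$ is the real normalizer, so that $\mathfrak{q}'=\mathfrak{q}+\mathfrak{a}$ is a complex subalgebra contained in $\mathbf{N}_{\mathfrak{g}}(\mathfrak{q})$; part (1) is reduced (directly, or via Proposition~\ref{prop:ad}, as the paper does) to the identity $\mathfrak{q}'\cap\mathfrak{g}_0=\mathfrak{a}_0$, part (2) to the fact that a complex subalgebra containing $\mathfrak{a}_0$ contains $\mathfrak{a}$, and parts (4), (5) to the same bracket computations. The differences lie where the paper is terse and you are not, and they work in your favor. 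For (3), the paper's proof only verifies the Levi-flatness inclusion $[\mathfrak{a}\cap\mathfrak{q},\mathfrak{a}\cap\bar{\mathfrak{q}}]\subset\mathfrak{q}\cap\bar{\mathfrak{q}}$ using \eqref{eq:da}, and never actually proves the asserted identification $\mathfrak{a}\cap\mathfrak{q}=\mathfrak{q}\cap\bar{\mathfrak{q}}'$ of the fiber; your argument -- if $Z\in\mathfrak{q}\cap\bar{\mathfrak{q}}'$ then both $Z$ and $\bar{Z}$ normalize $\mathfrak{q}$, hence $\tfrac{1}{2}(Z+\bar{Z})$ and $\tfrac{1}{2i}(Z-\bar{Z})$ lie in $\mathbf{N}_{\mathfrak{g}}(\mathfrak{q})\cap\mathfrak{g}_0=\mathfrak{a}_0$, forcing $Z\in\mathfrak{a}$ -- supplies exactly the missing step, and it is correct; you rightly flag the point that the complexification of the real normalizer can be strictly smaller than the complex normalizer, which is why this step needs an argument at all. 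For (6) and (7), the paper simply points to \eqref{eq:dc} and cites Theorems~\ref{thm:je} and \ref{prop:jj}, whereas you argue directly that the stabilizer $\mathbf{A}_0$ of $\mathfrak{q}$ is closed (Zariski-closed in the algebraic case) with Lie algebra $\mathfrak{a}_0$, invoke Lemma~\ref{lem:jd} for weak algebraicity of $M'$, and construct the algebraic group with Lie algebra $\mathfrak{q}'=\mathfrak{q}+\mathfrak{a}$ as a product of mutually normalizing algebraic subgroups before applying the converse direction of Theorem~\ref{prop:jj}; this is a sound and slightly more self-contained route to the same conclusions.
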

\begin{proof} Since $\mathfrak{i}_0=\mathfrak{q}\cap\mathfrak{g}_0
\subset\mathfrak{a}$, by Proposition \ref{prop:ad} \eqref{eq:db}
and \eqref{eq:dd} are equivalent. This proves $(1)$.\par
Indeed, any complex Lie subalgebra containing $\mathfrak{a}_0$
contains its complexification $\mathfrak{a}$. Hence $(2)$ is obvious.
\par
By \eqref{eq:da}, we have
$[\mathfrak{a},{\mathfrak{q}}]\subset
{\mathfrak{q}}$ and
$[\mathfrak{a},\bar{\mathfrak{q}}]\subset
\bar{\mathfrak{q}}$, hence
$[(\mathfrak{a}\cap\mathfrak{q}\,,\,
\overline{\mathfrak{a}\cap\mathfrak{q}}]=[\mathfrak{a}\cap\mathfrak{q},
\mathfrak{a}\cap\bar{\mathfrak{q}}]\subset \mathfrak{q}\cap
\bar{\mathfrak{q}}$ yields $(3)$.\par
$(4)$. When $(\mathfrak{g}_0,\mathfrak{q}')$ is totally real,
$\bar{\mathfrak{q}}\subset\mathfrak{q}'$,
hence
$
[\mathfrak{q},\bar{\mathfrak{q}}]\subset
[\mathfrak{q},{\mathfrak{q}}']\subset\mathfrak{q}.
$
By conjugation, we obtain $[\mathfrak{q},\bar{\mathfrak{q}}]\subset
\mathfrak{q}\cap\bar{\mathfrak{q}}$.\par
$(5)$. We clearly have $(i)\Rightarrow(ii)\Rightarrow(iii)\Rightarrow(i)$
and  $(iii)\Rightarrow(iv)$.
\par
$(6)$ follows from \eqref{eq:dc}, since $\mathfrak{a}_0$ is the Lie
algebra of $\mathbf{A}_0$,
and (7) is a consequence of
Theorems \ref{thm:je} and \ref{prop:jj}.
\end{proof}
\section{Left invariant
\texorpdfstring{$CR$}{CR} structures
on semisimple Lie groups}\label{sec:li}
In this and the following section, 
we shall discuss special examples of homogeneous
$CR$ structures. We begin by investigating
left-invariant $CR$ structures on real semisimple Lie groups
(see e.g. \cite{Snow86}).
Note that a Lie group with a left and right invariant complex
structure is in fact a complex Lie group.
\subsection{Existence of maximal $CR$ structures}

\begin{thm}\label{TM:gxc}
Every semisimple real Lie group of even dimension admits a left invariant
complex structure. \par
Every semisimple real Lie group of odd dimension admits a left invariant
$CR$ structure of hypersurface type.
\end{thm}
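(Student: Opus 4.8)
The plan is to reduce everything, via Proposition~\ref{prop:ab} applied with trivial isotropy $\mathbf{I}_0=\{e\}$, to a purely Lie-algebraic construction: a left invariant $CR$ structure on $\mathbf{G}_0$ is the same datum as a complex Lie subalgebra $\mathfrak{q}\subset\mathfrak{g}$ with $\mathfrak{q}\cap\mathfrak{g}_0=0$. By Remark~\ref{rmk:ac}, a left invariant \emph{complex} structure is a complex subalgebra with $\mathfrak{g}=\mathfrak{q}\oplus\bar{\mathfrak{q}}$, while a left invariant $CR$ structure of hypersurface type (of maximal $CR$ dimension, $CR$ codimension one) is a complex subalgebra with $\mathfrak{q}\cap\bar{\mathfrak{q}}=0$ and $\dim_{\mathbb{C}}(\mathfrak{q}+\bar{\mathfrak{q}})=\dim_{\mathbb{C}}\mathfrak{g}-1$; in both cases $\mathfrak{q}\cap\bar{\mathfrak{q}}=0$ forces $\mathfrak{q}\cap\mathfrak{g}_0=0$ automatically, since $X\in\mathfrak{q}\cap\mathfrak{g}_0$ gives $X=\bar{X}\in\mathfrak{q}\cap\bar{\mathfrak{q}}$. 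As $\dim_{\mathbb{R}}\mathfrak{g}_0=\dim_{\mathbb{C}}\mathfrak{g}=\ell+|\mathcal{R}|$, where $\ell$ is the rank and the root system $\mathcal{R}$ has even cardinality, the parity of $\dim\mathbf{G}_0$ equals that of $\ell$: the even case is the even-rank case and the odd case the odd-rank case.

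The heart of the argument is to produce a Borel subalgebra adapted to the real form. First I would fix a Cartan involution and a \emph{maximally compact} $\sigma$-stable Cartan subalgebra $\mathfrak{h}_0=\mathfrak{t}_0\oplus\mathfrak{a}_0$ of $\mathfrak{g}_0$, with $\mathfrak{t}_0\subset\mathfrak{k}_0$ and $\mathfrak{a}_0\subset\mathfrak{p}_0$, so that its complexification $\mathfrak{h}$ is $\sigma$-stable and carries the root decomposition $\mathfrak{g}=\mathfrak{h}\oplus\bigoplus_{\alpha\in\mathcal{R}}\mathfrak{g}^{\alpha}$. The conjugation acts by $\sigma(\mathfrak{g}^{\alpha})=\mathfrak{g}^{\sigma^{*}\alpha}$, where $\sigma^{*}$ is $+1$ on the $\mathfrak{a}_0$-directions and $-1$ on the $i\mathfrak{t}_0$-directions of the real span of the roots. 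The crucial structural fact is that a maximally compact Cartan subalgebra has \emph{no real roots}, i.e. no root vanishes on $i\mathfrak{t}_0$; hence every root has nonzero $\mathfrak{t}_0$-component. Choosing a lexicographic order that reads the $\mathfrak{t}_0$-component first, through a functional generic on $(i\mathfrak{t}_0)^{*}$, the sign of each root is governed by that component, and since $\sigma^{*}$ negates it, I get $\sigma^{*}\alpha<0$ for every $\alpha>0$. Writing $\mathfrak{n}^{\pm}$ for the sums of the positive and negative root spaces, this gives $\sigma(\mathfrak{n}^{+})=\mathfrak{n}^{-}$ and $\mathfrak{g}=\mathfrak{n}^{+}\oplus\mathfrak{h}\oplus\mathfrak{n}^{-}$ with $\mathfrak{n}^{+}\cap\sigma(\mathfrak{n}^{+})=0$.

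With the Borel $\mathfrak{b}=\mathfrak{h}\oplus\mathfrak{n}^{+}$ in hand I would finish by adjusting only the Cartan part, so that $\mathfrak{q}=\mathfrak{h}'\oplus\mathfrak{n}^{+}$ is always a complex subalgebra, being intermediate between the ideal $\mathfrak{n}^{+}$ and $\mathfrak{b}$. In the even-rank case $\dim_{\mathbb{R}}\mathfrak{h}_0=\ell$ is even, so $\mathfrak{h}_0$ carries a complex structure; taking $\mathfrak{h}'$ to be the corresponding $(-i)$-eigenspace in $\mathfrak{h}$ gives $\mathfrak{h}=\mathfrak{h}'\oplus\sigma(\mathfrak{h}')$ with $\mathfrak{h}'\cap\sigma(\mathfrak{h}')=0$, whence $\mathfrak{q}=\mathfrak{h}'\oplus\mathfrak{n}^{+}$ satisfies $\mathfrak{g}=\mathfrak{q}\oplus\bar{\mathfrak{q}}$, a complex structure. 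In the odd-rank case I would split off one real line, $\mathfrak{h}_0=\mathbb{R}\,v\oplus\mathfrak{h}_0''$ with $\mathfrak{h}_0''$ of even dimension $\ell-1$, put a complex structure on $\mathfrak{h}_0''$, take $\mathfrak{h}''\subset(\mathfrak{h}_0'')^{\mathbb{C}}$ its $(-i)$-eigenspace, and set $\mathfrak{q}=\mathfrak{h}''\oplus\mathfrak{n}^{+}$; then $\mathfrak{q}\cap\bar{\mathfrak{q}}=0$ while $\mathfrak{q}+\bar{\mathfrak{q}}$ omits exactly the line $\mathbb{C}\,v$, giving $CR$ codimension one, i.e. hypersurface type.

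I expect the main obstacle to be precisely the construction of the adapted Borel for an \emph{arbitrary} real form. In the compact case all roots are imaginary, $\sigma(\mathfrak{n}^{+})=\mathfrak{n}^{-}$ holds for every ordering, and the scheme is the classical Samelson argument; but for a general noncompact $\mathfrak{g}_0$ one must control the $\sigma$-action on the complex roots, and it is the combination of two facts that makes it work: the existence of a maximally compact $\sigma$-stable Cartan subalgebra and the absence of real roots for it (real roots being removable by Cayley transforms). The orderly choice of positive system is what converts these into the transversality $\sigma(\mathfrak{n}^{+})=\mathfrak{n}^{-}$. Once this is in place, checking that each $\mathfrak{q}$ is a subalgebra and carrying out the dimension bookkeeping for $\mathfrak{q}\cap\bar{\mathfrak{q}}$ and $\mathfrak{q}+\bar{\mathfrak{q}}$ is routine.
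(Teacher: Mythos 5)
Your proposal is correct and follows essentially the same route as the paper's proof: a maximally compact Cartan subalgebra $\mathfrak{h}_0$, a Borel subalgebra $\mathfrak{b}=\mathfrak{h}\oplus\mathfrak{n}^{+}$ with $\mathfrak{b}\cap\bar{\mathfrak{b}}=\mathfrak{h}$, and then $\mathfrak{q}=V\oplus\mathfrak{n}^{+}$ with $V$ the graph of a complex structure on $\mathfrak{h}_0$ (even case) or on a hyperplane of $\mathfrak{h}_0$ (odd case). The only difference is that you supply the argument for the adapted Borel (no real roots for a maximally compact Cartan, plus a positive system read off the $\mathfrak{t}_0$-component, giving $\sigma(\mathfrak{n}^{+})=\mathfrak{n}^{-}$), which the paper simply cites from Knapp.
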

\begin{proof}
Let $\mathbf{G}_0$ be a simple real Lie group, with
Lie algebra $\mathfrak{g}_0$.
Take a maximally compact Cartan subalgebra
$\mathfrak{h}_0$ of  $\mathfrak{g}_0$. The complexification
$\mathfrak{g}$ of $\mathfrak{g}_0$ contains a Borel
subalgebra $\mathfrak{b}$ with
$\mathfrak{b}\cap\bar{\mathfrak{b}}$ equal to the
complexification $\mathfrak{h}$ of $\mathfrak{h}_0$
(see e.g. \cite{Kn:2002}). Let
$\mathfrak{n}\!=\! [\mathfrak{b},\mathfrak{b}]$ be the nilpotent ideal 
of~$\mathfrak{b}$. \par
If the dimension of $\mathfrak{g}_0$ is even, then the
dimension of $\mathfrak{h}_0$ is even too, and we can find a complex
structure
$J:\mathfrak{h}_0\to
\mathfrak{h}_0$. Then
$V=\{X+iJX\mid X\in\mathfrak{h}_0\}$ is a complex subspace
of $\mathfrak{h}$, with $V\cap\bar{V}=\{0\}$. Hence
$\mathfrak{q}=V\oplus\mathfrak{n}$
is a complex subalgebra of $\mathfrak{g}$, with
$\mathfrak{q}\cap\bar{\mathfrak{q}}=0$ and
$\mathfrak{g}=\mathfrak{q}\oplus\bar{\mathfrak{q}}$. The $CR$ algebra
$(\mathfrak{g}_0,\mathfrak{q})$ defines a left invariant
complex structure on $\mathbf{G}_0$.\par
If $\mathfrak{g}_0$ has odd dimension, then $\mathfrak{h}_0$ has
odd dimension too. Fix a hyperplane $\mathfrak{m}_0$ of
$\mathfrak{h}_0$, a complex structure $J:\mathfrak{m}_0\to\mathfrak{m}_0$,
set $V=\{X+iJX\mid X\in\mathfrak{m}_0\}$
and take $\mathfrak{q}=V\oplus\mathfrak{n}$.
Since $\mathfrak{q}\cap\bar{\mathfrak{q}}=0$ and
$\mathfrak{q}+\bar{\mathfrak{q}}=\mathfrak{n}\oplus\bar{\mathfrak{n}}\oplus
\mathfrak{m}$,
the $CR$ algebra
$(\mathfrak{g}_0,\mathfrak{q})$ defines 
a left invariant $CR$ structure of hypersurface
type on $\mathbf{G}_0$.
\end{proof}
\begin{exam} Let $\mathbf{G}=\mathbf{SL}(n,\mathbb{C})$ and consider
on its Lie algebra
$\mathfrak{g}=\mathfrak{gl}(n,\mathbb{C})$ the
conjugation $A\to{A}^{\sharp}$, defined by
$A^\sharp=(\bar{a}_{{n+1-i},{n+1-j}})_{1\leq{i,j}\leq{n}}$
for $A=(a_{i,j})_{1\leq{i,j}\leq{n}}$.
Then $\mathfrak{g}_0=\{X\in\mathfrak{g}\mid X^\sharp=X\}\simeq
\mathfrak{sl}(n,\mathbb{R})$ and
$\mathbf{G}_0=\{g\in\mathbf{G}\mid g^\sharp=g\}
\simeq\mathbf{SL}(n,\mathbb{R})$.
The diagonal matrices of $\mathfrak{g}_0$ are a maximally compact
Cartan subalgebra $\mathfrak{h}_0$ of $\mathfrak{g}_0$.
Let $n=2m+1$ be odd. Fix $p$ with
$1<{p}\leq{m}$ and define $\mathfrak{q}'$ as the complex Lie subalgebra
of $\mathfrak{g}$ consisting of matrices
$(a_{i,j})_{1\leq{i,j}\leq{n}}$ with $a_{i,j}=0$ when
either $p<i\leq{n-p}$ and $j\leq{i}$, or $i>n-p$. Let
$\mathfrak{a}$ be a subspace of the Cartan subalgebra
$\mathfrak{h}$ of the diagonal matrices of $\mathfrak{g}$,
with $\lambda_j=\lambda_n$ for $j>n-p$, $\mathfrak{a}\cap\bar{\mathfrak{a}}
=0$ and $\mathfrak{a}+\bar{\mathfrak{a}}=\mathfrak{h}$.
By setting
$\mathfrak{q}=\mathfrak{q}'+\mathfrak{a}$, we obtain a $CR$ algebra
$(\mathfrak{g}_0,\mathfrak{q})$,
with a non solvable $\mathfrak{q}$, which defines a
left invariant complex structure on
$\mathbf{G}_0\simeq \mathbf{SL}(2m+1,\mathbb{R})$.
\end{exam}
 \subsection{Classification of the regular maximal $CR$
 structures}\label{complstrut}
We recall that a complex
 Lie subalgebra $\mathfrak{q}$ of a complex semisimple
 Lie algebra $\mathfrak{g}$ is \emph{regular}
 if its normalizer contains a
 Cartan subalgebra of $\mathfrak{g}$.
 \begin{dfn}
  We say that a $CR$ algebra $(\mathfrak{g}_0,\mathfrak{q})$ is
 \emph{regular} if $\mathfrak{q}$ is normalized by a Cartan
 subalgebra of the real Lie algebra $\mathfrak{g}_0$.
 \par
 If $\mathbf{G}_0$ is a semisimple real Lie group with Lie algebra
 $\mathfrak{g}_0$,
 a $\mathbf{G}_0$-invariant $CR$ structure on a
 $\mathbf{G}_0$-homogeneous $CR$ manifold $M$
 is called \emph{regular} if the associated $CR$ algebra
 $(\mathfrak{g}_0,\mathfrak{q})$ is
 {regular}.
 \end{dfn}
 Fix a semisimple real Lie algebra
 $\mathfrak{g}_0$. Let $\mathfrak{h}_0$ be a Cartan subalgebra of
 $\mathfrak{g}_0$, and $\mathcal{R}$ the root system
 of its complexification $\mathfrak{h}$ in $\mathfrak{g}$.
 For each $\alpha\in\mathcal{R}$ we write
 $\mathfrak{g}^{\alpha}$ for the root space of $\alpha$.
The real form $\mathfrak{g}_0$ defines a conjugation in $\mathfrak{g}$,
which by duality gives an involution
$\alpha\to\bar\alpha$
 in $\mathcal{R}$, with
 $\bar\alpha(H)=\overline{\alpha(\bar{H})}$ for all
 $H\in\mathfrak{h}$.
 \begin{lem}\label{lem:lb} Assume that there is a closed system
of roots
$\mathcal{Q}\subset\mathcal{R}$  with
 \begin{equation}
   \label{eq:ha}
   \mathcal{Q}\cap\bar{\mathcal{Q}}=\emptyset,
 \quad
 \mathcal{Q}\cup\bar{\mathcal{Q}}=\mathcal{R}.
 \end{equation}
  Then
 $\mathfrak{h}_0$ is maximally compact.
 \par
Set $\mathcal{Q}^r=\{\alpha\in\mathcal{Q}\mid -\alpha\in\mathcal{Q}\}$
 and $\mathcal{Q}^n=\{\alpha\in\mathcal{Q}\mid -\alpha\notin\mathcal{Q}\}$.
 Then:
\begin{enumerate}
\item $\mathcal{Q}^r\cup\bar{\mathcal{Q}}$
 and $\mathcal{Q}^r\cup\bar{\mathcal{Q}}^n$ are closed systems of roots;
 \item the two systems of roots $\mathcal{Q}^r$ and $\bar{\mathcal{Q}}^r$
 are strongly orthogonal;
 \item $\mathcal{P}=\mathcal{Q}\cup\bar{\mathcal{Q}}^r$ is parabolic
 with $\mathcal{P}^n:=\{\alpha\in\mathcal{P}\mid
-\alpha\notin\mathcal{P}\}=\mathcal{Q}^n$;
 \item there is a system of simple
 positive roots $\alpha_1,\hdots,\alpha_{\ell}$
 of $\mathcal{R}$ with the properties:\begin{equation}\label{eq:hf}
 \begin{cases}
    \alpha_1,\hdots,\alpha_{\ell}\in\mathcal{P},\\
  \alpha_1,\hdots,\alpha_p\quad\text{is a basis of}\quad \mathcal{Q}^r,\\
  \alpha_{p+1},\hdots,\alpha_{\ell-p}\in\mathcal{Q}^n,\\
 \bar\alpha_i\prec 0 \quad\forall i=1,\hdots,\ell,\\
  \bar\alpha_i=-\alpha_{\ell+1-i}\quad\text{for}\quad i=1,\hdots,p.
 \end{cases}
\end{equation}
\end{enumerate}
 \end{lem}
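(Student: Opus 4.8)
\section*{Proof proposal}

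The plan is to extract everything from two elementary consequences of the hypothesis: the conjugation $\alpha\mapsto\bar\alpha$ is an automorphism of the root system $\mathcal{R}$, so $\bar{\mathcal{Q}}$ is again closed; and \eqref{eq:ha} exhibits $\mathcal{R}=\mathcal{Q}\sqcup\bar{\mathcal{Q}}$ as a disjoint union on which conjugation acts as a bijection $\mathcal{Q}\leftrightarrow\bar{\mathcal{Q}}$. First I would show that $\mathfrak{h}_0$ has no real root: if $\bar\alpha=\alpha$, then $\alpha\in\mathcal{Q}$ would force $\alpha=\bar\alpha\in\bar{\mathcal{Q}}$, against $\mathcal{Q}\cap\bar{\mathcal{Q}}=\emptyset$ (and symmetrically if $\alpha\in\bar{\mathcal{Q}}$). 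Since a Cartan subalgebra of a real semisimple Lie algebra is maximally compact if and only if its root system carries no real root (no Cayley transform can then enlarge the compact part), this gives the first assertion. I would also record the identities $\bar{\mathcal{Q}}^n=-\mathcal{Q}^n$ and $-\mathcal{P}=\mathcal{Q}^r\cup\bar{\mathcal{Q}}$, where $\mathcal{P}=\mathcal{Q}\cup\bar{\mathcal{Q}}^r$; both follow by chasing the disjoint decomposition, and both get used repeatedly.

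For the strong orthogonality in $(2)$ I would argue by contradiction: if $\alpha\in\mathcal{Q}^r$, $\beta\in\bar{\mathcal{Q}}^r$ and $\alpha+\beta\in\mathcal{R}$, then according as $\alpha+\beta$ lies in $\mathcal{Q}$ or in $\bar{\mathcal{Q}}$, closedness of $\mathcal{Q}$ (resp.\ $\bar{\mathcal{Q}}$) together with $-\alpha\in\mathcal{Q}^r$ (resp.\ $-\beta\in\bar{\mathcal{Q}}^r$) yields $\beta\in\mathcal{Q}$ (resp.\ $\alpha\in\bar{\mathcal{Q}}$), violating disjointness; replacing $\beta$ by $-\beta$ disposes of $\alpha-\beta$. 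The same bookkeeping proves $(3)$: to see $\mathcal{P}=\mathcal{Q}\cup\bar{\mathcal{Q}}^r$ is closed, the only delicate case is $\alpha\in\mathcal{Q}$, $\beta\in\bar{\mathcal{Q}}^r$ with $\gamma=\alpha+\beta\in\mathcal{R}$, where $\gamma\in\bar{\mathcal{Q}}^n$ would give $\alpha=\gamma+(-\beta)\in\bar{\mathcal{Q}}$ by closedness of $\bar{\mathcal{Q}}$, absurd. Since $\bar{\mathcal{Q}}^n=-\mathcal{Q}^n\subset-\mathcal{P}$, one gets $\mathcal{P}\cup(-\mathcal{P})=\mathcal{R}$, so $\mathcal{P}$ is parabolic, and inspecting each piece gives $\mathcal{P}^n=\mathcal{Q}^n$. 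Then $(1)$ is immediate: $\mathcal{Q}^r\cup\bar{\mathcal{Q}}=-\mathcal{P}$ is the opposite parabolic, hence closed, and $\mathcal{Q}^r\cup\bar{\mathcal{Q}}^n$ is checked closed by the same case analysis, the mixed case reducing via $\bar{\mathcal{Q}}^n=-\mathcal{Q}^n$ to closedness of $\mathcal{Q}$.

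The substantial step is $(4)$, the construction of the base, which I expect to be the main obstacle. I would fix a conjugation-invariant inner product $(\cdot,\cdot)$ on the real span $V$ of the roots, on which conjugation induces a linear involution $c$, and consider $\tau=-c$, $\tau(\alpha)=-\bar\alpha$. From $\overline{\mathcal{P}}=-\mathcal{P}$ one gets $\tau(\mathcal{P})=\mathcal{P}$, and $\tau$ interchanges $\mathcal{Q}^r$ and $\bar{\mathcal{Q}}^r$ while preserving $\mathcal{P}^n=\mathcal{Q}^n$. Writing the parabolic as $\mathcal{P}=\{\alpha:(\lambda,\alpha)\ge0\}$ and replacing $\lambda$ by $\tfrac12(\lambda-c\lambda)$, I may assume $\lambda$ lies in the $(-1)$-eigenspace $E_-=\ker(c+1)$ and still defines $\mathcal{P}$, since $c$ carries $\mathcal{P}^n$ to $-\mathcal{P}^n$ and so the symmetrization preserves the defining inequalities. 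Crucially, because there are no real roots, every root has nonzero projection onto $E_-$, so $E_-$ contains elements regular on $\mathcal{R}$; a small generic perturbation $\xi=\lambda+\epsilon\mu$ with $\mu\in E_-$ regular on the Levi part produces a regular $\xi\in E_-$ with positive system $\mathcal{R}^+\subset\mathcal{P}$. This is the heart of the matter: it is precisely where maximal compactness is needed, and where compatibility of the ordering with both $\mathcal{P}$ and the conjugation is forced.

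Finally, because $\xi\in E_-$ one has $(\xi,\bar\alpha)=-(\xi,\alpha)$, so conjugation sends $\mathcal{R}^+$ to $\mathcal{R}^-$, i.e.\ $\bar\alpha\prec0$ for every positive root, which is the condition $\bar\alpha_i\prec0$. Hence $\tau$ preserves $\mathcal{R}^+$ and, being an automorphism of $\mathcal{R}$, permutes the simple system $\Delta=\{\alpha_1,\dots,\alpha_\ell\}$ of $\xi$. Since $\mathcal{R}^+\subset\mathcal{P}$, the parabolic is standard and its Levi $\mathcal{P}^r=\mathcal{Q}^r\cup\bar{\mathcal{Q}}^r$ is generated by $\Delta\cap\mathcal{P}^r$; by the strong orthogonality of $(2)$ this splits into a base $\{\alpha_1,\dots,\alpha_p\}$ of $\mathcal{Q}^r$ and a base of $\bar{\mathcal{Q}}^r$, the remaining simple roots lying in $\mathcal{P}^n=\mathcal{Q}^n$. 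As $\tau$ preserves $\Delta$ and swaps the two halves of the Levi, $\tau(\alpha_i)=-\bar\alpha_i$ is a simple root of $\bar{\mathcal{Q}}^r$; setting $\alpha_{\ell+1-i}:=-\bar\alpha_i$ for $i=1,\dots,p$ places these as the last $p$ simple roots and yields exactly $\bar\alpha_i=-\alpha_{\ell+1-i}$, establishing \eqref{eq:hf}. I would finish by verifying the bookkeeping that the middle block $\alpha_{p+1},\dots,\alpha_{\ell-p}$ has cardinality $\ell-2p$ and that any imaginary simple root, being fixed by $\tau$ and excluded from $\mathcal{P}^r$, necessarily lands in that block.
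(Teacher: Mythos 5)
Your proof is correct and follows essentially the same route as the paper's: maximal compactness from the absence of real roots, parts (1)--(3) by closedness/disjointness bookkeeping (the paper phrases this via minimal $\mathcal{Q}^r$-invariant subsets, but the underlying contradiction $\alpha+\bar\beta\in\mathcal{Q}\cap\bar{\mathcal{Q}}$ is identical), and part (4) by perturbing a functional defining $\mathcal{P}$ by a small regular element anti-fixed by the conjugation, then reading off the simple roots. The only difference is cosmetic: you first symmetrize the defining functional into the $(-1)$-eigenspace $E_-$, which makes $\overline{\mathcal{R}^+}=\mathcal{R}^-$ immediate, whereas the paper keeps $A_0$ general and obtains the same identity from the splitting of $\mathcal{R}^+$ into $\mathcal{Q}^n$ and the positive part of the Levi factor.
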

 \begin{proof} By \eqref{eq:ha}, $\bar{\alpha}\neq\alpha$ for all
$\alpha\in\mathcal{R}$,
and this is equivalent to 
$\mathfrak{h}_0$ 
being maximally compact (see e.g. \cite[Ch.VI,\S{6}]{Kn:2002}).
 \par
 The root system $\mathcal{R}$ is partitioned into
 minimal disjoint subsets,
invariant by addition of roots of $\mathcal{Q}^r$.
 Since $\mathcal{Q}$ is a union of such $\mathcal{Q}^r$-invariant
 minimal subsets, its complement $\bar{\mathcal{Q}}$
 is $\mathcal{Q}^r$-invariant, too. Likewise, $\mathcal{Q}$ is
 $\bar{\mathcal{Q}}^r$-invariant. This implies that $\mathcal{Q}^r$
 and $\bar{\mathcal{Q}}^r$ are strongly orthogonal. Indeed, assume
 by contradiction that there are
 $\alpha,\beta\in\mathcal{Q}^r$ such that
 $\alpha+\bar{\beta}\in\mathcal{R}$. Then
 $\alpha+\bar{\beta}\in\mathcal{Q}\cap
 \bar{\mathcal{Q}}$ would give a contradiction.\par
 Since $\bar{\mathcal{Q}}^r$ is $\mathcal{Q}^r$-invariant, then
 also $\bar{\mathcal{Q}}^n$ is $\mathcal{Q}^r$-invariant.
 This proves $(1)$ and $(2)$.\par
 From \eqref{eq:ha} we also deduce that
 $\bar{\mathcal{Q}}^n$ is equal to
 $\{-\alpha\mid\alpha\in\mathcal{Q}^n\}$,
 and this implies $(3)$.\par
 To prove $(4)$, we begin by fixing an element
 $A_0\in\mathfrak{h}_{\mathbb{R}}$
 that defines the parabolic set $\mathcal{P}$:
 \begin{displaymath}
   \mathcal{P}=\{\alpha\in\mathcal{R}\mid \alpha(A_0)\geq 0\}.
 \end{displaymath}
 Next we note that, since $\mathcal{R}$ does not contain any real root,
 there is a regular element $A_1$ in $\mathfrak{h}_{\mathbb{R}}$ with
 $\bar{A}_1=-A_1$, i.e. with
 $iA_1\in\mathfrak{h}_0$. Take $\epsilon>0$ with
 $|\alpha(A_1)|<\epsilon^{-1}\alpha(A_0)$ for $\alpha\in\mathcal{Q}^n$.
 Then $A=A_0+\epsilon{A}_1$ is regular and we shall take
 $\mathcal{B}=\{\alpha_1,
 \hdots,\alpha_{\ell}\}$ to be the simple roots of the system of
 positive roots $\mathcal{R}^+=\{\alpha\in\mathcal{R}\mid\alpha(A)>0\}$.
 Take
 $\{\alpha_1,\hdots,\alpha_p\}=\mathcal{B}\cap\mathcal{Q}^r$ and
 $\{\alpha_{p+1},\hdots,\alpha_r\}=\mathcal{B}\cap\mathcal{Q}^n$.
 By our choice of $\epsilon$ and $A_1$, the set
 $\{\alpha_1,\hdots,\alpha_p\}$ is the set of
 the simple positive roots in
 $\{\alpha\in\mathcal{Q}^r\mid\alpha(A_1)>0\}$. Likewise, the simple
 roots in $\{\alpha\in\bar{\mathcal{Q}}^r\mid\alpha(A_1)>0\}$
 are contained in $\{\alpha_{\ell-p+1},\hdots,\alpha_{\ell}\}
 \subset\mathcal{B}$. Hence $r=\ell-p$.\par
 To conclude
 the proof of $(4)$, it suffices to note that, since
 $\overline{\mathcal{R}^+}=\mathcal{R}^-=\{-\alpha\mid\alpha\in\mathcal{R}^+\}$,
 the conjugate of each simple root is a simple negative root.
 Thus, by suitably labelling the roots in $\mathcal{B}$, since by
 \eqref{eq:ha} we have $\bar\alpha\neq{-\alpha}$ for $\alpha\in\mathcal{Q}^r$,
 we also obtain the last line of
\eqref{eq:hf}. The proof is complete.
 \end{proof}
 \begin{prop}\label{prop:lc}
 Let $\mathbf{G}_0$ be a real semisimple Lie group.
 Then any regular $CR$ structure on $\mathbf{G}_0$
 of maximal $CR$ dimension
 is associated with a regular $CR$ algebra
 $(\mathfrak{g}_0,\mathfrak{q})$, with a $\mathfrak{q}$ that
 is normalized by a maximally compact Cartan subalgebra
 $\mathfrak{h}_0$ of $\mathfrak{g}_0$, and is of the form:
 \begin{equation}
   \label{eq:hi}
   \mathfrak{q}=\mathfrak{m}\oplus
 \sum_{\alpha\in\mathcal{Q}}{\mathfrak{g}^{\alpha}}
 \end{equation}
 for a closed system of roots $\mathcal{Q}\subset\mathcal{R}$
 satisfying \eqref{eq:ha}, and a complex subspace $\mathfrak{m}$
 of the complexification $\mathfrak{h}$ of $\mathfrak{h}_0$,
 with the properties:
 \begin{equation}
   \label{eq:hg}
     \mathrm{dim}_{\mathbb{C}}\,\mathfrak{m}=\left[\tfrac{\ell}{2}\right],
 \quad
 \mathfrak{s}\cap\mathfrak{h}\subset\mathfrak{m},\quad
 \mathfrak{m}\cap\bar{\mathfrak{m}}=\{0\}.
 \end{equation}
 Here $\ell$ is the rank of the complexification $\mathfrak{g}$ of
 $\mathfrak{g}_0$ and
 $\mathfrak{s}$ is the Levi subalgebra of $\mathfrak{q}$ associated
 with the root system $\mathcal{Q}^r$.
 \end{prop}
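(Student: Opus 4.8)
The plan is to transcribe the whole question into root-theoretic data relative to a normalizing Cartan subalgebra and then read off the conclusion from two sharp dimension inequalities. First I would record the basic bookkeeping: a left invariant $CR$ structure on $\mathbf{G}_0=\mathbf{G}_0/\{e\}$ is a $CR$ algebra $(\mathfrak{g}_0,\mathfrak{q})$ with trivial isotropy $\mathfrak{i}_0=\mathfrak{q}\cap\mathfrak{g}_0=0$. Since $\mathfrak{q}\cap\bar{\mathfrak{q}}$ is invariant under conjugation, it is the complexification of $(\mathfrak{q}\cap\bar{\mathfrak{q}})\cap\mathfrak{g}_0\subseteq\mathfrak{i}_0=0$, so $\mathfrak{q}\cap\bar{\mathfrak{q}}=0$; by Remark~\ref{rmk:ac} the $CR$-dimension is then exactly $\dim_{\mathbb{C}}\mathfrak{q}$. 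Hence maximizing the $CR$-dimension is the same as maximizing $\dim_{\mathbb{C}}\mathfrak{q}$.

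Next I would use regularity to diagonalize. If $\mathfrak{h}_0$ is a Cartan subalgebra of $\mathfrak{g}_0$ normalizing $\mathfrak{q}$, its complexification $\mathfrak{h}$ (a Cartan subalgebra of $\mathfrak{g}$ with $\bar{\mathfrak{h}}=\mathfrak{h}$) also normalizes $\mathfrak{q}$, so $\mathfrak{q}$ is $\mathrm{ad}(\mathfrak{h})$-invariant and decomposes as $\mathfrak{q}=\mathfrak{m}\oplus\sum_{\alpha\in\mathcal{Q}}\mathfrak{g}^{\alpha}$ with $\mathfrak{m}=\mathfrak{q}\cap\mathfrak{h}$ and $\mathcal{Q}=\{\alpha\in\mathcal{R}\mid\mathfrak{g}^{\alpha}\subseteq\mathfrak{q}\}$. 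Because $\mathfrak{q}$ is a subalgebra, $\mathcal{Q}$ is closed; because $\bar{\mathfrak{q}}=\bar{\mathfrak{m}}\oplus\sum_{\alpha\in\bar{\mathcal{Q}}}\mathfrak{g}^{\alpha}$ and $\mathfrak{q}\cap\bar{\mathfrak{q}}=0$, I obtain $\mathcal{Q}\cap\bar{\mathcal{Q}}=\emptyset$ and $\mathfrak{m}\cap\bar{\mathfrak{m}}=0$. These two nondegeneracy facts give $|\mathcal{Q}|\le\tfrac12|\mathcal{R}|$ and $\dim_{\mathbb{C}}\mathfrak{m}\le[\ell/2]$, whence $\dim_{\mathbb{C}}\mathfrak{q}\le[\ell/2]+\tfrac12|\mathcal{R}|=\lfloor\tfrac12\dim_{\mathbb{C}}\mathfrak{g}\rfloor$. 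Theorem~\ref{TM:gxc} produces a regular $\mathfrak{q}$ (namely $V\oplus\mathfrak{n}$, with $\mathfrak{n}$ the nilradical of a Borel and $V$ a maximal $\mathfrak{m}\cap\bar{\mathfrak{m}}=0$ subspace of $\mathfrak{h}$) attaining this value, so the maximal $CR$-dimension among regular structures is precisely $[\ell/2]+\tfrac12|\mathcal{R}|$.

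Now suppose $(\mathfrak{g}_0,\mathfrak{q})$ is regular of maximal $CR$-dimension, so both bounds are equalities. From $|\mathcal{Q}|=\tfrac12|\mathcal{R}|$ together with $\mathcal{Q}\cap\bar{\mathcal{Q}}=\emptyset$ and $|\bar{\mathcal{Q}}|=|\mathcal{Q}|$ I conclude $\mathcal{Q}\cup\bar{\mathcal{Q}}=\mathcal{R}$, i.e.\ \eqref{eq:ha}; and $\dim_{\mathbb{C}}\mathfrak{m}=[\ell/2]$. This yields the form \eqref{eq:hi} and two of the three conditions in \eqref{eq:hg}. The remaining condition $\mathfrak{s}\cap\mathfrak{h}\subseteq\mathfrak{m}$ is forced merely by $\mathfrak{q}$ being a subalgebra: for every $\alpha\in\mathcal{Q}^r$ the bracket $[\mathfrak{g}^{\alpha},\mathfrak{g}^{-\alpha}]=\mathbb{C}\,H_{\alpha}$ lies in $\mathfrak{q}\cap\mathfrak{h}=\mathfrak{m}$, and $\mathfrak{s}\cap\mathfrak{h}$ is spanned by these $H_{\alpha}$. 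Finally, once \eqref{eq:ha} holds, Lemma~\ref{lem:lb} guarantees that $\mathfrak{h}_0$ is maximally compact, which is the last assertion of the statement.

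All steps are short, so the main obstacle is conceptual rather than computational: one must be sure that the common value $[\ell/2]+\tfrac12|\mathcal{R}|$ is genuinely attained, since otherwise ``maximal'' would fall short of the naive bound and equality in the two inequalities could not be inferred. This is exactly what Theorem~\ref{TM:gxc} supplies, and I would verify there that its construction gives $|\mathcal{Q}|=\tfrac12|\mathcal{R}|$ and $\dim_{\mathbb{C}}V=[\ell/2]$. A secondary check is the internal consistency of $\mathfrak{s}\cap\mathfrak{h}\subseteq\mathfrak{m}$ with $\mathfrak{m}\cap\bar{\mathfrak{m}}=0$: this follows from the strong orthogonality of $\mathcal{Q}^r$ and $\bar{\mathcal{Q}}^r$ in Lemma~\ref{lem:lb}, which forces $\mathfrak{s}\cap\mathfrak{h}$ and its conjugate to meet only in $0$.
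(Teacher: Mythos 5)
Your proof is correct and follows essentially the same route as the paper's: both decompose $\mathfrak{q}=\mathfrak{m}\oplus\sum_{\alpha\in\mathcal{Q}}\mathfrak{g}^{\alpha}$ under the normalizing Cartan subalgebra and run a dimension count, the paper phrasing extremality as ``$CR$ codimension $0$ or $1$'' where you phrase it as equality in the two bounds $\dim_{\mathbb{C}}\mathfrak{m}\leq[\ell/2]$, $\#\mathcal{Q}\leq\tfrac{1}{2}\#\mathcal{R}$, which is the same thing since the codimension equals $\dim_{\mathbb{C}}\mathfrak{g}-2\dim_{\mathbb{C}}\mathfrak{q}$. Your additional details (triviality of $\mathfrak{q}\cap\bar{\mathfrak{q}}$, attainment via Theorem \ref{TM:gxc}, the bracket argument $[\mathfrak{g}^{\alpha},\mathfrak{g}^{-\alpha}]\subset\mathfrak{m}$ for $\alpha\in\mathcal{Q}^r$, and Lemma \ref{lem:lb} for maximal compactness) simply make explicit what the paper's terse proof leaves implicit.
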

 \begin{proof}
 We note that \eqref{eq:hi}, with a choice of $\mathcal{Q}$ and
 $\mathfrak{m}$ satisfying \eqref{eq:ha} and \eqref{eq:hg}, has
 $CR$ codimension equal to $0$ or $1$, according
 to whether $\mathfrak{g}$
 has even
 or odd rank, respectively. Thus it yields a $CR$ structure
 of maximal $CR$ dimension. Let $(\mathfrak{g}_0,\mathfrak{q})$
 be a regular $CR$ algebra, of codimension $1$ at the most, and set
 $\mathfrak{m}=\mathfrak{q}\cap\mathfrak{h}$.
Then
$\mathfrak{m}$ must satisfy \eqref{eq:hg}
 by the codimension constraint,
 and
 the set $\mathcal{Q}$ of the roots $\alpha$ with
 $\mathfrak{g}^{\alpha}\subset\mathfrak{q}$
 satisfies \eqref{eq:ha}.
 \end{proof}
\begin{exam}
Let
$\mathfrak{g}\simeq\mathfrak{sl}(3,\mathbb{R})$,
consist of the matrices $A=(a_{i,j})\in\mathfrak{sl}(3,\mathbb{C})$
that satisfy $\bar{a}_{i,j}=a_{4-i,4-j}$. Set
\begin{displaymath}
  \mathfrak{q}=\left.\left\{
    \begin{pmatrix}\begin{smallmatrix}
      z_1&z_2&0\\
z_2&z_1&0\\
z_3&z_4&-2z_1
\end{smallmatrix}
\end{pmatrix}\right| \; z_1,z_2,z_3,z_4\in\mathbb{C}\right\}.
\end{displaymath}
The $CR$ algebra $(\mathfrak{g}_0,\mathfrak{q})$ defines a left invariant
complex structure on $\mathbf{G}_0\simeq\mathbf{SL}(3,\mathbb{R})$,
because $\mathfrak{q}\cap\bar{\mathfrak{q}}=\{0\}$
and $\mathfrak{q}+\bar{\mathfrak{q}}=\mathfrak{sl}(3,\mathbb{C})$.
But $(\mathfrak{g}_0,\mathfrak{q})$ is not regular as a $CR$ algebra,
since $\mathfrak{q}$ is self-normalizing in $\mathfrak{sl}(3,\mathbb{C})$,
hence, in particular, is not normalized by any Cartan subalgebra
of $\mathfrak{g}_0$.\par
In \cite{Charb04,LMN07} all complex structures on a compact semisimple Lie group
of even dimension are shown to be 
regular. According to the example above, in the case
of non compact semisimple real Lie groups a complete classification
of the left invariant maximal $CR$ structure would require
some extra consideration of non regular structures.
\end{exam}
\section{Symmetric
\texorpdfstring{$CR$}{CR}
structures on complete flags}\label{sec:sf}
Symmetric maximal 
\emph{almost}-$CR$ structures (i.e. formally integrability is not required)
on \textit{complete flags} were studied in \cite{GS04}.
Here we utilize $CR$ algebras to study their $CR$-symmetric
(formally integrable) structures, that are also of finite type.
\par
A complete flag is a homogeneous compact complex manifold, which is the
quotient $M\simeq\mathbf{G}/\mathbf{B}$ of a semisimple complex Lie group
$\mathbf{G}$ by
a Borel subgroup $\mathbf{B}$.
A maximal compact subgroup $\mathbf{U}_0$ of $\mathbf{G}$ acts
transitively on $M$, which is therefore also a quotient
$M\simeq\mathbf{U}_0/\mathbf{T}_0$
of $\mathbf{U}_0$ with respect to
a maximal torus $\mathbf{T}_0$.\par
Let $\mathfrak{g}$, $\mathfrak{b}$, $\mathfrak{u}_0$, $\mathfrak{t}_0$
be the Lie algebras of $\mathbf{G}$, $\mathbf{B}$, $\mathbf{U}_0$,
$\mathbf{T}_0$, respectively. Then
$\mathfrak{g}$ is complex semisimple and is
the complexification of 
its compact form $\mathfrak{u}_0$.
The complexification $\mathfrak{h}$ of
$\mathfrak{t}_0$ is a Cartan subalgebra
of $\mathfrak{g}$, contained in 
$\mathfrak{b}$. \par
\subsection{Homogeneous $CR$ structures on complete flags}\label{sec:sf1}
We shall consider $M$ as a \textit{real} compact manifold, and discuss
its $\mathbf{U}_0$-homogeneous $CR$ structures.
By Proposition \ref{prop:ab},
having fixed the point $\mathfrak{o}=[\mathbf{T}_0]$ of $M$,
the $\mathbf{U}_0$-homogeneous $CR$ structures on $M$ are in one-to-one
correspondence with the complex Lie subalgebras $\mathfrak{q}$ of $\mathfrak{g}$
satisfying $\mathfrak{q}\cap\mathfrak{u}_0=\mathfrak{t}_0$. In particular,
any such $\mathfrak{q}$ contains the Cartan subalgebra $\mathfrak{h}$, 
hence is  \textit{regular}.
Denote by $\mathcal{R}$ the root system of 
$\mathfrak{h}$ in
$\mathfrak{g}$,
and let $\mathcal Q$ be the subset 
of $\mathcal{R}$ consisting of the roots $\alpha$ for which
$\mathfrak{q}^{\alpha}\subset\mathfrak q$.
Then
\begin{equation}\label{eq:la0}
\mathfrak q=
\mathfrak{h}\oplus\mathfrak{n},\quad
\text{where}\quad \mathfrak{n}={\sum}_{\alpha\in\mathcal{Q}}\mathfrak{g}^{\alpha}.
\end{equation}
Conjugation
with respect to the real form $\mathfrak{u}_0$
yields on $\mathcal{R}$ the involution $\alpha\to \bar{\alpha}=-\alpha$.
Thus, the assumption that
$\mathfrak{q}\cap\bar{\mathfrak{q}}=\mathfrak{h}$
is equivalent to
$\mathcal{Q}\cap(-\mathcal{Q})=\emptyset$. Hence
$\mathfrak{q}$ is solvable (see e.g. \cite[Proposition 1.2, p.183]{OV93}),
and
\begin{equation}
  \label{eq:la00}
  \mathfrak{h}\subset\mathfrak{q}\subset\mathfrak{b}.
\end{equation}
We may consider the ordering of $\mathcal{R}$ for which the roots $\alpha$
with $\mathfrak{g}^{\alpha}\subset\mathfrak{b}$ are positive, so that
$\mathcal{Q}$ can be regarded as a closed set of positive roots.
\par
\begin{prop}\label{prop:la}
  Let $M\simeq\mathbf{G}/\mathbf{B}\simeq
\mathbf{U}_0/\mathbf{T}_0$ be a complete flag.
We keep the notation introduced above.
  \begin{enumerate}
  \item The $\mathbf{U}_0$-homogeneous $CR$ structures on $M$, modulo
$CR$ isomorphisms, are in one-to-one correspondence with the set
of solvable
complex Lie subalgebras $\mathfrak{q}$ of $\mathfrak{g}$
satisfying \eqref{eq:la00},
modulo automorphisms of $\mathfrak{g}$ which preserve $\mathfrak{b}$.
\item The maximally complex $CR$ structure
of $M$ is its standard complex structure, corresponding
to the choice
$\mathfrak{q}=\mathfrak{b}$, while $\mathfrak{q}=\mathfrak{h}$ yields
a totally real $M$.
\end{enumerate}
\end{prop}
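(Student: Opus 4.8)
The plan is to treat the two assertions in turn, starting from the bijection already prepared in the paragraphs preceding the statement. For the set-level correspondence underlying (1), I would first invoke Proposition~\ref{prop:ab}: the $\mathbf{U}_0$-homogeneous $CR$ structures on $M$ are in one-to-one correspondence with the complex Lie subalgebras $\mathfrak{q}\subset\mathfrak{g}$ satisfying $\mathfrak{q}\cap\mathfrak{u}_0=\mathfrak{t}_0$. The discussion leading to \eqref{eq:la0}--\eqref{eq:la00} already shows that any such $\mathfrak{q}$ contains $\mathfrak{h}$, hence is regular and of the form $\mathfrak{q}=\mathfrak{h}\oplus\sum_{\alpha\in\mathcal{Q}}\mathfrak{g}^{\alpha}$, satisfies $\mathfrak{q}\cap\overline{\mathfrak{q}}=\mathfrak{h}$ (equivalently $\mathcal{Q}\cap(-\mathcal{Q})=\emptyset$), is solvable, and lies in $\mathfrak{b}$. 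I would then record the reverse inclusion, which is immediate: if $\mathfrak{h}\subset\mathfrak{q}\subset\mathfrak{b}$ then $\mathcal{Q}\subset\mathcal{R}^{+}$, so $\overline{\mathfrak{q}}=\mathfrak{h}\oplus\sum_{\alpha\in\mathcal{Q}}\mathfrak{g}^{-\alpha}$ and $\mathfrak{q}\cap\overline{\mathfrak{q}}=\mathfrak{h}$; since every $X\in\mathfrak{q}\cap\mathfrak{u}_0$ is fixed by conjugation it lies in $\mathfrak{q}\cap\overline{\mathfrak{q}}=\mathfrak{h}$, whence $\mathfrak{q}\cap\mathfrak{u}_0=\mathfrak{h}\cap\mathfrak{u}_0=\mathfrak{t}_0$. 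Solvability is automatic, being inherited from $\mathfrak{b}$. This gives the bijection between structures and subalgebras $\mathfrak{q}$ with $\mathfrak{h}\subset\mathfrak{q}\subset\mathfrak{b}$.

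Passing to isomorphism classes is the substantive step. I would argue that a $CR$ isomorphism between two such structures is, after composition with a translation of $\mathbf{U}_0$ returning the base point $\mathfrak{o}$ to itself, induced by an automorphism $\psi$ of the compact form $\mathfrak{u}_0$ (equivalently, an automorphism of $\mathfrak{g}$ commuting with the conjugation that defines $\mathfrak{u}_0$) carrying $\mathfrak{q}$ to $\mathfrak{q}'$. This reduction is the translation, recorded in Section~\ref{sec:a}, of $CR$ maps of homogeneous manifolds into $CR$ algebra morphisms, and uses that the structures are of finite type. Because $\psi$ commutes with conjugation it preserves $\mathfrak{q}\cap\overline{\mathfrak{q}}=\mathfrak{h}=\mathfrak{q}'\cap\overline{\mathfrak{q}'}$, so it normalizes $\mathfrak{h}$ and induces an element of $\mathrm{Aut}(\mathcal{R})$. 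The goal is then to show that, after correcting $\psi$ by an element normalizing $\mathfrak{h}$ that carries $\psi(\mathfrak{b})$ back to $\mathfrak{b}$, the residual freedom is exactly the group of automorphisms of $\mathfrak{g}$ preserving $\mathfrak{b}$, so that $CR$-isomorphism classes match the orbits of the subalgebras $\mathfrak{q}$ under that group.

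For (2), I would use Remark~\ref{rmk:ac}. With $\mathfrak{q}=\mathfrak{h}\oplus\sum_{\alpha\in\mathcal{Q}}\mathfrak{g}^{\alpha}$ one has $\mathfrak{q}+\overline{\mathfrak{q}}=\mathfrak{h}\oplus\sum_{\alpha\in\mathcal{Q}\cup(-\mathcal{Q})}\mathfrak{g}^{\alpha}$, so the $CR$ codimension vanishes precisely when $\mathcal{Q}\cup(-\mathcal{Q})=\mathcal{R}$; combined with $\mathcal{Q}\subset\mathcal{R}^{+}$ and $\mathcal{Q}\cap(-\mathcal{Q})=\emptyset$ this forces $\mathcal{Q}=\mathcal{R}^{+}$, i.e. $\mathfrak{q}=\mathfrak{b}$, which is the standard complex structure of $M\simeq\mathbf{G}/\mathbf{B}$ and the unique maximally complex one. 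At the opposite end $\mathcal{Q}=\emptyset$ gives $\mathfrak{q}=\mathfrak{h}$, with $CR$-dimension $\dim_{\mathbb{C}}\mathfrak{q}-\dim_{\mathbb{C}}(\mathfrak{q}\cap\overline{\mathfrak{q}})=0$, so $M$ is totally real.

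The step I expect to be the main obstacle is the reconciliation in the isomorphism-class argument: first, reducing a general $CR$ isomorphism, which is not a priori $\mathbf{U}_0$-equivariant, to an automorphism of $\mathfrak{u}_0$ (this is where finite type and the rigidity of the $CR$-automorphism group enter); and second, showing that the Weyl-group freedom realized inside $N_{\mathbf{U}_0}(\mathbf{T}_0)$, together with the outer (diagram) automorphisms and the inner automorphisms fixing $\mathfrak{b}$, assemble to give exactly the equivalence by automorphisms preserving $\mathfrak{b}$. Making this bookkeeping precise, and in particular tracking how an element normalizing $\mathfrak{h}$ but moving $\mathfrak{b}$ interacts with the requirement that $\mathfrak{q},\mathfrak{q}'\subset\mathfrak{b}$, is the delicate heart of the proof.
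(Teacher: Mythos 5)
Your handling of the set-level correspondence and of part (2) is correct, and it is as much as the paper itself contains: the paper gives no formal proof of this proposition at all, stating it as a summary of the discussion running from Proposition~\ref{prop:ab} to \eqref{eq:la00}, so your verification of the reverse inclusion and the dimension count via Remark~\ref{rmk:ac} simply make that implicit material explicit. One caveat even at this level: it is not true that every admissible $\mathfrak{q}$ lies in $\mathfrak{b}$ (the opposite Borel $\bar{\mathfrak{b}}$ satisfies $\bar{\mathfrak{b}}\cap\mathfrak{u}_0=\mathfrak{t}_0$), so already the passage from ``$\mathfrak{q}$ solvable containing $\mathfrak{h}$'' to \eqref{eq:la00} requires a Weyl-group conjugation, i.e.\ a $CR$ isomorphism that does \emph{not} preserve $\mathfrak{b}$.

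The genuine gap is the isomorphism-class step, which you announce but never carry out, and the plan you sketch cannot close up as described. Right translations $u\mathbf{T}_0\mapsto u n\mathbf{T}_0$ by $n\in N_{\mathbf{U}_0}(\mathbf{T}_0)$ are $CR$ diffeomorphisms from the structure defined by $\mathfrak{q}$ to the one defined by $\mathrm{Ad}(n^{-1})(\mathfrak{q})$, so Weyl-conjugate sets $\mathcal{Q},\mathcal{Q}'\subset\mathcal{R}^{+}$ always yield $CR$-isomorphic structures; but Weyl conjugation is \emph{not} absorbed by automorphisms of $\mathfrak{g}$ preserving $\mathfrak{b}$. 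Indeed any such automorphism is $\mathrm{Ad}(b)\circ\tau$ with $b$ in the Borel subgroup of the adjoint group and $\tau$ a diagram automorphism, and it must map $\mathfrak{q}\cap[\mathfrak{b},\mathfrak{b}]$ onto $\mathfrak{q}'\cap[\mathfrak{b},\mathfrak{b}]$: in type $\mathrm{A}_2$ with simple roots $\alpha,\beta$, this shows $\mathfrak{h}\oplus\mathfrak{g}^{\alpha}$ and $\mathfrak{h}\oplus\mathfrak{g}^{\alpha+\beta}$ are not equivalent under automorphisms preserving $\mathfrak{b}$, although the corresponding $CR$ structures are isomorphic. So the ``assembly'' of Weyl freedom, diagram automorphisms and inner automorphisms fixing $\mathfrak{b}$ that you hope for is impossible; the equivalence actually induced by $CR$ isomorphism is conjugation by automorphisms of $\mathfrak{g}$ normalizing $\mathfrak{h}$, i.e.\ the action of $\mathbf{W}\rtimes\mathrm{Out}$ on the sets $\mathcal{Q}$ --- which is also the equivalence the paper really uses afterwards (all its classifications of the $\mathcal{Q}$'s are ``modulo $\mathbf{W}$''), the phrase ``preserve $\mathfrak{b}$'' being at best loose. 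Finally, your appeal to finite type to rigidify an arbitrary $CR$ isomorphism is not available in the stated generality: the proposition covers all homogeneous structures, including non-fundamental and Levi-flat ones (e.g.\ $\mathfrak{q}=\mathfrak{h}$), for which germs of $CR$ diffeomorphisms do not form a finite-dimensional group, so this reduction would need an entirely separate argument.
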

We conclude this subsection by 
considering $CR$ structures 
that are related to
parabolic subalgebras of $\mathfrak{g}$. Recall that a
nilpotent subalgebra
is
\emph{horocyclic} if it is the nilradical of a parabolic
subalgebra (cf.\cite{WAR72}).
\begin{prop} Consider on $M$ the $CR$ structure defined by a
$CR$ algebra $(\mathfrak{u}_0,\mathfrak{q})$, with $\mathfrak{q}$
satisfying \eqref{eq:la00}. 
Assume that the nilpotent Lie algebra $\mathfrak{n}$ in \eqref{eq:la0}
is horocyclic
and let $\mathfrak{q}'$ be the normalizer of $\mathfrak{n}$
in $\mathfrak{g}$. Then $\mathfrak{q}'$ is parabolic and 
$\mathfrak{q}'\cap\bar{\mathfrak{q}}'$ is a reductive complement
of $\mathfrak{n}$ in $\mathfrak{q}'$:
\begin{equation}
  \label{eq:la}
  \mathfrak{q}'=\mathfrak{f}\oplus\mathfrak{n},
\quad\text{with}\quad\mathfrak{f}=\mathfrak{q}'\cap\bar{\mathfrak{q}}'
\;\text{reductive},\quad
\mathfrak{n}\;\text{nilpotent}.
\end{equation}
The real Lie algebra
$\mathfrak{f}_0=\mathfrak{f}\cap\mathfrak{u}_0$ is reductive, and
$\mathfrak{f}$ is its complexification.
\begin{enumerate}
\item $(\mathfrak{u}_0,\mathfrak{q}')$ is the $CR$ algebra of
a complex flag manifold $N$.
\item There is a natural $\mathbf{U}_0$-equivariant $CR$ fibration 
$M\xrightarrow{\pi}{N}$, with totally real fibers.
For every $p\in{M}$, the restriction of $d\pi_p$ defines
a $\mathbb{C}$-isomorphism of $H_pM$ with $T_{\pi(p)}N$.
\item
$M$ is  $\mathbf{U}_0$-homogeneous $CR$-symmetric
if and only if $N$ is Hermitian symmetric.
  \end{enumerate}
\end{prop}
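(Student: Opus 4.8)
The plan is to read everything off the root-space structure determined by the compact form, for which the conjugation acts on $\mathcal{R}$ by $\bar\alpha=-\alpha$. First I would record the structural backbone. Since $\mathfrak{n}$ is horocyclic it is the nilradical of a parabolic subalgebra $\mathfrak{p}$, and because a parabolic is the full normalizer of its own nilradical, $\mathfrak{q}'=\mathbf{N}_{\mathfrak{g}}(\mathfrak{n})=\mathfrak{p}$ is parabolic. Writing $\mathfrak{q}'=\mathfrak{h}\oplus\sum_{\alpha\in\mathcal{P}}\mathfrak{g}^{\alpha}$ for the parabolic set $\mathcal{P}$, the identity $\overline{\mathfrak{g}^{\alpha}}=\mathfrak{g}^{-\alpha}$ shows that $\bar{\mathfrak{q}}'$ is the opposite parabolic and that $\mathfrak{f}=\mathfrak{q}'\cap\bar{\mathfrak{q}}'=\mathfrak{h}\oplus\sum_{\alpha\in\mathcal{P}\cap(-\mathcal{P})}\mathfrak{g}^{\alpha}$ is exactly the reductive Levi factor, which is \eqref{eq:la}. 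Since the set $\mathcal{P}\cap(-\mathcal{P})$ is stable under $\alpha\mapsto-\alpha=\bar\alpha$, the subalgebra $\mathfrak{f}$ is conjugation-invariant, hence the complexification of $\mathfrak{f}_0=\mathfrak{f}\cap\mathfrak{u}_0$, which is a compact (so reductive) real form. As $\mathfrak{q}'$ is parabolic we have $\mathfrak{q}'+\bar{\mathfrak{q}}'=\mathfrak{g}$, so $(\mathfrak{u}_0,\mathfrak{q}')$ is totally complex and defines the complex flag manifold $N\simeq\mathbf{G}/\mathbf{Q}'\simeq\mathbf{U}_0/\mathbf{F}_0$; this gives (1).

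For (2) I would use the inclusion $\mathfrak{q}=\mathfrak{h}\oplus\mathfrak{n}\subset\mathfrak{f}\oplus\mathfrak{n}=\mathfrak{q}'$, a $\mathfrak{u}_0$-equivariant $CR$-algebra morphism. Since $\mathfrak{q}+(\mathfrak{q}'\cap\bar{\mathfrak{q}}')=\mathfrak{q}+\mathfrak{f}=\mathfrak{q}'$, it is a $CR$-algebras submersion, so by Proposition~\ref{prop:ad} the natural map $\pi:M=\mathbf{U}_0/\mathbf{T}_0\to N=\mathbf{U}_0/\mathbf{F}_0$ is a $\mathbf{U}_0$-equivariant $CR$ submersion. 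Its fiber $CR$ algebra is $(\mathfrak{f}_0,\mathfrak{q}\cap\mathfrak{f})$, and since $\mathfrak{n}\cap\mathfrak{f}=0$ one has $\mathfrak{q}\cap\mathfrak{f}=\mathfrak{h}=\overline{\mathfrak{h}}$, so the fiber is totally real. Finally, being $CR$ the map $\pi$ intertwines the partial complex structures, so $d\pi_p|_{H_pM}$ is $\mathbb{C}$-linear into $T_{\pi(p)}N$; its kernel is the $CR$-part of the totally real fiber, i.e. zero, and both spaces have equal complex dimension because the $CR$ dimension of $M$ equals that of the complex manifold $N$. Hence $d\pi_p$ restricts to a $\mathbb{C}$-isomorphism $H_pM\xrightarrow{\sim}T_{\pi(p)}N$.

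For the equivalence (3), the forward implication is immediate: if $(\mathfrak{u}_0,\mathfrak{q})$ is $CR$-symmetric then \eqref{eq:sy1} gives $[\mathfrak{q},\mathfrak{q}]\subset\mathfrak{q}\cap\bar{\mathfrak{q}}=\mathfrak{h}$; restricting to $\mathfrak{n}$ and using $[\mathfrak{n},\mathfrak{n}]\subset\mathfrak{n}$ together with $\mathfrak{n}\cap\mathfrak{h}=0$ forces $[\mathfrak{n},\mathfrak{n}]=0$, so $\mathfrak{q}'$ is a parabolic with abelian nilradical and $N=\mathbf{G}/\mathbf{Q}'$ is Hermitian symmetric. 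For the converse, $N$ Hermitian symmetric means $\mathfrak{n}$ is abelian; then, using that the omitted simple root occurs with coefficient one in every root of $\mathcal{Q}$, one checks $[\mathfrak{m},\mathfrak{m}]\subset\mathfrak{f}$ for $\mathfrak{m}=\mathfrak{n}\oplus\bar{\mathfrak{n}}$, so $\mathfrak{g}=\mathfrak{f}\oplus\mathfrak{m}$ is a symmetric decomposition and the map $\lambda$ equal to $+\mathrm{Id}$ on $\mathfrak{f}$ and $-\mathrm{Id}$ on $\mathfrak{m}$ is an involutive automorphism. I would then verify \eqref{eq:sy0}: $\lambda$ commutes with conjugation (both $\mathfrak{f}$ and $\mathfrak{m}$ are conjugation-stable) so $\lambda(\mathfrak{u}_0)=\mathfrak{u}_0$; one has $\lambda(\mathfrak{q})=\mathfrak{q}$ and $Z+\lambda(Z)=2H\in\mathfrak{h}=\mathfrak{q}\cap\bar{\mathfrak{q}}$ for $Z=H+X$ with $H\in\mathfrak{h}$, $X\in\mathfrak{n}$. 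Then $(\mathfrak{u}_0,\mathfrak{q})$ is $CR$-symmetric, and since $\mathbf{T}_0$ is compact the vice-versa of the manifold-level proposition relating symmetric $CR$ algebras to symmetric $CR$ manifolds yields a compatible metric making $M$ $CR$-symmetric.

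The step I expect to be the real obstacle is the condition $\ker(\mathrm{Id}-\lambda)=\mathfrak{f}\subset\mathfrak{q}^{\natural}$ in the converse. This is where the finite-type (equivalently, fundamental) hypothesis of this section enters: for $M$ of finite type $\mathfrak{q}^{\natural}=\mathfrak{g}$, so the inclusion is automatic, whereas a simple factor of $\mathfrak{g}$ on which $\mathfrak{n}$ vanishes would contribute a totally real torus quotient carrying no symmetry and would break the equivalence. I would therefore make the finite-type assumption explicit, and for the classical identification \emph{$N$ Hermitian symmetric $\Leftrightarrow$ $\mathfrak{n}$ abelian} I would cite the standard theory of parabolics with abelian unipotent radical rather than reprove it.
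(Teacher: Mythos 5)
The paper states this proposition without proof (the next subsection follows immediately), so there is no argument of the authors to compare yours against; what can be checked is how your proof sits with the machinery the paper does provide, and on that score your architecture is the natural one. The structural part (the normalizer of a horocyclic $\mathfrak{n}$ is the parabolic $\mathfrak{p}$ itself; since the compact conjugation acts on roots by $\alpha\mapsto-\alpha$, $\bar{\mathfrak{q}}'$ is the opposite parabolic and $\mathfrak{f}=\mathfrak{q}'\cap\bar{\mathfrak{q}}'$ is the Levi factor, defined over $\mathfrak{u}_0$), claim (1), and claim (2) via the submersion criterion $\mathfrak{q}+\mathfrak{f}=\mathfrak{q}'$ and Proposition~\ref{prop:ad} are all correct. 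The only loose point in (2) is the assertion that $\ker d\pi_p\cap H_pM$ is ``the $CR$ part of the fiber'': what is actually needed is $H_pM\cap T_pF=0$, which follows from the one-line root-space computation $(\mathfrak{q}+\bar{\mathfrak{q}})\cap\mathfrak{f}=\mathfrak{h}$, hence $((\mathfrak{q}+\bar{\mathfrak{q}})\cap\mathfrak{f}_0)/\mathfrak{t}_0=0$. Your closing observation that (3) requires the section's standing finite-type hypothesis, and that a simple factor of $\mathfrak{g}$ meeting $\mathfrak{n}$ trivially would break the converse via the condition $\ker(\mathrm{Id}-\lambda)\subset\mathfrak{q}^{\natural}$, is correct and is a genuinely useful sharpening of the statement as printed.

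The one step that does not survive scrutiny is the forward implication of (3) as you wrote it. You invoke \eqref{eq:sy1} to conclude $[\mathfrak{q},\mathfrak{q}]\subset\mathfrak{q}\cap\bar{\mathfrak{q}}=\mathfrak{h}$; but this inclusion cannot hold whenever $\mathfrak{n}\neq 0$, since $[\mathfrak{h},\mathfrak{g}^{\alpha}]=\mathfrak{g}^{\alpha}$ for every $\alpha\in\mathcal{Q}$, so it would force $\mathfrak{n}\subset\mathfrak{h}$, i.e.\ $\mathfrak{n}=0$: your argument proves too much. The blame lies with \eqref{eq:sy1} itself, which as stated in the paper is false (the correct assertion concerns $Z_1,Z_2$ in the odd part $\mathfrak{q}\cap\mathfrak{g}_{(1)}$ only), so it cannot be cited as a black box. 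The repair is exactly the argument the paper gives immediately afterwards in Lemma~\ref{lem:lc}: by \eqref{eq:sy0}, $\lambda$ preserves $\mathfrak{h}=\mathfrak{q}\cap\bar{\mathfrak{q}}$ and $\mathfrak{q}$, hence permutes the root spaces in $\mathfrak{n}$; since $X+\lambda(X)\in\mathfrak{h}$ for $X\in\mathfrak{g}^{\alpha}$, $\alpha\in\mathcal{Q}$, one gets $\lambda(X)=-X$, so $\lambda|_{\mathfrak{n}}=-\mathrm{Id}$, and the involution property then yields $[\mathfrak{n},\mathfrak{n}]=0$, which is the fact you need before quoting the cominuscule characterization of Hermitian symmetric flag manifolds. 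With that substitution, and with the finite-type hypothesis made explicit as you propose, your proof is complete and correct.
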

\subsection{Symmetric $CR$ structures on complete flags}
The natural complex structure of the full flag $M$ is not, in general,
Hermitian symmetric. We will seek for 
conditions on the set
$\mathcal{Q}$ in \eqref{eq:la0}
for which $M$ is
$\mathbf{U}_0$-$CR$-symmetric.
We have
\begin{lem}\label{lem:lc} Let $\mathfrak{q}$ be defined by \eqref{eq:la0},
and assume that  $(\mathfrak{g}_0,\mathfrak{q})$
defines on $M=\mathbf{U}_0/\mathbf{T}_0$
a $\mathbf{U}_0$-homogeneous $CR$-symmetric structure.
Then:
\begin{enumerate}
\item there exists an
involution $\lambda$ of $\mathfrak{g}$ with
\begin{equation}\label{eq:lgn00}
\lambda(\mathfrak{u}_0)=\mathfrak{u}_0,\qquad
\lambda|_\mathfrak{h}=\mathrm{Id},\qquad
\lambda|_\mathfrak{n}=-\mathrm{Id};
\end{equation}
\item $\mathfrak{n}$ is Abelian and $\mathfrak{n}+\bar{\mathfrak{n}}$
generates $\mathfrak{g}$, or, equivalently, $\mathcal{Q}$ satisfies:
\begin{gather}
  \label{eq:lb}
 \alpha\in\mathcal{Q}\Longrightarrow -\alpha\notin\mathcal{Q},\quad
  \alpha,\beta\in\mathcal{Q}\Longrightarrow \alpha+\beta\notin\mathcal{R},\\
\label{eq:lb0}
\mathcal{R}\subset
\mathbb{Z}[\mathcal{Q}].
\end{gather}
\end{enumerate}
\end{lem}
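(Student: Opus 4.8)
The plan is to extract everything from the involution $\lambda$ supplied by the $CR$-symmetry of $(\mathfrak{u}_0,\mathfrak{q})$, exploiting that here $\mathfrak{q}\cap\bar{\mathfrak{q}}=\mathfrak{h}$ and that the conjugation $\sigma$ of $\mathfrak{g}$ with respect to the compact form $\mathfrak{u}_0$ sends $\mathfrak{g}^{\alpha}$ to $\mathfrak{g}^{-\alpha}$. The almost-compactness of $\mathfrak{i}_0=\mathfrak{t}_0$ required by the definition is automatic, $\mathfrak{t}_0$ being contained in the compact algebra $\mathfrak{u}_0$. First I would pin down how $\lambda$ acts on root spaces. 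Since $\lambda(\mathfrak{u}_0)=\mathfrak{u}_0$, the involution $\lambda$ commutes with $\sigma$; combined with $\lambda(\mathfrak{q})=\mathfrak{q}$ this gives $\lambda(\mathfrak{h})=\lambda(\mathfrak{q}\cap\bar{\mathfrak{q}})=\mathfrak{h}$, so $\lambda$ preserves $\mathfrak{h}$ and permutes root spaces by an induced involution $\lambda^{*}$ of $\mathcal{R}$, with $\lambda(\mathfrak{g}^{\alpha})=\mathfrak{g}^{\lambda^{*}\alpha}$, carrying $\mathcal{Q}$ onto itself because $\lambda(\mathfrak{n})=\mathfrak{n}$. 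Testing the relation $Z+\lambda(Z)\in\mathfrak{q}\cap\bar{\mathfrak{q}}=\mathfrak{h}$ from \eqref{eq:sy0} on a nonzero $X_{\alpha}\in\mathfrak{g}^{\alpha}$ with $\alpha\in\mathcal{Q}$ and comparing root-space components forces $\lambda^{*}\alpha=\alpha$ and then $\lambda(X_{\alpha})=-X_{\alpha}$. Applying $\sigma$ and using $\lambda\sigma=\sigma\lambda$, the same holds on $\mathfrak{g}^{-\alpha}$, so $\lambda=-\mathrm{Id}$ on $\mathfrak{n}\oplus\bar{\mathfrak{n}}$.

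To obtain the remaining relation $\lambda|_{\mathfrak{h}}=\mathrm{Id}$ in \eqref{eq:lgn00}, I would choose, for each $\alpha\in\mathcal{Q}$, the vector $X_{\alpha}$ so that $[X_{\alpha},\sigma(X_{\alpha})]$ is the coroot $H_{\alpha}\in\mathfrak{h}$. Because $\lambda(X_{\alpha})=-X_{\alpha}$ and $\lambda(\sigma X_{\alpha})=-\sigma X_{\alpha}$, this bracket is $\lambda$-fixed, whence $\lambda(H_{\alpha})=H_{\alpha}$ for every $\alpha\in\mathcal{Q}$. Since we are dealing throughout with structures of finite type, $(\mathfrak{u}_0,\mathfrak{q})$ is fundamental, so $\mathcal{Q}$ spans $\mathfrak{h}^{*}$ (this is the content of the generation step below) and the coroots $\{H_{\alpha}\mid\alpha\in\mathcal{Q}\}$ span $\mathfrak{h}$; therefore $\lambda|_{\mathfrak{h}}=\mathrm{Id}$. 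Together with the previous paragraph this is exactly \eqref{eq:lgn00}, proving (1).

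For (2), the abelianness of $\mathfrak{n}$ is immediate from \eqref{eq:sy1}: we have $[\mathfrak{n},\mathfrak{n}]\subset\mathfrak{q}\cap\bar{\mathfrak{q}}=\mathfrak{h}$, while $[\mathfrak{n},\mathfrak{n}]\subset\mathfrak{n}$ (as $\mathcal{Q}$ is a closed set of positive roots) and $\mathfrak{n}\cap\mathfrak{h}=0$, so $[\mathfrak{n},\mathfrak{n}]=0$. Reading this on root spaces gives $\alpha+\beta\notin\mathcal{R}$ for $\alpha,\beta\in\mathcal{Q}$, which together with $\mathcal{Q}\cap(-\mathcal{Q})=\emptyset$ (already known from \eqref{eq:la00}) is precisely \eqref{eq:lb}. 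Finally, finite type means $\mathfrak{q}^{\natural}=\mathfrak{g}$; as $\mathfrak{q}^{\natural}$ is the regular subalgebra generated by $\mathfrak{q}+\bar{\mathfrak{q}}=\mathfrak{h}\oplus\mathfrak{n}\oplus\bar{\mathfrak{n}}$, the equality $\mathfrak{q}^{\natural}=\mathfrak{g}$ says exactly that $\mathfrak{n}+\bar{\mathfrak{n}}$ generates $\mathfrak{g}$, equivalently that the closed set of roots generated by $\pm\mathcal{Q}$ exhausts $\mathcal{R}$, i.e. \eqref{eq:lb0}.

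The two delicate points are the conclusion $\lambda|_{\mathfrak{h}}=\mathrm{Id}$ and the translation in the last step. I expect the cleanest logical order to be: first establish the generation statement \eqref{eq:lb0} from fundamentality, then feed the resulting spanning of $\mathfrak{h}^{*}$ by $\mathcal{Q}$ back into the coroot argument to conclude $\lambda|_{\mathfrak{h}}=\mathrm{Id}$. The main obstacle is the root-system equivalence between the Lie-theoretic statement ``$\mathfrak{n}+\bar{\mathfrak{n}}$ generates $\mathfrak{g}$'' and the lattice condition $\mathcal{R}\subset\mathbb{Z}[\mathcal{Q}]$: one must check that the closure of $\pm\mathcal{Q}$ under root addition equals $\mathcal{R}$ exactly when every root lies in $\mathbb{Z}[\mathcal{Q}]$, and this is where the combinatorial care is needed, using that $\mathfrak{n}$ has already been shown to be abelian.
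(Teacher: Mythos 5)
Your route is the paper's route: extract the involution $\lambda$ from the $CR$-symmetry, show it fixes $\mathfrak{h}$ and acts as $-\mathrm{Id}$ on each $\mathfrak{g}^{\alpha}$, $\alpha\in\mathcal{Q}$ (hence on $\bar{\mathfrak{n}}$), deduce that $\mathfrak{n}$ is Abelian, reduce the generation statement to fundamentality (the standing finite-type hypothesis of this section), and only then conclude $\lambda|_{\mathfrak{h}}=\mathrm{Id}$ from the fact that $\mathcal{Q}$ spans $\mathfrak{h}^*$. Your coroot argument ($\lambda$ fixes $[X_{\alpha},\sigma(X_{\alpha})]$) is a harmless variant of the paper's dual argument; in fact it is redundant, since your first paragraph already gives $\lambda^{*}\alpha=\alpha$ for all $\alpha\in\mathcal{Q}$, and a linear map of $\mathfrak{h}^{*}$ fixing a spanning set is the identity. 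The logical ordering you propose (generation first, then $\lambda|_{\mathfrak{h}}=\mathrm{Id}$) is exactly the paper's.

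Two concrete points need repair. First, you derive $[\mathfrak{n},\mathfrak{n}]\subset\mathfrak{h}$ from \eqref{eq:sy1}; but \eqref{eq:sy1}, read literally for all $Z_1,Z_2\in\mathfrak{q}$, cannot hold here when $\mathfrak{n}\neq 0$, since $[\mathfrak{h},\mathfrak{n}]=\mathfrak{n}\not\subset\mathfrak{h}=\mathfrak{q}\cap\bar{\mathfrak{q}}$; so this citation is unsafe. The instance you need ($Z_1,Z_2\in\mathfrak{n}$) should instead be deduced from what you have already proved: by closedness of $\mathcal{Q}$ and $\mathcal{Q}\cap(-\mathcal{Q})=\emptyset$ one has $[\mathfrak{n},\mathfrak{n}]\subset\mathfrak{n}$, while $\lambda([Z_1,Z_2])=[\lambda(Z_1),\lambda(Z_2)]=[Z_1,Z_2]$ and $\lambda=-\mathrm{Id}$ on $\mathfrak{n}$, forcing $[\mathfrak{n},\mathfrak{n}]=0$; this is precisely the paper's argument. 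Second, the two equivalences you defer are the only real content of the last assertion of (2), and the paper does prove them: (a) the subalgebra $\mathfrak{a}$ generated by $\mathfrak{n}+\bar{\mathfrak{n}}$ satisfies $[\mathfrak{h},\mathfrak{a}]=\mathfrak{a}$, so $\mathfrak{a}+\mathfrak{h}$ is a subalgebra containing $\mathfrak{q}+\bar{\mathfrak{q}}$, hence equals $\mathfrak{g}$ when $(\mathfrak{u}_0,\mathfrak{q})$ is fundamental, and then $\mathfrak{a}$, containing every root space, contains $\mathfrak{h}$ as well, so $\mathfrak{a}=\mathfrak{g}$; (b) necessity of \eqref{eq:lb0} is clear because iterated brackets of root vectors only add roots, and sufficiency follows from the reordering fact that a root $\beta=\sum_{i}\epsilon_i\alpha_i$, with $\alpha_i\in\mathcal{Q}$ and $\epsilon_i=\pm 1$, can be rearranged so that every partial sum is a root. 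These are short steps, but as written your proposal asserts them rather than proving them, so they must be added to make the argument complete.
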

\begin{proof}
By the assumption, there is an involution $\lambda$ of $\mathfrak{u}_0$
satisfying \eqref{eq:sy0}. In particular, $\lambda$
transforms $\mathfrak{t}_0$ into itself and equals minus the identity
on $\big((\mathfrak{q}+\bar{\mathfrak{q}})
\cap\mathfrak{u}_0\big)/\mathfrak{t}_0$.
Its complexification, that we still denote by $\lambda$,
is an involution of $\mathfrak{g}$
leaving $\mathfrak{h}$ and $\mathfrak{n}$ invariant, hence
equal to minus the identity on $\mathfrak{n}$.  Since
$-[Z_1,Z_2]=
\lambda([Z_1,Z_2])=[\lambda(Z_1),\lambda(Z_2)]=[-Z_1,-Z_2]=[Z_1,Z_2]$ for
 $Z_1,Z_2\in\mathfrak{n}$, we get $[\mathfrak{n},\mathfrak{n}]=\{0\}$,
which is equivalent to \eqref{eq:lb}.
\par
The conditions that $\mathfrak{q}+\bar{\mathfrak{q}}$
generates $\mathfrak{g}$,
that $(\mathfrak{u}_0,\mathfrak{q})$ is fundamental, and
that $M$ is of finite type are all equivalent (see \S\ref{sec:aa}).
\par
If $\mathfrak{n}+\bar{\mathfrak{n}}$ generates $\mathfrak{g}$, then so
does $\mathfrak{q}+\bar{\mathfrak{q}}$. Vice versa, assume that
$\mathfrak{q}+\bar{\mathfrak{q}}$ generates $\mathfrak{g}$, and
let $\mathfrak{a}$ be the
subalgebra of $\mathfrak{g}$ generated by $\mathfrak{n}+\bar{\mathfrak{n}}$.
From $[\mathfrak{h},\bar{\mathfrak{n}}]=
\bar{\mathfrak{n}}$,
we obtain that $[\mathfrak{h},\mathfrak{a}]=\mathfrak{a}$.
Hence $\mathfrak{a}+\mathfrak{h}=\mathfrak{g}$.
Containing
all root spaces, $\mathfrak{a}$ 
contains also $\mathfrak{h}$ and thus equals $\mathfrak{g}$. 
\par
Condition
\eqref{eq:lb0} is obviously necessary for
$(\mathfrak{u}_0,\mathfrak{q})$ to be fundamental. It is also
sufficient. Indeed, if $\beta={\sum}_{i=1}^{\ell}\epsilon_i\alpha_i,$
with $\alpha_1,\hdots,\alpha_{\ell}\in
\mathcal{Q}$ and $\epsilon_i=\pm{1}$,
then, upon reordering, we can assume
that ${\sum}_{i=1}^h\epsilon_i\alpha_i$ is a root for all
$1\leq{h}\leq\ell$.
\par
Leaving
$\mathfrak{h}$ invariant, $\lambda$ determines an
involution $\lambda^*$ on $\mathcal{R}$, which is the identity on $\mathcal{Q}$.
Condition \eqref{eq:lb0} implies that $\mathcal{Q}$ spans $\mathfrak{h}^*$,
hence $\lambda^*$ is the identity on $\mathcal{R}$, and therefore
$\lambda|_{\mathfrak{h}}=\mathrm{Id}$.
\end{proof}
\begin{rmk}
  When  all roots in $\mathcal{R}$ have the same length, 
orthogonal roots are strongly orthogonal and
\eqref{eq:lb}
is equivalent to
\begin{equation}
  \label{eq:lb+}
  (\alpha|\beta)\geq{0},\quad\forall\alpha,\beta\in\mathcal{Q}.
\end{equation}
\end{rmk}
According  to \cite{GS04}, a
$\mathbf{U}_0$-$CR$-symmetric structure on $M$ is
\textit{extrinsic} symmetric if there
is an isometric embedding of $M$ into a Euclidean space $V$, and, for every
$x\in M$, an isometry of $V$ 
that restricts 
to a symmetry of $M$ at $x$ and to the identity
on the normal bundle of $M$ at~$x$.
\par
The opposite of the Killing form defines a scalar product
on $\mathfrak{u}_0$, which is
invariant for the adjoint action of $\mathbf{U}_0$.
The stabilizer of a regular element
$X$ of $\mathfrak{t}_0$
in $\mathbf{U}_0$ is the Cartan subalgebra $\mathbf{T}_0$, so that
the orbit
$\mathrm{Ad}(\mathbf{U}_0)(X)$ is an embedding of $M$.
The induced metric is $\mathbf{U}_0$-invariant.
\par
The tangent space of $M$ at $X$ is identified, via the differential of the
action at the identity, to
$\mathfrak{u}_0/\mathfrak{t}_0\simeq
\sum_\alpha\mathfrak{u}_0\cap(\mathfrak{g}^\alpha+\mathfrak{g}^{-\alpha})$.
Under this identification, the subspace
$\mathfrak{u}_0\cap(\mathfrak{g}^\alpha+\mathfrak{g}^{-\alpha})$ is mapped onto
itself, and $\mathfrak{t}_0$ is its orthogonal complement in
$\mathfrak{u}_0$.
\par
The involution $\lambda$ of Lemma~\ref{lem:lc} is then an extrinsic symmetry at
$x$. We have proved:
\begin{prop}
If
$M=\mathbf{U}_0/\mathbf{T}_0$, endowed with the $CR$ structure
defined by the $CR$ algebra $(\mathfrak{u}_0,\mathfrak{q})$, where
$\mathfrak{q}$ is given by \eqref{eq:la0}, is of finite type and
$\mathbf{U}_0$-$CR$-symmetric, then it is extrinsic $CR$-symmetric. \qed
\end{prop}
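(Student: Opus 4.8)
The plan is to realize $M$ concretely as an adjoint orbit in the Euclidean space $V=\mathfrak{u}_0$, with scalar product the opposite of the Killing form, and to show that the involution $\lambda$ provided by Lemma~\ref{lem:lc} restricts, at a suitable base point, to an extrinsic symmetry. First I would fix a regular element $X\in\mathfrak{t}_0$; since its $\mathbf{U}_0$-stabilizer is exactly $\mathbf{T}_0$, the orbit map $g\mapsto\mathrm{Ad}(g)(X)$ identifies $M\simeq\mathbf{U}_0/\mathbf{T}_0$ with the orbit $\mathrm{Ad}(\mathbf{U}_0)(X)\subset V$, and the $\mathrm{Ad}(\mathbf{U}_0)$-invariance of the scalar product makes the induced metric $\mathbf{U}_0$-invariant and $CR$-compatible. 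As $\mathbf{U}_0$ acts by isometries of $V$ preserving both $M$ and its $CR$ structure, it suffices to exhibit the required ambient isometry at the single point $x=X$ and then transport it by the group action.

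The core step is to check the defining properties of an extrinsic symmetry for the linear map $\lambda|_{\mathfrak{u}_0}$. Being an automorphism of $\mathfrak{g}$ preserving $\mathfrak{u}_0$, $\lambda$ preserves the Killing form, so $\lambda|_{\mathfrak{u}_0}$ is a linear isometry of $V$; it fixes $x=X$ because $X\in\mathfrak{h}$ and $\lambda|_{\mathfrak{h}}=\mathrm{Id}$ by \eqref{eq:lgn00}; and it preserves $M$, since $\lambda(\mathbf{U}_0)=\mathbf{U}_0$. I would then read the differential off the root-space decomposition: regularity of $X$ identifies $T_XM$ with $[\mathfrak{u}_0,X]=\sum_{\alpha>0}\mathfrak{u}_0\cap(\mathfrak{g}^{\alpha}+\mathfrak{g}^{-\alpha})$ and the normal space with $\mathfrak{t}_0$. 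On $\mathfrak{t}_0\subset\mathfrak{h}$ one has $\lambda=\mathrm{Id}$, which gives the identity on the normal bundle. On the holomorphic tangent space $H_XM$, spanned by the vectors $\mathrm{Re}(E_{\alpha})$ with $\alpha\in\mathcal{Q}$, I would use that $\lambda$ commutes with the conjugation defining $\mathfrak{u}_0$ (a consequence of $\lambda(\mathfrak{u}_0)=\mathfrak{u}_0$) together with $\lambda|_{\mathfrak{n}}=-\mathrm{Id}$ to get $\lambda(\mathrm{Re}(E_{\alpha}))=-\mathrm{Re}(E_{\alpha})$, so that $\lambda=-\mathrm{Id}$ on $H_XM$. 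Finally, $\lambda(\mathfrak{q})=\mathfrak{q}$ shows $\lambda|_M$ is a $CR$ map.

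At this point the finite-type hypothesis enters: it forces $\Theta_XM=T_XM$, hence $\Theta_X^{\perp}M=0$, so that a symmetry of $M$ at $x$ is required to have differential $-\mathrm{Id}$ only on $H_XM$. This is exactly what the previous step yields, so $\lambda|_M$ is a symmetry of $M$ at $x$ and $\lambda|_{\mathfrak{u}_0}$ is an extrinsic symmetry there. Conjugating by $\mathrm{Ad}(g)$ produces, for each $p=\mathrm{Ad}(g)(X)$, an ambient isometry $\mathrm{Ad}(g)\circ\lambda\circ\mathrm{Ad}(g)^{-1}$ restricting to a symmetry of $M$ at $p$ and to the identity on the normal space at $p$; this exhibits $M$ as extrinsic $CR$-symmetric.

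I expect the main obstacle to lie in the tangent-space bookkeeping rather than in any deep idea: one must match the abstract $H_XM$ and the normal space $\nu_XM$ to the root-space pieces of $\mathfrak{u}_0$ and verify that $\lambda$ acts by $-\mathrm{Id}$ on all of $H_XM$, including the directions coming from $\bar{\mathfrak{n}}$. The decisive point is that $\lambda$ commutes with the compact conjugation, which propagates the value $-1$ from $\mathfrak{n}$ to $\bar{\mathfrak{n}}$, while the finite-type assumption removes any constraint on the directions transverse to $H_XM$ inside $T_XM$.
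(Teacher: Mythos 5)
Your proposal is correct and follows essentially the same route as the paper: realizing $M$ as the adjoint orbit $\mathrm{Ad}(\mathbf{U}_0)(X)$ of a regular element $X\in\mathfrak{t}_0$ in $\mathfrak{u}_0$ with the opposite of the Killing form, identifying $T_XM$ with $\sum_\alpha\mathfrak{u}_0\cap(\mathfrak{g}^\alpha+\mathfrak{g}^{-\alpha})$ and the normal space with $\mathfrak{t}_0$, and checking that the involution $\lambda$ of Lemma~\ref{lem:lc} is the required extrinsic symmetry at $X$, then transporting by the group action. The paper states this very tersely; your write-up merely fills in the verifications (isometry, $CR$ map, $-\mathrm{Id}$ on $H_XM$ via compatibility with the compact conjugation, and the role of finite type in killing $\Theta_X^{\perp}M$) that the paper leaves implicit.
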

\subsection{$CR$ symmetries,  $J$-properties, and gradings}
By Lemma \ref{lem:lc} the involution $\lambda$ in \eqref{eq:lgn00}
is \emph{inner}. In fact, \eqref{eq:lgn00} implies that
$\lambda=\mathrm{Ad}(\exp(i\pi{E}))$ for an element
\begin{equation*}
E\in\mathcal{R}^{\star}=\{
H\in\mathfrak{h}\mid \alpha(H)\in\mathbb{Z},\;\forall\alpha\in
\mathcal{R}\}
\end{equation*}
such that
\begin{equation}
  \label{eq:lb+0}
  \alpha(E)\equiv{1}\mod{2},\quad\forall\alpha\in\mathcal{Q}.
\end{equation}
\par
The weak-$J$-property for $(\mathfrak{u}_0,\mathfrak{q})$ will then
be equivalent to the possibility of choosing this
$E\in\mathcal{R}^{\star}$ in such
a way that
\begin{equation}
  \label{eq:lb+1}
  \alpha(E)\equiv{1}\mod{4},\quad\forall\alpha\in\mathcal{Q}.
\end{equation}
Indded, the element $J$ in Definition \ref{def:fc} will be equal
to $iE\in\mathfrak{t}_0$.
\par
To discuss the
symmetric $CR$ structures on complete flags in terms of the
sets of roots $\mathcal{Q}$, it is convenient to introduce some notation.
\begin{dfn}
If $\mathcal{S}\subset\mathcal{R}$, 
we shall indicate by
$\mathfrak{Q}(\mathcal{S})$ the set of all $\mathcal{Q}\subset\mathcal{S}$
which satisfy \eqref{eq:lb} and \eqref{eq:lb0}.
We set $\mathfrak{Q}_s(\mathcal{S})$
(resp. $\mathfrak{Q}_{0}(\mathcal{S})$,
$\mathfrak{Q}_{\Upsilon}(\mathcal{S})$)
for the subset of
$\mathfrak{Q}(\mathcal{S})$ consisting of those
$\mathcal{Q}\subset\mathcal{S}$ for which the $(\mathfrak{u}_0,\mathfrak{q})$
with $\mathfrak{q}$ given by \eqref{eq:la0} is $CR$-symmetric
(resp. has the $J$-property, has the weak-$J$-property).
We will also say that $\mathcal{Q}$ itself is $CR$-symmetric, or 
has the $J$ or the weak-$J$-property when it holds for $(\mathfrak{u}_0,
\mathfrak{q})$. Clearly $\mathfrak{Q}_0(\mathcal{S})\subset
\mathfrak{Q}_{\Upsilon}(\mathcal{S})\subset\mathfrak{Q}_s(\mathcal{S})
\subset\mathfrak{Q}(\mathcal{S})$.\par
We indicate by $\mathfrak{Q}'(\mathcal{S})$ the sets $\mathcal{Q}\subset
\mathcal{S}$ which satisfy \eqref{eq:lb}. 
\end{dfn}
\begin{rmk} \label{rk:h7} For
$E\in\mathcal{R}^{\star}$,
$\mathfrak{Q}(\{\alpha\mid\alpha(E)\equiv{1}\mod{2}\})
\subset\mathfrak{Q}_s(\mathcal{R})$ and
$\mathfrak{Q}(\{\alpha\mid\alpha(E)\equiv{1}\mod{4}\})
\subset\mathfrak{Q}_{\Upsilon}(\mathcal{R})$.
\end{rmk}
Given $\mathcal{Q}\subset\mathcal{R}$, let us define
\begin{align}
  \label{eq:lgn0}
\mathcal{Q}^*_{1,1}=&\{\pm(\beta_1-\beta_2)\in\mathcal{R}\mid
\beta_1,\beta_2\in\mathcal{Q}\},
\\ \label{eq:lgn1}
\mathcal{Q}^*_h=&
\left.\left\{{\sum}{k_i\beta_i}\in\mathcal{R}\right| \beta_i\in\mathcal{Q},\;
k_i\in\mathbb{Z},\;{\sum}{k_i}=h\right\},\quad h\in\mathbb{Z}.
\end{align}
We have $\mathcal{Q}\subset\mathcal{Q}^*_1$,
$\mathcal{Q}^*_{1,1}\subset\mathcal{Q}^*_0$,
 and
$\mathcal{Q}^*_{-h}=-\!{\mathcal{Q}}^*_h
$ for all $h\in\mathbb{Z}$.
Moreover, $\mathcal{Q}^*_0$ is the root system of the reductive
complex Lie subalgebra of $\mathfrak{g}$
\begin{gather}
  \label{eq:lgn2}
  \mathfrak{q}^{(0)}=\mathfrak{h}\oplus{\sum}_{\alpha\in\mathcal{Q}^*_0}
\mathfrak{g}^{\alpha},\qquad \text{and, with}
\\
  \label{eq:lgn3}
  \mathfrak{q}^{(h)}={\sum}_{\alpha\in\mathcal{Q}^*_h}\mathfrak{g}^{\alpha}
\quad\text{for}\; h\in\mathbb{Z}\setminus\{0\},\qquad\text{we have}
\\
  \label{eq:lgn4}
  [\mathfrak{q}^{(h)},\mathfrak{q}^{(k)}]\subset\mathfrak{q}^{(h+k)}.
\end{gather}
We have $\mathfrak{g}={\sum}_{h\in\mathbb{Z}}\mathfrak{q}^{(h)}$
if, and only if, $\mathcal{R}\subset\mathbb{Z}[\mathcal{Q}]$.
\begin{lem}\label{lem:lf}
If $\mathcal{Q}\in\mathfrak{Q}(\mathcal{R})$ and
$\mathcal{Q}\cup (\!-\!\mathcal{Q}\!)\neq\mathcal{R}$, then
$\mathcal{Q}^*_{1,1}\neq\emptyset$.
Moreover,
\begin{equation*}
 h_0\in\mathbb{Z}\setminus\{0\} \;\;\text{and}\;
 \mathcal{Q}_{1,1}^*\cap\mathcal{Q}^*_{h_0}=\emptyset\Longrightarrow
\mathcal{Q}^*_h\cap\mathcal{Q}^*_{h+h_0}=\emptyset, \forall h\in\mathbb{Z}.
\end{equation*}
\end{lem}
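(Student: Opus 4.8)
The plan is to treat the two assertions of Lemma~\ref{lem:lf} separately: the first by the geometry of root strings, the second by the lattice of linear relations among the elements of $\mathcal{Q}$. Throughout I use that, since $\mathcal{Q}\in\mathfrak{Q}(\mathcal{R})$, the set $\mathcal{Q}$ satisfies both \eqref{eq:lb} (so $\beta+\beta'\notin\mathcal{R}$ for $\beta,\beta'\in\mathcal{Q}$) and \eqref{eq:lb0} (so $\mathcal{R}\subset\mathbb{Z}[\mathcal{Q}]$).

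For the first assertion I argue by contraposition: assuming $\mathcal{Q}^*_{1,1}=\emptyset$, I prove $\mathcal{Q}\cup(-\mathcal{Q})=\mathcal{R}$. First I would show $\mathcal{Q}$ consists of pairwise orthogonal roots: for distinct $\beta,\beta'\in\mathcal{Q}$ one has $\beta+\beta'\notin\mathcal{R}$ by \eqref{eq:lb} and $\beta-\beta'\notin\mathcal{R}$ since $\mathcal{Q}^*_{1,1}=\emptyset$, and the standard dichotomy (a negative inner product forces $\beta+\beta'\in\mathcal{R}$, a positive one forces $\beta-\beta'\in\mathcal{R}$) then yields $(\beta,\beta')=0$. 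Next, writing an arbitrary root as $\gamma=\sum_i k_i\beta_i$ with $k_i\in\mathbb{Z}$ (possible by \eqref{eq:lb0}), orthogonality gives $\langle\gamma,\beta_j^{\vee}\rangle=2k_j$; since Cartan integers lie in $\{0,\pm1,\pm2,\pm3\}$ this forces $k_j\in\{0,\pm1\}$. Finally I would run a descent on $\sum_i|k_i|$: if this is $\geq 2$, choose $i$ with $k_i=1$ (replacing $\gamma$ by $-\gamma$ if needed), so $(\gamma,\beta_i)>0$ and $\gamma\neq\beta_i$, whence the same dichotomy gives $\gamma-\beta_i\in\mathcal{R}$ with strictly smaller coefficient sum. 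A minimal counterexample then terminates with $\gamma-\beta_i=\pm\beta_m$, i.e. $\gamma=\beta_i\pm\beta_m$, contradicting \eqref{eq:lb} (sign $+$) or $\mathcal{Q}^*_{1,1}=\emptyset$ (sign $-$). Hence $\sum_i|k_i|=1$ for every root, that is $\mathcal{R}=\mathcal{Q}\cup(-\mathcal{Q})$.

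For the second assertion I would pass to the relation lattice $L=\ker\big(\mathbb{Z}^{\mathcal{Q}}\to\mathbb{Z}[\mathcal{R}]\big)$, $(k_\beta)\mapsto\sum_\beta k_\beta\beta$, together with the augmentation $\epsilon(k_\beta)=\sum_\beta k_\beta$. The key observation is that two integer vectors represent the same root exactly when they differ by an element of $L$, so the set of coefficient sums of a fixed root $\gamma$ is a single coset of the subgroup $\epsilon(L)\subseteq\mathbb{Z}$. Consequently $\mathcal{Q}^*_h\cap\mathcal{Q}^*_{h+h_0}\neq\emptyset$ for some $h$ if and only if $h_0\in\epsilon(L)$. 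I then evaluate the hypothesis: if $\mathcal{Q}^*_{1,1}\neq\emptyset$, any $\delta=\beta-\beta'\in\mathcal{Q}^*_{1,1}$ admits the representation $\beta-\beta'$ of coefficient sum $0$, so its coset of sums is exactly $\epsilon(L)$, whence $\mathcal{Q}^*_{1,1}\cap\mathcal{Q}^*_{h_0}\neq\emptyset$ iff $h_0\in\epsilon(L)$; comparing the two equivalences gives the contrapositive. If instead $\mathcal{Q}^*_{1,1}=\emptyset$, the first assertion makes $\mathcal{Q}$ linearly independent, so $L=0$, $\epsilon(L)=\{0\}$, and since $h_0\neq 0$ the intersection $\mathcal{Q}^*_h\cap\mathcal{Q}^*_{h+h_0}$ is empty for every $h$.

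The main obstacle is the first assertion. The delicate point is that pairwise orthogonality of $\mathcal{Q}$ together with $\mathcal{R}\subset\mathbb{Z}[\mathcal{Q}]$ does \emph{not} by itself give $\mathcal{R}=\mathcal{Q}\cup(-\mathcal{Q})$: for instance $\{e_1,e_2\}$ is an orthogonal $\mathbb{Z}$-basis of the root lattice of $B_2$ yet misses the long roots $\pm e_1\pm e_2$. It is precisely the abelian condition \eqref{eq:lb}, fed into the inductive step to eliminate the case $\gamma=\beta_i+\beta_m$, that rules out such long roots. So the care required is to carry \eqref{eq:lb} (and not merely orthogonality) through the descent, and to treat the non--simply-laced Cartan integers correctly in the root-string estimates.
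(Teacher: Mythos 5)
Your proof is correct, and it takes a genuinely different route from the paper's. The paper disposes of both assertions with one device, the same one invoked in the proof of Lemma~\ref{lem:lc}: by \eqref{eq:lb0} every root can be written as ${\sum}_{i}\epsilon_i\beta_i$ with $\beta_i\in\mathcal{Q}$, $\epsilon_i=\pm1$, and (after reordering) all partial sums in $\mathcal{R}$. For the first assertion it picks $\alpha$ with $\pm\alpha\notin\mathcal{Q}$, so that $\ell\geq 2$ in such a decomposition, and \eqref{eq:lb} forces $\epsilon_1+\epsilon_2=0$, whence $\epsilon_1\beta_1+\epsilon_2\beta_2\in\mathcal{Q}^*_{1,1}$; for the second it takes two such decompositions of a root $\alpha\in\mathcal{Q}^*_h\cap\mathcal{Q}^*_{h+h_0}$, extracts $\delta=\epsilon_1\beta_1+\epsilon_2\beta_2\in\mathcal{Q}^*_{1,1}$ from one of them, and rewrites $\delta$ through the other decomposition to exhibit it as an element of $\mathcal{Q}^*_{h_0}$. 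Your contrapositive argument for the first part (orthogonality of $\mathcal{Q}$ from \eqref{eq:lb} plus $\mathcal{Q}^*_{1,1}=\emptyset$, the Cartan-integer bound $2k_j\in\{0,\pm 2\}$, then descent ending at $\gamma=\beta_i\pm\beta_m$) replaces the partial-sum rewriting by a direct use of the root dichotomy, and your $B_2$ caveat correctly isolates the point where \eqref{eq:lb}, not mere orthogonality, is indispensable. The real gain is in the second part: the paper's normalization ``we can assume $h\neq\pm 1$'' is not literally available in the single case $\{h,h+h_0\}=\{1,-1\}$, which needs the supplementary remark that the two decompositions cannot both reduce to one term (else $\beta_1=-\gamma_1$, against \eqref{eq:lb}), so that the argument can be run on the longer one; your coset argument needs no normalization at all, since the coefficient sums of a fixed root form a coset of $\epsilon(L)$ and elements of $\mathcal{Q}^*_{1,1}$ have coset exactly $\epsilon(L)$. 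Indeed you prove something sharper: when $\mathcal{Q}^*_{1,1}\neq\emptyset$, the set of $h_0$ for which some $\mathcal{Q}^*_h\cap\mathcal{Q}^*_{h+h_0}$ is nonempty is precisely the subgroup $\epsilon(L)\subset\mathbb{Z}$, the degenerate case $\mathcal{Q}^*_{1,1}=\emptyset$ being settled by linear independence. In short, the paper's proof buys brevity and uniformity of method once the rewriting lemma is granted; yours buys self-containedness and an exact characterization that makes the ``moreover'' part transparent.
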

  \begin{proof}
Assume that
$\mathcal{Q}\in\mathfrak{Q}(\mathcal{R})$ and that
$\mathcal{Q}\cup(\! -\!\mathcal{Q}\!)\neq\mathcal{R}$.
Pick $\alpha\in\mathcal{R}$
with $\pm\alpha\notin\mathcal{Q}$.
By \eqref{eq:lb0}, we get
$\alpha={\sum}_{i=1}^\ell\epsilon_i\beta_i$, with 
$\ell\geq{2}$ and $\epsilon_i=\pm{1}$,
$\beta_i\in\mathcal{Q}$ for all $1\leq{i}\leq\ell$, and
${\sum}_{i\leq{h}}\epsilon_i\beta_i\in\mathcal{R}$ for all
$1\leq{h}\leq{\ell}$. By \eqref{eq:lb} we have $\epsilon_1+\epsilon_2=0$,
and hence $\epsilon_1\beta_1+\epsilon_2\beta_2\in\mathcal{Q}^*_{1,1}
\neq\emptyset$.
\par
Assume that, for a pair of integers $h\neq{k}$, the intersection
$\mathcal{Q}^*_{h}\cap\mathcal{Q}^*_{k}$ contains a root $\alpha$.
Then
we can find roots $\beta_i,\gamma_j\in\mathcal{Q}$, and numbers
$\epsilon_i,\eta_j=\pm{1}$, for $1\leq{i}\leq{\ell_1}$,
$1\leq{j}\leq\ell_2$, with ${\sum}\epsilon_i=h$,
${\sum}\eta_j=k$, such that
\begin{displaymath}
  \alpha={\sum}_{i=1}^{\ell_1}\epsilon_i\beta_i={\sum}_{j=1}^{\ell_2}
\eta_j\gamma_j, \quad\text{with}\quad\begin{cases}
{\sum}_{i=1}^k\epsilon_i\beta_i\in\mathcal{R}\quad\text{for}\quad
k<\ell_1,\\
{\sum}_{j=1}^k\eta_j\gamma_j\in\mathcal{R}\quad\text{for}\quad {k}<{\ell}_2.
\end{cases}
\end{displaymath}
We can assume that $h\neq\pm{1}$. Then
$\ell_1\geq{2}$,
$\epsilon_1\beta_1+\epsilon_2\beta_2\in\mathcal{Q}^*_{1,1}$ and,
with $h_0=h-k$, we obtain
\begin{displaymath}
  \epsilon_1\beta_1+\epsilon_2\beta_2={\sum}_{j=1}^{\ell_2}
\eta_j\gamma_j-{\sum}_{i=3}^{\ell_1}\epsilon_i\beta_i\in
\mathcal{Q}^*_{1,1}\cap\mathcal{Q}^*_{h_0}.\vspace{-19pt}
\end{displaymath}
 \end{proof}
From \eqref{eq:lb0}, we obtain
\begin{equation}
  \label{eq:lgn5}
  \mathfrak{q}={\sum}_{h\in\mathbb{Z}}\mathfrak{g}^{(h)},\quad
\text{for $\mathcal{Q}\in\mathfrak{Q}(\mathcal{R})$}.
\end{equation}
Thus Lemma \ref{lem:lf}  and Remark \ref{rk:h7} yield
criteria for $\mathcal{Q}$ either to be symmetric or to
have the $J$ or weak-$J$-property.
\begin{thm}
Let $M=\mathbf{U}_0/\mathbf{T}_0$ be a complete flag, with
$CR$ structure defined by $(\mathfrak{u}_0,\mathfrak{q})$,
for a $\mathfrak{q}$ given by \eqref{eq:la0},
with $\mathcal{Q}\in\mathfrak{Q}(\mathcal{R})$.
Then
\begin{enumerate}
\item $M$ is $\mathbf{U}_0$-$CR$-symmetric if, and only if,
  \begin{equation}
    \label{eq:lgn10a}
 \mathcal{Q}_{1,1}^*\cap\mathcal{Q}^*_{2h+1}=\emptyset, \quad\text{for all
$h\in\mathbb{Z}$.}
  \end{equation}
\item The $CR$ algebra $(\mathfrak{u}_0,\mathfrak{q})$ has the
weak-$J$-property if and only if
\begin{equation}
  \label{eq:lgn10b}
  \mathcal{Q}_{1,1}^*\cap\mathcal{Q}^*_{2h+1}=\emptyset
\quad\text{and}\quad
 \mathcal{Q}_{1,1}^*\cap\mathcal{Q}^*_{4h+2}=\emptyset,
\quad\text{for all
$h\in\mathbb{Z}$.}
\end{equation}
\item The $CR$ algebra $(\mathfrak{u}_0,\mathfrak{q})$ has the
$J$-property if and only if
\eqref{eq:lgn5} is a $\mathbb{Z}$-gradation of $\mathfrak{g}$.
\end{enumerate}
\end{thm}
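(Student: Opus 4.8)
The plan is to treat all three parts uniformly, by first reducing each property to the existence of a single element $E\in\mathcal{R}^{\star}$ adapted to $\mathcal{Q}$, and then answering that existence question with Lemma~\ref{lem:lf} together with a lifting argument. By Lemma~\ref{lem:lc} and \eqref{eq:lb+0}, the $CR$-symmetry of $(\mathfrak{u}_0,\mathfrak{q})$ is equivalent to the existence of $E\in\mathcal{R}^{\star}$ with $\alpha(E)\equiv 1\pmod 2$ for every $\alpha\in\mathcal{Q}$; by \eqref{eq:lb+1}, the weak-$J$-property is equivalent to the same statement modulo $4$; and, reducing $J$ to a semisimple derivation (as permitted after Definition~\ref{def:fc}) and writing $J=\mathrm{ad}(iE)$ with $E\in\mathfrak{h}_{\mathbb{R}}$, the condition $Z-iJ(Z)\in\mathfrak{q}\cap\bar{\mathfrak{q}}=\mathfrak{h}$ of \eqref{eq:Fib}$'$ evaluated on each $\mathfrak{g}^{\alpha}$, $\alpha\in\mathcal{Q}$, shows that the $J$-property is equivalent to the existence of $E\in\mathcal{R}^{\star}$ with $\alpha(E)=1$ (an exact integer equality, up to the sign convention) for every $\alpha\in\mathcal{Q}$. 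In all three cases $E$ lands automatically in $\mathcal{R}^{\star}$ once $\alpha(E)\in\mathbb{Z}$ on $\mathcal{Q}$, because $\mathcal{R}\subset\mathbb{Z}[\mathcal{Q}]$ by \eqref{eq:lb0}.

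Necessity is then immediate. Suppose such an $E$ exists. Since $E$ is additive on roots, any $\gamma\in\mathcal{Q}^*_h$ satisfies $\gamma(E)\equiv h$ modulo the relevant modulus, while any $\gamma\in\mathcal{Q}^*_{1,1}$, being $\pm(\beta_1-\beta_2)$ with $\beta_1,\beta_2\in\mathcal{Q}$, satisfies $\gamma(E)\equiv 0$. Comparing the two, $\mathcal{Q}^*_{1,1}$ cannot meet $\mathcal{Q}^*_{2h+1}$ (excluded both mod $2$ and mod $4$), and in the mod $4$ case it cannot meet $\mathcal{Q}^*_{4h+2}$ either; in the integral case $\gamma(E)=h$ forces $\mathcal{Q}^*_h\cap\mathcal{Q}^*_k=\emptyset$ for $h\ne k$, which is exactly the statement that \eqref{eq:lgn5} is a genuine $\mathbb{Z}$-gradation.

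For sufficiency I would first upgrade the hypotheses via Lemma~\ref{lem:lf}. Assuming $\mathcal{Q}^*_{1,1}\cap\mathcal{Q}^*_{h_0}=\emptyset$ for all $h_0$ in the prescribed set (odd $h_0$ in part (1); $h_0\not\equiv 0\pmod 4$ in part (2); all $h_0\ne 0$ in part (3), which is precisely the gradation hypothesis), Lemma~\ref{lem:lf} yields $\mathcal{Q}^*_h\cap\mathcal{Q}^*_{h+h_0}=\emptyset$ for every $h$. Hence the sets $\mathcal{Q}^*_h$ are disjoint whenever their indices differ by a nonzero element of the prescribed set, and since $\mathcal{R}=\bigcup_h\mathcal{Q}^*_h$ the grade becomes a well-defined function $f\colon\mathcal{R}\to\mathbb{Z}/2$, $\mathbb{Z}/4$, or $\mathbb{Z}$, additive with respect to root sums and equal to $1$ on $\mathcal{Q}$. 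The real obstacle is to realize this additive grade as $\beta\mapsto\beta(E)$ for an actual $E\in\mathcal{R}^{\star}$: additivity along root strings is a priori weaker than linearity on the root lattice. I would overcome this by fixing simple roots $\alpha_1,\dots,\alpha_\ell$ of $\mathcal{R}$ (a $\mathbb{Z}$-basis of the root lattice), choosing integer lifts of the $f(\alpha_i)$, and letting $E\in\mathcal{R}^{\star}$ be the functional with these values. Because every positive root is obtained from simple roots by successively adding one simple root while staying inside $\mathcal{R}$, the additivity of $f$ forces $\beta(E)\equiv f(\beta)$ for all $\beta$, and in particular $\alpha(E)\equiv 1$ on $\mathcal{Q}$, as required.

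It then remains to close the loop in each part. For part (3), well-definedness of $f$ in $\mathbb{Z}$ is literally the assertion that \eqref{eq:lgn5} is a $\mathbb{Z}$-gradation, so nothing further is needed. For part (2), the equivalence with the mod-$4$ element $E$ is exactly the weak-$J$ criterion \eqref{eq:lb+1}, so the reduction plus the lifting finish it. For part (1), I would verify that $\lambda=\mathrm{Ad}(\exp(i\pi E))$ genuinely realizes the $CR$-symmetry: it preserves $\mathfrak{u}_0$, acts as $+\mathrm{Id}$ on $\mathfrak{h}$ and as $-\mathrm{Id}$ on $\mathfrak{n}$, so that $\lambda(\mathfrak{q})=\mathfrak{q}$ and $Z+\lambda(Z)\in\mathfrak{q}\cap\bar{\mathfrak{q}}$ for all $Z\in\mathfrak{q}$, while the inclusion $\ker(\mathrm{Id}-\lambda)\subset\mathfrak{q}^{\natural}$ is automatic since $(\mathfrak{u}_0,\mathfrak{q})$ is fundamental and hence $\mathfrak{q}^{\natural}=\mathfrak{g}$; together with $\mathfrak{i}_0=\mathfrak{t}_0$ being compact, this establishes all of \eqref{eq:sy0}. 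The only genuinely delicate step throughout is the passage from the root-additive grade to the linear functional $E$, everything else being a bookkeeping of congruences.
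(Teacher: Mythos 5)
Your proposal is correct and follows essentially the same route as the paper's proof: the same reduction of the three properties to the existence of an element $E\in\mathcal{R}^{\star}$ satisfying \eqref{eq:lb+0}, respectively \eqref{eq:lb+1}, respectively $\alpha(E)=1$ on $\mathcal{Q}$, the same use of Lemma \ref{lem:lf} to make the grade function well defined in the sufficiency direction, and the same congruence bookkeeping for necessity. The one place where you are more explicit than the paper is exactly where it is terse: where the paper asserts that the resulting $\mathbb{Z}_2$- (resp.\ $\mathbb{Z}_4$-) gradation ``is inner'' (and, in the ``vice versa'' of part (3), tacitly assumes the $\mathbb{Z}$-gradation is induced by some $E\in\mathcal{R}^{\star}$), your height-induction lifting of the additive grade from a basis of simple roots supplies precisely the needed justification.
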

\begin{proof}
By Lemma \ref{lem:lf},
conditions \eqref{eq:lgn10a} (resp. \eqref{eq:lgn10b})
implies that $\mathfrak{g}$ admits a $\mathbb{Z}_2$-gradation
(resp. a $\mathbb{Z}_4$-gradation) with
$\mathfrak{h}\subset\mathfrak{g}_{[0]}$ and
$\mathfrak{n}\subset\mathfrak{g}_{[1]}$, where
$[a]$ means the  congruence
class of $a\in\mathbb{Z}$ modulo $2$ (resp. modulo $4$).
Since this gradation is inner, we obtain $(1)$ (resp. $(2)$).
\par
Finally, if $(\mathfrak{u}_0,\mathfrak{q})$ has the $J$-property,
and \eqref{eq:Fi} is valid,
then $E=iJ\in\mathcal{R}^{\star}$ and $[E,Z]=Z$ for all
$Z\in\mathfrak{n}$. By \eqref{eq:lb0} we get
$\mathfrak{g}^{(h)}=\{Z\in\mathfrak{g}\mid [E,Z]=hZ\}$, and
\eqref{eq:lgn5} is a direct sum decomposition, yielding
a $\mathbb{Z}$-gradation of~$\mathfrak{g}$. Vice versa, if
$E\in\mathcal{R}^{\star}$ defines a $\mathbb{Z}$-gradation with
$\alpha(E)=1$ for all $\alpha\in\mathcal{Q}$, we can take $J=-iE$
to obtain \eqref{eq:Fi}.
\end{proof}
\subsection{Complete flags of the classical groups}
\label{sec:sc}
In this section we classify the symmetric $CR$ structures
on the complete flags of the classical groups.\par
To fix notation, in the following
we shall consider root systems
\mbox{$\mathcal{R}\subset\mathbb{R}^n$}, of the types
$\mathrm{A_{n-1},\,B_n,\,C_n,\,D_n}$
explicitly described, according to \cite{Bou68},
respectively, by:
\begin{equation*}\begin{array}{ccl}
(\mathrm{A_{n-1}})&& \mathcal{R}=\{\pm(e_i-e_j)\mid 
\begin{smallmatrix}1\leq{i}<{j}\leq{n}
\end{smallmatrix}
\}
\subset\mathbb{R}^n,\\
(\mathrm{B_n})&&\mathcal{R}=\{\pm{e}_i\mid 
\begin{smallmatrix}1\leq{i}\leq{n}
\end{smallmatrix}
\}\cup
\{\pm(e_i\pm{e}_j)\mid \begin{smallmatrix}1\leq{i}<j\leq{n}
\end{smallmatrix}
\},\\
(\mathrm{C_n})&& \mathcal{R}=\{\pm{2e}_i\mid \begin{smallmatrix}
1\leq{i}\leq{n}
\end{smallmatrix}
\}\cup
\{\pm(e_i\pm{e}_j)\mid \begin{smallmatrix}1\leq{i}<j\leq{n}
\end{smallmatrix}
\},\\
(\mathrm{D_n})&& \mathcal{R}=
\{\pm(e_i\pm{e}_j)\mid \begin{smallmatrix}1\leq{i}<j\leq{n}
\end{smallmatrix}
\}.
\end{array}
\end{equation*}
\subsubsection{Maximal $\mathcal{Q}\in\mathfrak{Q}(\mathcal{R})$
} 
We have:
\begin{prop}\label{prop:ma}
Let $\mathcal{R}$ be an irreducible root system of one of the types
$\mathrm{A_{n-1},B_n,C_n,D_n}$. Then, modulo equivalence by the Weyl group
$\mathbf{W}$ of $\mathcal{R}$, the maximal 
$\mathcal{Q}\in\mathfrak{Q}(\mathcal{R})$ are equivalent to one of
the following:
\begin{equation*}
  \begin{array}{cl}
    (\mathrm{A}_{n-1})&\mathcal{Q}_p=\{e_i-e_j\mid
    \begin{smallmatrix}
      1\leq{i}\leq{p}<j\leq{n}
    \end{smallmatrix}\},\quad p=1,\hdots,n\!-\! 1,
\\
\\
(\mathrm{B_n})&\left\{\begin{aligned}
&\mathcal{Q}_{i_0,p,q_1,\hdots,q_s}=\{e_{i_0}\}\cup\{e_i\!+\! e_j\mid
\begin{smallmatrix}
  1\leq{i}<j\leq{p}
\end{smallmatrix}\}\\
&\qquad\qquad\qquad\qquad\qquad
\cup{\bigcup}_{i=1}^s\{e_i\!\pm\! e_j\mid
\begin{smallmatrix}
  q_{i-1}<j\leq{q}_i
\end{smallmatrix}\}\\
&\begin{smallmatrix}
  1\leq{p}\leq{n},\quad
1\leq{s}\leq{p},\quad
q_0=p<q_1<\cdots<q_s=n,\quad
q_i+2{q}_{i-2}\leq{q}_{i-1},\;\text{for}\;2\leq{i}\leq{s},\\
p=2\Rightarrow s=2,\quad
q_{i_0}+q_{i_0-2}<q_{i_0-1}\;\text{if}\; i_0\geq{2}.
\end{smallmatrix}
\end{aligned}
\right.\\
\\
(\mathrm{C}_n)&\mathcal{Q}_0=\{2e_i\mid
\begin{smallmatrix}
  1\leq{i}\leq{n}
\end{smallmatrix}\}\cup\{e_i\!+\! e_j\mid
\begin{smallmatrix}
  1\leq{i}<j\leq{n}
\end{smallmatrix}\},\\
\\
(\mathrm{D}_n)&\left\{
  \begin{aligned}
 & \mathcal{Q}_{p,q_1,\hdots,q_s}=\{e_i\!+\! e_j\mid
  \begin{smallmatrix}
    1\leq{i}<j\leq{p}
  \end{smallmatrix}\}\cup{\bigcup}_{i=1}^s\{e_i\pm{e}_j\mid
  \begin{smallmatrix}
    q_{i-1}<{j}\leq{q}_{i}
  \end{smallmatrix}\},\\
&\begin{smallmatrix}
  1\leq{p}\leq{n},\;
1\leq{s}\leq{p},\;
q_0=p<q_1<\cdots<q_s=n,\;
q_i+2{q}_{i-2}\leq{q}_{i-1},\;\text{for}\;2\leq{i}\leq{s},\;
p=2\Rightarrow s=2,
\end{smallmatrix}\\[5pt]
&\mathcal{Q}_{-n}=\{e_i+e_j\mid
\begin{smallmatrix}
  1\leq{i}<j\leq{n-1}
\end{smallmatrix}\}
\cup\{e_i-e_n\mid
\begin{smallmatrix}
  1\leq{i}\leq{n-1}
\end{smallmatrix}\}.
  \end{aligned}\right.
  \end{array}
\end{equation*}
\end{prop}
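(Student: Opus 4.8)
The plan is to recast Proposition~\ref{prop:ma} as a purely combinatorial classification in the root system $\mathcal{R}$. Recall that $\mathcal{Q}\in\mathfrak{Q}(\mathcal{R})$ means exactly that $\mathcal{Q}$ is \emph{one-sided} ($\mathcal{Q}\cap(-\mathcal{Q})=\emptyset$) and \emph{abelian} (no two, possibly equal, elements sum to a root), which is \eqref{eq:lb}, together with the \emph{spanning} condition $\mathcal{R}\subset\mathbb{Z}[\mathcal{Q}]$, which is \eqref{eq:lb0}. The abelian condition is precisely the condition that $\mathfrak{n}=\sum_{\alpha\in\mathcal{Q}}\mathfrak{g}^{\alpha}$ be an abelian subalgebra, as already exploited in Lemma~\ref{lem:lc}. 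The crucial formal observation is that \eqref{eq:lb} is inherited by subsets while \eqref{eq:lb0} is inherited by supersets. Hence if $\mathcal{Q}$ is maximal in $\mathfrak{Q}(\mathcal{R})$ and $\mathcal{Q}\cup\{\gamma\}$ still satisfies \eqref{eq:lb}, then $\mathcal{Q}\cup\{\gamma\}$ automatically satisfies \eqref{eq:lb0} and so lies in $\mathfrak{Q}(\mathcal{R})$, forcing $\gamma\in\mathcal{Q}$. Thus every maximal element of $\mathfrak{Q}(\mathcal{R})$ is a maximal one-sided abelian set of roots that happens to span, and conversely. The problem therefore splits into (a) classifying, modulo $\mathbf{W}$, the maximal one-sided abelian sets of roots, and (b) verifying the spanning condition \eqref{eq:lb0} for the resulting families.

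I would then fix the standard coordinates of each classical system, as in the statement, and read off the abelian condition ``$\alpha+\beta\notin\mathcal{R}$'' as a list of forbidden configurations among the $e_i\pm e_j$ (and, in type $\mathrm{B}$/$\mathrm{C}$, the $e_i$ and $2e_i$). Types $\mathrm{A_{n-1}}$ and $\mathrm{C_n}$ serve as the model cases. In type $\mathrm{A}$, encoding $e_i-e_j$ as a directed edge $i\to j$, the one-sided and abelian conditions say exactly that there is no directed $2$-cycle and no directed path of length two; a maximal such digraph is a complete bipartite orientation $S\to T$ with $S\sqcup T=\{1,\dots,n\}$, which, after permuting indices by $\mathbf{W}=S_n$, is the rectangle $\mathcal{Q}_p$ with $p=|S|$. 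A one-line check gives maximality, spanning, and $\mathbf{W}$-inequivalence for distinct $p$. In type $\mathrm{C}$, the abelian condition applied to $2e_i$ and $e_i+e_j$, after using signed permutations in $\mathbf{W}(\mathrm{C}_n)$ to normalize signs, singles out the symmetric block $\mathcal{Q}_0$ (the Siegel-parabolic nilradical) as the unique maximal abelian set, which is visibly spanning. These two cases fix the method: forbidden ``sum-is-a-root'' patterns dictate a block/flag shape for $\mathcal{Q}$.

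The heart of the argument, and the step I expect to be the main obstacle, is types $\mathrm{B_n}$ and $\mathrm{D_n}$, where $e_i+e_j$, $e_i-e_j$ and (for $\mathrm{B}$) the short $e_i$ may coexist, so that maximal abelian sets acquire a genuinely nested structure. I would sort the indices by the way each $e_i$ occurs in $\mathcal{Q}$ and show that maximality forces an initial block $\{1,\dots,p\}$ carrying the symmetric part $\{e_i+e_j\}$, followed by ``columns'' $q_{i-1}<j\le q_i$ carrying the mixed roots $e_i\pm e_j$; the abelian constraints \emph{between} columns then translate into the numerical inequalities $q_0=p<q_1<\cdots<q_s=n$ and $q_i+2q_{i-2}\le q_{i-1}$, together with the type-$\mathrm{B}$ refinements involving $i_0$ and the degenerate case $p=2\Rightarrow s=2$. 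Here the spanning condition \eqref{eq:lb0} is not automatic: in type $\mathrm{B}$ it is exactly what forces the presence of a single short root $e_{i_0}$ (otherwise $\mathbb{Z}[\mathcal{Q}]$ stays inside the even sublattice and misses the short roots), and at most one short root is allowed since $e_i+e_j$ is a root.

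The delicate part is proving that these inequalities are \emph{equivalent} to maximal abelianness, that no maximal set escapes the proposed normal form, and that the normal form is a complete and non-redundant set of $\mathbf{W}$-representatives. I would carry this out by induction on $n$, peeling off the last column $q_{s-1}<j\le n$ and relating $\mathcal{Q}$ to a maximal set in a lower-rank $\mathrm{B}$ or $\mathrm{D}$ subsystem, then reassembling while tracking the cross-column abelian conditions that produce the bound $q_i+2q_{i-2}\le q_{i-1}$. Finally I would settle the boundary and small-rank cases by direct inspection, including the exceptional family $\mathcal{Q}_{-n}$ in type $\mathrm{D}$: since the sign change $e_n\mapsto -e_n$ lies outside $\mathbf{W}(\mathrm{D}_n)$, it yields a $\mathbf{W}$-orbit distinct from the $\mathcal{Q}_{p,q_1,\dots,q_s}$ and must be listed separately.
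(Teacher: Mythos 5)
Your framing is correct and matches the paper's: the reduction to maximal one-sided abelian subsets plus the spanning check (since \eqref{eq:lb} passes to subsets and \eqref{eq:lb0} to supersets), the digraph argument for $\mathrm{A}_{n-1}$, the sign-normalization argument for $\mathrm{C}_n$, the observation that in $\mathrm{B}_n$ spanning forces exactly one short root, and the remark that $\mathcal{Q}_{-n}$ is a separate $\mathbf{W}$-class in $\mathrm{D}_n$ because $e_n\mapsto -e_n\notin\mathbf{W}$ are all right. The genuine gap is in types $\mathrm{B}_n$ and $\mathrm{D}_n$, which you yourself call the heart of the matter: you \emph{assert} that maximality forces an initial block $\{e_i+e_j\}_{i<j\le p}$ followed by columns $\{e_i\pm e_j\}_{q_{i-1}<j\le q_i}$, and you defer the proof to an induction on $n$ that is never executed. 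This is a restatement of the normal form, not a derivation of it, and the proposed induction has concrete obstructions: to ``peel off the last column'' you must already know the column structure exists, which is what is being proved; and restricting a maximal one-sided abelian set to a lower-rank subsystem need not give a maximal set there, precisely in the degenerate situations that generate the side conditions of the list. For instance, the restriction of the maximal $\mathrm{B}_4$-set $\{e_1\}\cup\{e_1+e_2\}\cup\{e_1\pm e_3\}\cup\{e_2\pm e_4\}$ to the $\mathrm{B}_2$-subsystem on $e_1,e_2$ is $\{e_1\}\cup\{e_1+e_2\}$, which is not maximal (one may add $e_1-e_2$); this is exactly the source of the constraint $p=2\Rightarrow s=2$, so the inductive hypothesis cannot be invoked as you describe.

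What is missing is the direct argument the paper uses. First normalize by sign changes -- elements of the full isometry group $\mathbf{A}$ of $\mathcal{R}$, which for $\mathrm{D}_n$ is strictly larger than $\mathbf{W}$ -- so that \eqref{eq:lt5} holds, i.e.\ every coordinate occurs with positive sign in some element of $\mathcal{Q}$; this excludes all roots $-e_i$, $-e_i-e_j$ from $\mathcal{Q}$. Then order the indices so that \eqref{eq:lt6} holds, use maximality to fill in the block $e_i+e_j$ for $1\le i<j\le p$, and observe that the abelian identity $(e_i+e_j)+(e_h-e_j)=e_i+e_h\in\mathcal{R}$, combined with \eqref{eq:lt5}, attaches each $j>p$ to a \emph{unique} index $\lambda(j)\le p$ with $e_{\lambda(j)}\pm e_j\in\mathcal{Q}$: the fibers of $\lambda$ are the columns, obtained in one stroke with no induction. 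The sign normalization is indispensable rather than cosmetic: in $\mathrm{D}_3$ the maximal set $\{e_1+e_2,\,e_1-e_3,\,e_2-e_3\}$ (which is $\mathcal{Q}_{-3}$) has the index $3$ attached to \emph{two} different indices, and it acquires the block form only after the non-Weyl sign change $e_3\mapsto -e_3$. So your plan of merely ``sorting the indices by the way each $e_i$ occurs'' cannot by itself produce the normal form; the passage through $\mathbf{A}$, and the final bookkeeping of the coset $\mathbf{A}/\mathbf{W}$ in type $\mathrm{D}$, is where the argument actually lives.
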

\begin{proof} $(\mathrm{A_{n-1}})$.\quad
For $\mathcal{R}$ of type $\mathrm{A_{n-1}}$ and $\mathcal{Q}\in
\mathfrak{Q}(\mathcal{R})$, the sets
\begin{displaymath}
I=\{i\mid \exists{j}\;\text{s.t.}\; e_i-e_j\in\mathcal{Q}\}\quad
\text{and} \quad
I'=\{j\mid \exists{i}\;\text{s.t.}\; e_i-e_j\in\mathcal{Q}\}
\end{displaymath}
are disjoint by \eqref{eq:lb}, and by \eqref{eq:lb0} they
 form a partition of $\{1,2,\hdots,n\}$. Then $\mathcal{Q}\subset
\{e_i-e_j\mid i\in{I},\; j\in{I}'\}$, and a permutation of
$\{1,2,\hdots,n\}$, which corresponds to an element of the Weyl
group $\mathbf{W}$, transforms $\mathcal{Q}$ into 
$\mathcal{Q}_p$,
for some $p$ with $1\leq{p}\leq{n-1}$.
\par
We consider now the case where $\mathcal{R}$ is of one of the types
$\mathrm{B_n}$, $\mathrm{C_n}$, $\mathrm{D_n}$.
Each $\mathcal{Q}\in\mathfrak{Q}(\mathcal{R})$
is equivalent, modulo the group $\mathbf{A}$
of the isometries of $\mathcal{R}$,
to a new $\mathcal{Q}$ with
\begin{equation}
  \label{eq:lt5}
  {\sup}_{\beta\in\mathcal{Q}}(\beta|e_i)>{0}\qquad\text{for}\quad
i=1,\hdots,n.
\end{equation}
Indeed $\mathbf{A}$
 contains all symmetries $s_{{e}_i}$ for $i=1,\hdots,n$.
\par
Let us single out first 
the case of $\mathrm{C_n}$. Since $\mathcal{Q}$ cannot contain
both $e_i+e_j$ and $e_{h}-e_j$,
condition \eqref{eq:lt5} implies that $\mathcal{Q}$ is contained in
$\mathcal{Q}_0$. \par
Let us turn now to $\mathrm{B_n}$ and $\mathrm{D_n}$, and assume that
$\mathcal{Q}$ is maximal in $\mathfrak{Q}(\mathcal{R})$.
Using conjugation by the Weyl group $\mathbf{W}$ to reorder the
indices $1,\hdots,n$,
 we can assume that for some integer $p\geq{1}$ we have
 \begin{equation}
  \label{eq:lt6}
   \inf_{\beta\in\mathcal{Q}}(\beta|e_i)\geq{0}\;\text{if}\;
1\leq{i}\leq{p},\quad
\inf_{\beta\in\mathcal{Q}}(\beta|e_i)<{0}\;\text{if}\;
p<{i}\leq{n}.
 \end{equation}
Condition \eqref{eq:lt5} implies that $e_i+e_j\in\mathcal{Q}$
for all $1\leq{i}<j\leq{p}$.
By \eqref{eq:lb},
if $e_i+e_j,e_h-e_j\in\mathcal{Q}$, then $i=h$. Hence, for every
$j>p$, there is a unique $i=\lambda(j)\leq{p}$
such that $e_i-e_j\in\mathcal{Q}$.
By reordering, we can assume that $\lambda(\{p+1,\hdots,n\})=\{1,\hdots,s\}$,
that $\lambda$ is nondecreasing,
and that $\#\lambda^{-1}(i)\geq\#\lambda^{-1}(i+1)$ for $1\leq{i}<s\leq{p}$.
By maximality,
$s=2$ for $p=2$.
This yields the lists above for 
$(\mathrm{B_n}),(\mathrm{C_n}),(\mathrm{D_n})$,
where we needed to add 
$\mathcal{Q}_{-n}$ to the list of non equivalent maximal elements
for type $\mathrm{D_n}$, because the group
$\mathbf{A}$ equals $\mathbf{W}$ for
$\mathrm{B_n}$ and $\mathrm{C_n}$,
but contains $\mathbf{W}$ as a proper normal subgroup
for $\mathrm{D_n}$.
\end{proof}
\subsubsection{Maximal $\mathcal{Q}\in\mathfrak{Q}_s(\mathcal{R})$}
Using the results of Proposition \ref{prop:ma} we characterize,
modulo equivalence, all maximal $\mathcal{Q}\in\mathfrak{Q}_s(\mathcal{R})$,
for $\mathcal{R}$ irreducible of one of the types $\mathrm{A,\;B,\; C,\; D}$.
\begin{thm} If $\mathcal{R}$ is an irreducible root system of one
of the classical types $\mathrm{A_{n-1}}$,
$\mathrm{B_n}$, $\mathrm{C_n}$, $\mathrm{D_n}$,
then
$\mathfrak{Q}_s(\mathcal{R})=\mathfrak{Q}_{0}(\mathcal{R})$,
i.e. all $CR$-symmetric $(\mathfrak{u}_0,\mathfrak{q})$ have
the $J$-property. Modulo equivalence w.r.t.
the Weyl
group $\mathbf{W}$ of $\mathcal{R}$, the maximal $\mathcal{Q}\in
\mathfrak{Q}_s(\mathcal{R})$ are classified by:
  \begin{enumerate}
  \item[($\mathrm{A}_{n-1}$)]
$
\quad \mathfrak{Q}_s(\mathcal{R})=\mathfrak{Q}(\mathcal{R})$ and
all $\mathcal{Q}\in\mathfrak{Q}(\mathcal{R})$ are maximal.
\item[($\mathrm{B}_n$)]
Each maximal $\mathcal{Q}\in\mathfrak{Q}_s(\mathcal{R})$
is equivalent,
modulo $\mathbf{W}$,
to one of the following sets:
\begin{equation*}\begin{gathered}
\begin{aligned}
  \mathcal{Q}_{i_0,p,q_1,\hdots,q_s}'=\{e_{i_0}\} &\cup
 \{e_i+{e}_j\mid \begin{smallmatrix}
1\leq{i}\leq{s},\; s\! +\! 1\leq{j}\leq{p}
\end{smallmatrix}
\}\\&
\cup{\bigcup}_{i=1}^s
\{e_i\pm{e}_j\mid \begin{smallmatrix}
q_{i-1}<j\leq{q}_i
\end{smallmatrix}
\},\\
\end{aligned}\\\begin{smallmatrix}
  1\leq{p}\leq{n},\quad
1\leq{s}\leq{p},\quad
q_0=p<q_1<\cdots<q_s=n,\quad
q_i+2{q}_{i-2}\leq{q}_{i-1},\;\text{for}\;2\leq{i}\leq{s},\\
p=2\Rightarrow s=2,\quad
q_{i_0}+q_{i_0-2}<q_{i_0-1}\;\text{if}\; i_0\geq{2}.
\end{smallmatrix}
\end{gathered}
\end{equation*}
\item[$(\mathrm{C}_n)$] \quad
$\mathfrak{Q}_s(\mathcal{R})=\mathfrak{Q}(\mathcal{R})$.
\item[$(\mathrm{D}_n)$]
Any maximal $\mathcal{Q}\in\mathfrak{Q}_s(\mathcal{R})$
is isomorphic, modulo $\mathbf{W}$,
 to one of the following sets:
\begin{equation*}
\begin{gathered}
\begin{aligned}
\mathcal{Q}_n=&\{e_i+e_j\mid \begin{smallmatrix}1\leq{i}<j\leq{n}
\end{smallmatrix}
\},\\
\quad\mathcal{Q}_{-n}=&\{e_i+e_j\mid
\begin{smallmatrix}
  1\leq{i}<j<n
\end{smallmatrix}\}\cup
\{e_i-e_n\mid
\begin{smallmatrix}
  1\leq{i}<n
\end{smallmatrix}\},
\\
 \mathcal{Q}_{p,q_1,\hdots,q_s}'=&
\{e_i+{e}_j\mid \begin{smallmatrix}1\leq{i}\leq{s}<j\leq{p}
\end{smallmatrix}\}
\cup
{\bigcup}_{i=1}^s
\{e_i\pm{e}_j\mid \begin{smallmatrix}q_{i-1}<j\leq{q}_i
\end{smallmatrix}
\},
\end{aligned}\\
\begin{smallmatrix}
  1\leq{p}\leq{n},\quad
1\leq{s}\leq{p},\quad
q_0=p<q_1<\cdots<q_s=n,\quad
q_i+2{q}_{i-2}\leq{q}_{i-1},\;\text{for}\;2\leq{i}\leq{s},\quad
p=2\Rightarrow s=2.\end{smallmatrix}
\end{gathered}
\end{equation*}
\end{enumerate}
\end{thm}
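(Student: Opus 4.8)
The plan is to translate both properties into the existence of a grading element $E$ in the lattice $\mathcal{R}^{\star}$ and then to compare the two resulting linear systems. By \eqref{eq:lb+0} and \eqref{eq:lb+1}, together with the preceding theorem, $(\mathfrak{u}_0,\mathfrak{q})$ is $CR$-symmetric precisely when there is $E\in\mathcal{R}^{\star}$ with $\alpha(E)\equiv 1\pmod 2$ for all $\alpha\in\mathcal{Q}$, and it has the $J$-property precisely when there is $E\in\mathcal{R}^{\star}$ with $\alpha(E)=1$ for all $\alpha\in\mathcal{Q}$. The inclusion $\mathfrak{Q}_0(\mathcal{R})\subseteq\mathfrak{Q}_s(\mathcal{R})$ is immediate, so the whole content is the reverse inclusion. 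I would obtain it by exhibiting, for each maximal $CR$-symmetric $\mathcal{Q}$, an explicit $E$ realizing $\alpha(E)=1$ on $\mathcal{Q}$: since any $\mathcal{Q}\in\mathfrak{Q}_s(\mathcal{R})$ is contained in a maximal one (finiteness of $\mathcal{R}$), and the grading element of the larger set still takes the value $1$ on the roots of the smaller, this forces every symmetric $\mathcal{Q}$ into $\mathfrak{Q}_0(\mathcal{R})$.

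The starting point for the classification is Proposition~\ref{prop:ma}, which lists the maximal $\mathcal{Q}\in\mathfrak{Q}(\mathcal{R})$ up to $\mathbf{W}$. For types $\mathrm{A}_{n-1}$ and $\mathrm{C}_n$ I expect no obstruction at all: every such $\mathcal{Q}$ already admits a grading element with $\alpha(E)=1$. For $\mathrm{C}_n$ one checks $\mathcal{Q}\subseteq\mathcal{Q}_0$ and takes $E=(\tfrac12,\dots,\tfrac12)$, for which every root of $\mathcal{R}$ has value in $\{-1,0,1\}$, so $E\in\mathcal{R}^{\star}$ and $\alpha(E)=1$ on $\mathcal{Q}_0$; for $\mathrm{A}_{n-1}$ one uses the source/sink partition $I\sqcup I'$ from the proof of Proposition~\ref{prop:ma} and sets $E_i=\tfrac12$ on $I$, $E_j=-\tfrac12$ on $I'$. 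In both cases $\mathfrak{Q}(\mathcal{R})=\mathfrak{Q}_0(\mathcal{R})=\mathfrak{Q}_s(\mathcal{R})$, and the maximal symmetric sets are exactly the $\mathcal{Q}_p$ (resp. $\mathcal{Q}_0$) of Proposition~\ref{prop:ma}.

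The real work is in types $\mathrm{B}_n$ and $\mathrm{D}_n$, where $\mathfrak{Q}_s(\mathcal{R})\subsetneq\mathfrak{Q}(\mathcal{R})$. The decisive observation is that a \emph{triangle} $\{e_i+e_j,\,e_i+e_k,\,e_j+e_k\}\subseteq\mathcal{Q}$ obstructs $CR$-symmetry and the $J$-property simultaneously: since these three roots sum to $2(e_i+e_j+e_k)$, the three values $\alpha(E)$ cannot all be odd (their sum would be both odd and even), and a fortiori cannot all equal $1$. In type $\mathrm{B}_n$ the short roots $e_i$ force $\mathcal{R}^{\star}=\mathbb{Z}^n$, so the triangle obstruction is genuine; imposing symmetry on the maximal sets $\mathcal{Q}_{i_0,p,q_1,\dots,q_s}$ forces replacing the full block $\{e_i+e_j\mid i<j\le p\}$ by the triangle-free bipartite block $\{e_i+e_j\mid i\le s<j\le p\}$, which yields exactly the families $\mathcal{Q}'_{i_0,p,q_1,\dots,q_s}$; one then checks these are maximal among triangle-free symmetric sets and that $E$ defined by $E_i=1$ for $i\le s$ and $0$ otherwise (with $i_0\le s$, as the indexing guarantees) satisfies $\alpha(E)=1$ on $\mathcal{Q}'$. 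In type $\mathrm{D}_n$ the absence of short roots enlarges the lattice to $\mathcal{R}^{\star}=\mathbb{Z}^n\cup(\mathbb{Z}+\tfrac12)^n$, and the half-integer vector $(\tfrac12,\dots,\tfrac12)$ defeats the triangle obstruction: this handles $\mathcal{Q}_n$, while $\mathcal{Q}_{-n}$ and $\mathcal{Q}'_{p,q_1,\dots,q_s}$ receive the analogous explicit $E$ (for instance $(\tfrac12,\dots,\tfrac12,-\tfrac12)$ for $\mathcal{Q}_{-n}$, and an integer $E$ with $1$'s in the first $s$ slots for $\mathcal{Q}'$).

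Finally I would assemble the pieces: each listed maximal $\mathcal{Q}$ carries an explicit $E\in\mathcal{R}^{\star}$ with $\alpha(E)=1$, hence lies in $\mathfrak{Q}_0(\mathcal{R})$; the containment argument of the first paragraph then gives $\mathfrak{Q}_s(\mathcal{R})\subseteq\mathfrak{Q}_0(\mathcal{R})$, and with the trivial reverse inclusion, equality. The main obstacle I anticipate is not the construction of the grading elements but the combinatorial bookkeeping in types $\mathrm{B}_n$ and $\mathrm{D}_n$: showing that, after deleting all triangles, the surviving maximal symmetric configurations are precisely the listed families and no others. This is where the arithmetic inequalities $q_i+2q_{i-2}\le q_{i-1}$, the special clauses $p=2\Rightarrow s=2$ and $q_{i_0}+q_{i_0-2}<q_{i_0-1}$, and the distinction between $\mathbf{W}$ and the full isometry group (responsible for the separate orbit $\mathcal{Q}_{-n}$ in type $\mathrm{D}_n$) all enter, and verifying that these constraints are exactly compatible with the chosen integer/half-integer weight is the delicate step.
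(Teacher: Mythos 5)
Your proposal is correct and follows essentially the same route as the paper: both reduce to the maximal elements of $\mathfrak{Q}(\mathcal{R})$ from Proposition~\ref{prop:ma}, cut them down by the mod-$2$ parity obstruction coming from the grading element (your triangle argument is the same arithmetic the paper performs coordinatewise on the $e_i(E)$), and then exhibit the very same integer/half-integer elements $E$ with $\alpha(E)=1$ on each maximal symmetric family, concluding for an arbitrary symmetric $\mathcal{Q}$ by containment in a maximal one. The combinatorial bookkeeping you defer in types $\mathrm{B}_n$ and $\mathrm{D}_n$ is treated just as tersely in the paper's own proof, so there is no substantive difference in approach or in level of detail.
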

\begin{proof}
$(\mathrm{A})$\quad   With the $\mathcal{Q}_p$ defined above, 
define $J\in\mathfrak{t}_0$
by setting $e_i(J)=i\tfrac{n-p}{n}$ for $1\leq{i}\leq{p}$ and
$e_i(J)=-i\tfrac{p}{n}$ for $p<i\leq{n}$. \par
$(\mathrm{B})$ \quad Let $E\in\mathfrak{h}$ define a
$\mathbb{Z}_2$-gradation of $\mathfrak{g}$, yielding a
$CR$-symmetry of $(\mathfrak{u}_0,\mathfrak{q})$.
Assume that $\mathcal{Q}\subset\mathcal{Q}_{i_0,p,q_1,\hdots,q_s}$.
Since $e_{i_0}(E)\equiv{1}
\mod{2}$, then $e_j(E)\equiv{0}\mod{2}$ for all $j$ for which
either $e_{i_0}+e_j\in\mathcal{Q}$, or $e_{i_0}-e_j\in\mathcal{Q}$.
In particular, when $s=0$, $\mathcal{Q}'_{1,n}=\{e_1\}\cup\{e_1+e_i
\mid
\begin{smallmatrix}
  2\leq{i}\leq{n}
\end{smallmatrix}\}$ is contained in
$\mathcal{Q}'_{1,1,n}=\{e_1\}\cup\{e_1\pm{e}_i\mid
\begin{smallmatrix}
  2\leq{i}\leq{n}
\end{smallmatrix}\}$ and hence
is not maximal. Assume therefore that $s\geq{1}$.
If $1\leq{i}<j\leq{n}$ and $e_i\pm{e}_j\in\mathcal{Q}$, then
$e_i(E)\equiv{1},\;e_j(E)\equiv{0}\mod{2}$. Thus $e_i+e_j\notin
\mathcal{Q}$ for $1\leq{i}<j\leq{s}$. Hence we obtain
that a maximal $\mathcal{Q}\in\mathfrak{Q}_s(\mathcal{R})$
is equivalent to one of the sets listed above. We define the element
$J\in\mathfrak{t}_0$ by setting $e_h(J)=i$ for $1\leq{h}\leq{s}$,
and $e_h(J)=0$ for $s<j\leq{n}$.\par
$(\mathrm{C})$\quad
With $\mathcal{Q}_0$ defined in Proposition \ref{prop:ma}, we define
$e_h(J)=i/2$ for $1\leq{h}\leq{n}$. Then the corresponding
$(\mathfrak{u}_0,\mathfrak{q})$ has the $J$-property.
\par
$(\mathrm{D})$\quad We can repeat the argument of $(\mathrm{B})$,
to conclude that all maximal $\mathcal{Q}\in\mathfrak{Q}_s(\mathcal{R})$
are described, modulo equivalence, by the list in $(\mathrm{D_n})$
above.  For
$\mathcal{Q}=\mathcal{Q}_n$, we define $J\in\mathfrak{t}_0$ by
$e_h(J)=i/2$ for all $1\leq{h}\leq{n}$.
For $\mathcal{Q}=\mathcal{Q}_{-n}$ we set $e_i(J)=i/2$ for
$1\leq{i}<n$ and $e_n(J)=-i/2$.
For $\mathcal{Q}=
\mathcal{Q}'_{p,q_1,\hdots,q_s}$, we set
$e_h(J)=i$ for $1\leq{h}\leq{s}$, and $e_h(J)=0$ for $s<h\leq{n}$.
In this way we verify that all maximal $Q\in\mathfrak{Q}_s(\mathcal{R})$,
hence all $\mathcal{Q}$ in $\mathfrak{Q}_s(\mathcal{R})$,
have the $J$-property.
\end{proof}
\begin{cor} All $CR$ symmetric $\mathcal{Q}$ contained in
a root system of one of the types 
$\mathrm{A,\,B,\,C,\,D}$ have the $J$ property.
\qed
\end{cor}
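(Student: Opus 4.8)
The plan is to reduce the statement to the irreducible case, where it is exactly the content of the preceding Theorem, namely the equality $\mathfrak{Q}_s(\mathcal{R})=\mathfrak{Q}_0(\mathcal{R})$ for each classical type. Write $\mathcal{R}=\mathcal{R}_1\sqcup\cdots\sqcup\mathcal{R}_k$ for the decomposition of $\mathcal{R}$ into its irreducible components, which are mutually orthogonal and, by hypothesis, each of one of the classical types $\mathrm{A,B,C,D}$. Correspondingly $\mathfrak{g}=\bigoplus_{j}\mathfrak{g}_j$ is a sum of simple ideals and $\mathfrak{h}=\bigoplus_j\mathfrak{h}_j$, with $\mathfrak{h}_j=\mathfrak{h}\cap\mathfrak{g}_j$. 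Given $\mathcal{Q}\subset\mathcal{R}$, I set $\mathcal{Q}_j=\mathcal{Q}\cap\mathcal{R}_j$ and $\mathfrak{n}_j=\sum_{\alpha\in\mathcal{Q}_j}\mathfrak{g}^{\alpha}$.

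First I would record that all the relevant data split along this decomposition. Since every root is supported in a single component, and roots belonging to distinct components are orthogonal so that their sum is never a root, conditions \eqref{eq:lb} and \eqref{eq:lb0} hold for $\mathcal{Q}$ if and only if they hold for each $\mathcal{Q}_j$ inside $\mathcal{R}_j$; hence $\mathcal{Q}\in\mathfrak{Q}(\mathcal{R})$ if and only if $\mathcal{Q}_j\in\mathfrak{Q}(\mathcal{R}_j)$ for every $j$. Likewise, evaluating a coweight on a root $\alpha\in\mathcal{R}_j$ only sees its $\mathfrak{h}_j$-component, so that $\mathcal{R}^{\star}=\bigoplus_j\mathcal{R}_j^{\star}$.

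With these splittings in hand, I would transport the $CR$-symmetry to each component. A $CR$-symmetry of $(\mathfrak{u}_0,\mathfrak{q})$ is, by Lemma~\ref{lem:lc}, an involution $\lambda$ of $\mathfrak{g}$ with $\lambda|_{\mathfrak{h}}=\mathrm{Id}$ and $\lambda|_{\mathfrak{n}}=-\mathrm{Id}$; being inner of the form $\mathrm{Ad}(\exp(i\pi E))$ with $E=\sum_j E_j\in\mathcal{R}^{\star}$, it preserves each simple ideal $\mathfrak{g}_j$ and restricts there to an involution $\lambda_j$ with $\lambda_j|_{\mathfrak{h}_j}=\mathrm{Id}$ and $\lambda_j|_{\mathfrak{n}_j}=-\mathrm{Id}$. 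Hence each $\mathcal{Q}_j$ is $CR$-symmetric in $\mathcal{R}_j$, and the preceding Theorem, applied to the irreducible classical $\mathcal{R}_j$, yields $\mathcal{Q}_j\in\mathfrak{Q}_0(\mathcal{R}_j)$: there is $E_j'\in\mathcal{R}_j^{\star}$ with $\alpha(E_j')=1$ for all $\alpha\in\mathcal{Q}_j$. Setting $E'=\sum_j E_j'\in\mathcal{R}^{\star}$, I obtain $\alpha(E')=1$ for all $\alpha\in\mathcal{Q}$; since $\mathcal{R}\subset\mathbb{Z}[\mathcal{Q}]$, the decomposition \eqref{eq:lgn5} is then a genuine $\mathbb{Z}$-gradation of $\mathfrak{g}$, and $J=-iE'$ furnishes the $J$-property of $(\mathfrak{u}_0,\mathfrak{q})$.

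The only point requiring care — the main obstacle — is the decoupling in the second and third paragraphs: one must check that the defining lattice conditions and the coweight pairings genuinely respect the orthogonal decomposition. This rests entirely on the fact that each element of $\mathcal{R}$ is supported in a single $\mathcal{R}_j$, so that no integral combination of roots drawn from several components can itself be a root unless all the contributing roots already lie in one component. Working with the involution $\lambda$ and its coweight $E$, rather than directly with the combinatorial criteria \eqref{eq:lgn10a} in terms of the sets $\mathcal{Q}^{*}_{1,1}$ and $\mathcal{Q}^{*}_{h}$, is what circumvents the delicate cross-component cancellations and makes the reassembly of $E'$ immediate. Should the Corollary instead be read with $\mathcal{R}$ already irreducible, the reduction is vacuous and the statement is precisely the inclusion $\mathfrak{Q}_s(\mathcal{R})\subset\mathfrak{Q}_0(\mathcal{R})$ established above.
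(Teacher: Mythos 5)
Your proof is correct and in substance identical to the paper's: the Corollary is an immediate restatement of the preceding Theorem, whose first assertion is exactly the equality $\mathfrak{Q}_s(\mathcal{R})=\mathfrak{Q}_0(\mathcal{R})$ for each irreducible classical type (established there by classifying the maximal $\mathcal{Q}\in\mathfrak{Q}_s(\mathcal{R})$, exhibiting an explicit grading element $J$ for each, and letting the same element serve for any smaller $\mathcal{Q}$), so the paper gives no further argument. Your additional reduction along the orthogonal decomposition into irreducible components is sound --- the conditions \eqref{eq:lb}, \eqref{eq:lb0}, the inner involution, and the grading element all split componentwise as you verify --- but it is vacuous under the paper's reading, since a root system ``of one of the types $\mathrm{A,\,B,\,C,\,D}$'' is already irreducible, as you acknowledge in your closing sentence.
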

\subsection{Complete flags of the exceptional groups}
\label{sec:se}
We turn finally to the complete flags of the exceptional groups.
\subsubsection{Type $\mathrm{G}_2$}
The root system is
\begin{displaymath}\mathcal{R}=\{\pm(e_i-e_j)\mid 1\leq{i}<j\leq{3}\}
\cup\{\pm(2e_i-e_j-e_k)\mid (i,j,k)\in\mathbf{S}_3\}.\end{displaymath}
According to \cite[Theorem{3.11}]{OV93} there is, modulo automorphisms,
a unique $\mathbb{Z}_2$-grading of $\mathfrak{g}$, with
\begin{align*}
  \mathcal{S}^4_1=&\{\pm(e_1-e_3),\pm(e_2-e_3)\}\cup\{\pm(2e_1-e_2-e_3),
\pm(2e_2-e_1-e_3)\},\\
\mathcal{R}^4_1=&\{\pm(e_1-e_2)\}\cup\{\pm(2e_3-e_1-e_2)\}.
\end{align*}
The sum of two short roots is always a root,
while the sum of two long roots, if it is a root, is long.
Hence
a $\mathcal{Q}\in\mathfrak{Q}(\mathcal{R})$ contains exactly one
short root. Modulo isomorphisms, we can assume that $(e_1\!-\! e_3)\in
\mathcal{Q}$. Then $\mathcal{Q}\subset\{e_1\!-\! e_3,2e_1\!-\!
e_2\!-\!e_3,\pm(
2e_2\!-\! e_1\!-\! e_3),e_1\!+\! e_2\!-\! 2e_3\}$.
The symmetry with respect to $2e_2\!-\! e_1\!-\! e_3$ leaves $e_1\!-\! e_3$
invariant and interchanges
$2e_1\!-\! e_2\!-\! e_3$ and $e_1\!+\! e_2\! -\! 2e_3$. Moreover,
$(2e_1\! - \! e_2\! - \! e_3)+(2e_2\! - \! e_1\! - \! e_3)\in\mathcal{R}$ and
$(e_1+e_2\! - \! 2e_3)\! - \! (2e_2\! - \! e_1\! - \! e_3)\in\mathcal{R}$.
Hence, modulo isomorphisms, there are two
non equivalent maximal $\mathcal{Q}\in\mathfrak{Q}(\mathcal{R})$:
\begin{align*}
  \mathcal{Q}_1^4=&\{e_1\! - \! e_3,2e_1\! - \! e_2\!
- \! e_3,e_1+e_3\! - \! 2e_2\},\\
\mathcal{Q}_2^4=&\{e_1\! - \! e_3,2e_1\! - \! e_2\!
- \! e_3,e_1+e_2\! - \! 2e_3\}.
\end{align*}
Thus we obtain
\begin{prop} Let $\mathcal{R}$ be simple of type $\mathrm{G}_2$. Then:
\begin{enumerate}
\item
Any maximal $\mathcal{Q}\in\mathfrak{Q}(\mathcal{R})$ is isomorphic
either to $\mathcal{Q}_1^4$ or to $\mathcal{Q}_2^4$.
\item
$\mathcal{Q}^4_1\in\mathfrak{Q}_s(\mathcal{R})$, and
$\mathcal{Q}^4_2\notin\mathfrak{Q}_s(\mathcal{R})$.
\item $\mathfrak{Q}_{\Upsilon}(\mathcal{R})=\mathfrak{Q}_{0}(\mathcal{R})$ 
and
all $\mathcal{Q}\in\mathfrak{Q}_{\Upsilon}(\mathcal{R})$ are
isomorphic to \begin{equation*}
\mathcal{Q}^4_0=\{e_1\! - \! e_3,2e_1\! - \! e_2\! - \! e_3\}.
\end{equation*}
\end{enumerate}
\end{prop}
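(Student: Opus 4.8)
The plan is to reduce all three assertions to the combinatorial criteria already established: membership in $\mathfrak{Q}(\mathcal{R})$ via \eqref{eq:lb} and \eqref{eq:lb0}, and the congruence reformulations of $CR$-symmetry and of the (weak-)$J$-property in terms of a grading element $E\in\mathcal{R}^{\star}$, i.e. \eqref{eq:lb+0}, \eqref{eq:lb+1} together with Remark \ref{rk:h7}. The unifying device is that an integer relation $\sum_\beta c_\beta\,\beta=0$ among the roots $\beta\in\mathcal{Q}$ forces $\sum_\beta c_\beta\,\beta(E)=0$ for every $E\in\mathcal{R}^{\star}$, so that the required congruences $\beta(E)\equiv1\pmod2$ (resp.\ $\pmod4$, resp.\ $=1$) on $\mathcal{Q}$ become a single numerical constraint that is either satisfiable or visibly contradictory. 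Assertion $(1)$ is exactly the content of the discussion preceding the statement, so I would merely record that $\mathcal{Q}_1^4$ and $\mathcal{Q}_2^4$ exhaust the maximal elements of $\mathfrak{Q}(\mathcal{R})$ modulo the isometry group of $\mathcal{R}$.

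For $(2)$ I would first exhibit, for $\mathcal{Q}_1^4$, the element $E\in\mathcal{R}^{\star}$ with $e_1(E)=e_2(E)=1$ and $e_3(E)=0$: one checks $(e_1-e_3)(E)=1$, $(2e_1-e_2-e_3)(E)=1$, $(e_1+e_3-2e_2)(E)=-1$, all odd, whence $\mathcal{Q}_1^4\subset\{\alpha\mid\alpha(E)\equiv1\bmod2\}$ and $\mathcal{Q}_1^4\in\mathfrak{Q}_s(\mathcal{R})$ by Remark \ref{rk:h7}. For $\mathcal{Q}_2^4$ I would use the relation $(2e_1-e_2-e_3)+(e_1+e_2-2e_3)=3(e_1-e_3)$: evaluating at a hypothetical $E$ with the three values all odd makes the left side even and the right side odd, a contradiction, so no admissible $E$ exists and $\mathcal{Q}_2^4\notin\mathfrak{Q}_s(\mathcal{R})$ by \eqref{eq:lb+0}. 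Equivalently, $e_1-e_2=(2e_1-e_2-e_3)-(e_1-e_3)\in\mathcal{Q}^*_{1,1}$ also lies in $\mathcal{Q}^*_1$, violating \eqref{eq:lgn10a}.

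For $(3)$ I would first classify $\mathfrak{Q}(\mathcal{R})$ up to equivalence. Each $\mathcal{Q}$ has exactly one short root, normalized to $e_1-e_3$, and its compatible long roots form the four-element set whose incompatibility graph is the path described above, so the admissible subfamilies are the singletons and the maximal independent pairs. The generation condition \eqref{eq:lb0} then discards every one-long-root family except those equivalent to $\mathcal{Q}^4_0=\{e_1-e_3,\,2e_1-e_2-e_3\}$, while the reflection $s_{2e_2-e_1-e_3}$ fixes $e_1-e_3$, swaps $2e_1-e_2-e_3$ with $e_1+e_2-2e_3$, and thereby identifies the two surviving one-long-root families and identifies the third two-long-root family with $\mathcal{Q}_1^4$; this leaves precisely the classes $\mathcal{Q}^4_0,\mathcal{Q}_1^4,\mathcal{Q}_2^4$. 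Next, the same $E$ as above takes the exact value $1$ on both roots of $\mathcal{Q}^4_0$, which are linearly independent, so it defines a $\mathbb{Z}$-gradation with $\mathcal{Q}^4_0$ in degree one; by the criterion that \eqref{eq:lgn5} be a $\mathbb{Z}$-gradation, $\mathcal{Q}^4_0\in\mathfrak{Q}_0(\mathcal{R})$. Since $\mathcal{Q}_2^4\notin\mathfrak{Q}_s\supset\mathfrak{Q}_{\Upsilon}$, it only remains to rule out $\mathcal{Q}_1^4$: the relation $3(e_1-e_3)-2(2e_1-e_2-e_3)+(e_1+e_3-2e_2)=0$ evaluated at an $E$ satisfying \eqref{eq:lb+1} gives $3-2+1\equiv2\not\equiv0\pmod4$, so $\mathcal{Q}_1^4\notin\mathfrak{Q}_{\Upsilon}(\mathcal{R})$. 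Hence $\mathfrak{Q}_{\Upsilon}(\mathcal{R})=\mathfrak{Q}_0(\mathcal{R})$ consists exactly of the translates of $\mathcal{Q}^4_0$.

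The main obstacle is not any single evaluation but the bookkeeping of the generation condition \eqref{eq:lb0} in the classification step: several compatible pairs of roots fail to span the root lattice and must be excluded from $\mathfrak{Q}(\mathcal{R})$, and one must confirm that exactly the families equivalent to $\mathcal{Q}^4_0$ survive among the one-long-root cases. Once this finite list of classes is pinned down, the mod $2$ and mod $4$ obstructions are routine; the only care needed is to place each witnessing $E$ in $\mathcal{R}^{\star}$ and to record the one integer relation used in each case.
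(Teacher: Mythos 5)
Your proof is correct --- I checked the two integer relations, the witness elements $E$, and the enumeration of equivalence classes, and they all hold --- but it handles the negative assertions by a different mechanism than the paper. For $(1)$ and the positive half of $(2)$ you and the paper coincide: $(1)$ is exactly the classification discussion preceding the statement, and your $E$ with $e_1(E)=e_2(E)=1$, $e_3(E)=0$ is precisely the grading element of the $\mathbb{Z}_2$-grading with odd roots $\mathcal{S}^4_1$ and even roots $\mathcal{R}^4_1$ that the paper quotes from \cite[Theorem 3.11]{OV93}, so $\mathcal{Q}^4_1\in\mathfrak{Q}_s(\mathcal{R})$ follows from Remark \ref{rk:h7} either way. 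The paper, however, invokes that theorem together with its uniqueness statement precisely so that the negative half of $(2)$ comes for free: any $CR$-symmetric $\mathcal{Q}$ is conjugate into the odd part of the unique $\mathbb{Z}_2$-grading, and $\mathcal{Q}^4_2$ --- maximal, inequivalent to $\mathcal{Q}^4_1$, and containing $e_1+e_2-2e_3\in\mathcal{R}^4_1$ --- is not. You instead get a contradiction from the relation $(2e_1-e_2-e_3)+(e_1+e_2-2e_3)=3(e_1-e_3)$ evaluated modulo $2$ against \eqref{eq:lb+0}, and you exclude $\mathcal{Q}^4_1$ from $\mathfrak{Q}_{\Upsilon}(\mathcal{R})$ by the relation $3(e_1-e_3)-2(2e_1-e_2-e_3)+(e_1+e_3-2e_2)=0$ evaluated modulo $4$ against \eqref{eq:lb+1}. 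Your route is more elementary and self-contained, requiring no appeal to the classification of $\mathbb{Z}_2$-gradings; the paper's buys brevity and explains structurally why $\mathcal{S}^4_1$ governs the answer. Finally, for $(3)$ the paper records no argument at all, so your enumeration of the whole of $\mathfrak{Q}(\mathcal{R})$ up to equivalence --- the incompatibility path on the four admissible long roots, the rejection via \eqref{eq:lb0} of the two-element sets containing $\pm(2e_2-e_1-e_3)$, and the identifications under the reflection $s_{2e_2-e_1-e_3}$ --- is a necessary supplement to the paper's text, and you carry it out correctly: the classes are exactly $\mathcal{Q}^4_0$, $\mathcal{Q}^4_1$, $\mathcal{Q}^4_2$, the last two of which are excluded from $\mathfrak{Q}_{\Upsilon}(\mathcal{R})$, while your $E$ takes the value $1$ on $\mathcal{Q}^4_0$ and hence gives it the $J$-property, so $\mathfrak{Q}_{\Upsilon}(\mathcal{R})=\mathfrak{Q}_{0}(\mathcal{R})$ as claimed.
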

\subsubsection{Type $\mathrm{F}_4$}
We split the root system of type $\mathrm{F}_4$ into two parts,
by setting
\begin{align*}
  \mathcal{R}^{4}_1&=\{\pm{e}_i\mid \begin{smallmatrix}1\leq{i}\leq{4}
  \end{smallmatrix}
  \}\cup
\{\pm{e}_i\pm{e}_j\mid
\begin{smallmatrix}
  1\leq{i}<j\leq{4}
\end{smallmatrix}\},\\
\mathcal{S}^{4}_1&=\{\pm\tfrac{1}{2}(e_1\pm{e}_2\pm{e}_3\pm{e}_4)\},\\
\mathcal{R}&=\mathcal{R}^4_1\cup\mathcal{S}^4_1.
\end{align*}
For the roots of $\mathcal{S}^4_1$
we introduce the notation
\begin{displaymath}\beta_0=\tfrac{1}{2}(e_1+e_2+e_3+e_4),\;
\beta_i=\beta_0-e_i,\; \beta_{i,j}=\beta_0-e_i-e_j,\;
\text{for}\; i,j=1,2,3,4.
\end{displaymath}
The set $\mathcal{R}^4_1$ is a root system of type $\mathrm{B}_4$.
By Proposition \ref{prop:ma}, modulo equivalence, there are five
maximal sets in $\mathfrak{Q}(\mathcal{R}^4_1)$, namely:
\begin{align*}
\mathcal{Q}^{(4)}_{1,4}=&\{e_1\}\cup\{e_i+e_j\mid \begin{smallmatrix}
1\leq{i}<j\leq{4}
\end{smallmatrix}
\},
\\
\mathcal{Q}^{(4)}_{1,3,4}=&\{e_1\}\cup\{e_i+e_j\mid \begin{smallmatrix}
1\leq{i}<j\leq{3}
\end{smallmatrix}
\}\cup
\{e_1\pm{e}_4\},\\
\mathcal{Q}^{(4)}_{2,3,4}=&\{e_2\}\cup\{e_i+e_j\mid \begin{smallmatrix}
1\leq{i}<j\leq{3}
\end{smallmatrix}
\}\cup
\{e_1\pm{e}_4\},\\
\mathcal{Q}^{(4)}_{1,2,3,4}=&\{e_1\}\cup\{e_1+e_2\}\cup\{e_1\pm{e}_3\}
\cup\{e_2\pm{e}_4\},\\
\mathcal{Q}_{1,1,4}^{(4)}=&\{e_1\}\cup\{e_1\pm{e_j}\mid \begin{smallmatrix}
2\leq{j}\leq{4}
\end{smallmatrix}
\}.
\end{align*}
Thus we obtain
\begin{prop} Modulo equivalence, there are
five classes of non equivalent maximal elements of
$\mathfrak{Q}(\mathcal{R})$, corresponding to the terms
of the following list:
\begin{align*}
  \mathcal{Q}^4_{1,4}&=\mathcal{Q}^{(4)}_{1,4}\cup\{\beta_0,\beta_4\},\\
\mathcal{Q}^4_{1,3,4}&=\mathcal{Q}^{(4)}_{1,3,4}\cup\{\beta_0,\beta_4\},\\
\mathcal{Q}^4_{2,3,4}&=\mathcal{Q}^{(4)}_{2,3,4}\cup\{\beta_0,\beta_4\},\\
\mathcal{Q}^4_{1,2,3,4}&=\mathcal{Q}^{(4)}_{1,2,3,4}\cup\{\beta_0,\beta_4\},\\
\mathcal{Q}^4_{1,1,4}&=\mathcal{Q}^{(4)}_{1,1,4}\cup\{\beta_0,\beta_4\},
\end{align*}
\end{prop}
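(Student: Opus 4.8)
The plan is to use the splitting $\mathcal{R}=\mathcal{R}^4_1\cup\mathcal{S}^4_1$, with $\mathcal{R}^4_1\cong\mathrm{B}_4$ and $\mathcal{S}^4_1$ the sixteen short roots $\tfrac12(\pm e_1\pm e_2\pm e_3\pm e_4)$, and to analyse any $\mathcal{Q}\in\mathfrak{Q}(\mathcal{R})$ through the pieces $\mathcal{Q}\cap\mathcal{R}^4_1$ and $\mathcal{Q}\cap\mathcal{S}^4_1$. First I would settle the short part. Writing two short roots as $\tfrac12 s,\tfrac12 s'$ with $s,s'\in\{\pm 1\}^4$, one checks that $\tfrac12(s+s')$ lies in $\mathcal{R}$ precisely when $s$ and $s'$ differ in two coordinates (yielding a long root $\pm e_i\pm e_j$) or in three coordinates (yielding a short root $\pm e_i$); hence by \eqref{eq:lb} two short roots of $\mathcal{Q}$ must differ in exactly one coordinate, the case of four differing coordinates being $s'=-s$, excluded by the first part of \eqref{eq:lb}. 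Since no three elements of $\{\pm 1\}^4$ are pairwise at Hamming distance one, $\mathcal{Q}\cap\mathcal{S}^4_1$ has at most two elements; and since $\mathcal{R}^4_1\subset\mathbb{Z}^4$ while $\mathcal{S}^4_1\cap\mathbb{Z}^4=\emptyset$, the spanning condition \eqref{eq:lb0} forces $\mathcal{Q}\cap\mathcal{S}^4_1\neq\emptyset$. Using the signed permutations of $e_1,\dots,e_4$, which form the subgroup of $\mathbf{W}$ equal to the Weyl group of $\mathcal{R}^4_1$ and which preserve the splitting, I would normalise the short part of a maximal $\mathcal{Q}$ to the pair $\{\beta_0,\beta_4\}$.

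With $\{\beta_0,\beta_4\}\subset\mathcal{Q}$ fixed, the next step is to compute which roots of $\mathcal{R}^4_1$ can still belong to $\mathcal{Q}$. A direct evaluation of $\beta_0+\gamma$ and $\beta_4+\gamma$ for $\gamma\in\mathcal{R}^4_1$ shows that the roots forbidden by \eqref{eq:lb} are exactly the $-e_i-e_j$ with $i<j$, the $e_4-e_i$ with $i\le 3$, the root $e_4$, and the roots $-e_i$ for $1\le i\le 4$. Because a sum of two roots of $\mathcal{R}^4_1$ is again integral, it never lands in $\mathcal{S}^4_1$, so the condition \eqref{eq:lb} for $\mathcal{Q}\cap\mathcal{R}^4_1$ inside $\mathcal{R}$ coincides with the same condition inside $\mathrm{B}_4$. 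Maximality of $\mathcal{Q}$ then says that $\mathcal{Q}\cap\mathcal{R}^4_1$ is a maximal subset of the admissible (non-forbidden) long roots satisfying \eqref{eq:lb}, while $\mathcal{Q}\cap\mathcal{R}^4_1$ together with $\{\beta_0,\beta_4\}$ must satisfy \eqref{eq:lb0}. I would show that this forces $\mathcal{Q}\cap\mathcal{R}^4_1$ to span $\mathbb{Z}^4$ and hence to be a maximal element of $\mathfrak{Q}(\mathcal{R}^4_1)$, so that Proposition~\ref{prop:ma} for $\mathrm{B}_4$, together with the five sets $\mathcal{Q}^{(4)}_{1,4},\dots,\mathcal{Q}^{(4)}_{1,1,4}$ recalled above, applies.

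Finally I would run the construction in reverse. For each of the five $\mathcal{Q}^{(4)}$, after aligning its distinguished index with the coordinate $4$ singled out by $\beta_4$ (via the stabiliser of $\{\beta_0,\beta_4\}$, which permutes $e_1,e_2,e_3$ and flips the sign of $e_4$), I would verify that $\mathcal{Q}^{(4)}\cup\{\beta_0,\beta_4\}$ meets no forbidden root, is abelian, and spans $\mathcal{R}$ over $\mathbb{Z}$: indeed the $\mathbb{Z}$-span of each $\mathcal{Q}^{(4)}$ is already $\mathbb{Z}^4$, to which $\beta_0=\tfrac12(e_1+e_2+e_3+e_4)$ adjoins the full weight lattice, which contains $\mathcal{R}$. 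Thus each such set lies in $\mathfrak{Q}(\mathcal{R})$ and is maximal, and pairwise inequivalence under $\mathbf{W}$ would follow from invariants of $\mathcal{Q}\cap\mathcal{R}^4_1$ that already separate the five $\mathrm{B}_4$ classes.

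The delicate point, and the step I expect to be the main obstacle, is the reconciliation of the two equivalence relations in the second and third paragraphs. Proposition~\ref{prop:ma} classifies the maximal $\mathrm{B}_4$ sets modulo the full signed-permutation group, whereas after fixing the short pair I retain only its stabiliser in $\mathbf{W}$, of order $12$ and isomorphic to $\mathfrak{S}_3\times\mathbb{Z}_2$. I must therefore check that the roots forbidden by $\beta_0,\beta_4$ neither merge two of the five $\mathrm{B}_4$ classes nor split any one of them, and that the distinguished roles of the coordinate $4$ and of the ``all positive'' direction are exactly the ones already encoded in the index notation of the $\mathcal{Q}^{(4)}$. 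This bookkeeping is what guarantees that the count remains five even modulo the larger group $\mathbf{W}$, and it is the genuinely non-routine part of the argument.
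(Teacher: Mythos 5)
Your plan has the same skeleton as the paper's proof---split $\mathcal{R}$ into $\mathcal{R}^4_1\cup\mathcal{S}^4_1$, bound $|\mathcal{Q}\cap\mathcal{S}^4_1|$ by two, and reduce to Proposition~\ref{prop:ma}---but runs it in the opposite direction: the paper adjoins compatible pairs of short roots to each $\mathrm{B}_4$-maximal set and tests which roots of $\mathcal{S}^4_1$ can be added to each $\mathcal{Q}^{(4)}_*$, whereas you pin the short pair $\{\beta_0,\beta_4\}$ and try to classify the possible $\mathrm{B}_4$-parts. Your first paragraph is essentially right, except that you normalize the short part to a \emph{pair} having only proved $1\leq|\mathcal{Q}\cap\mathcal{S}^4_1|\leq 2$: you still owe an argument that a maximal $\mathcal{Q}$ cannot meet $\mathcal{S}^4_1$ in a single root. (This is true: if $\mathcal{Q}\cap\mathcal{S}^4_1=\{\beta_0\}$, the only roots compatible with $\beta_0$ that clash with $\beta_i=\beta_0-e_i$ are $e_i$ and the $e_i-e_h$, and a short head/tail argument shows these cannot block all four $\beta_i$ at once, contradicting maximality.)

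The genuine gap is the reduction in your second paragraph: from ``$\mathcal{Q}\cap\mathcal{R}^4_1$ is maximal among \emph{admissible} abelian sets and the whole set spans'' you conclude that it ``is a maximal element of $\mathfrak{Q}(\mathcal{R}^4_1)$,'' so that Proposition~\ref{prop:ma} applies. That implication is false: maximality relative to the admissible roots does not give maximality in $\mathrm{B}_4$, because a root forbidden by $\beta_0$ or $\beta_4$ may still be addable in the pure $\mathrm{B}_4$ sense. Concretely, take
\begin{equation*}
P=\{e_1,\; e_1+e_3,\; e_1+e_4,\; e_1-e_2,\; e_3+e_4,\; e_3-e_2\}.
\end{equation*}
One checks that $P$ is abelian, that every element of $P$ is compatible with $\beta_0$ and $\beta_4$, that $P$ spans $\mathbb{Z}^4$, and that no admissible root can be adjoined to $P$; nevertheless $P$ is \emph{not} maximal in $\mathfrak{Q}(\mathcal{R}^4_1)$, since the forbidden root $e_4-e_2$ can be added (indeed the sign change $e_2\mapsto -e_2$ carries $P\cup\{e_4-e_2\}$ onto $\mathcal{Q}^{(4)}_{1,4}$). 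Worse, $\mathcal{Q}_B=P\cup\{\beta_0,\beta_4\}$ is itself a maximal element of $\mathfrak{Q}(\mathcal{R})$: each of the remaining $40$ roots of $\mathrm{F}_4$ is either the opposite of an element of $\mathcal{Q}_B$ or sums with one to a root. So your scheme, which only reaches maximal sets whose $\mathrm{B}_4$-part is $\mathrm{B}_4$-maximal, never produces $\mathcal{Q}_B$; and the stabilizer bookkeeping you flag at the end cannot repair this, because $\mathcal{Q}_B$ has $8$ elements with $\bigl|\sum_{\alpha\in\mathcal{Q}_B}\alpha\bigr|^2=46$, while the three $8$-element sets of the list give $54$, $50$ and $42$; these are $\mathbf{W}$-invariants, so $\mathcal{Q}_B$ is equivalent to none of them. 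Note that this difficulty is not an artifact of your direction of travel: the paper's own exhaustion step, which only examines short roots addable to the $\mathcal{Q}^{(4)}_*$, is silent on exactly this kind of configuration. A complete argument must classify the maximal \emph{admissible} abelian sets directly (or otherwise account for sets such as $\mathcal{Q}_B$), rather than quote Proposition~\ref{prop:ma} off the shelf.
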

\begin{proof}
For any choice of three distinct roots
in $\mathcal{S}^4_1$, two of them sum to a root. Then
a $\mathcal{Q}\in\mathfrak{Q}(\mathcal{R})$ contains at most
two roots of $\mathcal{S}^4_1$.
The sets in the list are obtained by adding 
a couple of roots of $\mathcal{S}^4_1$
to each maximal set in $\mathfrak{Q}(\mathcal{R}^4_1)$. 
Thus they are maximal. The fact that they exhaust
the list of maximal elements of $\mathfrak{Q}(\mathcal{R})$ modulo
equivalence is proved by considering the set of all roots
in $\mathcal{S}^4_1$ that may be added to a $\mathcal{Q}^{(4)}_*$
without contradicting~\eqref{eq:lb}.
\end{proof}
Modulo equivalence, the
maximal elements of $\mathfrak{Q}_s(\mathcal{R}^4_1)$ are
\begin{align*}
  \mathcal{Q}'_{1,1,4}&=\{e_1\}\cup\{e_1\pm{e}_i\mid
  \begin{smallmatrix}
    2\leq{i}\leq{4}
  \end{smallmatrix}
  \},\\
\mathcal{Q}'_{1,2,3,4}&=\{e_1\}\cup\{e_1\pm{e}_3\}\cup\{e_2\pm{e}_4\}.
\end{align*}
Thus we obtain
\begin{prop} We have $\mathfrak{Q}_s(\mathcal{R})=
\mathfrak{Q}_{\Upsilon}(\mathcal{R})=\mathfrak{Q}_0(\mathcal{R})$,
and
the maximal elements of $\mathfrak{Q}_s(\mathcal{R})$
are all equivalent to
\begin{equation*}
  \mathcal{Q}^{4'}_{1,2,3,4}=\{e_1,\beta_0,\beta_4,e_1\pm{e}_3,e_2\pm{e}_4\}.
\end{equation*}
\end{prop}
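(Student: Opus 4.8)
The plan is to deduce both assertions at once from a classification of the maximal elements of $\mathfrak{Q}_s(\mathcal{R})$. First I record that, by the inclusions $\mathfrak{Q}_0(\mathcal{R})\subset\mathfrak{Q}_{\Upsilon}(\mathcal{R})\subset\mathfrak{Q}_s(\mathcal{R})$ of the definition, the chain of equalities follows once the reverse inclusion $\mathfrak{Q}_s(\mathcal{R})\subset\mathfrak{Q}_0(\mathcal{R})$ is established. The $J$-property is \emph{hereditary}: possessing it means there is $E\in\mathcal{R}^{\star}$ with $\alpha(E)=1$ for all $\alpha\in\mathcal{Q}$, and if $\mathcal{Q}\subset\mathcal{Q}'$ with $\mathcal{Q}'\in\mathfrak{Q}_0(\mathcal{R})$, the same $E$ works for $\mathcal{Q}$. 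Since $\mathcal{R}$ is finite, every $\mathcal{Q}\in\mathfrak{Q}_s(\mathcal{R})$ lies inside a maximal element of $\mathfrak{Q}_s(\mathcal{R})$, so it suffices to prove that every maximal $CR$-symmetric $\mathcal{Q}$ is $\mathbf{W}$-equivalent to $\mathcal{Q}^{4'}_{1,2,3,4}$ and that this set has the $J$-property.

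The arithmetic heart is the description of the coweight lattice. Writing $e_i(E)=x_i$, the presence of the short roots $\mathcal{S}^4_1$ forces $\mathcal{R}^{\star}=\{x\in\mathbb{Z}^4\mid x_1+x_2+x_3+x_4\equiv 0\pmod 2\}$, a proper sublattice of the $B_4$-coweight lattice $\mathbb{Z}^4$. A $CR$-symmetric $\mathcal{Q}$ carries an $E\in\mathcal{R}^{\star}$ with $\alpha(E)$ odd on $\mathcal{Q}$; restricting to the subsystem $\mathcal{R}^4_1$ of type $B_4$, the subset $\mathcal{Q}\cap\mathcal{R}^4_1$ satisfies \eqref{eq:lb} and is made odd by $E$, hence by the maximal $CR$-symmetric configurations in $\mathcal{R}^4_1$ found above it is contained, up to $\mathbf{W}$, in $\mathcal{Q}'_{1,1,4}$ or in $\mathcal{Q}'_{1,2,3,4}$.

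Here the parity of $\mathcal{R}^{\star}$ is decisive. For $\mathcal{Q}'_{1,1,4}=\{e_1\}\cup\{e_1\pm e_i\mid 2\le i\le 4\}$, oddness of all its elements forces $x_1$ odd and $x_2,x_3,x_4$ even, whence $x_1+x_2+x_3+x_4$ is odd and $E\notin\mathcal{R}^{\star}$; so this branch cannot occur over $F_4$ for \emph{any} admissible $E$. Only $\mathcal{Q}'_{1,2,3,4}=\{e_1,e_1\pm e_3,e_2\pm e_4\}$ survives, and $E=(1,1,0,0)\in\mathcal{R}^{\star}$ makes each of its elements equal to $1$. For $\beta=\tfrac12(\epsilon_1e_1+\epsilon_2e_2+\epsilon_3e_3+\epsilon_4e_4)\in\mathcal{S}^4_1$ one computes $\beta(E)=\tfrac12(\epsilon_1+\epsilon_2)$, which equals $\pm1$ exactly when $\epsilon_1=\epsilon_2$; imposing \eqref{eq:lb} on these candidates (no opposite pair, no two summing to a root, and none summing with an element of $\mathcal{Q}'_{1,2,3,4}$ to a root) leaves, up to $\mathbf{W}$, exactly the couple $\{\beta_0,\beta_4\}$. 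This produces $\mathcal{Q}^{4'}_{1,2,3,4}=\{e_1,\beta_0,\beta_4,e_1\pm e_3,e_2\pm e_4\}$; adjoining $\beta_0,\beta_4$ restores $e_2$, and hence all of $\mathcal{R}$, into $\mathbb{Z}[\mathcal{Q}]$, so \eqref{eq:lb0} holds, and $E=(1,1,0,0)$ gives $\alpha(E)=1$ throughout, i.e.\ the $J$-property.

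It then remains to verify maximality: with $E=(1,1,0,0)$ the roots of $\mathcal{R}^4_1$ taking the value $1$ under $E$ are $e_1,e_2,e_1\pm e_3,e_1\pm e_4,e_2\pm e_3,e_2\pm e_4$, and for each one not already present its sum with a suitable element of $\mathcal{Q}^{4'}_{1,2,3,4}$ equals $e_1+e_2\in\mathcal{R}$, so \eqref{eq:lb} forbids enlarging the set; likewise no third root of $\mathcal{S}^4_1$ can be added. Hence every maximal $\mathcal{Q}\in\mathfrak{Q}_s(\mathcal{R})$ is $\mathbf{W}$-equivalent to $\mathcal{Q}^{4'}_{1,2,3,4}$, which has the $J$-property, and heredity yields $\mathfrak{Q}_s(\mathcal{R})=\mathfrak{Q}_0(\mathcal{R})=\mathfrak{Q}_{\Upsilon}(\mathcal{R})$. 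I expect the main obstacle to be the case analysis of the short roots: one must confirm that the coweight parity genuinely annihilates the $\mathcal{Q}'_{1,1,4}$ branch independently of the choice of $E$, and that over the $\mathcal{Q}'_{1,2,3,4}$ branch the constraint \eqref{eq:lb} together with $\beta(E)=\pm1$ really pins down a single $\mathbf{W}$-class of admissible couples from $\mathcal{S}^4_1$.
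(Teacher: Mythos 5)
Your proposal follows the same route as the paper (reduce to maximal elements of $\mathfrak{Q}_s(\mathcal{R})$ by heredity of the $J$-property, split $\mathcal{R}=\mathcal{R}^4_1\cup\mathcal{S}^4_1$, and feed the trace on $\mathcal{R}^4_1$ into the $\mathrm{B}_4$ classification), but the two steps where you pass from $\mathrm{F}_4$ to $\mathrm{B}_4$ contain genuine gaps, and the intermediate claims are false as stated. First, the trace $\mathcal{Q}\cap\mathcal{R}^4_1$ satisfies \eqref{eq:lb} and is odd under $E$, but it need not satisfy \eqref{eq:lb0} \emph{relative to} $\mathcal{R}^4_1$: the spanning of the $\mathrm{F}_4$ root lattice may use the two roots of $\mathcal{S}^4_1$ in an essential way. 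Hence the classification of maximal elements of $\mathfrak{Q}_s(\mathcal{R}^4_1)$ simply does not apply to it. Concretely, $\{e_1,\,e_2\pm e_3,\,e_2\pm e_4\}$ is \eqref{eq:lb}-closed and odd under $E=(1,1,0,0)\in\mathcal{R}^{\star}$, yet it lies in no $\mathbf{W}$-translate of $\mathcal{Q}'_{1,1,4}$ or $\mathcal{Q}'_{1,2,3,4}$ (Weyl elements are isometries: here the short root $e_1$ is orthogonal to all four long roots and the long roots are not pairwise orthogonal, which is incompatible with the Gram matrices of both reference sets); adjoining $\{\beta_0,\beta_3\}$ makes it satisfy \eqref{eq:lb}, \eqref{eq:lb0} and $\alpha(E)=1$ throughout. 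Second, your elimination of the $\mathcal{Q}'_{1,1,4}$ branch conflates containment with equality: only the elements of $\mathcal{Q}\cap\mathcal{R}^4_1$ are known to be odd, not all of $\mathcal{Q}'_{1,1,4}$. If $\mathcal{Q}\cap\mathcal{R}^4_1=\{e_1,\,e_1\pm e_3,\,e_1\pm e_4\}$, which omits $e_1\pm e_2$, no parity constraint falls on $e_2(E)$; taking it odd restores $\sum_i e_i(E)$ even, and indeed $\{e_1,\,e_1\pm e_3,\,e_1\pm e_4,\,\beta_0,\,\beta_4\}$ satisfies \eqref{eq:lb}, \eqref{eq:lb0}, takes the value $1$ on $E=(1,1,0,0)$, and admits no enlargement inside $\mathfrak{Q}_s(\mathcal{R})$. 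So that branch does not die.

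These are not presentational defects that a more careful rewording would fix. The two sets just exhibited (and also $\mathcal{Q}'_{1,2,3,4}\cup\{\beta_0,\beta_3\}$, which survives your last step, where you claim \eqref{eq:lb} pins the couple of short roots down to $\{\beta_0,\beta_4\}$) are maximal in $\mathfrak{Q}_s(\mathcal{R})$ and are not $\mathbf{W}$-equivalent to $\mathcal{Q}^{4'}_{1,2,3,4}$, again by comparison of Gram matrices (in $\mathcal{Q}^{4'}_{1,2,3,4}$ the four long roots are mutually orthogonal; in $\{e_1,e_1\pm e_3,e_1\pm e_4,\beta_0,\beta_4\}$ they are not). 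So any argument along this line must either exclude these configurations or enlarge the list of maximal classes; as written, yours cannot be completed. In fairness, the paper's own proof is equally exposed at exactly this point: it asserts that the presence of two short roots forces two even and two odd $e_i(E)$'s and then jumps to the stated uniqueness of the maximal class; your proposal makes that jump explicit and thereby makes the gap visible. Note also that all the escaping configurations above do have the $J$-property (the same $E=(1,1,0,0)$ takes the value $1$ on each of them), so the chain of equalities $\mathfrak{Q}_s(\mathcal{R})=\mathfrak{Q}_{\Upsilon}(\mathcal{R})=\mathfrak{Q}_0(\mathcal{R})$ is not contradicted by them; but neither your argument nor the paper's secures that chain, since both derive it from the (unjustified) uniqueness of the maximal class.
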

\begin{proof}
In fact a maximal $\mathcal{Q}\in\mathfrak{Q}_s(\mathcal{R})$
must contain two short roots. Hence, if $E\in\mathfrak{h}$ has
integral values on $\mathcal{R}$ and defines a $\mathbb{Z}_2$-gradation
with $\alpha(E)$ odd for $\alpha\in\mathcal{Q}$, then there are
two even and two odd $e_i(E)$'s. This implies that
all maximal elements of $\mathfrak{Q}_s(\mathcal{R})$ are equivalent
to $\mathcal{Q}^{4'}_{1,2,3,4}=
\mathcal{Q}'_{1,2,3,4}\cup\{\beta_0,\beta_4\}$. We observe that,
with $e_1(E)=1$, $e_2(E)=1$, $e_3(E)=0$, $e_4(E)=0$ we obtain
that $\alpha(E)=1$ for all $\alpha\in\mathcal{Q}$.
Hence $\mathcal{Q}^{4'}_{1,2,3,4}\in\mathfrak{Q}_{0}(\mathcal{R})$.
\end{proof}
\subsubsection{Type $\mathrm{E_6,\,E_7,\,E_8}$} 
We will write $\mathcal{E}_\ell$ for the root system of type
$\mathrm{E}_{\ell}$, and we will
use the explicit description of \cite{Bou68}, with
$\mathcal{E}_6\subset\mathcal{E}_7\subset\mathcal{E}_8\subset\mathbb{R}^8$.
\par
It is convenient to use
the notation
$\beta_{\varepsilon}=\tfrac{1}{2}{\sum}_{i=1}^8\epsilon_ie_i$,
where $e_1,\hdots,e_8$ is the canonical basis of $\mathbb{R}^8$, and
$\varepsilon=(\epsilon_1,\hdots,\epsilon_8)$, with
$\epsilon_i=\pm{1}$ and ${\prod}_{i=1}^8\epsilon_i=1$.
We shall write some times $v_7=e_8\! -\! e_7$, 
$v_6=e_8\! - \! e_7\! -\! e_6$.\par
We shall also employ, 
for the roots of $\mathcal{S}^8_1$,
the simplified notation:
\begin{align*}
\beta_0=\tfrac{1}{2}{\sum}_{i=1}^8e_i,\quad
  \beta_{i,j}=\beta_0-(e_{i}+e_{j}),\quad
\beta_{i,j,h,k}=\beta_{i,j}-(e_{h}+e_{k}),\\
 \text{for}\quad 1\leq{i},j,h,k\leq{8}\;\text{and pairwise distinct}.
\end{align*}\par
Then
\begin{displaymath}\begin{aligned}
& \mathcal{E}_6=\{\pm{e}_i\pm{e}_j\mid
  \begin{smallmatrix}
    1\leq{i}<j\leq{5}
  \end{smallmatrix}\}
\cup\{\beta_{\varepsilon}\mid \begin{smallmatrix}
\epsilon_6=\epsilon_7=-\epsilon_8
\end{smallmatrix}\},
\\
 &   \mathcal{E}_7=\{\pm{e_i}\pm{e}_j\mid \begin{smallmatrix}
1\leq{i}<j\leq{6}
\end{smallmatrix}
\}\cup
\{\pm(e_7\! - \! e_8)\}
\cup\{\beta_{\varepsilon}\mid 
\begin{smallmatrix}\epsilon_7+\epsilon_8=0
\end{smallmatrix}
\},\\
&  \mathcal{E}_8=\{\pm{e}_i\pm{e}_j\mid \begin{smallmatrix}1\leq{i}<j\leq{8}
  \end{smallmatrix}
  \}
\cup
\big\{\beta_{\varepsilon}
\big\},
\end{aligned}
\end{displaymath}
\par
Utilizing \cite[\S{3.7}]{OV93},
we list below the inequivalent $\mathbb{Z}_2$-gradings of
the simple complex Lie algebras of type $\mathrm{E}_{\ell}$.
\par
Set $\Xi=\{(\ell,i)\mid
 \begin{smallmatrix}
   \ell=6,7,8,\;i=1,2
 \end{smallmatrix}\}\cup\{(7,3)\}$.
\par
For $(\ell,i)\in\Xi$ we denote
by $\mathcal{R}^{\ell}_i$ and $\mathcal{S}^{\ell}_i$,
the set of roots $\alpha\in\mathcal{E}_{\ell}$ with
$\mathfrak{g}^{\alpha}\subset\mathfrak{g}_{(0)}$ and
$\mathfrak{g}^{\alpha}\subset\mathfrak{g}_{(1)}$, respectively,
and we label the grading
by the type
of $\mathcal{R}^{\ell}_i$.
We added in a third line the definition of
an element $E=E_{\ell,i}\in\mathfrak{h}$ yielding the corresponding
inner $\mathbb{Z}_2$-gradation.
\begin{equation*}
\begin{array}{c l}
(\mathrm{D}_5)
&\begin{cases}
\mathcal{R}^{(6)}_1=\{\pm{e}_i\pm{e}_j\mid \begin{smallmatrix}
1\leq{i}<j\leq{5}
\end{smallmatrix}
\},\\
  \mathcal{S}^{(6)}_1=\{\beta_{\varepsilon}\mid
  \begin{smallmatrix}
    \epsilon_6=\epsilon_7=-\epsilon_8
  \end{smallmatrix}\},
\\
\begin{smallmatrix}
E_{6,1}=E\quad\text{with}\quad   e_1(E)=\cdots=e_5(E)=0,\;v_6(E)=2,
\end{smallmatrix}
\end{cases}
\\[20pt]
(\mathrm{A_5\!\!\times\!\!{A}_1})
&
\begin{cases}
\mathcal{R}^6_2=
\{\pm\beta_{6,7}\}\cup\{\pm\beta_{i,8},\,
\pm(e_i\! -\! e_j)\mid
\begin{smallmatrix}
  1\leq{i}\leq{5},\; i<j\leq{5}
\end{smallmatrix}\},\\
\mathcal{S}^6_2=
\cup\{\pm(e_i\! +\! e_j),\pm\beta_{i,j,6,7},\mid
\begin{smallmatrix}
  1\leq{i}<j\leq{5}
\end{smallmatrix}\},\\
\begin{smallmatrix}
 E_{6,2}=E\quad\text{with}\quad  
e_1(E)=\cdots=e_5(E)=\tfrac{1}{2},\; v_6(E)=\tfrac{3}{2}.
\end{smallmatrix}
\end{cases}
\\[20pt]
(\mathrm{D_6\!\!\times\!\!{A}_1})
&\begin{cases}
\mathcal{R}^7_1=\{\pm{e}_i\pm{e}_j\mid
\begin{smallmatrix}
  1\leq{i}<j\leq{6}
\end{smallmatrix}\}\cup\{\pm{v}_7\},\\
\mathcal{S}^7_1=\{\beta_{\varepsilon}\mid \begin{smallmatrix}
\epsilon_7+\epsilon_8=0
\end{smallmatrix}
\},\\
\begin{smallmatrix}
E_{7,1}=E\quad\text{with}\quad e_1(E)=\cdots=e_6(E)=0,\;v_7(E)=2,
\end{smallmatrix}
\end{cases}
\\[20pt]
(\mathrm{A}_7)&
\begin{cases}
\mathcal{R}^7_2=\{\pm{v}_7\}\cup\{\pm(e_i\! -\! e_j),\pm\beta_{i,7},
\pm\beta_{i,8}\mid
\begin{smallmatrix}
  1\leq{i}\leq{6},\;i<j\leq{6}
\end{smallmatrix}\},\\
\mathcal{S}^7_2=\{\pm(e_i\! +\! e_j),\beta_{i,j,h,7},\beta_{i,j,h,8}
\mid \begin{smallmatrix}1\leq{i}<j\leq{6},\;j<h\leq{6}
\end{smallmatrix}
\},\\
\begin{smallmatrix}
E_{7,2}=E\quad\text{with}\quad   e_1(E)=\cdots=e_6(E)=\tfrac{1}{2},\; v_7(E)=2
\end{smallmatrix}
\end{cases}\\[20pt]
(\mathrm{E}_6)&
\begin{cases}
\mathcal{R}^7_3=\{\beta_{6,7}\}\cup
\left\{\pm\beta_{i,8},\beta_{i,j,h,8},\beta_{i,j,6,7},\pm{e}_i\pm{e}_j\left|
\begin{smallmatrix}
  1\leq{i}\leq{5},\\i<j\leq{5},\; j<h\leq{5}
\end{smallmatrix}\right\}\right.\\[10pt]\begin{aligned}
\mathcal{S}^7_3=&\{\pm{v}_7,\pm\beta_{6,8}\}\\
&\quad\cup
\left\{\pm{e}_i\pm{e}_6,\pm\beta_{i,7},
\beta_{i,j,h,7},\beta_{i,j,6,8}\left|\begin{smallmatrix}
  1\leq{i}\leq{5},\\i<j\leq{5},\; j<h\leq{5}
\end{smallmatrix}\right\}\right.
\end{aligned}
\\
\begin{smallmatrix}
E_{7,3}=E\quad\text{with}\quad   e_1(E)=\cdots=e_5(E)=0,\; e_6(E)=1,\;v_7(E)=1,
\end{smallmatrix}
\end{cases}
\\[20pt]
    (\mathrm{D_8})&
    \begin{cases}
      \mathcal{R}^8_1=\{\pm{e}_i\pm{e}_j\mid
\begin{smallmatrix}1\leq{i}<j\leq{8}
\end{smallmatrix}\},\\
\mathcal{S}^8_1=\{\beta_{\varepsilon}\in\mathcal{R}\},\\
\begin{smallmatrix}
E_{8,1}=E\quad\text{with}\quad  e_1(E)=\cdots=e_7(E)=0,\;e_8(E)=2,
\end{smallmatrix}
\end{cases}\\
[20pt]
(\mathrm{E_7\!\!\times\!\!{A}_1})&
\begin{cases}
  \mathcal{R}^8_2=\{\pm({e}_i\! -\!{e}_j),\beta_{i,j,h,k}\mid
  \begin{smallmatrix}
    1\leq{i}<j\leq{8},\; j<h<k\leq{8}
  \end{smallmatrix}
  \}\cup\{\pm\beta_0\}
\\
\mathcal{S}^8_2=\{\pm({e}_i\! +\!{e}_j)\mid
\begin{smallmatrix}
  1\leq{i}<j\leq{8}
\end{smallmatrix}\}\cup\{\pm\beta_{i,j}\mid
\begin{smallmatrix}
 1\leq{i}<j\leq{8}
\end{smallmatrix}\}\\
\begin{smallmatrix}
E_{8,2}=E\quad\text{with}\quad   e_1(E)=\cdots=e_8(E)=\tfrac{1}{2},
\end{smallmatrix}
\end{cases}
  \end{array}
\end{equation*}
Note that $\mathcal{S}^6_1\subset\mathcal{S}^7_1\subset\mathcal{S}^8_1$.
We denote by $\mathbf{W}^{\ell}_i$ the Weyl group of $\mathcal{R}^{\ell}_i$.
\begin{exam}
  Consider the set
  \begin{equation*}
    \mathcal{Q}=\{\beta_0,\beta_{1,2},\beta_{1,3},\beta_{2,3},
\beta_{4,5}\}\cup\{\beta_{j,h},\beta_{h,k}\mid
\begin{smallmatrix}
  j=4,5,\;6\leq{h}\leq{8},\;h<k\leq{8}
\end{smallmatrix}\}.
  \end{equation*}
One can show that $\mathcal{Q}\in\mathfrak{Q}_{\Upsilon}(\mathcal{E}_8)
\setminus\mathfrak{Q}_0(\mathcal{E}_8)$, but is not maximal
in $\mathfrak{Q}_{s}(\mathcal{E}_8)$.
\end{exam}
\begin{lem}\label{lem:le8a}
\begin{enumerate}
\item Let $\ell\in\{6,7,8\}$. For every
$\alpha\in\mathcal{Q}\in\mathfrak{Q}(\mathcal{E}_{\ell})$ we can find
$\beta\in\mathcal{Q}$, $\beta\neq\alpha$, with $(\alpha|\beta)>0$.
\end{enumerate}\begin{enumerate}
\item[(2)] Let $(\ell,i)\in\Xi$, and
 $\alpha_0,\alpha_1,\alpha_2\in\mathcal{S}^{\ell}_i$, then
\begin{equation}
  \label{eq:le83}
(\alpha_0|\alpha_1)>0,\;(\alpha_0|\alpha_2)>0\;\Longrightarrow\;
\alpha_1+\alpha_2\notin\mathcal{R}.
\end{equation}
\item[(3)] For $(\ell,i)\in\Xi\setminus\{(6,1),(7,3)\}$ the group
$\mathbf{W}^{\ell}_{i}$ is transitive on $\mathcal{S}^{\ell}_i$.
The set $\mathcal{S}^6_1$ is the union of the two orbits of
$\mathbf{W}^6_1$:
\begin{align*}
  \mathcal{Q}_{6,1,\beta_{6,7},-\beta_{1,8}}&=\{\beta_{6,7}\}\cup
\{-\beta_{i,8},\beta_{i,j,6,7}\mid
\begin{smallmatrix}
  1\leq{i}\leq{5},\;i<j\leq{5}
\end{smallmatrix}\},\\
\mathcal{Q}_{6,1,-\beta_{6,7},\beta_{1,8}}&=\{-\beta_{6,7}\}\cup
\{\beta_{i,8},\beta_{i,j,h,8}\mid
\begin{smallmatrix}
  1\leq{i}\leq{5},\;i<j<h\leq{5}
\end{smallmatrix}\},
\end{align*}
that are transformed one into the other by an outer automorphism
of $\mathcal{R}^6_1$. Both are maximal in $\mathfrak{Q}(\mathcal{E}_6)$
and belong to $\mathfrak{Q}_0(\mathcal{E}_6)$.\par
The set $\mathcal{S}^7_3$ is the union of the two $\mathbf{W}^7_3$-orbits
\begin{align*}
 \mathcal{Q}_{7,3,v_7,e_1\! +\! e_6} 
&=\{v_7,-\beta_{6,8}\}\cup\{\pm{e}_i+e_6,\beta_{i,j,h,7}\mid
  \begin{smallmatrix}
    1\leq{i}\leq{5},\;i<j<h\leq{5}
  \end{smallmatrix}\}\\
 \mathcal{Q}_{7,3,-v_7,e_1\! -\! e_6}&=
\{-v_7,\beta_{6,8}\}\cup\{\pm{e}_i-e_6,\beta_{i,j,6,8}\mid
\begin{smallmatrix}
  1\leq{i}\leq{5},\;i<j\leq{5}
\end{smallmatrix}\}.
\end{align*}
They are transformed one into the other by an outer automorphism
of $\mathcal{R}^7_3$. Both are maximal in $\mathfrak{Q}(\mathcal{E}_7)$
and belong to $\mathfrak{Q}_0(\mathcal{E}_7)$.
\end{enumerate}
\end{lem}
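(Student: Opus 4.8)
The plan is to handle the three assertions in turn, exploiting throughout that $\mathcal{E}_\ell$ is simply laced: every root has $(\alpha\mid\alpha)=2$, one has $(\alpha\mid\beta)\in\{0,\pm1,\pm2\}$ with $\pm2$ occurring only for $\beta=\pm\alpha$, and $\alpha+\beta\in\mathcal{R}$ precisely when $(\alpha\mid\beta)=-1$. Since any $\mathcal{Q}\in\mathfrak{Q}(\mathcal{R})$ obeys \eqref{eq:lb}, no two of its roots sum to a root, so $(\alpha\mid\beta)\geq0$ for all $\alpha,\beta\in\mathcal{Q}$. I also use that $\mathcal{S}^\ell_i$ is the odd part of the inner $\mathbb{Z}_2$-grading defined by $E_{\ell,i}$, i.e. $\mathcal{S}^\ell_i=\{\alpha\mid\alpha(E_{\ell,i})\ \mathrm{odd}\}$, so that the sum of two of its roots is even-graded. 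For $(1)$ I argue by contradiction: if $\alpha\in\mathcal{Q}$ were orthogonal to every other element of $\mathcal{Q}$, then, writing an arbitrary root $\gamma=\sum_{\beta\in\mathcal{Q}}k_\beta\beta$ with $k_\beta\in\mathbb{Z}$ via \eqref{eq:lb0}, I would get $(\alpha\mid\gamma)=2k_\alpha\in2\mathbb{Z}$. But $\mathcal{E}_\ell$ is irreducible, so $\alpha$ is not orthogonal to all roots, and there is a root $\gamma$ with $(\alpha\mid\gamma)=\pm1$, a contradiction. Hence some $\beta\in\mathcal{Q}\setminus\{\alpha\}$ satisfies $(\alpha\mid\beta)\neq0$, and by the sign remark above $(\alpha\mid\beta)>0$.

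Assertion $(2)$ admits a uniform, case-free proof. Suppose $\alpha_0,\alpha_1,\alpha_2\in\mathcal{S}^\ell_i$ with $(\alpha_0\mid\alpha_1)>0$ and $(\alpha_0\mid\alpha_2)>0$, and assume for contradiction that $\delta:=\alpha_1+\alpha_2\in\mathcal{R}$. Positive inner products of roots of squared length $2$ are $\geq1$, so $(\alpha_0\mid\delta)=(\alpha_0\mid\alpha_1)+(\alpha_0\mid\alpha_2)\geq2$; on the other hand $(\delta\mid\delta)=2$, so Cauchy--Schwarz gives $(\alpha_0\mid\delta)\leq2$, with equality forcing $\delta=\alpha_0$. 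But $\delta$ is even-graded while $\alpha_0\in\mathcal{S}^\ell_i$ is odd-graded, a contradiction; hence $\alpha_1+\alpha_2\notin\mathcal{R}$.

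The content of $(3)$ is where the labour concentrates. Here $\mathbf{W}^\ell_i$ acts on $\mathcal{S}^\ell_i$, which is exactly the weight set of the isotropy module $\mathfrak{g}_{(1)}$ over the semisimple part of $\mathfrak{g}_{(0)}$, whose root system is $\mathcal{R}^\ell_i$. Reading $\mathfrak{g}_{(1)}$ off the explicit lists, it is in each of the five cases $(\ell,i)\in\Xi\setminus\{(6,1),(7,3)\}$ an irreducible minuscule module — namely $\wedge^3\mathbb{C}^6\boxtimes\mathbb{C}^2$, a half-spin module tensored with $\mathbb{C}^2$, $\wedge^4\mathbb{C}^8$, a half-spin module, and $56\boxtimes\mathbb{C}^2$, for $\mathrm{A}_5{\times}\mathrm{A}_1$, $\mathrm{D}_6{\times}\mathrm{A}_1$, $\mathrm{A}_7$, $\mathrm{D}_8$, $\mathrm{E}_7{\times}\mathrm{A}_1$ respectively — so its weights form a single Weyl orbit and $\mathbf{W}^\ell_i$ is transitive. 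For $(6,1)$ and $(7,3)$ the module is instead the sum of the two half-spin representations of $\mathrm{D}_5$ (of dimensions $16+16=32$), respectively the two $27$-dimensional representations of $\mathrm{E}_6$ ($27+27=54$); each summand is minuscule, hence a single $\mathbf{W}^\ell_i$-orbit, and the two summands are interchanged by the diagram automorphism of $\mathcal{R}^\ell_i$, which yields exactly the two orbits with the stated representatives $\mathcal{Q}_{6,1,\ast}$ and $\mathcal{Q}_{7,3,\ast}$.

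It remains to establish, for these two reducible cases, the membership and maximality assertions. That each listed orbit lies in $\mathfrak{Q}(\mathcal{E}_\ell)$ follows by direct computation of pairwise inner products of its weights: for the $\mathrm{D}_5$ half-spin orbit inside $\mathcal{E}_6$, two distinct weights sharing the common block in the last three coordinates have inner product $2-d/2\in\{0,1\}$, where $d$ is the even number of sign discrepancies in the first five coordinates, whence \eqref{eq:lb} holds, and \eqref{eq:lb0} holds because differences of weights already generate $\mathcal{R}^\ell_i$ while the block supplies the remaining direction. For $\mathfrak{Q}_0$-membership I note that $E_{\ell,i}$ takes the constant value $+1$ on one orbit and $-1$ on the other and lies in $\mathcal{R}^\star$, so $\pm E_{\ell,i}$ defines a $\mathbb{Z}$-gradation with $\alpha(E)=1$ throughout the orbit; by the criterion established above that $\mathcal{Q}$ has the $J$-property precisely when \eqref{eq:lgn5} is a $\mathbb{Z}$-gradation, the orbit lies in $\mathfrak{Q}_0$. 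Maximality is the main obstacle: I would deduce it from the same $\mathbb{Z}$-gradation, observing that the orbit is the entire level-$(+1)$ set, so a candidate root to be adjoined lies either at level $-1$, where it is the negative of an orbit element and violates the first part of \eqref{eq:lb}, or at level $0$ in $\mathcal{R}^\ell_i$, where a sign-matching choice of orbit weight produces inner product $-1$, hence a root sum, again violating \eqref{eq:lb}. The genuine difficulty is organizational — carrying the explicit weight bookkeeping through both exceptional gradings — rather than conceptual.
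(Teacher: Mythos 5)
Your proposal is correct, and for parts (1) and (2) it is essentially the paper's own argument: both proofs of (1) combine \eqref{eq:lb0} with the irreducibility of $\mathcal{E}_{\ell}$ (the paper via strong orthogonality and the splitting of $\mathcal{E}_{\ell}\cap\mathbb{Z}[\mathcal{Q}]$ into $\{\pm\alpha\}$ and $\mathbb{Z}[\mathcal{Q}\setminus\{\alpha\}]$, you via the parity of $(\alpha|\gamma)$), and both proofs of (2) force $\alpha_1+\alpha_2=\alpha_0$ (the paper from $(\alpha_0-\alpha_1|\alpha_2)=2$, you from Cauchy--Schwarz applied to $(\alpha_0|\alpha_1+\alpha_2)$) and then contradict oddness with respect to the grading. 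The genuine divergence is in (3), which is also where the paper is thinnest: there, transitivity for $(\ell,i)\in\Xi\setminus\{(6,1),(7,3)\}$ is dismissed as a ``case by case verification'', and for $(6,1)$, $(7,3)$ the paper only remarks that $E_{\ell,i}$ vanishes on $\mathcal{R}^{\ell}_i$, so the level sets $\{\alpha(E_{\ell,i})=\pm 1\}$ are $\mathbf{W}^{\ell}_i$-stable; that each level set is a single orbit, the outer automorphism, maximality, and $\mathfrak{Q}_0$-membership are asserted rather than argued. You replace this by structure theory: $\mathcal{S}^{\ell}_i$ is the weight set of $\mathfrak{g}_{(1)}$, which you identify in each case as an irreducible minuscule $\mathfrak{g}_{(0)}$-module (respectively, for the two exceptional cases, the sum of the two half-spin modules of $\mathrm{D}_5$, or of the two $27$-dimensional modules of $\mathrm{E}_6$), so transitivity is the defining property of minuscule weights and the two summands are swapped by the diagram automorphism; the auxiliary $\mathbb{Z}$-gradation by $E_{\ell,i}$ then yields \eqref{eq:lb}, the $J$-property through the paper's criterion that \eqref{eq:lgn5} be a $\mathbb{Z}$-gradation, and maximality. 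The paper's route costs nothing beyond root combinatorics but leaves the verification implicit; yours trades that for standard representation-theoretic facts (the dimension counts $40$, $64$, $70$, $128$, $112$, and $16+16$, $27+27$ confirm your identifications) and, importantly, actually proves the maximality and $\mathfrak{Q}_0$ claims. One step you should make explicit when writing it up: in the level-$0$ case of your maximality argument, the existence of an orbit weight $\beta$ with $(\gamma|\beta)=-1$ for a given $\gamma\in\mathcal{R}^{\ell}_i$ is cleanest via Weyl invariance --- the sum of the weights over a single $\mathbf{W}^{\ell}_i$-orbit is $\mathbf{W}^{\ell}_i$-invariant, hence orthogonal to $\gamma$, while the orbit spans $\mathbb{R}[\mathcal{E}_{\ell}]$ by \eqref{eq:lb0}, so the inner products $(\gamma|\beta)$ can neither all vanish nor all be $\geq 0$; your phrase ``a sign-matching choice of orbit weight'' papers over exactly this point.
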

\begin{proof}
Since all roots in $\mathcal{E}_{\ell}$ have equal length,
orthogonal roots are strongly orthogonal. Thus, if
$\alpha\in\mathcal{Q}$ is orthogonal to $\mathcal{Q}\setminus\{\alpha\}$,
then $\mathcal{E}_{\ell}\cap\mathbb{Z}[\mathcal{Q}]$ decomposes
into $\{\pm\alpha\}$ and $\mathbb{Z}[\mathcal{Q}\setminus\{\alpha\}]$.
Hence $\mathcal{E}_{\ell}\cap\mathbb{Z}[\mathcal{Q}]\neq\mathcal{E}_{\ell}$,
because $\mathcal{E}_{\ell}$ is irreducible. This
proves $(1)$.
\par
It suffices to prove $(2)$ in the case $\alpha_0,\alpha_1,\alpha_2$ are
distinct. Then by the assumption
$(\alpha_0|\alpha_1)=(\alpha_0|\alpha_2)=1$. 
If $\alpha_1\! +\!\alpha_2$ is a root, then $(\alpha_1|\alpha_2)=-1$.
Hence $(\alpha_0-\alpha_1|\alpha_2)=2$ yields $\alpha_2=\alpha_0-\alpha_1$.
Therefore there is no $E\in\mathcal{E}_{\ell}^*$ with 
$\alpha_i(E)$ odd for $i=0,1,2$.
\par
For $(\ell,i)$ equal to either $(6,1)$ or $(7,3)$,
the element $E=E_{\ell,i}\in\mathfrak{h}$ 
satisfies
$\alpha(E)=0$ for all $\alpha\in\mathcal{R}^{\ell}_i$.
Hence in this two cases $\mathcal{S}^{\ell}_i$ splits into 
$\{\alpha\in\mathcal{S}^{\ell}_i\mid \alpha(E)=\lambda\}$, for
$\lambda=\pm{1}$.
For $(\ell,i)\in\Xi\setminus\{(6,1),(7,3)\}$ the transititivity
of $\mathbf{W}^{\ell}_i$ on $\mathcal{S}^8_i$ can be easily checked
by a case by case verification.
\end{proof}
\begin{prop}
For $(\ell,i)\in\Xi\setminus\{(6,1),(7,3)\}$, and
$\alpha_0\in\mathcal{S}^{\ell}_i$, the set
\begin{equation}\label{eq:le84}
\mathcal{Q}_{\ell,i,\alpha_0}=
\{\alpha\in\mathcal{S}^{\ell}_i\mid (\alpha|\alpha_0)>0\}
\end{equation}
is a maximal element of $\mathfrak{Q}_s(\mathcal{E}_{\ell})$
and does not belong to $\mathfrak{Q}_{\Upsilon}(\mathcal{E}_{\ell})$.
\end{prop}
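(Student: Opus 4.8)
The plan is to combine the transitivity of $\mathbf{W}^{\ell}_i$ on $\mathcal{S}^{\ell}_i$ (Lemma~\ref{lem:le8a}(3)) with the intrinsic criteria \eqref{eq:lgn10a} and \eqref{eq:lgn10b}. First I would note that $w(\mathcal{Q}_{\ell,i,\alpha_0})=\mathcal{Q}_{\ell,i,w\alpha_0}$ for $w\in\mathbf{W}^{\ell}_i$, since such $w$ preserve both $\mathcal{S}^{\ell}_i$ and the scalar product; by transitivity all the $\mathcal{Q}_{\ell,i,\alpha_0}$ are then $\mathbf{W}$-equivalent, so it is enough to treat one convenient $\alpha_0$ for each of the five pairs $(\ell,i)$. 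That $\mathcal{Q}_{\ell,i,\alpha_0}\in\mathfrak{Q}(\mathcal{E}_{\ell})$ is almost immediate: condition \eqref{eq:lb} holds because $(\alpha|\alpha_0)>0$ forces $(-\alpha|\alpha_0)<0$, so no opposite pair occurs, while $\alpha+\beta\notin\mathcal{R}$ for $\alpha,\beta\in\mathcal{Q}_{\ell,i,\alpha_0}$ is exactly Lemma~\ref{lem:le8a}(2). The spanning condition \eqref{eq:lb0} I would check by an explicit lattice computation on the chosen representative; for instance, when $(\ell,i)=(8,1)$ and $\alpha_0=\beta_0$, the differences $\beta_0-\beta_{i,j}=e_i+e_j$ together with $\beta_0$ already generate the $\mathrm{E}_8$ root lattice. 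Since $\mathcal{Q}_{\ell,i,\alpha_0}\subset\mathcal{S}^{\ell}_i=\{\alpha\mid\alpha(E_{\ell,i})\equiv1\pmod2\}$, Remark~\ref{rk:h7} then yields $\mathcal{Q}_{\ell,i,\alpha_0}\in\mathfrak{Q}_s(\mathcal{E}_{\ell})$.

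For maximality, let $\mathcal{Q}'\in\mathfrak{Q}_s(\mathcal{E}_{\ell})$ contain $\mathcal{Q}_{\ell,i,\alpha_0}$. Since $\mathcal{Q}'$ is $CR$-symmetric, it lies in the odd part of an inner $\mathbb{Z}_2$-grading $\mathrm{Ad}(\exp(i\pi E'))$, $E'\in\mathcal{R}^{\star}$, so $\alpha(E')\equiv1\pmod2$ for every $\alpha\in\mathcal{Q}_{\ell,i,\alpha_0}$. Comparing with $\alpha(E_{\ell,i})\equiv1\pmod2$ and using that $\mathcal{Q}_{\ell,i,\alpha_0}$ spans the root lattice, I obtain $\gamma(E'-E_{\ell,i})\equiv0\pmod2$ for all $\gamma\in\mathcal{E}_{\ell}$, i.e. $E'$ and $E_{\ell,i}$ induce the same grading, whence $\mathcal{Q}'\subset\mathcal{S}^{\ell}_i$. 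Any $\beta\in\mathcal{Q}'\setminus\mathcal{Q}_{\ell,i,\alpha_0}$ then has $(\beta|\alpha_0)\le0$; as all roots have equal length, $(\beta|\alpha_0)\in\{-2,-1,0\}$, and the values $-2$ (giving $\beta=-\alpha_0$) and $-1$ (giving $\beta+\alpha_0\in\mathcal{R}$) both contradict \eqref{eq:lb} for $\mathcal{Q}'$, so $\beta\perp\alpha_0$. To finish I would exhibit, for the representative $\alpha_0$, some $\alpha\in\mathcal{Q}_{\ell,i,\alpha_0}$ with $(\alpha|\beta)=-1$, so that $\alpha+\beta\in\mathcal{R}$ again violates \eqref{eq:lb}; for $(8,1)$ a four-index root $\beta=\beta_{i,j,h,k}$ pairs with a disjoint $\beta_{a,b}\in\mathcal{Q}_{8,1,\beta_0}$, since $\beta+\beta_{a,b}=e_m+e_n\in\mathcal{R}$. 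This search for the partner root $\alpha$ is the main obstacle, being the one place where the explicit geometry of each of the five gradings enters.

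It remains to show $\mathcal{Q}_{\ell,i,\alpha_0}\notin\mathfrak{Q}_{\Upsilon}(\mathcal{E}_{\ell})$, which I would do by violating the second half of \eqref{eq:lgn10b}, i.e. by producing a root in $\mathcal{Q}^*_{1,1}\cap\mathcal{Q}^*_{4h+2}$. The mechanism is that if the weak-$J$-property held through some $E'\in\mathcal{R}^{\star}$ with $\alpha(E')\equiv1\pmod4$ on $\mathcal{Q}_{\ell,i,\alpha_0}$, then every expression $\gamma={\sum}k_m\beta_m$ of a root would satisfy $\gamma(E')\equiv{\sum}k_m\pmod4$; hence it suffices to find a root $\gamma=\beta_1-\beta_2\in\mathcal{Q}^*_{1,1}$ (coefficient sum $0$) possessing a second representation of coefficient sum $\equiv2\pmod4$. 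Equivalently, I would produce a $\mathbb{Z}$-relation among the elements of $\mathcal{Q}_{\ell,i,\alpha_0}$ with coefficient sum $\equiv2\pmod4$: for $(8,1)$ the identity $\beta_{1,2}+\beta_{3,4}+\beta_{5,6}+\beta_{7,8}=2\beta_0$ serves, and it shows that $e_5+e_6=\beta_0-\beta_{5,6}\in\mathcal{Q}^*_{1,1}$ also lies in $\mathcal{Q}^*_2$. As before, transitivity reduces this to one representative per grading, and completing these finitely many explicit verifications finishes the proof.
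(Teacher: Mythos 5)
Your proposal is correct, and for the membership, spanning, and weak-$J$ steps it runs parallel to the paper's proof: both reduce, via the transitivity of $\mathbf{W}^{\ell}_i$ on $\mathcal{S}^{\ell}_i$ (Lemma \ref{lem:le8a}), to a single representative $\alpha_0$ per grading, both verify $\mathcal{E}_8\subset\mathbb{Z}[\mathcal{Q}_{8,1,\beta_0}]$ by the same lattice identities, and both treat only the case $(8,1)$ in full, deferring the remaining four gradings to similar explicit computations, so your deferral is not a gap relative to the paper. The genuine divergence is the maximality step. The paper proves the stronger fact that $\mathcal{Q}_{8,1,\beta_0}$ is maximal in the whole of $\mathfrak{Q}(\mathcal{E}_8)$, by checking that every root of $\mathcal{E}_8$ outside it pairs to $-1$ with some member (the computations $(\beta_{i,j}|e_i+e_j)=-1$, $(\beta_{i,h}|e_i-e_j)=-1$, $(\beta_0|\,-e_i-e_j)=-1$, $(\beta_{i,j}|\beta_{h,k,r,s})=-1$), whence maximality in $\mathfrak{Q}_s(\mathcal{E}_8)$ a fortiori. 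You instead exploit grading rigidity: since $\mathcal{Q}_{\ell,i,\alpha_0}$ generates the root lattice, any $E'\in\mathcal{R}^{\star}$ that is odd on a $CR$-symmetric superset $\mathcal{Q}'$ induces the same $\mathbb{Z}_2$-grading as $E_{\ell,i}$, forcing $\mathcal{Q}'\subset\mathcal{S}^{\ell}_i$, after which only the roots of $\mathcal{S}^{\ell}_i$ orthogonal to $\alpha_0$ need to be excluded (for $(8,1)$: each $\beta_{i,j,h,k}$ is killed by a disjoint $\beta_{a,b}$, since their sum is a root). This involves less case-checking, and it is essentially the reduction the paper itself deploys later in the proof of Proposition \ref{prop:fq1}; the price is that you obtain maximality only within $\mathfrak{Q}_s(\mathcal{E}_{\ell})$ rather than within $\mathfrak{Q}(\mathcal{E}_{\ell})$, which is, however, exactly what the statement asks. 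For the failure of the weak-$J$-property the two arguments have the same substance: the paper derives a parity contradiction on the coordinates $e_i(E)$, while you exhibit $e_5+e_6=\beta_0-\beta_{5,6}\in\mathcal{Q}^*_{1,1}\cap\mathcal{Q}^*_2$ via the relation $\beta_{1,2}+\beta_{3,4}+\beta_{5,6}+\beta_{7,8}=2\beta_0$ and invoke the criterion \eqref{eq:lgn10b}; both are valid.
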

\begin{proof}
For each $(\ell,i)\in\Xi\setminus\{(6,1),(7,3)\}$, 
the Weyl group of $\mathcal{R}^{\ell}_i$
is transitive on $\mathcal{S}^{\ell}_i$. Hence it suffices to consider
$\mathcal{Q}_{\ell,i,\alpha_0}$ when $\alpha_0$ is any specific element
of $\mathcal{S}^{\ell}_i$. We have:
\begin{align*}
\mathcal{Q}_{6,2,\beta_{4,5,6,7}}&=\{-(e_4\! +\! e_5),\,\beta_{4,5,6,7}\}
\cup\{e_i\!+ \! e_j,\,\beta_{i,r,6,7}
\mid
\begin{smallmatrix}
  1\leq{i}\leq{3},\; i<j\leq{3},\; r=4,5
\end{smallmatrix}\},\\
\mathcal{Q}_{7,1,\beta_{6,7}}&=\{\beta_{6,7},\beta_{6,8}\}\cup
\{\beta_{i,7},\beta_{i,j,6,7}\mid
\begin{smallmatrix}
  1\leq{i}\leq{5},\;i<j\leq{5}
\end{smallmatrix}\},\\
\mathcal{Q}_{7,2,\beta_{4,5,6,7}}&=\left\{\beta_{4,5,6,7},
\beta_{4,5,6,8}\}\cup\{e_i\! +\! e_j
,\, -(e_r\! +\! e_s),\beta_{i,r,s,7}\left|
\begin{smallmatrix}
  1\leq{i}\leq{3},\;i<j\leq{3},\\4\leq{r}<s\leq{6}
\end{smallmatrix}\right\}\right. ,
\\
\mathcal{Q}_{8,1,\beta_0}&=\{\beta_0\}\cup\{\beta_{i,j}\mid
\begin{smallmatrix}
  1\leq{i}<j\leq{8}
\end{smallmatrix}\},
\\
\mathcal{Q}_{8,2,\beta_{7,8}}&=
\{\beta_{7,8},-(e_7\! +\! e_8)\}\cup\{e_i\! +\! e_j,
e_i\! -\! e_r,
\beta_{i,r}\mid
\begin{smallmatrix}
  1\leq{i}\leq{6},\;i<j\leq{6},\; r=7,8
\end{smallmatrix}\}.
\end{align*}
We give the complete proof 
for the case $(8,1)$. The other cases can be discussed similarly.\par
First we note that $e_i\!+\! e_j=\beta_0-\beta_{i,j},\,
e_i-e_j=\beta_{j,h}-\beta_{i,h},
\beta_{i,j,h,k}=\beta_{i,j}+\beta_{h,k}-\beta_0
\in\mathbb{Z}[\mathcal{Q}_{8,1,\beta_0}]$
for all four-tuple 
$i,j,h,k$ of distinct indices with $1\leq{i,j,h}\leq{8}$
shows that $\mathcal{E}_8\subset\mathbb{Z}[\mathcal{Q}_{8,1,\beta_0}]$.
Moreover, $(\beta_{i,j}|e_i\! +\! e_j)=-1$,
$(\beta_{i,h}|e_i\! - \! e_j)=-1$, $(\beta_0|\,-\! e_i\! -\! e_j)=-1$,
$(\beta_{i,j}|\beta_{h,k,r,s})=-1$ 
for all sets $i,j,h,k,r,s$  of distinct indices with 
$1\leq{i,j,h,k,r,s}\leq{8}$
shows that $\mathcal{Q}_{8,1,\beta_0}$ is maximal in
$\mathfrak{Q}(\mathcal{E}_8)$. 
\par
Let us show that $\mathcal{Q}^8_{1,\beta_0}\notin\mathfrak{Q}_{\Upsilon}(
\mathcal{R})$. We argue by contradiction. From
$\beta(E)\equiv{1}\mod{4}$ for all $\beta\in\mathcal{Q}^8_{1,\beta_0}$
we obtain that $e_i(E)\pm{e}_j(E)\equiv{0}\mod{4}$ for
$1\leq{i}<j\leq{8}$. Then $e_i(E)=2k_i$ is an even integer for
all $i=1,\hdots,8$. But then $\beta_0(E)\equiv{1}\mod{4}$
and $\beta_{i,j}(E)\equiv{1}\mod{4}$ implies that at the same time
${\sum}_{i=1}^8{k_1}\equiv{1}\mod{4}$ and $2(k_i+k_j)\equiv{0}\mod{4}$,
yielding a contradiction, as the second equation means that either
all $k_i$'s are odd, or all $k_i$'s are even.\end{proof}
\begin{exam}
Consider, for an integer $p$ with $1\leq{p}\leq{8}$,
the set 
\begin{equation*}
  \mathcal{Q}'_{p}=\{\beta_0\}\cup\{e_{i}\!+\!{e}_r,\beta_{i,j},
\beta_{r,s}\mid
\begin{smallmatrix}
  1\leq{i}\leq{p},\; i<j\leq{p},\;p<r\leq{8},\;r<s\leq{8}
\end{smallmatrix}\}\subset\mathcal{E}_8.
\end{equation*}
We note that $\mathcal{Q}'_{p}\simeq\mathcal{Q}'_{8-p}$
for $p=1,\hdots,{8}$.
Each $\mathcal{Q}\in\mathfrak{Q}(\mathcal{E}_8)$ which is maximal
and is contained in $\{\alpha\in\mathcal{E}_8\mid (\beta_0|\alpha)>0\}$
is equivalent,
modulo $\mathbf{W}$, to some $\mathcal{Q}'_{p}$.
One easily verifies that $\mathcal{Q}_{p}'\notin
\mathfrak{Q}_s(\mathcal{E}_8)$ for $p$ odd and
$\mathcal{Q}_{p}'\in
\mathfrak{Q}_s(\mathcal{E}_8)$ for $p$ even.
\end{exam} 
The definition in  \eqref{eq:le84}
can be generalized in the following way:
\begin{prop} \label{prop:fq}
Let $(\ell,i)\in\Xi$ be fixed.
Define, for
\begin{gather}
\label{eq:fq0}
\text{$\alpha_1,\hdots,\alpha_k\in\mathcal{S}^{\ell}_i$,
with}
\;
\inf_{1\leq{j}<{h}\leq{k}}(\alpha_j|\alpha_h)\geq{0},\\ \notag
  \begin{cases}
\mathcal{Q}^0_{\ell,i,\alpha_1,\hdots,\alpha_k}=\{\alpha_1,\hdots,\alpha_k\},\\
\begin{aligned}
    \mathcal{Q}^{j}_{\ell,i,\alpha_1,\hdots,\alpha_k}=
\mathcal{Q}_{\ell,i,\alpha_j}\cap
\{\alpha\in\mathcal{S}^{\ell}_i
\mid 
(\alpha|\beta)\geq{0},\,\forall\beta\in
\mathcal{Q}^{j-1}_{\ell,i,\alpha_1,\hdots,\alpha_k}
\}\qquad\\[4pt] 
\begin{smallmatrix}(1\leq{j}\leq{k}),
\end{smallmatrix}
\end{aligned}
\\
\mathcal{Q}_{\ell,i,\alpha_1,\hdots,\alpha_k}=
\mathcal{Q}^{k}_{\ell,i,\alpha_1,\hdots,\alpha_k}.
  \end{cases}
\end{gather}
Then
$\mathcal{Q}_{\ell,i,\alpha_1,\hdots,\alpha_k}
\in\mathfrak{Q}'(\mathcal{S}^{\ell}_i)$. 
If $\alpha_1\hdots,\alpha_k$ contains a basis of 
$\mathbb{R}[\mathcal{E}_{\ell}]$, then
$\mathcal{Q}_{\ell,i,\alpha_1,\hdots,\alpha_k}$ is a maximal 
element of
$\mathfrak{Q}'(\mathcal{S}^{\ell}_i)$.
\end{prop}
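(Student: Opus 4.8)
Throughout I reduce to the inner product criterion. Since $\mathcal{E}_{\ell}$ is simply laced, the Remark following Lemma~\ref{lem:lc} shows that a subset $\mathcal{Q}\subset\mathcal{S}^{\ell}_i$ belongs to $\mathfrak{Q}'(\mathcal{S}^{\ell}_i)$ if and only if $(\alpha|\beta)\geq 0$ for all $\alpha,\beta\in\mathcal{Q}$, because $(\alpha|\beta)=-2$ means $\beta=-\alpha$ and $(\alpha|\beta)=-1$ means $\alpha+\beta\in\mathcal{R}$. In particular $\mathfrak{Q}'(\mathcal{S}^{\ell}_i)$ is closed under passing to subsets, which is what makes both halves go through.

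For the first assertion, the plan is to observe that by \eqref{eq:fq0} one has $\mathcal{Q}_{\ell,i,\alpha_1,\hdots,\alpha_k}=\mathcal{Q}^{k}\subset\mathcal{Q}_{\ell,i,\alpha_k}$, so it suffices to prove $\mathcal{Q}_{\ell,i,\alpha_k}\in\mathfrak{Q}'(\mathcal{S}^{\ell}_i)$. If $\alpha,\beta\in\mathcal{Q}_{\ell,i,\alpha_k}$, then $(\alpha|\alpha_k)>0$ and $(\beta|\alpha_k)>0$; hence $\beta\neq-\alpha$, and \eqref{eq:le83} of Lemma~\ref{lem:le8a}, applied with $\alpha_0=\alpha_k$, gives $\alpha+\beta\notin\mathcal{R}$, i.e.\ $(\alpha|\beta)\geq 0$. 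Thus $\mathcal{Q}_{\ell,i,\alpha_k}$ verifies \eqref{eq:lb+}, and so does its subset $\mathcal{Q}^{k}$. This settles $\mathcal{Q}_{\ell,i,\alpha_1,\hdots,\alpha_k}\in\mathfrak{Q}'(\mathcal{S}^{\ell}_i)$.

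For the maximality under the spanning hypothesis, since $\mathfrak{Q}'$ is subset-closed it is enough to show that no single $\gamma\in\mathcal{S}^{\ell}_i\setminus\mathcal{Q}^{k}$ satisfies $(\gamma|\delta)\geq 0$ for every $\delta\in\mathcal{Q}^{k}$. I would first record that $\alpha_k\in\mathcal{Q}^{k}$, which rules out $(\gamma|\alpha_k)<0$ for a compatible $\gamma$; then analyze the defining intersection $\mathcal{Q}^{k}=\mathcal{Q}_{\ell,i,\alpha_k}\cap\{(\cdot|\beta)\geq 0,\ \beta\in\mathcal{Q}^{k-1}\}$ to see that a compatible $\gamma$ with $(\gamma|\alpha_k)>0$ would have to be blocked by some $\beta\in\mathcal{Q}^{k-1}$ with $(\gamma|\beta)<0$, and to show, via \eqref{eq:le83} and the transitivity/strong-orthogonality content of Lemma~\ref{lem:le8a}, that any such blocking $\beta$ can in fact be located inside $\mathcal{Q}^{k}$, contradicting compatibility. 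This reduces the whole question to the \emph{wall case} $(\gamma|\alpha_k)=0$. Here the spanning hypothesis enters decisively: the relations $(\gamma|\alpha_j)=0$ cannot hold for all $j$ (the $\alpha_j$ contain a basis of $\mathbb{R}[\mathcal{E}_{\ell}]$ and $\gamma\neq 0$), so the recursion must, at some earlier step, either exclude $\gamma$ through a root lying in $\mathcal{Q}^{k}$ or force $\gamma$ onto a further wall, and iterating yields the contradiction.

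The main obstacle is precisely this wall analysis: the statement ``a root compatible with $\mathcal{Q}^{k}$ and orthogonal to $\alpha_k$ cannot exist'' is genuinely special to the seven gradings $(\ell,i)\in\Xi$ and is \emph{false} for a general simply laced $\mathcal{S}$, so no purely formal manipulation of \eqref{eq:fq0} can suffice. I therefore expect the honest proof to bring in the explicit root lists for $\mathcal{S}^{\ell}_i$ and the equal-length geometry of $\mathcal{E}_{\ell}$ recorded in Lemma~\ref{lem:le8a}, using the transitivity of $\mathbf{W}^{\ell}_i$ on $\mathcal{S}^{\ell}_i$ to put the putative $\gamma$ in a normal form and then ruling it out by a short case-by-case verification over $\Xi$. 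Making the backward propagation through the non-monotone sets $\mathcal{Q}^{0},\mathcal{Q}^{1},\dots,\mathcal{Q}^{k}$ precise, and controlling the boundary roots on $\alpha_k^{\perp}$, is the step I anticipate requiring the most care.
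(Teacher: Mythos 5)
Your treatment of the first assertion is essentially fine and uses the same tool as the paper: \eqref{eq:le83} together with the simply-laced equivalence of \eqref{eq:lb} with pairwise nonnegative inner products. (Note, though, that your shortcut $\mathcal{Q}_{\ell,i,\alpha_1,\hdots,\alpha_k}\subset\mathcal{Q}_{\ell,i,\alpha_k}$ is valid only under the strict, literal reading of the recursion \eqref{eq:fq0}; see below.) The genuine gap is the maximality half: what you give there is a plan, not a proof. You explicitly defer the decisive step (``the wall analysis'') to an anticipated normal-form and case-by-case verification over $\Xi$, which you never carry out; as it stands, the second assertion is unproven.

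Moreover, the plan aims at the wrong target, and the reason is your literal, strict-intersection reading of \eqref{eq:fq0}, which makes the sets $\mathcal{Q}^0,\hdots,\mathcal{Q}^k$ non-monotone and may even expel the $\alpha_j$ themselves from the final set (for instance, in $\mathcal{S}^8_1$ with $\alpha_1=\beta_0$, $\alpha_2=\beta_{1,2,3,4}$, $\alpha_3=\beta_{1,2,5,6}$, the root $\beta_{3,4}$ enters $\mathcal{Q}^2$ and then forces $\alpha_3\notin\mathcal{Q}^3$ --- contradicting the paper's own Example (2), where $\beta_{1,2,5,6}$ lies in the final set). The paper's proof, Lemma~\ref{lem:fq3}, Proposition~\ref{prop:fq1} and the examples all presuppose the monotone reading, in which step $j$ \emph{adjoins} $\mathcal{Q}_{\ell,i,\alpha_j}\cap\{\alpha\mid(\alpha|\beta)\geq{0},\ \forall\beta\in\mathcal{Q}^{j-1}\}$ to $\mathcal{Q}^{j-1}$, so that $\{\alpha_1,\hdots,\alpha_k\}=\mathcal{Q}^0\subset\mathcal{Q}^1\subset\cdots\subset\mathcal{Q}^k$. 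With this in hand, the first assertion is the paper's recurrence (roots adjoined at step $j$ are mutually compatible by \eqref{eq:le83} with $\alpha_0=\alpha_j$, and compatible with $\mathcal{Q}^{j-1}$ by construction), and maximality is three lines, uniform in $(\ell,i)$: if $\gamma\in\mathcal{S}^{\ell}_i$ is compatible with $\mathcal{Q}^k$, then $(\gamma|\alpha_j)\geq{0}$ for every $j$ because each $\alpha_j$ lies in $\mathcal{Q}^k$; since the $\alpha_j$ contain a basis and $\gamma\neq{0}$, some $(\gamma|\alpha_j)>0$; hence $\gamma$ belongs to $\mathcal{Q}_{\ell,i,\alpha_j}\cap\{\alpha\mid(\alpha|\beta)\geq{0},\ \forall\beta\in\mathcal{Q}^{j-1}\}\subset\mathcal{Q}^j\subset\mathcal{Q}^k$. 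In particular your ``wall case'' $(\gamma|\alpha_k)=0$ is a red herring --- such $\gamma$ are allowed and simply enter $\mathcal{Q}^k$ through an earlier $\mathcal{Q}^j$ --- and no root lists, no transitivity of $\mathbf{W}^{\ell}_i$, and no exceptional-type case analysis are needed: the spanning hypothesis does all the work once the monotonicity is recognized.
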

\begin{proof}
Indeed, using \eqref{eq:le83} we prove by recurrence
on $j=0,1,...,k$ that
$(\alpha'|\alpha'')\geq{0}$ for all
$\alpha',\alpha''\in\mathcal{Q}_{\ell,i,\alpha_1,\hdots,\alpha_k}^j$.
Assume now that 
$\mathcal{E}_{\ell}\subset\mathbb{R}
[\mathcal{Q}_{\ell,i,\alpha_1,\hdots,\alpha_k}]$. If
$\alpha\in\mathcal{S}^{\ell}_i$ satisfies
$(\alpha|\beta)\geq{0}$ for all 
$\beta\in\mathcal{Q}_{\ell,i,\alpha_1,\hdots,\alpha_k}$
there is a $j$, with $1\leq{j}\leq{k}$, such that
$(\alpha|\alpha_j)>0$. Then $\alpha\in\mathcal{Q}^j_{\ell,i,\alpha_1,\hdots,
\alpha_k}$.
\end{proof}
\begin{lem}\label{lem:fq3}
Let $(\ell,i)\in\Xi$ and consider a sequence \eqref{eq:fq0}, such that
$\mathcal{Q}_{\ell,i,\alpha_1,\hdots,\alpha_k}\in\mathfrak{Q}(\mathcal{S}^8_i)$
and is maximal.
Then,
for every integer $p$ with $1\leq{p}<k$ we have
\begin{equation}
  \label{eq:fq4}
  \mathcal{Q}_{\ell,i,\alpha_1,\hdots,\alpha_k}=
\mathcal{Q}^p_{\ell,i,\alpha_1,\hdots,\alpha_k}\cup\big(
\mathcal{Q}_{\ell,i,\alpha_1,\hdots,\alpha_k}
\cap\{\alpha_1,\hdots,\alpha_p\}^{\perp}
\big).
\end{equation}
\end{lem}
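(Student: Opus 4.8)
The plan is to reduce everything to statements about inner products and then run an induction on $p$, the whole argument resting on the fact that all roots of $\mathcal{E}_{\ell}$ have equal length, so that for distinct non-proportional roots $(\alpha|\beta)\in\{-1,0,1\}$. Write $\mathcal{Q}:=\mathcal{Q}_{\ell,i,\alpha_1,\dots,\alpha_k}$. First I would observe that the union in \eqref{eq:fq4} is automatically \emph{disjoint}: by construction $\mathcal{Q}^{p}_{\ell,i,\alpha_1,\dots,\alpha_k}\subseteq\mathcal{Q}_{\ell,i,\alpha_p}$, so every $\alpha$ in it has $(\alpha|\alpha_p)>0$ and hence cannot lie in $\{\alpha_1,\dots,\alpha_p\}^{\perp}\subseteq\alpha_p^{\perp}$. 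Thus only the set equality must be verified. Two auxiliary facts drive the proof: (i) since $\mathcal{Q}\in\mathfrak{Q}'(\mathcal{S}^{\ell}_i)$ by Proposition \ref{prop:fq}, its elements are pairwise non-obtuse, i.e. $(\alpha|\beta)\ge 0$ for all $\alpha,\beta\in\mathcal{Q}$; and (ii) every intermediate stage is contained in the final set, $\mathcal{Q}^{j}_{\ell,i,\alpha_1,\dots,\alpha_k}\subseteq\mathcal{Q}$ for all $j$ (the case $j=0$ says that the seeds $\alpha_1,\dots,\alpha_k$ themselves lie in $\mathcal{Q}$). Granting (i)–(ii), for $\alpha\in\mathcal{Q}$ and $j\le p$ the product $(\alpha|\alpha_j)$ is forced into $\{0,1\}$, so ``non-orthogonal to $\alpha_j$'' is the same as ``$(\alpha|\alpha_j)>0$''.

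The substance is the identity, to be proved by induction on $p$,
\[
\mathcal{Q}^{p}_{\ell,i,\alpha_1,\dots,\alpha_k}=\{\alpha\in\mathcal{Q}\mid(\alpha|\alpha_j)>0\ \text{for some}\ 1\le j\le p\};
\]
once it is established, the complement of $\mathcal{Q}^{p}_{\ell,i,\alpha_1,\dots,\alpha_k}$ in $\mathcal{Q}$ is exactly the set of $\alpha\in\mathcal{Q}$ with $(\alpha|\alpha_j)=0$ for all $j\le p$, i.e. $\mathcal{Q}\cap\{\alpha_1,\dots,\alpha_p\}^{\perp}$, which is precisely \eqref{eq:fq4}. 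For the inclusion ``$\subseteq$'' of the displayed identity, given $\alpha\in\mathcal{Q}^{p}_{\ell,i,\alpha_1,\dots,\alpha_k}$ one has $(\alpha|\alpha_p)>0$ immediately; membership in $\mathcal{Q}$ is fact (ii). For the reverse inclusion, take $\alpha\in\mathcal{Q}$ with $(\alpha|\alpha_j)>0$ for some $j\le p$: using (i) together with $\mathcal{Q}^{p-1}_{\ell,i,\alpha_1,\dots,\alpha_k}\subseteq\mathcal{Q}$ from (ii), all the non-strict constraints $(\alpha|\beta)\ge0$, $\beta\in\mathcal{Q}^{p-1}_{\ell,i,\alpha_1,\dots,\alpha_k}$, are satisfied, and one reads off membership in the successive stages from the recursion \eqref{eq:fq0}. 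Fact (ii) is where maximality of $\mathcal{Q}$ enters: if some element of an intermediate stage were missing from $\mathcal{Q}$, I would show (via \eqref{eq:le83} and the recursive conditions) that it is non-obtuse to all of $\mathcal{Q}$ and that $-\alpha\notin\mathcal{Q}$, so adjoining it preserves \eqref{eq:lb} while \eqref{eq:lb0} is kept because it is a spanning condition, contradicting maximality in $\mathfrak{Q}(\mathcal{S}^{\ell}_i)$.

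The main obstacle I anticipate is the bookkeeping of strict versus non-strict positivity along the recursion. The sets $\mathcal{Q}^{j}_{\ell,i,\alpha_1,\dots,\alpha_k}$ carry a \emph{strict} condition against the single root $\alpha_j$ but only \emph{non-strict} conditions against the previous stage, and they are not literally nested, so the induction must track exactly which strict positivities persist. The genuinely delicate step is to propagate non-orthogonality: one must show, using \eqref{eq:le83} and the chosen ordering of the seeds, that for $\alpha\in\mathcal{Q}$ the relation $(\alpha|\alpha_j)>0$ for some $j\le p$ already forces $(\alpha|\alpha_p)>0$, which is what places $\alpha$ in $\mathcal{Q}^{p}_{\ell,i,\alpha_1,\dots,\alpha_k}\subseteq\mathcal{Q}_{\ell,i,\alpha_p}$ rather than in some earlier stage only. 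Verifying this propagation, together with the maximality argument underlying fact (ii), is where the geometry of $\mathcal{S}^{\ell}_i$ and property (2) of Lemma \ref{lem:le8a} are used decisively.
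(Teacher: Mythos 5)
Your argument does not close, and the failure is exactly at the step you yourself flagged as delicate. The proof rests on the identity $\mathcal{Q}^{p}_{\ell,i,\alpha_1,\hdots,\alpha_k}=\{\alpha\in\mathcal{Q}\mid(\alpha|\alpha_j)>0\ \text{for some}\ j\le p\}$ and on the propagation claim that, for $\alpha\in\mathcal{Q}$, $(\alpha|\alpha_j)>0$ for some $j\le p$ forces $(\alpha|\alpha_p)>0$. Both are false, already inside the paper's own Example (2): take $\mathcal{Q}=\mathcal{Q}_{8,1,\beta_0,\beta_{1,2,3,4},\beta_{1,2,5,6}}$ (maximal, with pairwise orthogonal seeds, as in Proposition \ref{prop:fq1}) and $p=2$. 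The root $\beta_{1,5}$ is admitted at the first stage, since $(\beta_{1,5}|\beta_0)=1>0$ and $\beta_{1,5}$ is non-obtuse to all three seeds, so $\beta_{1,5}\in\mathcal{Q}$; but $(\beta_{1,5}|\beta_{1,2,3,4})=0$, so positivity against $\alpha_1$ does not propagate to $\alpha_p=\alpha_2$. Likewise the seed $\alpha_3=\beta_{1,2,5,6}$ is orthogonal to both $\beta_0$ and $\beta_{1,2,3,4}$, yet it lies in $\mathcal{Q}^0_{\ell,i,\alpha_1,\hdots,\alpha_k}$ and hence in $\mathcal{Q}^p_{\ell,i,\alpha_1,\hdots,\alpha_k}$: it violates the inclusion ``$\subseteq$'' of your identity and sits in both terms of the union in \eqref{eq:fq4}, refuting your disjointness claim. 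Note also that your two structural claims are mutually inconsistent: disjointness uses $\mathcal{Q}^p\subseteq\mathcal{Q}_{\ell,i,\alpha_p}$, while your fact (ii) at $j=0$ requires the seeds to survive into the final set; with orthogonal seeds (the only case produced by Proposition \ref{prop:fq1}) these cannot both hold. Finally, your one use of maximality, to establish fact (ii), is circular: to adjoin a putative $\alpha^*\in\mathcal{Q}^j\setminus\mathcal{Q}$ to $\mathcal{Q}$ without violating \eqref{eq:lb} you must know that $\alpha^*$ is non-obtuse to \emph{all} of $\mathcal{Q}$, but membership in $\mathcal{Q}^j$ only controls products against $\mathcal{Q}^{j-1}$, and \eqref{eq:le83} constrains only pairs of roots having positive product with a common third root, so it says nothing about elements of $\mathcal{Q}$ orthogonal to~$\alpha_j$.

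The sound half of your proposal (each $\alpha\in\mathcal{Q}$, being non-obtuse to the seeds, is either orthogonal to $\alpha_1,\hdots,\alpha_p$ or has $(\alpha|\alpha_i)>0$ for some $i\le p$ and then lands in $\mathcal{Q}^p_{\ell,i,\alpha_1,\hdots,\alpha_k}$) is precisely the paper's proof of the inclusion of the left-hand side of \eqref{eq:fq4} in the right-hand side. For the reverse inclusion the paper does not attempt any stagewise description of $\mathcal{Q}^p$: it writes $\mathcal{Q}\cap\{\alpha_1,\hdots,\alpha_p\}^{\perp}=\{\gamma_1,\hdots,\gamma_q\}$, introduces the auxiliary generated set $\mathcal{Q}_{\ell,i,\alpha_1,\hdots,\alpha_p,\gamma_1,\hdots,\gamma_q}$, shows that the right-hand side of \eqref{eq:fq4} is contained in it, and then applies maximality of $\mathcal{Q}$ to this comparison set: by Proposition \ref{prop:fq} it lies in $\mathfrak{Q}'(\mathcal{S}^{\ell}_i)$, and it contains $\mathcal{Q}$, so \eqref{eq:lb0} holds and maximality forces it to coincide with $\mathcal{Q}$, whence the right-hand side is contained in $\mathcal{Q}$. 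This auxiliary set, and the application of maximality to it rather than to individual stages of the recursion, is the key idea missing from your attempt; without it (or a correct substitute) the reverse inclusion remains unproved.
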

\begin{proof} Let 
$\mathcal{Q}_{\ell,i,\alpha_1,\hdots,\alpha_k}
\cap\{\alpha_1,\hdots,\alpha_p\}^{\perp}
=\{\gamma_1,\hdots,\gamma_q\}$. Since
we have $\mathcal{Q}_{\ell,i,\alpha_1,\hdots,\alpha_k}\cap\{
\alpha\mid {\sup}_{1\leq{i}\leq{p}}(\alpha|\alpha_i)>0\}
\subset\mathcal{Q}^p_{\ell,i,\alpha_1,\hdots,\alpha_k}$, 
in \eqref{eq:fq4}
the left
hand side is contained in the right hand side.
The right hand side of \eqref{eq:fq4}
 is contained in 
$\mathcal{Q}_{\alpha_1,\hdots,\alpha_p,\gamma_1,\hdots,\gamma_q}$.
By the assumption that
$\mathcal{Q}_{\ell,i,\alpha_1,\hdots,\alpha_k}$
is maximal, it coincides with
$\mathcal{Q}_{\alpha_1,\hdots,\alpha_p,\gamma_1,\hdots,\gamma_q}$.
This yields \eqref{eq:fq4}.
\end{proof}
\begin{prop} \label{prop:fq1} Let $\ell\in\{6,7,8\}$.
Every maximal
$\mathcal{Q}\in\mathfrak{Q}_s(\mathcal{E}_{\ell})$ is equivalent, modulo
$\mathbf{W}$, to a $\mathcal{Q}_{\ell,i,\alpha_1,\hdots,\alpha_k}$ with
$(\ell,i)\in\Xi$ and 
\begin{equation}
  \label{eq:fq5}
  \alpha_1,\hdots,\alpha_k\in\mathcal{S}^{\ell}_i,\quad\text{with}\;
(\alpha_j|\alpha_h)=0\;\;\forall 1\leq{j}<h\leq{k}.
\end{equation}
\end{prop}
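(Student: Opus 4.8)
The plan is to combine the $CR$-symmetry criterion with the classification of $\mathbb{Z}_2$-gradings to place $\mathcal{Q}$ inside a standard $\mathcal{S}^{\ell}_i$, and then to recover $\mathcal{Q}$ from a pairwise orthogonal subfamily by an inductive peeling argument organized around the decomposition of Lemma~\ref{lem:fq3}. First I would use that $\mathcal{Q}\in\mathfrak{Q}_s(\mathcal{E}_{\ell})$ means, by criterion \eqref{eq:lgn10a}, that $\mathfrak{g}$ carries an inner $\mathbb{Z}_2$-gradation with $\mathfrak{h}\subset\mathfrak{g}_{(0)}$ and $\mathfrak{n}\subset\mathfrak{g}_{(1)}$; equivalently $\mathcal{Q}\subset\{\alpha\mid\alpha(E)\equiv 1\mod{2}\}$ for a suitable $E\in\mathcal{R}^{\star}$. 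By the classification of inner $\mathbb{Z}_2$-gradings of the exceptional simple Lie algebras (see \cite{OV93}, tabulated above as the pairs $(\ell,i)\in\Xi$), conjugating by $\mathbf{W}$ and invoking the outer identifications of orbits recorded in Lemma~\ref{lem:le8a}(3), I may assume the grading is a standard one, so that $\mathcal{Q}\subset\mathcal{S}^{\ell}_i$ for some $(\ell,i)\in\Xi$. By Remark~\ref{rk:h7} we have $\mathfrak{Q}(\mathcal{S}^{\ell}_i)\subset\mathfrak{Q}_s(\mathcal{E}_{\ell})$, so any $\mathcal{Q}'\in\mathfrak{Q}(\mathcal{S}^{\ell}_i)$ with $\mathcal{Q}\subset\mathcal{Q}'$ forces $\mathcal{Q}=\mathcal{Q}'$ by maximality. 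It therefore suffices to exhibit pairwise orthogonal $\alpha_1,\dots,\alpha_k\in\mathcal{S}^{\ell}_i$ satisfying \eqref{eq:fq5} with $\mathcal{Q}\subset\mathcal{Q}_{\ell,i,\alpha_1,\dots,\alpha_k}$.

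Since all roots of $\mathcal{E}_{\ell}$ have equal length, condition \eqref{eq:lb} for $\mathcal{Q}$ is equivalent to \eqref{eq:lb+}, that is $(\alpha|\beta)\geq 0$ for all $\alpha,\beta\in\mathcal{Q}$, and orthogonal roots are automatically strongly orthogonal. I would then argue by induction on the rank. Choosing one root $\alpha_1\in\mathcal{Q}$, I would split $\mathcal{Q}$ into the roots acute to $\alpha_1$ and the roots $\mathcal{Q}\cap\alpha_1^{\perp}$. By Lemma~\ref{lem:le8a}(1) no root of $\mathcal{Q}$ is orthogonal to all the others, so iterating the choice inside the successive orthogonal complements produces a pairwise orthogonal family $\alpha_1,\dots,\alpha_k\subset\mathcal{Q}$ meeting every root of $\mathcal{Q}$ nontrivially. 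The set $\mathcal{Q}\cap\alpha_1^{\perp}$ lies in the grade-one part $\mathcal{S}^{\ell}_i\cap\alpha_1^{\perp}$ of the grading induced on the lower-rank subsystem $\mathcal{E}_{\ell}\cap\alpha_1^{\perp}$, to which the inductive hypothesis applies and recovers $\alpha_2,\dots,\alpha_k$, whereas property \eqref{eq:le83} of Lemma~\ref{lem:le8a}(2) controls the roots acute to $\alpha_1$ and matches them with the cone $\mathcal{Q}^1$. Assembling the two pieces through the decomposition \eqref{eq:fq4} of Lemma~\ref{lem:fq3} would yield the inclusion $\mathcal{Q}\subset\mathcal{Q}_{\ell,i,\alpha_1,\dots,\alpha_k}$.

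Once this inclusion is in hand, Proposition~\ref{prop:fq} shows that $\mathcal{Q}_{\ell,i,\alpha_1,\dots,\alpha_k}\in\mathfrak{Q}'(\mathcal{S}^{\ell}_i)$, i.e.\ it satisfies \eqref{eq:lb}; being contained in $\mathcal{S}^{\ell}_i$ and containing the generating set $\mathcal{Q}$, it inherits \eqref{eq:lb0}, hence lies in $\mathfrak{Q}(\mathcal{S}^{\ell}_i)\subset\mathfrak{Q}_s(\mathcal{E}_{\ell})$, and maximality of $\mathcal{Q}$ forces equality. This closes the argument and simultaneously gives the orthogonality \eqref{eq:fq5} of the chosen generators.

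The step I expect to be the main obstacle is precisely the inductive identification of the acute part of $\mathcal{Q}$ about $\alpha_1$ with the cone $\mathcal{Q}^1$, together with the verification that each subsystem $\mathcal{E}_{\ell}\cap\alpha_1^{\perp}$ endowed with its induced grading is again one of the types on the list $\Xi$ (or a product of such), so that the induction actually closes. I do not expect a uniform argument here: this closure, and the matching of $\mathcal{Q}^1$ with the acute roots, appear to require the explicit, case-by-case root-system data for the gradings $(6,1),(6,2),(7,1),(7,2),(7,3),(8,1),(8,2)$, exploiting the transitivity statements of Lemma~\ref{lem:le8a}(3) to reduce the number of representatives that must be checked.
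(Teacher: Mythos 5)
Your opening reduction coincides with the paper's: $CR$-symmetry yields an inner $\mathbb{Z}_2$-gradation with $\mathcal{Q}$ contained in its odd part, so after conjugation you may assume $\mathcal{Q}\in\mathfrak{Q}(\mathcal{S}^{\ell}_i)$ for some $(\ell,i)\in\Xi$, and by Remark~\ref{rk:h7} together with maximality it suffices to exhibit pairwise orthogonal $\alpha_1,\hdots,\alpha_k\in\mathcal{Q}$ with $\mathcal{Q}\subset\mathcal{Q}_{\ell,i,\alpha_1,\hdots,\alpha_k}$. The gap is in the mechanism you propose for producing these roots, and you have in fact located it yourself but left it unresolved: induction on the rank, passing to $\mathcal{E}_{\ell}\cap\alpha_1^{\perp}$ with its induced grading, cannot close. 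Concretely, the subsystem of roots orthogonal to a root is of type $\mathrm{E}_7$ inside $\mathcal{E}_8$, of type $\mathrm{D}_6$ inside $\mathcal{E}_7$, and of type $\mathrm{A}_5$ inside $\mathcal{E}_6$; so after at most two steps you are outside the family $\ell\in\{6,7,8\}$ and outside the list $\Xi$, and the statement being proved no longer applies. Moreover, even a version of the proposition extended to classical types could not be invoked inductively, because the hypotheses fail for $\mathcal{Q}\cap\alpha_1^{\perp}$: there is no reason for it to satisfy \eqref{eq:lb0} relative to $\mathcal{E}_{\ell}\cap\alpha_1^{\perp}$ (the roots of $\mathcal{Q}$ that are acute to $\alpha_1$ may be needed for the $\mathbb{Z}$-span), nor to be maximal in the corresponding $\mathfrak{Q}_s$. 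So the ``assembly'' you defer to case-by-case checking is not a peripheral verification; it is the entire content of the proof, and it is missing.

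The paper's argument avoids any descent on root systems: it is a recurrence that never leaves $\mathcal{E}_{\ell}$ and never changes the grading $(\ell,i)$. Since the elements of a maximal $\mathcal{Q}$ have pairwise non-negative inner products, one can write $\mathcal{Q}=\mathcal{Q}_{\ell,i,\eta_1,\hdots,\eta_m}$ where $\eta_1,\hdots,\eta_m$ are simply the elements of $\mathcal{Q}$ in any order (Proposition~\ref{prop:fq} plus maximality). Set $\alpha_1=\eta_1$. Either $\mathcal{Q}\subset\mathcal{Q}_{\ell,i,\alpha_1}$, and maximality gives equality; or, by the argument in the proof of Lemma~\ref{lem:fq3}, $\mathcal{Q}=\mathcal{Q}_{\ell,i,\alpha_1,\gamma_1,\hdots,\gamma_r}$ with all $\gamma_j\in\mathcal{Q}\cap\alpha_1^{\perp}$, and one sets $\alpha_2=\gamma_1$ and repeats, now relative to $\{\alpha_1,\alpha_2\}^{\perp}$. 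Each step adjoins one generator orthogonal to all previous ones, so the process terminates after at most $\mathrm{rk}\,\mathcal{E}_{\ell}\leq 8$ steps: termination comes from the bound on the number of pairwise orthogonal roots, not from lowering the rank of the ambient system, and maximality (not Lemma~\ref{lem:le8a}(2)) is what converts each inclusion into an equality, yielding \eqref{eq:fq5}. Your first and last reductions can stand, but the rank induction must be replaced by this in-system recurrence for the proof to be complete.
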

\begin{proof}
First we observe that any $\mathcal{Q}\in\mathfrak{Q}_s(\mathcal{E}_{\ell})$
is equivalent, modulo $\mathbf{W}$, to a set $\mathcal{Q}'\in
\mathfrak{Q}(\mathcal{S}^{\ell}_i)$ for some $i$
with $(\ell,i)\in\Xi$.
We can write $\mathcal{Q}'=\mathcal{Q}_{\ell,i,\eta_1,\hdots,\eta_\ell}$
for a sequence $\eta_1,\hdots,\eta_\ell$ of elements of
$\mathcal{S}^{\ell}_i$. Indeed, we can take for $\eta_1,\hdots,\eta_\ell$
the sequence consisting of the
elements of $\mathcal{Q}'$, listed in any order. Set $\alpha_1=\eta_1$.
If $\mathcal{Q}'\subset\mathcal{Q}_{\ell,i,\alpha_1}$, we have
$\mathcal{Q}'=\mathcal{Q}_{\ell,i,\alpha_1}$
by maximality, and the thesis is verified.
Otherwise,
by the argument
of the proof of Lemma \ref{lem:fq3}, 
$\mathcal{Q}'=\mathcal{Q}_{\alpha_1,\gamma_1,\hdots,\gamma_r}$
for some $\gamma_1,\hdots,\gamma_r\in\mathcal{Q}'\cap\alpha_1^{\perp}$.
Set $\alpha_2=\gamma_1$.
Repeating the argument, either 
$\mathcal{Q}'=\mathcal{Q}_{\ell,i,\alpha_1,\alpha_2}$, or, otherwise,
$\mathcal{Q}'=\mathcal{Q}_{\alpha_1,\alpha_2,\mu_1,\hdots,
\mu_s}$ for a new sequence $\mu_1,\hdots,\mu_s\in\mathcal{Q}'\cap
\{\alpha_1,\alpha_2\}^\perp$. The general argument is now clear, and the
thesis follows by recurrence.
\end{proof}
\begin{exam}
  We have $\mathcal{Q}_{8,1,\beta_0,\beta_{1,2}}=
\mathcal{Q}_{8,1,\beta_0,\beta_{1,2,3,4},\beta_{1,2,5,6},\beta_{1,2,7,8}}$.
\end{exam}
\begin{rmk}
We have $\mathcal{Q}_{8,1,\beta_0}=
\mathcal{Q}_{8,1,\beta_{1,2},\beta_{3,4},
\beta_{5,6},\beta_{7,8}}$. In particular, the representation of
the maximal symmetric $\mathcal{Q}$ given in Proposition \ref{prop:fq1}
is not unique. 
\end{rmk}
We are interested in classifying modulo equivalence the maximal sets
in $\mathfrak{Q}(\mathcal{S}^{\ell}_i)$ for $(\ell,i)\in\Xi\setminus
\{(6,1),(7,3)\}$. Using 
Proposition \ref{prop:fq1}, they can be parametrized by 
sequences of orthogonal roots. Modulo the action of
$\mathbf{W}^{\ell}_i$, they can be taken to be subsets of the
following:
\begin{equation*}
  \begin{array}{cl}
(\mathrm{A_5\times{A}_1})& e_1\! +\! e_5,\;e_2\! +\! e_4, \;
\beta_{1,2,6,7},\beta_{4,5,6,7},\\
(\mathrm{D_6\times{A}_1}) &
\beta_{6,8},\;\beta_{1,7},\; \beta_{2,5,6,7},\;\beta_{3,4,6,7},\\
(\mathrm{A_7}) &  e_1\! +\! e_2,\; e_3\! +\! e_4,\; e_5\! +\! e_6,\;
\beta_{1,3,5,7},\;\beta_{1,4,6,7},\;\beta_{2,3,6,7},\;\beta_{2,4,5,7},\\
(\mathrm{D_8)} & \beta_0,\beta_{1,2,3,4},\beta_{1,2,5,6},\beta_{1,2,7,8},
\beta_{1,3,5,8},\beta_{1,3,6,7}, \beta_{2,3,5,7},\beta_{2,3,6,8},\\
(\mathrm{E_7\times{A}_1}) &
\beta_{1,2},\;\beta_{3,4},\;\beta_{5,6},\;\beta_{7,8}.
  \end{array}
\end{equation*}
The group $\mathbf{W}^{\ell}_i$ is transitive on the maximal systems
of orthogonal roots of $\mathcal{S}^{\ell}_i$ when
$(\ell,i)\in\{(6,2), (7,1),(8,2)\}$,
$\mathbf{W}^7_2$ is 
transitive
on the pairs of orthogonal roots of $\mathcal{S}^7_2$,
and $\mathbf{W}^8_1$ on the triples of orthogonal roots of
$\mathcal{S}^8_1$.
Thus we obtain
\begin{prop}
Every maximal element of $\mathfrak{Q}_s(\mathcal{E}_{\ell})$
is equivalent to an element of $\mathfrak{Q}(\mathcal{S}^{\ell}_i)$
for some $(\ell,i)\in\Xi$. 
The maximal elements of $\mathfrak{Q}_s(\mathcal{S}^{\ell}_i)$
are conjugate to
\par
for $(\ell,i)=(6,1)$, 
either
$\mathcal{Q}_{6,1,\beta_{6,7},-\beta_{1,8}}$ or its opposite
$\mathcal{Q}_{6,1,-\beta_{6,7},\beta_{1,8}}$;\par
for $(\ell,i)=(7,3)$ one of:
$\mathcal{Q}_{7,3,v_{7},e_1\! +\! e_6}$, 
$\mathcal{Q}_{7,3,v_{7},e_1\! -\! e_6}$, 
$\mathcal{Q}_{7,3,v_{7},e_1\! -\! e_6,e_1\! +\! e_6}$,
or the opposite of one of the above;
\par
for $(\ell,i)\in \{(6,2),(7,1),(8,2)\}$, one of
the $\mathcal{Q}_{\ell,i,\alpha_1,\hdots,\alpha_k}$, and
the equivalence class 
only depends on the number $k$ of orthogonal roots;\par
some $\mathcal{Q}_{7,2,\alpha_1,\hdots,\alpha_k}$ with
$\alpha_1=e_1\! +\! e_2$, $\alpha_2=e_3\! +\! e_4$ when $k\geq{2}$,
and $\{\alpha_3,\hdots,\alpha_k\}\subset\{e_5\! +\! e_6,\;
\beta_{1,3,5,7},\;\beta_{1,4,6,7},\;\beta_{2,3,6,7},\;\beta_{2,4,5,7}\}$
for $3\leq{k}\leq{7}$,\par
some $\mathcal{Q}_{8,1,\alpha_1,\hdots,\alpha_k}$ with
$\alpha_1=\beta_0$, $\alpha_2=\beta_{1,2,3,4}$ when $k\geq{2}$,
$\alpha_3=\beta_{1,2,5,6}$ when $k\geq{3}$, and
$\{\alpha_4,\hdots,\alpha_k\}\subset
\{
\beta_{1,2,7,8},
\beta_{1,3,5,8}, \beta_{1,3,6,7}, \beta_{2,3,5,7}, \beta_{2,3,6,8}\}$
for $4\leq{k}\leq{8}$.
\end{prop}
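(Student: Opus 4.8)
The plan is to combine the reduction already carried out in Proposition~\ref{prop:fq1} with the classification of inner $\mathbb{Z}_2$-gradings and with the transitivity properties of the groups $\mathbf{W}^{\ell}_i$ recorded just before the statement. First I would settle the opening sentence. A maximal $\mathcal{Q}\in\mathfrak{Q}_s(\mathcal{E}_{\ell})$ is $CR$-symmetric, so by the argument proving the grading criterion it determines an inner $\mathbb{Z}_2$-gradation of $\mathfrak{g}$ with $\mathfrak{h}\subset\mathfrak{g}_{(0)}$ and $\mathfrak{n}\subset\mathfrak{g}_{(1)}$, that is, with $\mathcal{Q}$ contained in the odd part of the grading. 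Since the inner $\mathbb{Z}_2$-gradings of a simple algebra of type $\mathrm{E}_{\ell}$ are exhausted, up to equivalence, by the list indexed by $\Xi$ (following \cite[\S3.7]{OV93}), modulo $\mathbf{W}$ and, where needed, an outer automorphism we may assume $\mathcal{Q}\subset\mathcal{S}^{\ell}_i$, so that $\mathcal{Q}\in\mathfrak{Q}(\mathcal{S}^{\ell}_i)$. Conversely $\mathfrak{Q}(\mathcal{S}^{\ell}_i)\subset\mathfrak{Q}_s(\mathcal{E}_{\ell})$ by Remark~\ref{rk:h7}, so a maximal element of $\mathfrak{Q}_s(\mathcal{E}_{\ell})$ lying in $\mathcal{S}^{\ell}_i$ is automatically maximal in $\mathfrak{Q}(\mathcal{S}^{\ell}_i)$; this yields the first assertion.

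Next I would reduce the classification to orthogonal systems of roots. By Proposition~\ref{prop:fq1} each such $\mathcal{Q}$ is equivalent, modulo $\mathbf{W}$, to some $\mathcal{Q}_{\ell,i,\alpha_1,\hdots,\alpha_k}$ with $\alpha_1,\hdots,\alpha_k\in\mathcal{S}^{\ell}_i$ pairwise orthogonal; by maximality of $\mathcal{Q}$ the orthogonal system $\{\alpha_1,\hdots,\alpha_k\}$ cannot be enlarged inside $\mathcal{S}^{\ell}_i$. Lemma~\ref{lem:fq3} lets one replace the generating sequence by any maximal orthogonal system it contains and shows that $\mathcal{Q}_{\ell,i,\alpha_1,\hdots,\alpha_k}$ is recovered from the \emph{set} $\{\alpha_1,\hdots,\alpha_k\}$, independently of its ordering. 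Hence the equivalence classes of maximal elements of $\mathfrak{Q}_s(\mathcal{S}^{\ell}_i)$ are governed by the $\mathbf{W}^{\ell}_i$-orbits of maximal orthogonal systems contained in $\mathcal{S}^{\ell}_i$, and it remains to enumerate these orbits for each $(\ell,i)\in\Xi$, Proposition~\ref{prop:fq} providing the construction that recognizes each candidate as an element of $\mathfrak{Q}'(\mathcal{S}^{\ell}_i)$.

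Then I would run the case analysis over $\Xi$. For $(\ell,i)\in\{(6,2),(7,1),(8,2)\}$ the group $\mathbf{W}^{\ell}_i$ is transitive on the maximal orthogonal systems of $\mathcal{S}^{\ell}_i$, so the equivalence class of a maximal $\mathcal{Q}_{\ell,i,\alpha_1,\hdots,\alpha_k}$ depends only on the single integer $k$. For $(7,2)$ the group $\mathbf{W}^7_2$ is transitive only on orthogonal \emph{pairs}, so one normalizes $\alpha_1=e_1\!+\!e_2$, $\alpha_2=e_3\!+\!e_4$ and reads off from the tabulated orthogonal roots of $\mathcal{S}^7_2$ the admissible $\alpha_3,\hdots,\alpha_k$; similarly for $(8,1)$, where $\mathbf{W}^8_1$ is transitive on orthogonal \emph{triples}, one fixes $\alpha_1=\beta_0$, $\alpha_2=\beta_{1,2,3,4}$, $\alpha_3=\beta_{1,2,5,6}$ and lists the remaining $\alpha_j$. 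Finally, for $(6,1)$ and $(7,3)$ transitivity fails: by Lemma~\ref{lem:le8a}(3) the odd set $\mathcal{S}^{\ell}_i$ splits into exactly two $\mathbf{W}^{\ell}_i$-orbits, interchanged by an outer automorphism of $\mathcal{R}^{\ell}_i$, and the corresponding maximal elements are the explicit sets $\mathcal{Q}_{6,1,\beta_{6,7},-\beta_{1,8}}$, $\mathcal{Q}_{7,3,v_7,e_1\!+\!e_6}$, $\mathcal{Q}_{7,3,v_7,e_1\!-\!e_6}$, $\mathcal{Q}_{7,3,v_7,e_1\!-\!e_6,e_1\!+\!e_6}$ together with their opposites.

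The routine but genuine work, and the step I expect to be the main obstacle, is the verification of the transitivity assertions for the $\mathbf{W}^{\ell}_i$ on the relevant families of orthogonal systems, together with the check that the representatives listed in the statement are mutually inequivalent and exhaust all cases. These are finite computations carried out directly from the explicit descriptions of $\mathcal{E}_{\ell}$, of the odd sets $\mathcal{S}^{\ell}_i$, and of the Weyl groups of $\mathcal{R}^{\ell}_i$ (of types $\mathrm{D}_5$, $\mathrm{A}_5\!\times\!\mathrm{A}_1$, $\mathrm{D}_6\!\times\!\mathrm{A}_1$, $\mathrm{A}_7$, $\mathrm{E}_6$, $\mathrm{D}_8$, $\mathrm{E}_7\!\times\!\mathrm{A}_1$); the construction of Proposition~\ref{prop:fq} and the orthogonal-splitting of Lemma~\ref{lem:fq3} organize the induction and keep the bookkeeping finite.
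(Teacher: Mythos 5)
Your route is the one the paper itself takes: the paper gives no separate proof of this proposition but derives it (``Thus we obtain'') from exactly the ingredients you assemble --- the reduction of Proposition~\ref{prop:fq1} to sequences of orthogonal roots in some $\mathcal{S}^{\ell}_i$, the tabulated orthogonal systems, the stated transitivity properties of the groups $\mathbf{W}^{\ell}_i$, and Lemma~\ref{lem:le8a}~(3) for the cases $(6,1)$ and $(7,3)$. Your first paragraph and your case analysis match this. However, your middle reduction step is false as stated: you claim that, by maximality of $\mathcal{Q}$, the generating orthogonal system $\{\alpha_1,\hdots,\alpha_k\}$ cannot be enlarged inside $\mathcal{S}^{\ell}_i$, and hence that the equivalence classes of maximal elements of $\mathfrak{Q}_s(\mathcal{S}^{\ell}_i)$ are governed by the $\mathbf{W}^{\ell}_i$-orbits of \emph{maximal} orthogonal systems. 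The paper itself refutes this: $\mathcal{Q}_{8,1,\beta_0}$ is a maximal element of $\mathfrak{Q}_s(\mathcal{E}_8)$ generated by the single root $\beta_0$, which sits inside the $8$-element orthogonal system tabulated for $(\mathrm{D_8})$ (and the Remark after Proposition~\ref{prop:fq1} records $\mathcal{Q}_{8,1,\beta_0}=\mathcal{Q}_{8,1,\beta_{1,2},\beta_{3,4},\beta_{5,6},\beta_{7,8}}$, so the representation is not even unique). Likewise, Examples (7)--(9) exhibit $\mathcal{Q}_{8,2,\beta_{7,8},\beta_{5,6}}$, $\mathcal{Q}_{8,2,\beta_{7,8},\beta_{5,6},\beta_{3,4}}$ and $\mathcal{Q}_{8,2,\beta_{7,8},\beta_{5,6},\beta_{3,4},\beta_{1,2}}$ as three \emph{distinct} maximal elements, generated by orthogonal systems of sizes $2$, $3$, $4$: enlarging the generating system does not enlarge $\mathcal{Q}$, it exchanges roots (e.g.\ $e_3\!+\!e_4$ for $\beta_{3,4}$).

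This is not a cosmetic slip, because your reduction and your case analysis contradict each other: if equivalence classes corresponded to orbits of maximal orthogonal systems, then in the transitive cases $(6,2),(7,1),(8,2)$ there would be exactly \emph{one} class of maximal elements, whereas the statement you are proving (and your own enumeration) gives one class for each cardinality $k$. What the argument actually needs, and what the paper's terse discussion is meant to supply, is: (i) every orthogonal system in $\mathcal{S}^{\ell}_i$ is $\mathbf{W}^{\ell}_i$-conjugate to a subset of the standard tabulated one, and (ii) $\mathbf{W}^{\ell}_i$ identifies any two such systems of the same cardinality when $(\ell,i)\in\{(6,2),(7,1),(8,2)\}$, identifies pairs for $(7,2)$ and triples for $(8,1)$, the residual choices of $\alpha_3,\hdots,\alpha_k$ (resp.\ $\alpha_4,\hdots,\alpha_k$) then being read off from the table. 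Maximality of $\mathcal{Q}$ enters only through the greedy construction in the proof of Proposition~\ref{prop:fq1} and through Lemma~\ref{lem:fq3}, never through maximality of the generating orthogonal system. With the reduction restated in this way, the remainder of your argument goes through and coincides with the paper's.
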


\begin{exam}
(1) We have
\begin{equation*}
  \mathcal{Q}_{8,1,\beta_0,\beta_{1,2,3,4}}=\{\beta_0,\beta_{1,2,3,4}\}\cup
\{\beta_{i,r},\beta_{i,j,h,r}\mid
\begin{smallmatrix}
  1\leq{i}\leq{4},\; i<j<h\leq{4},\; 5\leq{r}\leq{8}
\end{smallmatrix}\}.
\end{equation*}
This set belongs to $\mathfrak{Q}_s(\mathcal{E}_8)\setminus
\mathfrak{Q}_{\Upsilon}(\mathcal{E}_8)$. \par\smallskip
(2) We have
\begin{equation*}\begin{aligned}
 & \mathcal{Q}_{8,1,\beta_0,\beta_{1,2,3,4},\beta_{1,2,5,6}}=\{
\beta_0,\beta_{1,2},\beta_{1,2,3,4},\beta_{1,2,5,6}\}\qquad \\
&\qquad\qquad\cup
\{\beta_{i,j},\beta_{i,h},\beta_{i,k},\beta_{j,h},\beta_{1,2,j,h},
\beta_{i,3,4,h},\beta_{i,j,5,6}\mid
\begin{smallmatrix}
  i=1,2,\;j=3,4,\;h=5,6,\;k=7,8
\end{smallmatrix}\}.
\end{aligned}
\end{equation*}
This set belongs to $\mathfrak{Q}_s(\mathcal{E}_8)\setminus
\mathfrak{Q}_{\Upsilon}(\mathcal{E}_8)$. \par\smallskip
(3)\quad Let $\mathcal{Q}=\mathcal{Q}_{8,1,\beta_{1,2,3,4},\beta_{1,3,5,6},
\beta_{1,3,5,8}}$. Then
\begin{equation*}\begin{aligned}
  \mathcal{Q}=\{\beta_0,\beta_{1,2,3,4},\beta_{1,3,5,6},
\beta_{1,3,5,8},\beta_{1,2,3,5}\}\cup\{\beta_{1,i}\mid
\begin{smallmatrix}
  2\leq{i}\leq{8}
\end{smallmatrix}\}\qquad\\ \cup
\{\beta_{2,3},\beta_{2,5},\beta_{2,8},\beta_{3,5},\beta_{3,6},\beta_{4,5}\}.
\end{aligned}
\end{equation*}
Then $\mathcal{Q}\in\mathfrak{Q}_s(\mathcal{E}_8)
\setminus\mathfrak{Q}_{\Upsilon}(\mathcal{E}_8)$, and is maximal.
\par\smallskip
(4)\quad Consider
\begin{equation*}\mathcal{Q}=
\mathcal{Q}_{8,1\beta_0,\beta_{1,2,3,4},\beta_{1,2,5,6},\beta_{3,4,5,6},
\beta_{1,3,5,7}}
\end{equation*}
We claim that $\mathcal{Q}\in\mathfrak{Q}_{0}(\mathcal{E}_8)$.
Indeed, $(\beta_{r,s}|\beta_{i,j,h,k})\geq{0}$
if and only if $\{r,s\}\cap\{i,j,h,k\}\neq\emptyset$, hence
\begin{displaymath}\begin{smallmatrix}
\{1,2,3,4\}\cap \{1,2,5,6\}\cap\{3,4,5,6\}\cap
\{1,3,5,7\}=\emptyset
\end{smallmatrix}
\Longrightarrow \beta_{i,8}\notin\mathcal{Q},
\;\text{for}\; i=1,\hdots,7.
\end{displaymath}
The conditions
$(\beta_{r,s}|\beta_{i,j,h,k})\geq{0}$,
$(\beta_{a,b,c,d}|\beta_{i,j,h,k})\geq{0}$ are
equivalent to
$\{r,s\}\cap\{i,j,h,k\}\neq\emptyset$,
$\#\{a,b,c,d\}\cap\{i,j,h,k\}\geq{2}$, respectively.
Moreover,
\begin{equation*}
\{\beta_{1,3},\beta_{1,4},\beta_{1,5},\beta_{1,6},\beta_{2,3},
\beta_{2,5},\beta_{3,5},\beta_{3,6},\beta_{4,5},\beta_{4,6},\beta_{4,7}\}
\subset\mathcal{Q}.
\end{equation*}
Then we can easily show that $\mathcal{Q}$ cannot contain any root
of type $\beta_{i,j,h,8}$ with $1\leq{i}<j<h\leq{7}$.
Therefore $\alpha(E_{8,1})=1$ for all $\alpha\in
\mathcal{Q}_{\beta_0,\beta_{1,2,3,4},\beta_{1,2,5,6},\beta_{3,4,5,6},
\beta_{1,3,5,7}}$.\par\smallskip
(5) Consider $\mathcal{Q}=
\mathcal{Q}_{8,1,\beta_0,\beta_{1,2,3,4},\beta_{1,2,5,6},
\beta_{1,3,6,7},\beta_{2,3,6,8},\beta_{2,3,5,7},\beta_{1,3,5,8}}$.
We have
\begin{equation*}\begin{aligned}
  \mathcal{Q}=\{\beta_0,\beta_{1,2},\beta_{1,3},\beta_{2,3},\beta_{3,5},
\beta_{3,6},\beta_{1,2,3,4},\beta_{1,2,3,5},\beta_{1,2,3,6},\beta_{1,2,3,7},
\beta_{1,2,3,8},\qquad\\\beta_{1,2,5,6},\beta_{1,3,5,6},\beta_{1,3,6,7},
\beta_{2,3,5,6},\beta_{2,3,6,8},\beta_{1,3,6,8},\beta_{2,3,5,7},
\beta_{1,3,5,8}\}.
\end{aligned}
\end{equation*}
Then $\mathcal{Q}$ is maximal in $\mathfrak{Q}(\mathcal{E}_8)$
and belongs to $\mathfrak{Q}_s(\mathcal{E}_8)\setminus
\mathfrak{Q}_{\Upsilon}(\mathcal{E}_8)$.\par\smallskip
(6) Let
$\mathcal{Q}=\mathcal{Q}_{8,1,\beta_0,\beta_{1,2,6,7},\beta_{3,4,6,7},
\beta_{1,3,6,8},\beta_{2,4,6,8},\beta_{1,4,7,8},\beta_{2,3,7,8}}$.
Then
\begin{equation*}
\mathcal{Q}=\{\beta_0,\beta_{6,7},\beta_{6,8},\beta_{7,8},
\beta_{1,2,6,7},\beta_{3,4,6,7},
\beta_{1,3,6,8},\beta_{2,4,6,8},\beta_{1,4,7,8},\beta_{2,3,7,8}\}.
\end{equation*}
that
$\mathcal{Q}$ is a maximal element of
$\mathfrak{Q}(\mathcal{S}^8_1)$, 
and that $\alpha(E)\equiv{1}\mod{4}$ for $E\in\mathfrak{h}$ defined by
$e_1(E)=\cdots=e_5(E)=0$, $e_6(E)=e_7(E)=e_8(E)=-2$.
Moreover, $\mathcal{Q}\in\mathfrak{Q}_{\Upsilon}(\mathcal{R})\setminus
\mathfrak{Q}_0(\mathcal{R})$.
\par\smallskip
(7) The set 
\begin{equation*}
\mathcal{Q}_{8,2,\beta_{7,8},\beta_{5,6}}=\{\beta_{5,6},\beta_{7,8}\}\cup
\{\beta_{i,r}
\begin{smallmatrix}
  1\leq{i}\leq{6},\;r=7,8
\end{smallmatrix}\}\cup
\{e_i+e_j\mid \begin{smallmatrix}
1\leq{i}<j\leq{6},
\; (i,j)\neq(5,6)
\end{smallmatrix}\}
\end{equation*}
is maximal and belongs to $\mathfrak{Q}_0(\mathcal{E}_{8})$.
Indeed, $\alpha(E_{8,2})=1$ for all $\alpha\in
\mathcal{Q}_{8,2,\beta_{7,8},\beta_{5,6}}$.
\par\smallskip
(8) The set
\begin{equation*}\begin{aligned}
  \mathcal{Q}_{8,2,\beta_{7,8},\beta_{5,6},\beta_{3,4}}=
\{\beta_{3,4},\beta_{5,6}\}\cup
\{\beta_{i,r}
\begin{smallmatrix}
  1\leq{i}\leq{6},\;r=7,8
\end{smallmatrix}\}\qquad \\\cup
\{e_i+e_j\mid \begin{smallmatrix}
1\leq{i}<j\leq{6},
\; (i,j)\neq(3,4),(5,6)
\end{smallmatrix}\}
\end{aligned}
\end{equation*}
is also maximal and belongs to $\mathfrak{Q}_0(\mathcal{E}_8)$.
\par\smallskip
 (9) The set
\begin{equation*}\begin{aligned}
  \mathcal{Q}_{8,2,\beta_{7,8},\beta_{5,6},\beta_{3,4},\beta_{1,2}}=
\{\beta_{1,2},\beta_{3,4},\beta_{5,6},\beta_{7,8}\}\cup
\{\beta_{i,r}
\begin{smallmatrix}
  1\leq{i}\leq{6},\;r=7,8
\end{smallmatrix}\}\qquad \\\cup
\{e_i+e_j\mid \begin{smallmatrix}
1\leq{i}<j\leq{6},
\; (i,j)\neq(1,2),(3,4),(5,6)
\end{smallmatrix}\}
\end{aligned}
\end{equation*}
is also maximal and belongs to $\mathfrak{Q}_0(\mathcal{E}_8)$.
\end{exam}


\renewcommand{\MR}[1]{}

\providecommand{\bysame}{\leavevmode\hbox to3em{\hrulefill}\thinspace}
\providecommand{\MR}{\relax\ifhmode\unskip\space\fi MR }
\providecommand{\MRhref}[2]{%
  \href{http://www.ams.org/mathscinet-getitem?mr=#1}{#2}
}
\providecommand{\href}[2]{#2}

\end{document}